\newcommand{\Lad}{\mathop{\text{Lad}}}
\newcommand{\uab}{\mathop{u_{\rho}(a,b)}}
\newcommand{\Dr}{\mathop{\Delta_{\rho}}}
\newcommand{\abs}{|\cdot|}
\newcommand*{\N}{\mathbb{N}}
\newcommand*{\Z}{\mathbb{Z}}
\newcommand*{\R}{\mathbb{R}}
\newcommand*{\C}{\mathbb{C}}
\newcommand{\ceq}{\coloneqq}
\numberwithin{equation}{section}
\numberwithin{equation}{section}
\newtheorem{thm}{Theorem}[section]
\newtheorem{lemma}[thm]{Lemma}
\newtheorem{prop}[thm]{Proposition}
\newtheorem{cor}[thm]{Corollary}
\theoremstyle{definition}
\newtheorem{de}[thm]{Definition}
\newtheorem{rmk}[thm]{Remark}
\newtheorem{con}[thm]{Conjecture}
\xpatchcmd{\titlepage}{\@restonecolfalse\newpage}{\@restonecolfalse}{}{}
\xpatchcmd{\endtitlepage}{\if@restonecol\twocolumn \else \newpage \fi}{\if@restonecol\twocolumn \else  \fi}{\typeout{success}}{\typeout{fail}}
\def\Z{\mathbb{Z}}
\title{\large Irreducibility of the parabolic induction of essentially Speh representations and a representation of Arthur type over a $p$-adic field}
\date{}
\begin{document}

\numberwithin{thm}{section}
\numberwithin{de}{section}
\numberwithin{lemma}{section}
\numberwithin{cor}{section}
\numberwithin{ex}{section}
\numberwithin{prop}{section}
\numberwithin{rmk}{section}

\author{Barbara Bošnjak and Alexander Stadler}

\maketitle
\begin{abstract}
    Let $F$ be a $p$-adic field. In this article, we consider representations of split special orthogonal groups $\mathrm{SO}_{2n+1}(F)$ and symplectic groups $\mathrm{Sp}_{2n}(F)$ of rank $n$. We denote by $\pi_1 \times \ldots \times \pi_r \rtimes \pi$ the normalized parabolically induced representation of either. Now let $u_i$ be essentially Speh representations and $\pi$ a representation of Arthur type. We prove that the parabolic induction $u_1 \times \ldots \times u_r \rtimes \pi$ is irreducible if and only if $u_i \times u_j$, $u_i \times u_j^\vee$ and $u_i \rtimes \pi$ are irreducible for all choices of $i\neq j$. If $u_i$ are Speh representations, we determine the composition series of the above parabolically induced representation. Through this, we are able to produce a new collection of unitary representations.
\end{abstract}

\tableofcontents

\pagenumbering{roman}

\pagestyle{headings}
\pagenumbering{arabic}

\section{Introduction}
A central problem of representation theory is the classification of the unitary dual (i.e. the class of irreducible unitary representations) of classical $p$-adic groups. It is still an open question and the main motivation for the problems solved in this article.
The groups of our interest are split special odd orthogonal groups $\mathrm{SO}_{2n+1}(F)$ or symplectic groups $\mathrm{Sp}_{2n}(F)$ of rank $n$, which we both denote by $G_n$. 
Here $F$ stands for a $p$-adic field.
The unitary dual of these groups $G_n$ is explored in various directions by representation theorists. To mention some of them, we note that, for Levi subgroups of corank at most 3, Tadić described the irreducible unitary subquotients of parabolically induced representations of $G_n$ in \cite{T3}.
The \textit{generic dual} of $G_n$ has been determined by Lapid, Muić and Tadić in \cite{LMT}. 
The unramified unitary dual of $G_n$ was determined by Muić and Tadić in \cite{TadMu}.
In \cite{Matic1}, Matić constructed a class of unitarizabile non-tempered representations using the Aubert involution. 

The relation between the question of unitarizability of representations of classical $p$-adic groups $G_n$ and Arthur packets is studied by Tadić in \cite{T6}, and further explored in \cite{big} by Hazeltine, Jiang, Liu, Lo and Zhang.
Considering a natural topology on unitary duals, the aforementioned authors state conjectures linking Arthur type representations with isolated representations and the representations appearing at the critical points, in this way indicating that the representations of Arthur type should play an important role in the classification of the unitary dual of the group $G_n$.
The notion of a representation $\pi$ of \textit{Arthur type} was introduced by Arthur in \cite[Chapter 8]{Arthur} and says that $\pi$ is the local factor of a square-integrable automorphic representation and in particular, unitary.
Closely related to Arthur type representations are the \textit{essentially Speh representations}.
They are the main building blocks in the classification of unitary irreducible representations of $\mathrm{GL}_n(F)$.
This classification was a major milestone, achieved by Tadić in 1986 \cite{T2}.

Parabolic induction is an important tool in the smooth complex representation theory of the classical groups $G_n$, which is a method to construct representations of a group from representations of its Levi subgroups. 
A natural step towards understanding the unitary dual of the groups $G_n$ is to consider the \textit{parabolically induced} representations of the tensor product of  essentially Speh and Arthur type representations. 
For representations $\pi_1, \ldots, \pi_r$ and $\pi_0$ of the groups $\mathrm{GL}_{d_1}(F), \ldots, \mathrm{GL}_{d_r}(F)$ and $G_{n_0}$ respectively, we denote by $\pi_1 \times \cdots \times \pi_r \rtimes \pi_0$ the normalized parabolically induced representation.
In this article, we are interested in an irreducibility criterion for this representation. The natural conjecture, which is well-known by the experts (however, we haven't seen it written down before), is the following:

\begin{con} \label{conjecture}
    Let $\pi_1,\ldots,\pi_r$ be irreducible representations of $\mathrm{GL}_{d_1}(F),...,\mathrm{GL}_{d_r}(F)$ and let $\pi$ be an irreducible representation of $G_{n_0}$. Then the parabolically induced representation $\pi_1\times\ldots\times \pi_r\rtimes\pi$ is irreducible if and only if 
\begin{gather*}
    \begin{cases}
    \pi_i\times \pi_j  \\
    \pi_i\times \pi_j^\vee \\
    \pi_i\rtimes\pi 
    \end{cases} 
    \textit{are irreducible for}\ 1 \leq i,j \leq r,\ i\neq j.
\end{gather*}
\end{con}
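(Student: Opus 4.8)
The plan is to prove the two implications separately, the forward (``only if'') direction being formal and the converse being the substantive one. For necessity no special structure of the $\pi_i$ is needed: working in the Grothendieck group of finite length representations one may freely permute the $\mathrm{GL}$-factors and replace any $\mathrm{GL}$-factor adjacent to $\pi$ by its contragredient, since $\sigma_a\times\sigma_b$ and $\sigma_b\times\sigma_a$ have the same semisimplification and, for the classical groups considered here, so do $\sigma\rtimes\pi$ and $\sigma^\vee\rtimes\pi$. Hence $\pi_1\times\cdots\times\pi_r\rtimes\pi$ has the same semisimplification as each of $(\pi_i\times\pi_j)\times(\text{rest})\rtimes\pi$, $(\pi_i\times\pi_j^\vee)\times(\text{rest})\rtimes\pi$ and $(\text{the }\mathrm{GL}\text{-factors other than }\pi_i)\rtimes(\pi_i\rtimes\pi)$. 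Since parabolic induction is exact and sends a representation of length $\ge 2$ to one of length $\ge 2$, and since irreducibility means having length exactly $1$ (a property of the semisimplification), reducibility of any one of $\pi_i\times\pi_j$, $\pi_i\times\pi_j^\vee$, $\pi_i\rtimes\pi$ forces reducibility of $\pi_1\times\cdots\times\pi_r\rtimes\pi$.

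For sufficiency I would induct on $r$, the case $r=1$ being the hypothesis. For the step, put $\tau:=\pi_2\times\cdots\times\pi_r\rtimes\pi$; the pairwise hypotheses for the index set $\{2,\dots,r\}$ are among those assumed, so $\tau$ is irreducible by induction, and it remains to show $\pi_1\rtimes\tau$ is irreducible given that $\pi_1\rtimes\pi$, $\pi_1\times\pi_j$ and $\pi_1\times\pi_j^\vee$ ($j\ge 2$) are irreducible. This isolates the essential point: a ``one new $\mathrm{GL}$-factor'' statement in which the representation of the classical group to which $\pi_1$ is glued is itself an induced representation.

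For that reduced statement the plan is to convert irreducibility of $\pi_1\rtimes\sigma$, for an arbitrary irreducible $\sigma$ of $G_m$, into a combinatorial \emph{linkage} criterion. Write $\pi_1=L(\mathfrak m_1)$ via the Langlands classification for $\mathrm{GL}$ and describe $\sigma$ through the classification of its discrete and tempered building blocks (M\oe glin--Waldspurger, Xu) together with the Langlands classification, recording the attached (extended) multisegment data; then, using the M\oe glin--Tadić structure formula for Jacquet modules, Aubert duality and the socle-irreducibility machinery, show that $\pi_1\rtimes\sigma$ is irreducible if and only if no segment of $\mathfrak m_1$ is linked to a segment occurring in the data of $\sigma$, nor to its dual. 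Applied with $\sigma=\tau$, whose data is the union of the data of $\pi_2,\dots,\pi_r$ and $\pi$, the no-linkage condition splits into ``no linkage of $\pi_1$ with $\pi_j$ or with $\pi_j^\vee$'', which is irreducibility of $\pi_1\times\pi_j$ and of $\pi_1\times\pi_j^\vee$, and ``no linkage of $\pi_1$ with $\pi$'', which is irreducibility of $\pi_1\rtimes\pi$. That closes the induction; the rigid base instances, $\pi_1$ essentially square-integrable or essentially Speh, are covered by the known criteria of Jantzen, Tadić and Matić and by the essentially-Speh results of the present paper.

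I expect the main obstacle to be establishing the linkage criterion for $G_n$ in full generality, and in particular its \emph{pairwise} nature. For $\mathrm{GL}_n$ the reducibility of a product of standard modules is governed by linkage of segments in a manifestly pairwise way; for $G_n$, however, the Jordan blocks of $\sigma$ interact with its partial cuspidal support near the reducibility point (integral or half-integral depending on the type), and a priori this could create reducibility of $\pi_1\rtimes\sigma$ not visible on any single pair consisting of $\pi_1$ and one block of $\sigma$. Excluding such genuinely ternary phenomena --- equivalently, showing that $\pi_1\rtimes\tau$ has irreducible socle equal to its cosocle and appearing with multiplicity one in the relevant Jacquet module --- is where a uniform argument valid for arbitrary multisegments is needed. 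For essentially Speh $\pi_i$ and Arthur-type $\pi$ the rectangular rigidity of the relevant multisegments and M\oe glin's explicit combinatorics of Arthur packets make this analysis tractable, which is precisely the case proved in this article.
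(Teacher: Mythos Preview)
The statement you are trying to prove is labelled a \emph{Conjecture} in the paper and is not proved there in full generality; the paper establishes only the special case where the $\pi_i$ are essentially Speh and $\pi$ is of Arthur type (Theorem~\ref{thm:IRR}). Your forward (``only if'') direction is correct and essentially coincides with the paper's argument in Section~\ref{outline}: working in the Grothendieck group one may freely permute the $\mathrm{GL}$-factors and replace any factor by its contragredient, and since parabolic induction is exact and nonzero on nonzero representations, reducibility of any pairwise induction forces reducibility of the full one. This direction needs no hypothesis on the $\pi_i$.

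Your converse, however, is not a proof but a sketch with a gap you yourself identify. The proposed ``linkage criterion'' --- that $\pi_1\rtimes\sigma$ is irreducible if and only if no segment of $\pi_1$ is linked to a segment in the data of $\sigma$ or its dual --- is not a theorem in the literature for arbitrary irreducible $\sigma$ of $G_n$, and your assertion that the data of $\tau=\pi_2\times\cdots\times\pi_r\rtimes\pi$ is simply ``the union of the data of $\pi_2,\dots,\pi_r$ and $\pi$'' is not justified: the Langlands data of an irreducible $\tau$ of this shape, especially its tempered part, need not decompose so cleanly. More fundamentally, even granting a pairwise linkage criterion for $\pi_1\rtimes\sigma$, the inductive step would require knowing that the criterion applied to $\sigma=\tau$ reduces to the separate criteria for $\sigma=\pi$ and for the $\mathrm{GL}$-products $\pi_1\times\pi_j$, $\pi_1\times\pi_j^\vee$; this is exactly the ``no genuinely ternary phenomena'' claim you flag as the obstacle, and it is open in general.

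For comparison, the paper's proof of the special case does not proceed via a linkage criterion at all. It splits the essentially Speh factors by type (unitary, small, big, non-half-integral), handles the unitary type via the explicit combinatorics of extended multi-segments parametrising Arthur packets (Chapter~\ref{Arthur}), and treats the remaining types by a delicate analysis of derivatives $D^{\max}_{(a,b,s)}$ tracking multiplicities through compositions of Jacquet functors (Sections~\ref{prep}--\ref{bigcase}). The socle-irreducibility and multiplicity-one statements you allude to are established there by these derivative computations, not by any general structural criterion, and they rely essentially on the rigid shape of essentially Speh representations and on Atobe's description of Arthur packets.
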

Here $\pi_j^\vee$ denotes the \textit{contragredient representation} of $\pi_j$. A similar conjectural criterion exists for the parabolically induced representation $\pi_1 \times \ldots \times \pi_r$ of $\mathrm{GL}_n(F)$ and it is known for the case where all but at most two of the representations $\pi_i$ are square irreducible (\cite[Corollary 4.3]{Cyclic}). We will prove Conjecture \ref{conjecture} when $\pi_i=u_i$ are \textit{essentially Speh} and when $\pi$ is of \textit{Arthur type} (defined in Section \ref{a-packs}).

Indeed, the next theorem is our main result:

\begin{restatable}{thm}{IRR}
\label{thm:IRR}
Let $u_1,\ldots,u_r$ be essentially Speh representations of $\mathrm{GL}_{d_1}(F),...,\mathrm{GL}_{d_r}(F)$ and let $\pi$ be an irreducible representation of $G_{n_0}$ of Arthur type. Then the parabolically induced representation $u_1\times\ldots\times u_r\rtimes\pi$ is irreducible if and only if 
\begin{gather} \label{irr} \tag{Irr}
    \begin{cases}
    u_i\times u_j  \\
    u_i\times u_j^\vee \\
    u_i\rtimes\pi 
    \end{cases}
    \text{are irreducible for}\ 1 \leq i,j \leq r,\ i\neq j. 
\end{gather}
\end{restatable}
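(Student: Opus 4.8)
\textbf{Necessity of \eqref{irr}.} I would prove this directly and uniformly from exactness of parabolic induction. Exactness makes the length of $\sigma_1\times\cdots\times\sigma_k\rtimes\pi$ additive over the composition factors of any one inducing datum, and the semisimplification is insensitive to the order of the $\sigma_i$. So, if $u_1\times\cdots\times u_r\rtimes\pi$ is irreducible, then reordering to place $u_i$ next to $\pi$ forces $u_i\rtimes\pi$ to be irreducible, and reordering to place $u_i,u_j$ first forces $u_i\times u_j$ to be irreducible (a reducible factor would push the length to $\ge2$). For the last family, recall that every irreducible smooth representation of $G_n$ is self-dual (the transpose-inverse automorphism being inner) and that $(u\rtimes\sigma)^\vee\cong u^\vee\rtimes\sigma^\vee$; hence $u_i\rtimes\pi\cong u_i^\vee\rtimes\pi$. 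Feeding this into $u_1\times\cdots\times u_r\rtimes\pi\cong\bigl(\prod_{k\ne i}u_k\bigr)\rtimes(u_i\rtimes\pi)$ (an honest isomorphism, since both sides are irreducible with the same composition factors) and reapplying the previous step to the result shows $u_k\times u_i^\vee$ is irreducible for every $k\ne i$.

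\textbf{Sufficiency: reduction and a key proposition.} Assume \eqref{irr}. Using Jantzen's decomposition along cuspidal lines, together with the standard fact that $u\rtimes\sigma$ is irreducible once the cuspidal support of $u$ misses those of $\sigma$ and $\sigma^\vee$, I reduce to the case where all $u_i$ and all blocks of the Arthur parameter of $\pi$ lie on a single line $\mathbb{Z}+\alpha$ over a fixed self-dual cuspidal $\rho$; contributions on non-resonant shifts are purely $\mathrm{GL}$-type and are covered by \cite[Corollary~4.3]{Cyclic}, which, since essentially Speh representations are square-irreducible, also shows the conditions ``$u_i\times u_j$ irreducible'' already make $\lambda:=u_1\times\cdots\times u_r$ irreducible. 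The core of the argument is then a blockwise statement that I would establish separately: let $\mathcal{C}$ be the class of irreducible representations of the groups $G_m$ occurring as constituents of inductions $v_1\times\cdots\times v_s\rtimes\pi_0$ with $v_i$ essentially Speh and $\pi_0$ of Arthur type, each equipped with the multiset of ``blocks'' accumulated from the $\pi_0$-parameter and from $v_i,v_i^\vee$; then, for $v$ essentially Speh and $\sigma\in\mathcal{C}$, the representation $v\rtimes\sigma$ is irreducible precisely when the block attached to $v$ is not linked, in the explicit prohibited manner, to any block of $\sigma$, and in that case $v\rtimes\sigma\in\mathcal{C}$ with the blocks of $v,v^\vee$ adjoined.

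\textbf{Sufficiency: deduction of the theorem.} Granting this, I finish by induction on $r$, keeping $\pi$ fixed. The case $r=1$ is the hypothesis. For $r>1$, the conditions \eqref{irr} hold for $u_2,\ldots,u_r$ and $\pi$, so by the inductive hypothesis $\pi':=u_2\times\cdots\times u_r\rtimes\pi$ is irreducible; unwinding it as iterated essentially-Speh inductions of the Arthur type representation $\pi$, the key proposition places $\pi'$ in $\mathcal{C}$ and identifies its blocks as those of $\pi$ together with the blocks attached to $u_i,u_i^\vee$ for $i\ge2$. It remains to see $u_1\rtimes\pi'$ is irreducible, and by the key proposition this can fail only if $u_1$ is badly linked to some block of $\pi'$: linking to a block of $\pi$ is excluded by irreducibility of $u_1\rtimes\pi$, while linking to a block attached to $u_i$ ($i\ge2$) translates, in the language of linked segments, into reducibility of $u_1\times u_i$ or of $u_1\times u_i^\vee$, which \eqref{irr} forbids. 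Hence $u_1\rtimes\pi'=u_1\times\cdots\times u_r\rtimes\pi$ is irreducible.

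\textbf{Main obstacle.} Everything hinges on the key proposition, whose difficult content is the irreducibility criterion for $v\rtimes\sigma$ when $\sigma$ is of Arthur type (or in $\mathcal{C}$). Establishing it requires computing the Jacquet modules of $\sigma$ with enough precision --- via M{\oe}glin's construction of the packet $\Pi_\psi$, Tadi\'c's $\mu^*$-formula, and the explicit Jacquet modules of essentially Speh representations --- to rule out that the non-tempered part of the parameter (the factors $S_b$ with $b>1$) manufactures reducibility invisible to the pairwise conditions, and to control how distinct blocks interact with each other, which is exactly where the conditions on $u_i\times u_j$ and $u_i\times u_j^\vee$ become indispensable. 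I expect this Jacquet-module bookkeeping, together with verifying that $\mathcal{C}$ is genuinely stable and that the blockwise description of its members is preserved under the relevant inductions, to be the main technical burden of the proof.
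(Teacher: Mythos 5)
Your necessity argument is in substance the paper's (reorder in the Grothendieck group, then compare composition lengths), but the justification for $u_i\rtimes\pi\cong u_i^\vee\rtimes\pi$ is off: it is \emph{not} true that every irreducible smooth representation of $G_n$ is self-dual (this fails for $\mathrm{Sp}_{2n}(F)$; the transpose-inverse is not inner there). The correct tool is the MVW involution: $(\tau\rtimes\pi)^\vee\cong(\tau^\vee\rtimes\pi^\vee)^{\mathrm{MVW}}\cong\tau^\vee\rtimes\pi$ when the induced representation is irreducible, which yields the needed isomorphism without asserting self-duality of $\pi$ or of $u_i\rtimes\pi$.

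The sufficiency direction, however, is not a proof: everything is delegated to a "key proposition" that is both unproven and, in the clean blockwise form you state, not of the right shape. Concretely: (a) the intermediate representation $\pi'=u_2\times\cdots\times u_r\rtimes\pi$ is of Arthur type only when all $u_i$ with $i\ge2$ are \emph{unitary} Speh representations, so none of the available criteria (Atobe's socle results, the extended multi-segment decomposition of Proposition \ref{decomp}) apply directly to $u_1\rtimes\pi'$; asserting that reducibility of $u_1\rtimes\pi'$ "can fail only if $u_1$ is badly linked to some block" is precisely the statement to be proved, so the induction is circular at its key step. (b) Even in the base case of a single unitary Speh, irreducibility of $u_\rho(c,d)\rtimes\pi(\mathcal{S})$ is not a segment-linkage condition: by Corollary \ref{dec} it is governed by the cardinality of a set $N$ of parameters $\nu$ cut out by inequalities involving the signs $\mu_i^\rho$ and the quantities $\Delta_{\mathcal{S}}$, data invisible to any "block" bookkeeping that records only the segments of $u_i$, $u_i^\vee$ and $\psi$. (c) For small-type factors the identification of the socle goes through matching of derivative multiplicities $d^{ij}$ (Propositions \ref{3.6} and \ref{char}), and the paper must treat all small-type factors \emph{simultaneously} (Lemma \ref{nested-soc}, Proposition \ref{subrep}) rather than one at a time, precisely because the one-at-a-time induction through a stable class breaks down; the hypotheses "$u_i\times u_j$ and $u_i\times u_j^\vee$ irreducible" enter not as a linkage veto but through the additivity of the multiplicities $m^{ij}$ (Lemmas \ref{glirred} and \ref{jmlema}) under the ordering $B_i+s_i\le B_{i+1}+s_{i+1}$. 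You have correctly located where the difficulty lies, but the proposal does not surmount it, and the "blockwise" formulation would need to be replaced by the derivative/extended-multi-segment machinery to become a proof.
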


In case that the $u_i$'s are actually Speh representations, one can say even more. Besides the irreducibility result in this special case (which is also the first step in the proof of the Theorem \ref{thm:IRR} and carried out in Chapter \ref{Arthur}), we can also determine the composition series of the representation $u_1\times\ldots\times u_r\rtimes\pi $ (see Corollary \ref{Nindex}). The key to understanding representations of Arthur type and Arthur packets is the combinatorial construction by Mœglin \cite{Mœglin}, which has been refined by Xu \cite{Xu1}, \cite{Xu2} and Atobe \cite{A1}. We rely heavily on Atobe's work and simplify some notation, to obtain the desired decomposition. 

In the general case, where $u_i$ are essentially Speh representations, we extensively use two articles, which are due to Tadić \cite{T1} and Atobe \cite{A2}.
Namely, there are explicit criteria to check the left-hand side of (\ref{irr}). 
Tadić proved in \cite[Theorem 1.1]{T1} a criterion for the reducibility of the induced representation from two essentially Speh representations. 
In \cite{A2}, Atobe described the socle, i.e. the  maximal semisimple subrepresentation, of a representation of $G_n$ of the form $u|\cdot|^s\rtimes\pi$, where $u$ is a Speh representation. We generalize these methods to several essentially Speh representations. Our main tool are the Jacquet modules, mostly in the form of derivatives (see Subsection \ref{derivatives}). 

Let us state a corollary of Theorem \ref{thm:IRR} in a form that emphasizes its role in the classification of the unitary dual of the groups $G_n$.
We recall that Tadić's classification states that every irreducible unitary representation of a general linear group is of the form $u_1\times\cdots\times u_r$, for some essentially Speh representations $u_1,\ldots,u_r$. For classical groups $G_n$, one gets the following unitarity result due to Deformation (\cite[Chapter 2]{LMT}):

\begin{cor}
    For $1\leq i \leq r$ let $\rho_i$ be self-dual supercuspidal representations. Let $u_i \coloneqq u_{\rho_i}(a_i,b_i)$ be Speh representations of $\mathrm{GL}_{n_1}(F),\ldots ,\mathrm{GL}_{n_r}(F)$ respectively and let $\pi$ be an irreducible representation of $G_{n_0}$ of Arthur type. Let $s_i:[0,1] \rightarrow \R$ with $s_i(0)=0$ be continuous for $1\leq i\leq r$. We define 
    \begin{align*}
        \pi_t \ceq u_1|\cdot|^{s_1(t)}\times\ldots\times u_r|\cdot|^{s_r(t)}\rtimes\pi
    \end{align*}
    as a representation of $G_{n_0+n_1+\ldots +n_r}$. If for $0\leq t <1$ it holds that
    \begin{gather}
    \begin{cases}
    u_i|\cdot|^{s_i(t)} \times u_j|\cdot|^{ s_j(t)}  \\
    u_i|\cdot|^{s_i(t)} \times u_j^\vee|\cdot|^{- s_j(t)}  \\
    u_i|\cdot|^{s_i(t)} \rtimes\pi 
    \end{cases}
    \text{are irreducible for} \ 1 \leq i,j \leq r,\ i\neq j, 
\end{gather}
    then $\pi_t$ is unitarizable for all $t\in [0,1)$ and so is every subquotient of $\pi_1$.
\end{cor}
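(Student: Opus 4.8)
The plan is to run a standard deformation (analytic continuation / continuity-of-Hermitian-form) argument, using Theorem~\ref{thm:IRR} to guarantee that irreducibility is preserved all along the path, and then to invoke the well-known principle that a path of irreducible representations that starts at a unitarizable point remains unitarizable as long as it stays irreducible. Concretely, fix continuous functions $s_i\colon[0,1]\to\R$ with $s_i(0)=0$, set $\pi_t \ceq u_1|\cdot|^{s_1(t)}\times\ldots\times u_r|\cdot|^{s_r(t)}\rtimes\pi$, and note first that each $u_i|\cdot|^{s_i(t)}$ is again an essentially Speh representation, so Theorem~\ref{thm:IRR} applies verbatim at every $t\in[0,1)$: the hypothesis says precisely that the three families $u_i|\cdot|^{s_i(t)}\times u_j|\cdot|^{s_j(t)}$, $u_i|\cdot|^{s_i(t)}\times (u_j|\cdot|^{s_j(t)})^\vee = u_i|\cdot|^{s_i(t)}\times u_j^\vee|\cdot|^{-s_j(t)}$, and $u_i|\cdot|^{s_i(t)}\rtimes\pi$ are irreducible for $i\neq j$, hence $\pi_t$ is irreducible for $0\le t<1$.

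Next I would address unitarizability. At $t=0$ we have $\pi_0 = u_1\times\ldots\times u_r\rtimes\pi$, which is unitarizable: each Speh representation $u_i$ is unitary, $\pi$ is unitary (being of Arthur type, by Arthur's construction recalled in the introduction), and parabolic induction from a unitary representation of a Levi subgroup is unitary. Now one applies the deformation argument of Lapid--Mu\'ic--Tadi\'c \cite[Chapter~2]{LMT} (itself a descendant of the Kazhdan--Speh/Bernstein continuity arguments): the representations $\pi_t$ carry a continuous family of invariant Hermitian forms, obtained from the standard intertwining operator normalized so that it is holomorphic and nonzero at $t=0$; since $\pi_t$ is irreducible for all $t\in[0,1)$, this Hermitian form is nondegenerate there, and by continuity of its signature it remains definite on the whole interval $[0,1)$, so $\pi_t$ is unitarizable for every $t\in[0,1)$. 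Finally, unitarizability is preserved under passing to the limit point and to subquotients: the family $\{\pi_t\}_{t\in[0,1)}$ lies in a bounded region of the (Fell topology on the) unitary dual, and the unitary dual of $G_n$ is closed, so every irreducible subquotient of $\pi_1$ is a limit of the unitarizable $\pi_t$ and hence unitarizable. This last point uses that $\pi_1$ has finite length and that each of its irreducible subquotients appears as such a limit, which is the content of the deformation principle as formulated in \cite{LMT}.

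The genuinely new input here is Theorem~\ref{thm:IRR}: it is what upgrades the pointwise hypotheses on pairs $u_i|\cdot|^{s_i(t)}\times u_j|\cdot|^{\pm s_j(t)}$ and on $u_i|\cdot|^{s_i(t)}\rtimes\pi$ to full irreducibility of the long induced representation $\pi_t$, which is exactly what the deformation machine needs as its hypothesis. Everything else is an application of \cite[Chapter~2]{LMT}, so the only step requiring care is checking that the normalization of the intertwining operator producing the Hermitian form is indeed holomorphic and nonvanishing at $t=0$ (so that the form is genuinely nondegenerate there and its signature can be tracked); but this is standard for induction from unitary data and is exactly the setting treated in \cite{LMT}. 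Thus the main obstacle is not in this corollary at all — it has been absorbed into the proof of Theorem~\ref{thm:IRR} — and the corollary itself is a formal consequence.
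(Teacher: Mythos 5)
Your proposal is correct and follows essentially the same route as the paper: Theorem \ref{thm:IRR} gives irreducibility of $\pi_t$ for $t\in[0,1)$, $\pi_0$ is unitary as an induction from unitary data, and the rest is the deformation principle of \cite[Chapter 2]{LMT}. The one point the paper verifies explicitly and that you pass over is the hermitian hypothesis of the LMT deformation lemma: before one can speak of a continuous family of invariant Hermitian forms on $\pi_t$, one must know $\pi_t^+\cong\pi_t$, and this is not automatic for nonzero $s_i(t)$; the paper deduces it from irreducibility, since $\pi_t^+\cong u_1^\vee|\cdot|^{-s_1(t)}\times\ldots\times u_r^\vee|\cdot|^{-s_r(t)}\rtimes\pi\cong\pi_t$, the last isomorphism using the irreducibility hypotheses (which allow one to commute the factors and replace each $u_i^\vee|\cdot|^{-s_i(t)}$ by $u_i|\cdot|^{s_i(t)}$ via the MVW involution). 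With that small verification added, your argument is complete and identical in substance to the paper's.
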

Note that in \cite[Chapter 2]{LMT} the representation $\pi_t$ is also required to be hermitian, but this follows from irreducibility in this case, because 
\begin{align*}
    \pi_t^+ \cong u_1^\vee|\cdot|^{-s_1(t)}\times\ldots\times u_r^\vee|\cdot|^{-s_r(t)}\rtimes\pi \cong \pi_t.
\end{align*}

For a long time now, the endoscopic classification of Arthur \cite{Arthur} relied on results that were expected to hold, but not yet proven. Recently, this gap has been closed in \cite{intertwine} and the results presented here are only dependent on the validity of the twisted weighted fundamental Lemma (\cite[Section II.4.4]{twisted}).
One would expect similar results to hold for other classical groups than the ones treated here ($\mathrm{SO}_{2n+1}(F)$ and $\mathrm{Sp}_{2n}(F)$). However, the necessary methods are so far only available in these cases, due to Atobe's work.

Let us describe the organization of this article into chapters:
In Chapter \ref{second} we define notations used throughout the article and state the main theorems in the surrounding theory of the problem.
In Chapter \ref{Arthur}, we introduce the notion of extended multi-segments and explain how parabolic induction of Speh representations and Arthur type representations behaves in terms of extended multi-segments.
The results behind the proof of Theorem \ref{thm:IRR} are developed in Chapter \ref{main}. 
It is organized by different types of essentially Speh representations appearing in the induction.
Finally in the Appendix, we provide some proofs omitted in Chapter \ref{Arthur} and give a formula for the Aubert dual of representations of Arthur type.

This work is supported (in part) by the Croatian Science Foundation under the project number HRZZ-IP-2022-10-4615.
The authors would also like to thank Alberto Mínguez for his guidance and support throughout this project. Moreover, they are grateful to Hiraku Atobe, Johannes Droschl and Alexander Hazeltine for their helpful correspondence.

\section{Notation and Preliminaries}
\label{second}

\subsection{Notations}
Let $F$ be a $p$-adic field and let $G$ be the $F$-points of a reductive group. Let $P$ denote a parabolic subgroup of $G$. We denote by $\mathrm{Ind}_P^{G}$ the normalized parabolic induction along $P$ and by $\mathrm{Jac}_P$ its left adjoint functor, the Jacquet functor. A representation of $G$ is called \textit{supercuspidal}, if it is not a composition factor of any representation of the form $\mathrm{Ind}_P^{G}(\tau)$, where $P$ is a proper parabolic subgroup of $G$ and $\tau$ is a representation of the corresponding Levi subgroup $M$. We let $\mathrm{Rep}(G)$, $\mathrm{Irr}(G)$, $\mathrm{Irr}_{unit}(G)$, $\mathrm{Cusp}^\perp(G)$ and $\mathrm{Cusp}_{unit}(G)$ denote the set of equivalence classes of smooth $\C$-representations of $G$, and its subsets of irreducible, unitary (irreducible), self-dual supercuspidal and unitary supercuspidal representations respectively. Furthermore, let $\mathscr{R}(G)$ denote the Grothendieck group of smooth representations of finite length of $G$.
Let $\mathrm{GL}_n(F)$ denote the general linear group of rank $n$ and let $G_n$ either denote the split special orthogonal group $\mathrm{SO}_{2n+1}(F)$ or the symplectic group $\mathrm{Sp}_{2n}(F)$ of rank $n$ over $F$. We define $\mathscr{R}(\mathrm{GL})=\oplus_{n\ge0}\mathscr{R}(\mathrm{GL}_n(F))$ and $\mathscr{R}(G)=\oplus_{n\ge0}\mathscr{R}(G_n)$. We let $\text{Irr}(GL) \ceq \bigcup_{n\ge0} \text{Irr}(\mathrm{GL}_n(F))$ and similarly $\text{Irr}(G) \ceq \bigcup_{n\ge0} \text{Irr}(G_n)$.
For $\pi\in \mathrm{Rep}(GL_n(F))$ or $\pi\in \mathrm{Rep}(G_n)$, we denote by $[\pi]$ its semisimplification, i.e. the direct sum of its composition factors.
If $\pi_1\in\mathrm{Irr}(G)$ is a subquotient of $\pi$, we write $\pi_1\le \pi$. 
By $\mathrm{soc}(\pi)$ we denote the socle of $\pi$, which is the maximal semisimple subrepresentation of $\pi$ and we denote by $\pi^\vee$ the contragradient (or dual) representation of $\pi$. By $\overline{\pi}$, we denote the complex conjugate representation of $\pi$, which acts on the vector space $(\overline{V_\pi},\overline{\cdot})$, where $\overline{V_\pi}=V_\pi$, $\alpha \overline{\cdot} v = \overline{\alpha} v$ for $\alpha \in \C$ and $v\in V_\pi$ and $\overline{\pi}(g)(v)=\pi(g)(v)$. Furthermore, we denote by $\pi^+ = \overline{\pi^\vee} \cong \overline{\pi}^\vee$ the hermitian contragredient of $\pi$. The irreducible representation $\pi$ is \textit{hermitian} (i.e. there exists a non-degenerate $G$-invariant hermitian form on $\pi$) if and only if $\pi^+\cong \pi$. The representation $\pi$ is \textit{unitary}, if it is hermitian and the hermitian form is definite.

\subsection{General Linear Groups}
\label{glchapter}

Let us first consider the general linear group $\mathrm{GL}_n(F)$. Let $|\cdot|$ be the norm on $F$. We use the same notation for the map
$$
|\cdot|: \mathrm{GL}_n(F) \rightarrow \R_{\geq 0},
$$
which is given by $|X|=|\det X|$ for $X\in \mathrm{GL}_n(F)$.
Fix the $F$-rational Borel subgroup consisting of upper-triangular matrices of $\mathrm{GL}_n(F)$.
Then the Levi subgroup $M$ of a standard parabolic subgroup $P$ of $\mathrm{GL}_n(F)$ has the form $M\cong\mathrm{GL}_{d_1}(F) \times \cdots \times \mathrm{GL}_{d_r}(F)$, for some integers $d_1,\ldots,d_r$ with $d_1 + \ldots +d_r =n$. For smooth representations $\pi_1, \ldots, \pi_r$ of the groups $\mathrm{GL}_{d_1}(F)$, $\ldots$, $\mathrm{GL}_{d_r}(F)$ respectively, we denote the parabolic induction of their tensor product by
\[
\pi_1 \times \cdots \times \pi_r \ceq \operatorname{Ind}_{P}^{\mathrm{GL}_n(F)} (\pi_1 \boxtimes \cdots \boxtimes \pi_r),
\]
where $\pi_i$ is a smooth representation of $\mathrm{GL}_{d_i}(F)$. We will also use the notation 
\begin{align*}
    \pi^k \ceq \pi \times \overset{k\text{ times}}{\cdots} \times \pi.
\end{align*}
We write $m :\mathscr{R}(\mathrm{GL})\otimes \mathscr{R}(\mathrm{GL})\to \mathscr{R}(\mathrm{GL})$ for the map that performs this multiplication, i.e. $m: \pi_1 \otimes \pi_2 \mapsto \pi_1 \times \pi_2$. 
Let $P_{(k_1,\ldots, k_j)}$ denote the maximal parabolic subgroup of $\mathrm{GL}_n(F)$ with Levi subgroup isomorphic to $\mathrm{GL}_{k_1}(F)\times \ldots \times  \mathrm{GL}_{k_j}(F)$ and denote by $\mathrm{Jac}_{(k_1,\ldots,k_j)}=\mathrm{Jac}_{P_{(k_1,\ldots,k_j)}}^{\mathrm{GL}_n(F)}$ the corresponding Jacquet functor. We define a map $m^*:\mathscr{R}(\mathrm{GL})\to \mathscr{R}(\mathrm{GL})\otimes \mathscr{R}(\mathrm{GL})$ by 
$$m^*(\pi)=\sum_{n\geq 0} \sum_{n_1+n_2=n}[\text{Jac}_{(n_1,n_2)}(\pi)]$$
for $\pi \in \mathscr{R}(GL_n(F))$.
The Geometric Lemma (see \cite{BernZel1}) implies that the map $ m^* $ is multiplicative, i.e. for $\pi_1,\pi_2\in \mathscr{R}(GL)$ we have $m^*(\pi_1\times\pi_2)=m^*(\pi_1)\times m^*(\pi_2)$. 
The multiplication on the right hand side is determined by $(\tau_1\otimes\tau_2)\times(\tau_1'\otimes\tau_2')=(\tau_1\times\tau_1')\otimes(\tau_2\times\tau_2')$.

A \textit{segment} $[x, y]_\rho$ (we sometimes drop the index $\rho$ if it is clear from the context) is a set of supercuspidal representations of the form
\[
[x, y]_\rho \ceq \{\rho | \cdot |^x, \rho | \cdot |^{x-1}, \ldots, \rho | \cdot |^y\},
\]
where $\rho \in \mathrm{Cusp}_\mathrm{unit}(\mathrm{GL}_n(F))$, $\rho | \cdot |^z$ denotes the twist of the representation $\rho$ by the character $|\cdot|^z$ and $x, y \in \mathbb{R}$ such that $x - y \in \mathbb{Z}$ and $x \geq y$. Note that for every supercuspidal representation $\rho$, there exists a $z\in \R$ such that $\chi_\rho|\cdot|^z$ is unitary, where $\chi_\rho$ is the central character of $\rho$. But this implies that $\rho|\cdot|^z$ itself is unitary. Hence we can restrict the definition of segments to unitary $\rho$. We call the set 
\begin{align*}
    \Z_{\rho}\ceq \{\rho|\cdot|^z\ |z\in \Z\}
\end{align*}
the \text{line} of $\rho$. For the segment $\Delta \ceq [x,y]_\rho$, we denote
\begin{align*}
    \overset{\leftarrow}{\Delta} &\ceq [x-1,y-1]_\rho \\
    \min(\Delta) &\ceq y \\
    \max(\Delta)&\ceq x.
\end{align*}
We define the \textit{Steinberg representation} $\Delta_\rho[x, y]$ as the unique irreducible subrepresentation of
\[
\rho | \cdot |^x \times \cdots \times \rho | \cdot |^y.
\]
This is an essentially discrete series representation of $\mathrm{GL}_{d(x-y+1)}(F)$. Moreover, we define $Z_\rho[y, x]$ as the unique irreducible quotient of the same induced representation. By convention, we set $\Delta_\rho[x, x+1]$ and $Z_\rho[x+1, x]$ to be the trivial representation of the trivial group $\mathrm{GL}_0(F)$.

The Langlands classification (\cite[Section 2]{A2}) for $\mathrm{GL}_n(F)$ says that every $\pi \in \mathrm{Irr}(\mathrm{GL}_n(F))$ is the unique irreducible subrepresentation of $\Delta_{\rho_1}[x_1, y_1] \times \cdots \times \Delta_{\rho_r}[x_r, y_r]$, for some $\rho_i \in \mathrm{Cusp}_{unit}(\mathrm{GL}_{d_i}(F))$ for $i = 1, \ldots, r$ and segments $[x_1,y_1]_{\rho_1},\ldots ,[x_r,y_r]_{\rho_r}$ such that $x_1 + y_1 \leq \cdots \leq x_r + y_r$ and $\sum_{i=1}^r d_i(x_i-y_i+1)=n$. In this case, we write
\[
\pi = L(\Delta_{\rho_1}[x_1, y_1], \ldots, \Delta_{\rho_r}[x_r, y_r]).
\]
We call the segments $[x_1, y_1]_{\rho_1}, \ldots, [x_r, y_r]_{\rho_r}$ the \textit{Langlands data} of $\pi$. 
We will also use the following notation: We call the formal sum $\mathfrak{m}=\Delta_1 + \ldots +\Delta_r$ of segments $\Delta_1,\ldots,\Delta_r$, where $\Delta_i=[x_i, y_i]_{\rho_i}$ as above, a \textit{multi-segment}. Then we also denote $\pi$ with $L(\mathfrak{m})$.
\begin{de}
    Let $\Delta=[x,y]_{\rho}$ and $\Delta'=[x',y']_{\rho'}$ be two segments. We say that $\Delta$ and $\Delta'$ are linked if $\Delta\cup \Delta'$ forms a segment, but neither $\Delta\subset \Delta'$ nor $\Delta'\subset \Delta$. If $\Delta$ and $\Delta'$ are linked and $\rho'|\cdot|^{y'}=\rho|\cdot|^{y+i}$ for $i>0$, then we say that $\Delta$ precedes $\Delta'$ and write $\Delta\prec \Delta'$.
\end{de}
Now fix some $\rho \in \mathrm{Cusp}_{unit}(\mathrm{GL}_n(F))$. For positive integers $a$ and $b$, we define $\uab$ to be the unique irreducible subrepresentation of 
$$ \Dr[\tfrac{a-1}{2},-\tfrac{a-1}{2}]\abs^{-\frac{b-1}{2}}\times\ldots\times\Dr[\tfrac{a-1}{2},-\tfrac{a-1}{2}]\abs^{\frac{b-1}{2}}. $$
In terms of the Langlands classification, 
\begin{align} \label{speh}
    \uab = L(\Dr[\tfrac{a-b}{2},-\tfrac{a+b}{2}+1],\ldots,\Dr[\tfrac{a+b}{2}-1,\tfrac{b-a}{2}]).
\end{align}
We call $\uab$ a \textit{Speh representation}. It is an irreducible unitary representation. We often automatically set $A \ceq \frac{a+b}{2} - 1$ and $B \ceq \frac{a-b}{2}$.
An \textit{essentially Speh representation} is a representation of the form
$$
u_\rho(a,b)|\cdot|^s,
$$
for a real number $s$.
We see that if $\pi=L(\Delta_{\rho}[x_1, y_1], \ldots, \Delta_{\rho}[x_n, y_n])$ is an essentially Speh representation, it is determined by the numbers $x_1,y_1,x_n$ and $y_n$.
Therefore, in the following definition, we introduce another notation for essentially Speh representations, used by Tadić in \cite{T1}:
\begin{de}
An essentially Speh representation $L(\Dr[x_1,y_1],\ldots,\Dr[x_n,y_n])$, where $y_n=y_1+n-1$ and $x_n=x_1+n-1$, is denoted 
$$u_{ess}\begin{pmatrix}
y_1 \hspace{3mm} & x_1 \hspace{3mm} \\
\hspace{3mm} y_n & \hspace{3mm} x_n
\end{pmatrix}^{(\rho)}.$$
\end{de}
Using this notation, we determine the Langlands parameters for the essentially Speh representation $\uab|\cdot|^s$ and two closely related representations $u_{\rho}(2s,b)|\cdot|^{\frac{a}{2}}$ and $u_{\rho}(a-2s,b)$, that will often appear in Chapter \ref{main}:
\begin{itemize}
    \item $\uab|\cdot|^s = u_{ess}\begin{pmatrix}
-A+s \hspace{3mm} &  B+s \hspace{3mm} \\
\hspace{3mm} -B+s & \hspace{3mm} A+s
\end{pmatrix}^{(\rho)}$
    \item $u_{\rho}(2s,b)|\cdot|^{\frac{a}{2}} = u_{ess}\begin{pmatrix}
B-s+1 \hspace{3mm} &  B+s \hspace{3mm} \\
\hspace{3mm} A-s+1 & \hspace{3mm} A+s
\end{pmatrix}^{(\rho)}$
    \item $u_{\rho}(a-2s,b) = u_{ess}\begin{pmatrix}
-A+s \hspace{3mm} &  B-s \hspace{3mm} \\
\hspace{3mm} -B+s & \hspace{3mm} A-s
\end{pmatrix}^{(\rho)}$
\end{itemize}
For segments $[x_i,y_i]_{\rho}$ for $i=1,\ldots,k$, we call $\pi=L(\Dr[x_1,y_1],\ldots,\Dr[x_k,y_k])$ a \textit{ladder representation}, if $x_1<\ldots<x_k$ and $y_1<\ldots<y_k$.
The class of ladder representations, which contains essentially Speh representations, was defined in \cite{LapidMinguez}. 
A description of the Jacquet modules of ladder representations is obtained in \cite{KretLapid} and we will vastly use it.
Recall the map $m^*:\mathscr{R}(\mathrm{GL})\to \mathscr{R}(\mathrm{GL})\otimes \mathscr{R}(\mathrm{GL})$ from above.
From \cite[Theorem 2.1]{KretLapid} we get
\begin{gather}
    \label{lad}
    m^*(\pi)= 
    \displaystyle \sum_{\Lad(\pi)} L(\Dr[x_1,c_1+1],\ldots,\Dr[x_k,c_k+1])\otimes  \\ \otimes L(\Dr[c_1,y_1],\ldots,\Dr[c_k,y_k]), \nonumber
\end{gather}
where $\Lad(\pi)$ denotes the set of all $k$-tuples $(c_1,\ldots,c_k)$ of real numbers for which the following conditions hold:
\begin{itemize}
    \item $c_1<\ldots<c_k$,
    \item $y_i-1 \le c_i \le x_i$ for $i=1,\ldots,k$,
    \item $c_i-x_i\in\Z$ for $i=1,\ldots,k$.
\end{itemize}
We define
$$ \begin{pmatrix}
    u_1 \hspace{3mm} & v_1 \hspace{3mm} \\
    \hspace{3mm} x_1 & \hspace{3mm} y_1
    \end{pmatrix} <_{\text{strong}} \begin{pmatrix}
    u_2 \hspace{3mm} & v_2 \hspace{3mm} \\
    \hspace{3mm} x_2 & \hspace{3mm} y_2
    \end{pmatrix} \Longleftrightarrow u_1<u_2,~ v_1<v_2,~ x_1<x_2,~ y_1<y_2. $$
We will vastly use the following theorem on the reducibility of a representation induced from two essentially Speh representations.
\begin{thm} (\cite[Theorem 1.1]{T1}) \label{essred} \\
Let $n, n_1$ and $n_2$ be positive integers such that $n=n_1+n_2$.
    Let $\pi_1$ and $\pi_2$ be essentially Speh representations of $GL_{n_1}(F)$ and $GL_{n_2}(F)$. Let $\rho_1,\rho_2\in\mathrm{Cusp}_{unit}(GL_{n}(F))$ and $u_i, v_i, x_i, y_i \in \mathbb{Z}$, such that 
    $$ \pi_i=u_{ess}\begin{pmatrix}
    u_i \hspace{3mm} & v_i \hspace{3mm} \\
    \hspace{3mm} x_i & \hspace{3mm} y_i
    \end{pmatrix}^{(\rho_i)} $$
    for $i=1,2$. Then the parabolically induced representation $\pi_1 \times \pi_2$ reduces if and only if 
    \begin{itemize}
        \item[(1)] $[u_1,y_1]_{\rho_1}\cup [u_2,y_2]_{\rho_2}$ is a segment,
        \item[(2)] $$ \begin{pmatrix}
    u_1 \hspace{3mm} & v_1 \hspace{3mm} \\
    \hspace{3mm} x_1 & \hspace{3mm} y_1
    \end{pmatrix} <_{\text{strong}} \begin{pmatrix}
    u_2 \hspace{3mm} & v_2 \hspace{3mm} \\
    \hspace{3mm} x_2 & \hspace{3mm} y_2
    \end{pmatrix}$$
    or 
    $$ \begin{pmatrix}
    u_1 \hspace{3mm} & v_1 \hspace{3mm} \\
    \hspace{3mm} x_1 & \hspace{3mm} y_1
    \end{pmatrix} >_{\text{strong}} \begin{pmatrix}
    u_2 \hspace{3mm} & v_2 \hspace{3mm} \\
    \hspace{3mm} x_2 & \hspace{3mm} y_2
    \end{pmatrix}.$$
    \end{itemize}
Moreover, if $\pi_1 \times \pi_2$ is irreducible, then  $\pi_1 \times \pi_2 \cong \pi_2 \times \pi_1$. 
\end{thm}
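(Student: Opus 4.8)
The plan is to reduce the statement to a combinatorial question about multi-segments on a single line, which is then settled using the explicit Jacquet-module formula (\ref{lad}) for ladder representations, of which essentially Speh representations are a special case. First observe that condition (1) can hold only if $\rho_1$ and $\rho_2$ are twist-equivalent, since a union of subsets of two distinct lines is never a segment. If $\rho_1\not\cong\rho_2|\cdot|^k$ for all $k\in\Z$, then no segment occurring in the Langlands data of $\pi_1$ is linked to one occurring in that of $\pi_2$, and comparing $m^*(\pi_1\times\pi_2)=m^*(\pi_1)\times m^*(\pi_2)$ with $m^*(\pi_1)$ and $m^*(\pi_2)$ separately (the Geometric Lemma, \cite{BernZel1}) shows that $\pi_1\times\pi_2$ is irreducible; as (1) fails as well, the asserted equivalence is vacuous. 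We may therefore assume $\rho:=\rho_1$ and, after applying a common twist (which changes neither the reducibility of $\pi_1\times\pi_2$ nor the truth of (1) and (2)), that $\pi_1,\pi_2$ are supported on $\Z_\rho$ with the integral boxes of the statement; write $\pi_i=L(\mathfrak m_i)$ for the associated ladder multi-segments.

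Since essentially Speh representations are ladder representations, they are square-irreducible; by the theory of ladder representations (\cite{LapidMinguez}, using (\ref{lad})) the products $\pi_1\times\pi_2$ and $\pi_2\times\pi_1$ then have irreducible socles, and $\pi_1\times\pi_2$ is irreducible if and only if $\pi_1\times\pi_2\cong\pi_2\times\pi_1$ — which in particular gives the final assertion of the theorem. The problem thus becomes: decide from the boxes when $\mathrm{soc}(\pi_1\times\pi_2)\cong\mathrm{soc}(\pi_2\times\pi_1)$. Moreover, by symmetry under contragredient — $(\pi_1\times\pi_2)^\vee$ being again a product of the essentially Speh representations $\pi_i^\vee$, whose boxes are obtained from those of the $\pi_i$ by negation, an operation interchanging the two strong inequalities — it suffices in the reducibility direction to treat the case $<_{\text{strong}}$.

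For the remaining argument I split into three cases. If (1) fails, the exponent intervals $[u_1,y_1]$ and $[u_2,y_2]$ are separated by a gap, so every segment of $\mathfrak m_1$ is unlinked from every segment of $\mathfrak m_2$ and $\pi_1\times\pi_2$ is irreducible by the classical unlinked criterion. If (1) holds but (2) fails — so the two boxes are nested, or overlap without being strictly staggered — I would compute $\mathrm{soc}(\pi_1\times\pi_2)$ and $\mathrm{soc}(\pi_2\times\pi_1)$ from (\ref{lad}) by peeling the cuspidal support off column by column, and find that both socles have Langlands data $\mathfrak m_1+\mathfrak m_2$ (rearranged into standard order), so that $\pi_1\times\pi_2\cong L(\mathfrak m_1+\mathfrak m_2)\cong\pi_2\times\pi_1$ is irreducible. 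Finally, if (1) and (2) both hold (with $<_{\text{strong}}$), a suitable pair of columns, one from each box, yields segments that link genuinely across the two boxes; applying the Zelevinsky-type elementary operation to this pair produces a multi-segment $\mathfrak m'\neq\mathfrak m_1+\mathfrak m_2$, and a computation with (\ref{lad}) at the corresponding maximal parabolic exhibits $L(\mathfrak m')$ as a constituent of $\pi_1\times\pi_2$ distinct from $L(\mathfrak m_1+\mathfrak m_2)$ — equivalently $\mathrm{soc}(\pi_1\times\pi_2)\not\cong\mathrm{soc}(\pi_2\times\pi_1)$ — whence $\pi_1\times\pi_2$ is reducible. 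After discarding the inert columns, this last point reduces to the elementary fact that $\Delta_{\rho}[a,b]\times\Delta_{\rho}[c,d]$ is reducible precisely when the two segments are linked.

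The main obstacle is the middle case together with the converse: identifying the socle of a product of two ladder representations as a Langlands constituent of the merged multi-segment; checking that the nesting/overlap pattern forced by the failure of (2) rules out a linked obstruction; and pinning down precisely which merged segment witnesses the reducibility when (1) and (2) hold. Each of these is a finite check governed by (\ref{lad}) and the arithmetic of the boxes, but the boundary subcases — boxes that are merely adjacent, or that share an edge — need to be handled carefully.
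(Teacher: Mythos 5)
This statement is not proved in the paper at all: it is quoted verbatim from Tadi\'c (\cite[Theorem 1.1]{T1}) and used as a black box, so there is no internal proof to compare against. Judged on its own terms, your outline has a sound frame — the reduction to a single cuspidal line, the observation that essentially Speh representations are square-irreducible so that $\pi_1\times\pi_2$ and $\pi_2\times\pi_1$ are socle-irreducible with multiplicity one and irreducibility is equivalent to $\mathrm{soc}(\pi_1\times\pi_2)\cong\mathrm{soc}(\pi_2\times\pi_1)$, and the disposal of the case where (1) fails via the unlinked criterion. But the two steps that carry the entire content of the theorem are only asserted, not proved.

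First, in the case where (1) holds and (2) fails, "I would compute the socles column by column and find both equal $L(\mathfrak m_1+\mathfrak m_2)$" is precisely the theorem; nothing in the sketch indicates how the failure of the strong inequality forces the peeling procedure to produce the merged Langlands data on both sides. Second, and more seriously, your reducibility argument rests on the claim that a linked pair of segments across the two boxes, "after discarding the inert columns," reduces matters to the reducibility of $\Delta_\rho[a,b]\times\Delta_\rho[c,d]$ for linked segments. That reduction is exactly what fails in general: two Speh representations $u_\rho(a_1,b_1)\times u_\rho(a_2,b_2)$ never satisfy (2) (the strong inequalities are incompatible for centered boxes) and hence always induce irreducibly by this very theorem, yet their constituent segments are typically linked whenever (1) holds. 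So the existence of linked pairs does not witness reducibility, and exhibiting $L(\mathfrak m')$ as a genuine second constituent of $\pi_1\times\pi_2$ requires an actual argument — a Jacquet-module multiplicity count using (\ref{lad}), an intertwining-operator degeneration, or an appeal to the Lapid--M\'inguez combinatorial criterion for products of ladder representations, after which the proof becomes a (still nontrivial) translation of their matching conditions into the box inequalities. As it stands, the proposal is a plausible strategy with the combinatorial core left unexecuted in both nontrivial directions.
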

Note that, in particular, this implies that the parabolic induction of two Speh representations is always irreducible. Also, it implies that $u_{\rho_1}(a_1,b_1)|\cdot|^{s_1} \times u_{\rho_2}(a_2,b_2)|\cdot|^{s_2}$ is irreducible if $\rho_1 \not \cong \rho_2$. 
\subsection{Classical Groups}
Next, we recall some notations for representations of the classical groups $G_n$. Fix a $F$-rational Borel subgroup consisting of upper-triangular matrices of $G_n$.
Then the Levi subgroup $M$ of a standard parabolic subgroup $P$ of $G_n$ has the form  $M \cong \mathrm{GL}_{d_1}(F) \times \cdots \times \mathrm{GL}_{d_r}(F) \times G_{n_0}$ such that $n=n_0+(d_1+\ldots+d_r)$. For representations $\pi_1, \ldots, \pi_r$ and $\pi_0$ of the groups $\mathrm{GL}_{d_1}(F), \ldots, \mathrm{GL}_{d_r}(F)$ and $G_{n_0}$ respectively, we denote by
\[
\pi_1 \times \cdots \times \pi_r \rtimes \pi_0 \ceq \operatorname{Ind}_{P}^{G_n} (\pi_1 \boxtimes \cdots \boxtimes \pi_r \boxtimes \pi_0)
\]
the normalized parabolically induced representation.
The Langlands classification (\cite[Section 2.2]{A2}) for $G_n$ says that every $\pi \in \mathrm{Irr}(G_n)$ is the unique irreducible subrepresentation of some parabolically induced representation $\Delta_{\rho_1}[x_1, y_1] \times \cdots \times \Delta_{\rho_r}[x_r, y_r] \rtimes \pi_0$, where
\begin{itemize}
    \item $\rho_i \in \mathrm{Cusp}_{unit}(\mathrm{GL}_{d_i}(F))$ for $i = 1, \ldots, r$;
    \item $x_1 + y_1 \leq \cdots \leq x_r + y_r < 0$;
    \item $\pi_0$ is an irreducible tempered representation of $G_{n_0}$; 
    \item $n=n_0+(d_1+\ldots+d_r)$.
\end{itemize}
In this case, we write
\[
\pi = L(\Delta_{\rho_1}[x_1, y_1], \ldots, \Delta_{\rho_r}[x_r, y_r]; \pi_0),
\]
and call $(\Delta_{\rho_1}[x_1, y_1], \ldots, \Delta_{\rho_r}[x_r, y_r]; \pi_0)$ the Langlands data for $\pi$. 

The Jacquet functor with respect to a parabolic subgroup $P$ is denoted $\mathrm{Jac}_P$.
Let $P_{d}$ denote the maximal parabolic subgroup of $G_n(F)$ with Levi subgroup isomorphic to $\mathrm{GL}_d(F)\times G_{n-d}(F)$. We consider a map $\mu^*:\mathscr{R}(G)\to \mathscr{R}(\mathrm{GL})\otimes \mathscr{R}(G)$ defined by 
\begin{gather} \label{mu1}
    \mu^*(\pi)=\sum_{d=0}^n[\text{Jac}_{P_{d}}(\pi)],
\end{gather}
for $\pi \in \mathscr{R}(G_n)$.
In \cite{Tad5}, M. Tadi\'{c} defined a map $M^*:\mathscr{R}(\mathrm{GL})\to \mathscr{R}(\mathrm{GL})\otimes \mathscr{R}(\mathrm{GL})$ by 
\begin{gather} \label{m1}
    M^*=(m\otimes 1)\circ (\cdot ^\vee \otimes m^*)\circ s \circ m^*
\end{gather}
where $s: \sum x_i\otimes y_i \mapsto \sum y_i\otimes x_i$.
Note that $M^*$ is a multiplicative map.
It is the map used to obtain a description of $\mu^*$ applied to a parabolically induced representation (this is known as the \textit{Geometric Lemma} or more specifically \textit{Tadi\'{c}'s Formula}):
\begin{thm}  (\cite[Theorem 5.4]{Tad5})\label{TadicStruct} \\
For $\pi\in \mathscr{R}(\mathrm{GL})$ and $\sigma\in \mathscr{R}(G)$
$$\mu^*(\pi\rtimes\sigma)=M^*(\pi)\rtimes\mu^*(\sigma), $$
where the product on the right hand side is determined by $(\pi_1\otimes\pi_2)\rtimes(\pi'\otimes\sigma')=(\pi_1\times\pi')\otimes(\pi_2\rtimes\sigma')$.
\end{thm}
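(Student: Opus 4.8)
The plan is to derive this from the Bernstein--Zelevinsky (Casselman) geometric lemma for the composite functor $\mathrm{Jac}_Q\circ\mathrm{Ind}_P$, applied to $Q=P_d$ for each $d$, and then to reorganise the resulting sum over Weyl group double cosets into precisely the shape $M^*(\pi)\rtimes\mu^*(\sigma)$. First I would reduce to the case $\pi\in\mathrm{Irr}(\mathrm{GL}_m(F))$ and $\sigma\in\mathrm{Irr}(G_{n_0})$ with $n=m+n_0$: both sides of the claimed identity are additive in $\pi\in\mathscr{R}(\mathrm{GL})$ and in $\sigma\in\mathscr{R}(G)$, and the Grothendieck groups are generated by irreducibles. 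In this case $\pi\rtimes\sigma=\mathrm{Ind}_{P_M}^{G_n}(\pi\boxtimes\sigma)$ for the standard parabolic $P_M$ with Levi $M\cong\mathrm{GL}_m(F)\times G_{n_0}$, and since $\mu^*(\pi\rtimes\sigma)=\sum_{d=0}^n[\mathrm{Jac}_{P_d}(\pi\rtimes\sigma)]$ by definition, it suffices to identify $[\mathrm{Jac}_{P_d}(\pi\rtimes\sigma)]\in\mathscr{R}(\mathrm{GL}_d)\otimes\mathscr{R}(G_{n-d})$ for each $d$ and sum.

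Next I would feed $\mathrm{Jac}_{P_d}\circ\mathrm{Ind}_{P_M}^{G_n}$ into the geometric lemma: it admits a filtration whose graded pieces are indexed by minimal-length representatives $w$ of the double cosets $W_{L_d}\backslash W_{G_n}/W_M$, where $L_d\cong\mathrm{GL}_d(F)\times G_{n-d}$ is the Levi of $P_d$ and $W_{G_n}$ is the group of signed permutations on $n$ letters; the $w$-piece is a normalized induction to $L_d$ of $w$ transported to an appropriate Jacquet module of $\pi\boxtimes\sigma$ on $M$. Passing to $\mathscr{R}(G)$ turns this filtration into a sum. The combinatorial heart is that a set of representatives $w$ is parametrized by a splitting $m=\alpha+\beta+\gamma$ of the $\mathrm{GL}_m$-block of $M$ into three consecutive sub-blocks, together with an integer $0\le k\le n_0$: the corresponding $w$ fixes the first block (size $\alpha$), sends the second block (size $\beta$) to the ``negative'' coordinates --- this reflection is exactly what produces a contragredient --- leaves the third block (size $\gamma$) inside the $G$-factor, and pulls a $\mathrm{GL}_k$-chunk out of $G_{n_0}$ into the $\mathrm{GL}$-factor, all subject to $d=\alpha+\beta+k$ (equivalently $n-d=\gamma+n_0-k$).

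Finally I would compute the $w$-contribution for fixed data. Using transitivity of Jacquet functors on $M$ and the (already recorded) multiplicativity of $m^*$, it equals $(\tau_2^\vee\times\tau_1'\times\sigma_{(1)})\boxtimes(\tau_1''\rtimes\sigma_{(2)})$, where $m^*(\pi)$ contributes $\tau_1\otimes\tau_2$ (on $\mathrm{GL}_{\alpha+\gamma}\times\mathrm{GL}_\beta$), then $m^*$ applied to $\tau_1$ contributes $\tau_1'\otimes\tau_1''$ (on $\mathrm{GL}_\alpha\times\mathrm{GL}_\gamma$), and $\mathrm{Jac}_{P_k}(\sigma)$ contributes $\sigma_{(1)}\otimes\sigma_{(2)}$ (on $\mathrm{GL}_k\times G_{n_0-k}$); the contragredient attaches to $\tau_2$ precisely because the reflection in $w$ acts on the middle ($\beta$) block, and because the inductions and restrictions are normalized all the half-sum-of-roots twists cancel, so no extra character survives. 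Summing over all splittings and over all $d$ then reproduces $(m\otimes1)\circ(\cdot^\vee\otimes m^*)\circ s\circ m^*(\pi)$ in the first tensor slot against $\sum_k[\mathrm{Jac}_{P_k}(\sigma)]=\mu^*(\sigma)$ in the second, once one invokes the product rule $(\pi_1\otimes\pi_2)\rtimes(\pi'\otimes\sigma')=(\pi_1\times\pi')\otimes(\pi_2\rtimes\sigma')$ and the commutativity of $\times$ in $\mathscr{R}(\mathrm{GL})$; this is exactly $M^*(\pi)\rtimes\mu^*(\sigma)$. The main obstacle is the bookkeeping in the middle two steps: pinning down the double-coset representatives as this triple splitting together with the cut $k$ of $\sigma$, and checking both that the contragredient lands on exactly the $\beta$-block and that the modulus characters from the unnormalized geometric lemma cancel under normalized induction --- it is precisely this verification that forces $M^*$ to have the stated form $(m\otimes1)\circ(\cdot^\vee\otimes m^*)\circ s\circ m^*$.
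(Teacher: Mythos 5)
The paper does not prove this statement; it is quoted verbatim from Tadi\'{c} \cite{Tad5}, so there is no internal proof to compare against. Your proposal is, in outline, exactly the published argument: the Bernstein--Zelevinsky/Casselman geometric lemma for $\mathrm{Jac}_{P_d}\circ\mathrm{Ind}_{P_M}^{G_n}$, a parametrization of the double cosets $W_{L_d}\backslash W_{G_n}/W_M$ in the hyperoctahedral Weyl group by a three-fold splitting of the $\mathrm{GL}_m$-block together with a cut of the $G_{n_0}$-factor, the observation that the sign-changing part of $w$ turns a block of $\pi$ into its contragredient, and the cancellation of the modulus characters under normalized induction. The contribution you write down at the end, $(\tau_2^\vee\times\tau_1'\times\sigma_{(1)})\boxtimes(\tau_1''\rtimes\sigma_{(2)})$, is the correct one and reassembles into $M^*(\pi)\rtimes\mu^*(\sigma)$.

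One bookkeeping slip is worth fixing, since you yourself identify this as the crux. Your verbal description of the representative $w$ (first block fixed, \emph{second} block of size $\beta$ sent to negative coordinates, \emph{third} block of size $\gamma$ left in the $G$-factor) is inconsistent with your computation, where $m^*(\pi)=\sum\tau_1\otimes\tau_2$ cuts off the \emph{last} $\beta$ coordinates as the dualized piece $\tau_2$ and the \emph{middle} $\gamma$ coordinates as the piece $\tau_1''$ that stays in the $G$-factor. The computation is the right version: for a minimal-length double coset representative, a negative image cannot be followed by a positive one (the root $e_{w(i)}-e_{w(i+1)}$ would then be negative), so the negated block must come last, and among the positive images those landing in $\{1,\dots,d\}$ precede those landing in $\{d+1,\dots,n\}$. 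The block order is therefore $(\alpha,\gamma,\beta)$, and it is precisely this order that forces the swap $s$ and the placement of $\cdot^\vee$ on the second tensor factor in $M^*=(m\otimes 1)\circ(\cdot^\vee\otimes m^*)\circ s\circ m^*$. With that corrected, your sketch is a faithful reconstruction of Tadi\'{c}'s proof.
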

For an induced representation $\pi\rtimes\sigma$, where $\pi$ is a ladder representation with Langlands data $(\Delta_{\rho}[x_1, y_1], \ldots, \Delta_{\rho}[x_k, y_k])$, we combine the result of Theorem \ref{TadicStruct} with the description of the image of the map $m^*$ applied to a ladder representation in (\ref{lad}).
We define $\Lad(\pi)'$ as the set of all pairs $\big((c_1,\ldots,c_k),(d_1,\ldots,d_k)\big)$ in $\Lad(\pi)\times\Lad(\pi)$ such that $c_i\le d_i$ for $i=1,2,\ldots,k$. Then we have
\begin{gather}
    \label{struct}
    \mu^*(\pi\rtimes\sigma)= \sum\limits_{\Lad(\pi)'} L(\Delta_{\rho^\vee}[-y_k,-c_k],\ldots,\Delta_{\rho^\vee}[-y_1,-c_1])\times \\
     L(\Dr[x_1,d_1+1],\ldots,\Dr[x_k,d_k+1]) \otimes 
    L(\Dr[d_1,c_1+1],\ldots,\Dr[d_k,c_k+1]) \rtimes \mu^*(\sigma) \nonumber
\end{gather}
up to semisimplification.

In \cite{MVW}, the covariant functor $\mathrm{MVW}:\mathrm{Rep}(G_n) \rightarrow \mathrm{Rep}(G_n)$ is defined. It has the following properties:
\begin{itemize}
    \item If $\pi \in \mathrm{Irr}(G_n)$, then $\pi^{\mathrm{MVW}}=\pi^\vee$,
    \item If $\pi \in \mathrm{Rep}(G_{n_0})$ and $\tau \in \mathrm{Rep}(\mathrm{GL}_d(F))$, then $(\tau \rtimes \pi)^{\mathrm{MVW}} \cong \tau \rtimes \pi^{\mathrm{MVW}}$.
\end{itemize}
We use it to show for example the following: If $\tau \rtimes \pi$ is irreducible for $\tau \in \mathrm{Irr}(GL_d(F))$ and $\pi\in \mathrm{Irr}(G_{n_0})$, then
\begin{align*}
    \tau \rtimes \pi & \cong (\tau^\vee \rtimes \pi^\vee)^\vee \cong (\tau^\vee \rtimes \pi^\vee)^{\mathrm{MVW}} \\
    &\cong \tau^\vee \rtimes \pi.
\end{align*}
\subsection{Derivatives} \label{derivatives}

In this section, we recall the theory of derivatives, established in \cite{AM} and in \cite[Section 3.1]{A2}. It is a computational tool, that allows one to reduce questions about representations of $G$ to representations of a certain subgroup, by utilizing the Geometric Lemma (Theorem \ref{TadicStruct}). First, we recall the notion of derivatives for representations of $\mathrm{GL}_n(F)$ (\cite[Definition 2.3]{A4} and \cite[Section A.1]{A2}):
\begin{de}
    Let $\pi \in \mathscr{R}(\mathrm{GL})$ and let $\sigma$ be either supercuspidal or equal to $Z_\rho[0,1]=L(\{\rho\},\{\rho|\cdot|^1\})$ for some supercuspidal $\rho$. For a non-negative integer $k$, we define the left and right derivatives $L^{(k)}_{\sigma}$ and $R^{(k)}_{\sigma}$ of $\pi$ to be the semisimple representations satisfying 
\begin{gather} \label{gl-deriv}
   m^*(\pi)=\sigma^k\otimes L^{(k)}_{\sigma}(\pi) + \displaystyle\sum_{i} \pi_i\otimes\tau_i,
\end{gather}
where $\pi_i\otimes\tau_i$ are irreducible representations such that $\pi_i\not\cong\sigma^k$ and
\begin{gather} \label{gl-deriv-right}
   m^*(\pi)= R^{(k)}_{\sigma}(\pi) \otimes \sigma^k + \displaystyle\sum_{i} \pi_i\otimes\tau_i,
\end{gather}
where $\pi_i\otimes\tau_i$ are irreducible representations such that $\tau_i\not\cong\sigma^k$
. For $k=1$, we define $L_{\sigma}=L_{\sigma}^{(1)}$ and $R_{\sigma}=R_{\sigma}^{(1)}$. The representation $\pi$ is called left (resp. right) $\sigma$-\textit{reduced} if $L_{\sigma}(\pi)=0$ (resp. $R_{\sigma}(\pi)=0$). When $L_\sigma^{(k+1)}(\pi)=0$ but $L_\sigma^{(k)}(\pi)\neq 0$, we call $L_\sigma^{(k)}(\pi)\neq 0$ the highest derivative and denote it by $L_\sigma^{max}(\pi)$ (analogously for the right derivative).
\end{de}
\begin{rmk}
     If $\sigma$ is supercuspidal, the highest (left or right) derivative $L_{\sigma}^{(k)}(\pi)$ of an irreducible representation $\pi$ is irreducible (\cite[Lemma 2.1.2]{Jantz}). In this case, $\pi$ is the unique irreducible subrepresentation of $\sigma^k \rtimes L_{\sigma}^{(k)}(\pi)$. If $\pi$ is irreducible and left $\rho|\cdot|^1$-reduced, then $L_{Z_\rho[0,1]}^{max}(\pi)=L_{Z_\rho[0,1]}^{(k)}(\pi)$ is irreducible and again $\pi$ is the unique irreducible subrepresentation of $Z_\rho[0,1]^k \rtimes L_{Z_\rho[0,1]}^{(k)}(\pi)$. The highest left and right derivatives are explicitly computable in terms of Langlands data (\cite[Theorem 2.4]{A4}).
\end{rmk}

\begin{de} \label{def1}
Let $\pi\in \mathscr{R}(G)$ and let $\sigma\in \mathrm{Irr}(GL)$ be either a supercuspidal representation or $\sigma =Z_\rho[0,1]$ for a supercuspidal $\rho$. For a non-negative integer $k$, we define the derivative $D^{(k)}_{\sigma}$ of $\pi$ to be the semisimple representation satisfying 
\begin{gather} \label{eq01}
   \mu^*(\pi)=\sigma^k\otimes D^{(k)}_{\sigma}(\pi) + \displaystyle\sum_{i} \pi_i\otimes\pi_i',
\end{gather}
where $\mu^*$ is the map defined in (\ref{mu1}) and $\pi_i\otimes\pi_i'$ are irreducible representations such that $\pi_i\not\cong\sigma^k$. For $k=1$, we define $D_{\sigma}=D_{\sigma}^{(1)}$.
The representation $\pi$ is called $\sigma$-\textit{reduced} if $D_{\sigma}(\pi)=0$. 
\end{de}
As we see in Theorem \ref{TadicStruct}, the map $\mu^*$ is closely related to the map $M^*$. 
This motivates the following definition, analogous to Definition \ref{def1}.
\begin{de}
Let $\pi\in \mathscr{R}(GL)$ and let $\sigma\in \mathrm{Irr}(GL)$ be either a supercuspidal representation or $\sigma =Z_\rho[0,1] $ for a supercuspidal $\rho$. For a non-negative integer $k$, we define the derivative $M^{(k)}_{\sigma}$ of $\pi$ to be the semisimple representation satisfying 
\begin{gather*}
   M^*(\pi)=\sigma^k\otimes M^{(k)}_{\sigma}(\pi) + \displaystyle\sum_{i} \pi_i\otimes\pi_i',
\end{gather*}
where $M^*$ is the map defined in (\ref{m1}) and $\pi_i\otimes\pi_i'$ are irreducible representations such that $\pi_i\not\cong\sigma^k$. For $k=1$, we define $M_{\sigma}=M_{\sigma}^{(1)}$.
\end{de}
    Note that we do \textbf{not} define the notion of being $\sigma$-reduced for $M_{\sigma}$, because this already has a meaning in terms of left and right derivatives. However, the following holds:
\begin{lemma} \label{reduced}
    Let $\pi\in \mathscr{R}(GL_n)$ and let $\rho \in \mathrm{Irr}(GL)$ be supercuspidal. If $M_\rho(\pi)=0$, then for every constituent $\tau \otimes \sigma \leq  M^*(\pi)$, the representation $\tau$ is left $\rho$-reduced.
\end{lemma}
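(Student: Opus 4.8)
The plan is to unwind the definition of $M^*$ and trace where a copy of $\rho$ in the first tensor factor could come from. Recall from \eqref{m1} that $M^*(\pi) = (m\otimes 1)\circ(\cdot^\vee\otimes m^*)\circ s\circ m^*(\pi)$. Writing $m^*(\pi) = \sum_j \pi_j^{(1)}\otimes\pi_j^{(2)}$, after applying $s$ we get $\sum_j \pi_j^{(2)}\otimes\pi_j^{(1)}$, then $(\cdot^\vee\otimes m^*)$ produces $\sum_j (\pi_j^{(2)})^\vee \otimes m^*(\pi_j^{(1)})$, and finally $(m\otimes 1)$ multiplies the first factor of $m^*(\pi_j^{(1)})$ into $(\pi_j^{(2)})^\vee$. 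So a typical constituent $\tau\otimes\sigma \leq M^*(\pi)$ has $\tau \leq (\pi_j^{(2)})^\vee \times \alpha$ and $\sigma \leq \beta$, where $\alpha\otimes\beta \leq m^*(\pi_j^{(1)})$ for some $j$.

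First I would show the contrapositive: suppose some constituent $\tau\otimes\sigma\leq M^*(\pi)$ has $L_\rho(\tau)\neq 0$, i.e. $\rho\otimes\tau' \leq m^*(\tau)$ for some $\tau'$; the goal is to conclude $M_\rho(\pi)\neq 0$. Since $m^*$ is multiplicative, from $\tau\leq (\pi_j^{(2)})^\vee\times\alpha$ we get $m^*(\tau) \leq m^*((\pi_j^{(2)})^\vee)\times m^*(\alpha)$, so the leading $\rho$ must come either from $m^*((\pi_j^{(2)})^\vee)$ or from $m^*(\alpha)$; in either case $\rho$ appears as the first factor of a constituent of $m^*$ of a piece of $\pi$ sitting in the appropriate slot. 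The key observation is that applying $m^*$ once more and using associativity/coassociativity of the comultiplication (the Hopf-algebra structure on $\mathscr{R}(\mathrm{GL})$, from the Geometric Lemma) lets one reorganize: the first tensor slot of $M^*(\pi)$, after a further $m^*$, records exactly the data that, when $\rho$ is extracted, matches the first slot of $M^*(\pi)$ with a $\rho$ split off — which is precisely $M_\rho(\pi)$. Concretely I would argue: if $\rho\otimes\tau'\otimes\sigma \leq (m^*\otimes 1)M^*(\pi)$, then by the definition of $M^*$ and coassociativity this forces $\rho\otimes(\text{something nonzero}) \leq M^*(\pi)$, i.e. $M_\rho(\pi)\neq 0$, contradicting the hypothesis.

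The main obstacle is the bookkeeping in the middle step: carefully checking that a leading $\rho$ inside $m^*((\pi_j^{(2)})^\vee)$ or inside $m^*(\alpha)$ genuinely propagates to a leading $\rho$ of $M^*(\pi)$ itself, rather than being "absorbed" by the multiplication $m$ in $(m\otimes 1)$ or cancelled against another factor. To handle this I would use that $\rho$ is supercuspidal, so $m^*(\rho) = \mathbf{1}\otimes\rho + \rho\otimes\mathbf{1}$ and $\rho$ cannot be a proper subquotient of any induced representation; this rigidity means that whenever $\rho$ occurs as the leftmost factor of a constituent of $m^*$ applied to a product, it must already occur as the leftmost factor of $m^*$ of one of the factors. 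Combined with the explicit shuffle in \eqref{m1}, this pins down that $L_\rho(\tau)\neq 0$ for some $\tau\otimes\sigma\leq M^*(\pi)$ if and only if $M_\rho(\pi)\neq 0$. I expect the cleanest write-up uses the computation $M^*(\pi) = \sum (\pi_j^{(2)})^\vee\times\alpha_{jk}\otimes\beta_{jk}$ with $\sum_k\alpha_{jk}\otimes\beta_{jk}=m^*(\pi_j^{(1)})$ explicitly, then applies $m^*\otimes 1$ and isolates the $\rho\otimes(-)$ part on each side.
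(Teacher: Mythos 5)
Your proposal is correct and follows essentially the same route as the paper: unwind $M^*$ to write $\tau$ as a subquotient of $(\pi_j^{(2)})^\vee\times\alpha$ with $\alpha\otimes\beta\otimes\pi_j^{(2)}$ a constituent of a three-step Jacquet module of $\pi$, then use multiplicativity of $m^*$ together with supercuspidality of $\rho$ to force the offending leading $\rho$ into one of the two factors, and conclude $M_\rho(\pi)\neq 0$. The only difference is cosmetic: where you invoke coassociativity for the final propagation step, the paper checks the two cases directly — a leading $\rho$ in $m^*((\pi_j^{(2)})^\vee)$ yields a constituent $1\otimes\delta''\otimes\rho^\vee$ of $[\mathrm{Jac}_{(0,n-d,d)}(\pi)]$, while a leading $\rho$ in $m^*(\alpha)$ yields $\rho\otimes\delta''\otimes 1$ in $[\mathrm{Jac}_{(d,n-d,0)}(\pi)]$, and each visibly contributes a term $\rho\otimes(\cdot)$ to $M^*(\pi)$.
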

\begin{proof}
    Every such constituent $\tau$ is a subquotient of $\tau_3^\vee \times \tau_1$, where $\tau_1 \otimes \tau_2 \otimes \tau_3 \leq [\mathrm{Jac}_{(d_1,d_2,d_3)}(\pi)]$ with $d_1+d_2+d_3=n$. If $\tau$ were not $\rho$-reduced, we would have $\rho \otimes \delta \leq [\mathrm{Jac}_{(d,d_1+d_3-d)}(\tau_3^\vee \times \tau_1)]$, which means (due to the Geometric Lemma for $\mathrm{GL}_n(F)$ \cite[Section 1.2]{LapidMinguez2}) that $\rho \otimes \delta' \leq [\mathrm{Jac}_{(d,d_3-d)}(\tau_3^\vee)]$ or $\rho \otimes \delta' \leq [\mathrm{Jac}_{(d,d_1-d)}(\tau_1)]$. In the first case, a constituent $1 \otimes \delta'' \otimes \rho^\vee$ appears in $[\mathrm{Jac}_{(0,n-d,d)}(\pi)]$, in the latter case a constituent $\rho \otimes \delta'' \otimes 1$ in $[\mathrm{Jac}_{(d,n-d,0)}(\pi)]$. Both contradict $M_\rho(\pi)=0$.
\end{proof}
\begin{de} \label{maxder}
Let $\pi_1 \in \mathscr{R}(G)$ and let $\sigma\in \mathrm{Irr}(GL)$ be either a supercuspidal representation or $\sigma =Z_\rho[0,1] $ for a self-dual supercuspidal $\rho$. 
If $D^{(k)}_{\sigma}(\pi_1)\neq 0$, but $D^{(k+1)}_{\sigma}(\pi_1)=0$, we call $D^{(k)}_{\sigma}(\pi_1)$ the maximal $\sigma$-derivative of $\pi_1$ and set $D^{\max}_{\sigma}(\pi_1)=D^{(k)}_{\sigma}(\pi_1)$.

Analogously, for $\pi_2\in \mathscr{R}(GL)$, if $M^{(k)}_{\sigma}(\pi_2)\neq 0$, but $M^{(k+1)}_{\sigma}(\pi_2)=0$, we call $M^{(k)}_{\sigma}(\pi_2)$ the maximal $\sigma$-derivative of $\pi_2$ and set $M^{\max}_{\sigma}(\pi_2)=M^{(k)}_{\sigma}(\pi_2)$. 
\end{de}
Let us recall two important facts about the maximal derivatives from Section 3 in \cite{AM}:
For a representation $\pi\in\mathrm{Irr}(G)$, a supercuspidal representation $\rho\in\mathrm{Irr}(GL)$ with $\rho \not \cong \rho^\vee$, the maximal derivative $D^{\max}_{\rho}(\pi)$ is an irreducible representation. Moreover, if $D^{\max}_{\rho}(\pi)=D^{(k)}_{\rho}(\pi)$, then $\pi$ is the unique irreducible subrepresentation of $\rho^k\rtimes D^{(k)}_{\rho}(\pi)$ (also, $k$ is then the maximal integer such that there exists an irreducible $\tau$ such that $\pi \hookrightarrow \rho^k \rtimes \tau$).

Assume for the rest of this section that $\rho$ is a self-dual supercuspidal representation. If $\pi$ is $\rho|\cdot|^1$-reduced, i.e. $D_{\rho|\cdot|^1}(\pi)=0$, and $D^{\max}_{Z_\rho[0,1]}(\pi)=D^{(k)}_{Z_\rho[0,1]}(\pi)$, then $D^{(k)}_{Z_\rho[0,1]}(\pi)$ is irreducible and $\pi$ is the unique irreducible subrepresentation of $Z_\rho[0,1]^k\rtimes D^{\max}_{Z_\rho[0,1]}(\pi)$ (\cite[Proposition 3.7]{AM}).

We often consider a certain composition of derivatives, for which we adapt the following notation of Definition 3.3 in \cite{A2}.
\begin{de} \label{def2}
For a segment $[x,y]_{\rho}$, we denote by $D^{\max}_{\rho|\cdot|^{y},\ldots,\rho|\cdot|^{x}}$ the composition of derivatives
$$
\begin{cases}
D^{\max}_{\rho|\cdot|^{x}}\circ\ldots\circ D^{\max}_{\rho|\cdot|^{y}} \text{, if }\rho\not\in[x,y]_{\rho} \\
D^{\max}_{\rho|\cdot|^{x}}\circ\ldots\circ D^{\max}_{\rho|\cdot|^{2}}\circ\left( D^{\max}_{Z_\rho[0,1]} \circ D^{\max}_{\rho|\cdot|^{1}} \right)\circ D^{\max}_{\rho|\cdot|^{-1}} \circ \ldots \circ D^{\max}_{\rho|\cdot|^{y}} \text{, if }\rho\in[x,y]_{\rho} \text{ and}\ x>0.
\end{cases}
$$
Analogously, we define $M^{\max}_{\rho|\cdot|^{y},\ldots,\rho|\cdot|^{x}}$ as
$$
\begin{cases}
M^{\max}_{\rho|\cdot|^{x}}\circ\ldots\circ M^{\max}_{\rho|\cdot|^{y}} \text{, if }\rho\not\in[x,y]_{\rho} \\
M^{\max}_{\rho|\cdot|^{x}}\circ\ldots\circ M^{\max}_{\rho|\cdot|^{2}}\circ\left( M^{\max}_{Z_\rho[0,1]} \circ M^{\max}_{\rho|\cdot|^{1}} \right)\circ M^{\max}_{\rho|\cdot|^{-1}} \circ \ldots \circ M^{\max}_{\rho|\cdot|^{y}} \text{, if }\rho\in[x,y]_{\rho} \text{ and}\ x>0,
\end{cases}
$$
and
 $L^{\max}_{\rho|\cdot|^{y},\ldots,\rho|\cdot|^{x}}$ as
$$
\begin{cases}
L^{\max}_{\rho|\cdot|^{x}}\circ\ldots\circ L^{\max}_{\rho|\cdot|^{y}} \text{, if }\rho\not\in[x,y]_{\rho} \\
L^{\max}_{\rho|\cdot|^{x}}\circ\ldots\circ L^{\max}_{\rho|\cdot|^{2}}\circ\left( L^{\max}_{Z_\rho[0,1]} \circ L^{\max}_{\rho|\cdot|^{1}} \right)\circ L^{\max}_{\rho|\cdot|^{-1}} \circ \ldots \circ L^{\max}_{\rho|\cdot|^{y}} \text{, if }\rho\in[x,y]_{\rho} \text{ and}\ x>0.
\end{cases}
$$
\end{de}
The reason for this definition for $D^{\max}_{\rho|\cdot|^{y},\ldots,\rho|\cdot|^{x}}$, when $\rho\in[x,y]_{\rho}$, is given in \cite[Proposition 3.7]{AM}. 
Namely, by defining it as we did, we have the following theorem:
\begin{thm}(\cite[Theorem 3.1, 3.2]{A2}) \label{iredder}\\
    For a segment $[x,y]_{\rho}$ (with $x>0$ if $\rho \in [x,y]_{\rho}$) and $\pi\in \mathscr{R}(G)$, if $\pi$ is irreducible, then 
    $$
    D^{\max}_{\rho|\cdot|^{y},\ldots,\rho|\cdot|^{x}}(\pi)
    $$ 
    is also irreducible.
    Moreover, the Langlands data for $D^{\max}_{\rho|\cdot|^{y},\ldots,\rho|\cdot|^{x}}(\pi)$ can be described from those for $\pi$ explicitly, and vice versa. 
\end{thm}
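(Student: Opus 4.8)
The plan is to reduce everything to the atomic case of a single maximal derivative $D^{\max}_{\rho|\cdot|^{z}}$ (for $z \neq 0$) and $D^{\max}_{Z_\rho[0,1]}$ applied to $\rho|\cdot|^1$-reduced representations, and then compose. For a single self-dual (or non-self-dual) supercuspidal $\sigma = \rho|\cdot|^{z}$, the irreducibility of $D^{\max}_{\sigma}(\pi)$ for irreducible $\pi$ is exactly the content recalled after Definition \ref{maxder}: if $D^{\max}_{\sigma}(\pi)=D^{(k)}_{\sigma}(\pi)$, then $\pi$ is the unique irreducible subrepresentation of $\sigma^k \rtimes D^{(k)}_\sigma(\pi)$, and by Frobenius reciprocity combined with the structure of $\mu^*$ via Theorem \ref{TadicStruct} one pins down $D^{(k)}_\sigma(\pi)$ as a single irreducible constituent. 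The same argument, invoking \cite[Proposition 3.7]{AM}, handles $D^{\max}_{Z_\rho[0,1]}$ on a $\rho|\cdot|^1$-reduced representation. So the first step is: record that each elementary building block of $D^{\max}_{\rho|\cdot|^{y},\ldots,\rho|\cdot|^{x}}$ preserves irreducibility, \emph{and} that after applying it the result is again in the class where the next block applies (e.g. after $D^{\max}_{\rho|\cdot|^{-1}}\circ\cdots\circ D^{\max}_{\rho|\cdot|^{y}}$ the representation is $\rho|\cdot|^{0}$-isotypic-free enough that $D^{\max}_{\rho|\cdot|^1}$ followed by $D^{\max}_{Z_\rho[0,1]}$ behaves as in \cite{AM}). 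Composing, $D^{\max}_{\rho|\cdot|^{y},\ldots,\rho|\cdot|^{x}}(\pi)$ is irreducible.

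For the ``vice versa'' and the explicit description of Langlands data, I would argue by reconstruction. Having established that $\pi \hookrightarrow \Delta_\rho[x,y]^{?} \rtimes D^{\max}_{\rho|\cdot|^{y},\ldots,\rho|\cdot|^{x}}(\pi)$ — more precisely that $\pi$ embeds into the appropriate iterated induction $\rho|\cdot|^{x}\times\cdots\times\rho|\cdot|^{y}\rtimes\tau$ (with the $Z_\rho[0,1]$-modification when $\rho\in[x,y]_\rho$) where $\tau=D^{\max}_{\rho|\cdot|^{y},\ldots,\rho|\cdot|^{x}}(\pi)$, and that $\pi$ is the \emph{unique} irreducible subrepresentation — one recovers $\pi$ from $\tau$ by taking a socle. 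Then the combinatorial dictionary on Langlands data is obtained by tracking, segment by segment, how the Langlands data of $\tau$ change: adding the characters $\rho|\cdot|^{x},\ldots,\rho|\cdot|^{y}$ either extends existing segments of $\tau$ on the $\rho$-line or creates the new segment $[x,y]_\rho$ (respectively $[-y,-x]$ after the $\vee$-twist bookkeeping forced by $\mu^*$), governed by the linking rules and Theorem \ref{essred}-type reducibility constraints. This is a finite, explicit case analysis on the relative position of $[x,y]_\rho$ with respect to the segments of $\tau$, exactly parallel to \cite[Theorem 2.4]{A4} for $\mathrm{GL}_n$; I would cite \cite[Theorem 3.1, 3.2]{A2} for the precise formulas rather than reproduce them.

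The main obstacle is the case $\rho \in [x,y]_\rho$, i.e. when the segment crosses $0$. Here the naive composition $D^{\max}_{\rho|\cdot|^{0}}$ is \emph{not} what one wants, because on the classical-group side the derivative at $\rho|\cdot|^0$ does not interact well with the $G_n$-self-duality; this is precisely why the definition splices in $D^{\max}_{Z_\rho[0,1]}\circ D^{\max}_{\rho|\cdot|^1}$ in place of $D^{\max}_{\rho|\cdot|^1}\circ D^{\max}_{\rho|\cdot|^0}\circ D^{\max}_{\rho|\cdot|^{-1}}$. Verifying that this substituted block still (i) preserves irreducibility and (ii) produces the correct Langlands data requires the finer analysis of \cite[Proposition 3.7]{AM}: one must check that the intermediate representation after $D^{\max}_{\rho|\cdot|^{-1}}\circ\cdots\circ D^{\max}_{\rho|\cdot|^{y}}$ is $\rho|\cdot|^1$-reduced, so that $D^{\max}_{Z_\rho[0,1]}$ on it is legitimate and irreducible. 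Once that single delicate link is in place, the rest of the composition runs through the elementary building blocks without incident, and the Langlands-data description follows by concatenating the per-block formulas.
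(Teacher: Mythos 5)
This statement is not proved in the paper at all: it is imported verbatim from \cite[Theorems 3.1, 3.2]{A2}, with the surrounding text only remarking that the splice $D^{\max}_{Z_\rho[0,1]}\circ D^{\max}_{\rho|\cdot|^{1}}$ in Definition \ref{def2} is designed precisely so that \cite[Proposition 3.7]{AM} applies. Your sketch of the irreducibility half is essentially the intended mechanism and is sound: each factor $D^{\max}_{\rho|\cdot|^{z}}$ with $z\neq 0$ has $\rho|\cdot|^{z}\not\cong(\rho|\cdot|^{z})^{\vee}$, so the fact recalled after Definition \ref{maxder} gives irreducibility of the output, and one simply composes. One small correction: the hypothesis needed for the $Z_\rho[0,1]$-block is not that the representation after $D^{\max}_{\rho|\cdot|^{-1}}\circ\cdots\circ D^{\max}_{\rho|\cdot|^{y}}$ be $\rho|\cdot|^{1}$-reduced (it generally is not), but that the representation to which $D^{\max}_{Z_\rho[0,1]}$ is applied be so --- and that representation is $D^{\max}_{\rho|\cdot|^{1}}$ of the previous output, which is $\rho|\cdot|^{1}$-reduced automatically, being a highest derivative. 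So the ``delicate link'' you flag is in fact immediate once the order of operations in Definition \ref{def2} is read correctly.

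The genuine gap is in the second half of the statement. The explicit two-way dictionary between the Langlands data of $\pi$ and of $D^{\max}_{\rho|\cdot|^{y},\ldots,\rho|\cdot|^{x}}(\pi)$ is the actual content of \cite[Theorems 3.1, 3.2]{A2}: it is a nontrivial case analysis carried out there (and its $\mathrm{GL}$-analogue in \cite[Theorem 2.4]{A4}), and it is exactly what downstream arguments in Section \ref{main} consume. Your proposal describes the shape of that analysis (extending or creating segments on the $\rho$-line, governed by linking) but then cites \cite{A2} ``for the precise formulas,'' which is circular as a proof of the theorem. Since the paper itself treats the result as a black box, this does not conflict with the paper's approach, but it should be stated clearly that the proposal reproves only the irreducibility assertion and defers the combinatorial description --- the harder and more used part --- back to the source.
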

\begin{de}
    Let $\pi \in \mathscr{R}(GL)$. Let $\sigma$ be a supercuspidal representation or $Z_\rho[0,1]$. If $D_\sigma^{max}(\pi)=D_\sigma^{(k)}(\pi)$, we denote
    \begin{align*}
        \mathrm{mult}(D_\sigma,\pi)\ceq k.
    \end{align*}
    Analogously, we define for $\tau \in \mathscr{R}(G)$(if $k$ is the exponent of the respective maximal derivative)
    \begin{align*}
        \mathrm{mult}(M_\sigma,\pi)&\ceq k,\\
        \mathrm{mult}(L_\sigma,\pi)&\ceq k.
    \end{align*}
\end{de}
\begin{prop} \label{tad}
Let $\pi \in \mathscr{R}(GL)$ and $\sigma \in \mathscr{R}(G)$. Let $\rho\in \mathrm{Irr}(GL)$ be a supercuspidal representation. Then 
\begin{align*}
D^{\max}_{\rho|\cdot|^z}(\pi \rtimes\sigma ) &= M^{\max}_{\rho|\cdot|^z}(\pi) \rtimes D^{\max}_{\rho|\cdot|^z}(\sigma), \\
\mathrm{mult}(D_{\rho|\cdot|^z},\pi\rtimes\sigma) & =\mathrm{mult}(M_{\rho|\cdot|^z},\pi) + \mathrm{mult}(D_{\rho|\cdot|^z},\sigma).
\end{align*}
Assume now that $M_{\rho|\cdot|^1}(\pi)=0$ and $D_{\rho|\cdot|^1}(\sigma)=0$ ($\sigma$ is $\rho|\cdot|^1$-reduced). Then
\begin{align*}
D^{\max}_{Z_\rho[0,1]}(\pi\rtimes\sigma) &= M^{\max}_{Z_\rho[0,1]}(\pi) \rtimes D^{\max}_{Z_\rho[0,1]}(\sigma), \\
\mathrm{mult}(D_{Z_\rho[0,1]},\pi\rtimes\sigma) &= \mathrm{mult}(M_{Z_\rho[0,1]},\pi) + \mathrm{mult}(D_{Z_\rho[0,1]},\sigma).
\end{align*}
\end{prop}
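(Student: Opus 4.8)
The plan is to reduce the statement about the derivatives $D_{\rho|\cdot|^z}^{\max}$ and $D_{Z_\rho[0,1]}^{\max}$ on $\pi\rtimes\sigma$ to the structural formula of Theorem~\ref{TadicStruct}, namely $\mu^*(\pi\rtimes\sigma)=M^*(\pi)\rtimes\mu^*(\sigma)$, and then read off the leading terms. First I would treat the supercuspidal case $\sigma_0\ceq\rho|\cdot|^z$. Write $k_1\ceq\mathrm{mult}(M_{\sigma_0},\pi)$ and $k_2\ceq\mathrm{mult}(D_{\sigma_0},\sigma)$, so that $M^*(\pi)=\sigma_0^{k_1}\otimes M_{\sigma_0}^{\max}(\pi)+\sum_i\pi_i\otimes\pi_i'$ with $\pi_i\not\cong\sigma_0^{k_1}$, and $\mu^*(\sigma)=\sigma_0^{k_2}\otimes D_{\sigma_0}^{\max}(\sigma)+\sum_j\tau_j\otimes\tau_j'$ with $\tau_j\not\cong\sigma_0^{k_2}$. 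Plugging into $\mu^*(\pi\rtimes\sigma)=M^*(\pi)\rtimes\mu^*(\sigma)$ and using that the product is $(\pi_1\otimes\pi_2)\rtimes(\pi'\otimes\sigma')=(\pi_1\times\pi')\otimes(\pi_2\rtimes\sigma')$, every summand has first tensor factor of the form $\alpha\times\beta$ where $\alpha\in\{\sigma_0^{k_1},\pi_i\}$ and $\beta\in\{\sigma_0^{k_2},\tau_j\}$. Since $\sigma_0$ is supercuspidal, the only way $\alpha\times\beta\cong\sigma_0^{m}$ for some $m$ is $\alpha\cong\sigma_0^{a}$, $\beta\cong\sigma_0^{b}$, $m=a+b$; and because $\sigma_0$ is not a constituent of the first factor of any non-leading term on either side (the defining property of the maximal derivative: $\pi_i\not\cong\sigma_0^{k_1}$ forces, by supercuspidal support considerations, that $\sigma_0$ appears strictly fewer than $k_1$ times in $\pi_i$, and similarly for the $\tau_j$), the unique term with maximal $\sigma_0$-multiplicity $k\ceq k_1+k_2$ is $(\sigma_0^{k_1}\times\sigma_0^{k_2})\otimes(M_{\sigma_0}^{\max}(\pi)\rtimes D_{\sigma_0}^{\max}(\sigma))=\sigma_0^{k}\otimes(M_{\sigma_0}^{\max}(\pi)\rtimes D_{\sigma_0}^{\max}(\sigma))$. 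This both identifies $\mathrm{mult}(D_{\sigma_0},\pi\rtimes\sigma)=k_1+k_2$ and gives $D_{\sigma_0}^{\max}(\pi\rtimes\sigma)=M_{\sigma_0}^{\max}(\pi)\rtimes D_{\sigma_0}^{\max}(\sigma)$, which is the first pair of formulas.

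For the second pair, with the standing hypotheses $M_{\rho|\cdot|^1}(\pi)=0$ and $D_{\rho|\cdot|^1}(\sigma)=0$, I would run the same argument with $\sigma_0$ replaced by $Z_\rho[0,1]$, except that now $\sigma_0$ is no longer supercuspidal, so I cannot argue purely on supercuspidal support; instead I need to control how $Z_\rho[0,1]$ can split off from a product $\alpha\times\beta$. This is exactly where the reducedness hypotheses enter. By Lemma~\ref{reduced} (applied with $\rho$ there being $\rho|\cdot|^1$, which is supercuspidal), the assumption $M_{\rho|\cdot|^1}(\pi)=0$ forces every first tensor factor $\tau$ of $M^*(\pi)$ to be left $\rho|\cdot|^1$-reduced; dually/analogously $D_{\rho|\cdot|^1}(\sigma)=0$ forces the relevant left factors coming from $\mu^*(\sigma)$ to be $\rho|\cdot|^1$-reduced. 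Since $Z_\rho[0,1]=L(\{\rho\},\{\rho|\cdot|^1\})$ is the irreducible subrepresentation of $\rho\times\rho|\cdot|^1$ and its left $\rho|\cdot|^1$-derivative is nonzero (it is $Z_\rho[0,1]$ being "built from the top" so that stripping $\rho|\cdot|^1$ is possible), the only way a factor $Z_\rho[0,1]^m$ can appear in $\alpha\times\beta$ with $\alpha,\beta$ both $\rho|\cdot|^1$-reduced is again $\alpha\cong Z_\rho[0,1]^a$, $\beta\cong Z_\rho[0,1]^b$, $m=a+b$ — any genuine "mixing" would produce a $\rho|\cdot|^1$ that can be stripped from the left, contradicting reducedness of one of the factors. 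Hence the same leading-term extraction goes through verbatim, using this time the explicit irreducibility statement for $D_{Z_\rho[0,1]}^{\max}$ of a $\rho|\cdot|^1$-reduced irreducible representation recalled after Definition~\ref{def2} (from \cite[Proposition 3.7]{AM}) to justify treating $D_{Z_\rho[0,1]}^{\max}(\sigma)$ as a genuine single term rather than a sum.

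I would organize the write-up by first isolating a small combinatorial lemma: if $\xi$ is supercuspidal or $\xi=Z_\rho[0,1]$ with the ambient reducedness hypotheses in force, and $\alpha,\beta$ are (sums of) irreducibles each having $\xi$-multiplicity at most $a$, resp.\ at most $b$, in the sense of the maximal-derivative filtration, then $\alpha\times\beta$ has $\xi$-multiplicity at most $a+b$, with equality only for the pair $(\xi^{a}\otimes\cdots)\times(\xi^{b}\otimes\cdots)$; this lemma simultaneously covers $M^*$-factors and $\mu^*$-factors and both cases of $\xi$. The main obstacle is precisely the second case: making rigorous the claim that $Z_\rho[0,1]$ cannot be "assembled" across the product $\times$ out of smaller pieces once both factors are $\rho|\cdot|^1$-reduced. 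The clean way to do this is via the Geometric Lemma for $\mathrm{GL}$ (as already used in the proof of Lemma~\ref{reduced}): compute $\mathrm{Jac}$ of $\alpha\times\beta$ along the parabolic that would strip off $Z_\rho[0,1]^m$, expand by the Geometric Lemma, and observe that each term forces either $\alpha$ or $\beta$ to have a $\rho|\cdot|^1$ on the far left, i.e.\ to fail to be $\rho|\cdot|^1$-reduced, unless the split is the "pure" one. Everything else — bookkeeping of multiplicities, the passage from "leading term of $\mu^*$" to "the derivative equals $\dots$" — is a routine unwinding of the definitions in Definition~\ref{def1} and Definition~\ref{maxder}, and of Theorem~\ref{TadicStruct}.
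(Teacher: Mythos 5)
Your overall skeleton coincides with the paper's: expand $\mu^*(\pi\rtimes\sigma)=M^*(\pi)\rtimes\mu^*(\sigma)$ via Theorem \ref{TadicStruct}, extract the term of maximal multiplicity, handle the supercuspidal case by cuspidal support, and handle the $Z_\rho[0,1]$ case using the two reducedness hypotheses together with Lemma \ref{reduced}. The supercuspidal half of your argument is correct (a product $\alpha\times\beta$ has $(\rho|\cdot|^z)^k$ as a constituent only if each factor is itself a power of $\rho|\cdot|^z$, by comparing cuspidal supports, and maximality of $k_1,k_2$ then pins down the unique leading term).

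The gap is in the $Z_\rho[0,1]$ case, exactly at the step you flag as the main obstacle. The claim that the left $\rho|\cdot|^1$-derivative of $Z_\rho[0,1]$ is nonzero is false: by the ladder formula (\ref{lad}) one has $m^*(Z_\rho[0,1])=Z_\rho[0,1]\otimes 1+\rho\otimes\rho|\cdot|^1+1\otimes Z_\rho[0,1]$, so $L_{\rho|\cdot|^1}(Z_\rho[0,1])=0$ — it is the \emph{right} $\rho|\cdot|^1$-derivative that is nonzero, and indeed $Z_\rho[0,1]$ being left $\rho|\cdot|^1$-reduced is the whole reason it is the correct object to use after stripping $\rho|\cdot|^1$. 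Consequently the mechanism "any genuine mixing would produce a $\rho|\cdot|^1$ strippable from the left" cannot close the argument: if $\alpha$ and $\beta$ are both left $\rho|\cdot|^1$-reduced, the Geometric Lemma only tells you that $\alpha\times\beta$ is left $\rho|\cdot|^1$-reduced, and since $Z_\rho[0,1]^m$ is itself left $\rho|\cdot|^1$-reduced no contradiction arises from left-strippability alone. What is actually needed — and what the paper supplies — is a classification-plus-counting step: the relevant products $\pi_1^{(i)}\times\tau_1^{(j)}$ have cuspidal support $k\{\rho\}+k\{\rho|\cdot|^1\}$, so each factor is of the form $L(i\cdot[0,0]_\rho+j\cdot[1,1]_\rho+(\cdots)\cdot[1,0]_\rho)$; left $\rho|\cdot|^1$-reducedness (from $D_{\rho|\cdot|^1}(\sigma)=0$ for the $\tau$-factor and from Lemma \ref{reduced} for the $\pi$-factor) eliminates the $[1,0]_\rho$ segments and forces $i\geq j$ in each factor, and only then does the count $i_1+i_2=j_1+j_2=k$ force $i_1=j_1$ and $i_2=j_2$, whence each factor is a power of $Z_\rho[0,1]$. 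Without that support count, reducedness of both factors does not by itself exclude impure splits, so you should replace the strippability heuristic by this (or an equivalent) counting argument.
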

\begin{proof}
    Write 
    \begin{align*}
        M^*(\pi) &= \sum_i \pi_1^{(i)} \otimes \pi_2^{(i)}, \\
        \mu^*(\sigma) &= \sum_j \tau_1^{(j)} \otimes \sigma_2^{(j)}. 
    \end{align*}
    By Theorem \ref{TadicStruct} we have 
    \begin{align*}
        \mu^*(\pi \rtimes \sigma) &= M^*(\pi) \rtimes \mu^*(\sigma) \\
        &= \sum_{i,j} \left[ (\pi_1^{(i)} \times \tau_1^{(j)}) \otimes (\pi_2^{(i)} \rtimes \sigma_2^{(j)} ) \right]. 
    \end{align*}
    Let us denote $k_1=\text{mult}(M_{\rho|\cdot|^z},\pi)$ and $k_2=\text{mult}(D_{\rho|\cdot|^z},\sigma)$.
    Now clearly
    \begin{align*}
        D^{\max}_{\rho|\cdot|^z}(\pi \rtimes\sigma ) 
        &= \sum_{\substack{i,j \\ \pi_1^{(i)} \cong (\rho|\cdot|^z)^{k_1} \\  \tau_1^{(j)} \cong (\rho|\cdot|^z)^{k_2} }}   \left[ \pi_2^{(i)} \rtimes \sigma_2^{(j)} \right] = M^{\max}_{\rho|\cdot|^z}(\pi) \rtimes D^{\max}_{\rho|\cdot|^z}(\sigma), 
    \end{align*}
    which also implies the equality for the multiplicities. For the second assertion, we denote $k=\text{mult}( D_{Z_\rho[0,1]},\pi\rtimes\sigma)$. Hence, we get
    \begin{align*}
       (Z_\rho[0,1])^k \leq  (\pi_1^{(i)} \times \tau_1^{(j)}). 
    \end{align*}
    Now this shows that the supercuspidal support of both $\pi_1^{(i)}$ and $\tau_1^{(j)}$ consists only of $\rho$ and $\rho|\cdot|^1$ (with multiplicities). Therefore $\pi_1^{(i)}$ is of the form
    \begin{align*}
        L(i_1\cdot [0,0]_\rho+ j_1\cdot [1,1]_\rho+ (k_1-i_1-j_1)\cdot [1,0]_\rho)
    \end{align*}
    and $\tau_1^{(j)}$ of the form
    \begin{align*}
             L(i_2\cdot [0,0]_\rho+ j_2\cdot [1,1]_\rho+ (k_2-i_2-j_2)\cdot [1,0]_\rho).
    \end{align*}
    Since $\sigma$ is $\rho|\cdot|^1$-reduced, so is $\tau_1^{(j)}$. By Lemma \ref{reduced}, we know that $\pi_1^{(i)}$ is $\rho|\cdot|^1$-reduced as well. Now this implies that $k_1=i_1+j_1$ and $k_2=i_2+j_2$ as well as $i_1\geq j_1$ and $i_2 \geq j_2$. But since $i_1+i_2=j_1+j_2=k$, we have $i_1=j_1$ and $i_2=j_2$. As in the proof of \cite[Lemma 3.6]{AM}, this
    implies that $\pi_1^{(i)} \cong (Z_\rho[0,1])^{i_1}$ and $\tau_1^{(j)} \cong (Z_\rho[0,1])^{i_2}$. The remaining argument is the same. 
\end{proof}
\begin{rmk}
Most importantly, this implies
\begin{gather} \label{a123}
       \left( D^{\max}_{Z_\rho[0,1]} \circ D^{\max}_{\rho|\cdot|^{1}} \right)(\pi\rtimes\sigma) = (M^{\max}_{Z_\rho[0,1]}\circ M^{\max}_{\rho|\cdot|^1})(\pi) \rtimes (D^{\max}_{Z_\rho[0,1]}\circ D^{\max}_{\rho|\cdot|^1})(\sigma).
    \end{gather} 
        We will only consider $D^{\max}_{Z_\rho[0,1]}$ as a part of the composition of the derivatives $D^{\max}_{\rho|\cdot|^{y},\ldots,\rho|\cdot|^{x}}$, where it is preceded by the $\rho|\cdot|^1$-derivative. In the cases of our interest (when $\pi \rtimes \sigma$ is irreducible and in a composition of derivatives $D^{\max}_{\rho|\cdot|^{y},\ldots,\rho|\cdot|^{x}}$), $M_{\rho}^{\max}(\pi)$ and $M_{Z_\rho[0,1]}^{\max}(\pi)$ are irreducible because of the irreducibility of $D_\rho^{\max}(\pi \rtimes \sigma)$ and the above proposition.
\end{rmk}
Atobe's Propositions 3.4, 3.6 and 3.7 in \cite{A2} are the cornerstones of describing the socle of the induced representation $\uab|\cdot|^s\rtimes\pi$. 
Since it is the basic case of the irreducibility criterion we study in Section \ref{main}, we introduce a notation for a special composition of derivatives:
\begin{de} \label{def3}
 Let $u_{\rho}(a,b)|\cdot|^s$ be an essentially Speh representation. We set
\begin{gather*}
    D^{\max}_{(a,b,s)}\ceq D^{\max}_{\rho|\cdot|^{B-s+1},\ldots,\rho|\cdot|^{A-s+1}} \circ \cdots \circ D^{\max}_{\rho|\cdot|^{B+s},\ldots,\rho|\cdot|^{A+s}}.
\end{gather*}
We analogously define
\begin{gather*}
    M^{\max}_{(a,b,s)}\ceq M^{\max}_{\rho|\cdot|^{B-s+1},\ldots,\rho|\cdot|^{A-s+1}} \circ \cdots \circ M^{\max}_{\rho|\cdot|^{B+s},\ldots,\rho|\cdot|^{A+s}}
\end{gather*}
and
\begin{gather*}
    L^{\max}_{(a,b,s)}\ceq L^{\max}_{\rho|\cdot|^{B-s+1},\ldots,\rho|\cdot|^{A-s+1}} \circ \cdots \circ L^{\max}_{\rho|\cdot|^{B+s},\ldots,\rho|\cdot|^{A+s}}.
\end{gather*}
\end{de}
Let us introduce some additional notation for derivatives:
\begin{de}
Let $u_{\rho_l}(a_l,b_l)|\cdot|^{s_l}$ be an essentially Speh representation. For a pair of indices $i,j$ such that $i\in\{1,\ldots,2s_l\}$ and $j\in\{1,\ldots,b_l\}$, we denote 
\begin{align} \label{sigma}
    \sigma_l^{ij} \ceq \left\{
\begin{matrix}
    \rho|\cdot|^1 & if\ B_l+s_l-i+j=0 \\
    Z_\rho[0,1] & if\ B_l+s_l-i+j=1 \text{ and } j\neq 1\\
    \rho|\cdot|^{B_l+s_l-i+j} & else.
\end{matrix}
    \right.
\end{align}
For an essentially Speh $u_\rho(a,b)|\cdot|^s$ we simply denote this by $\sigma^{ij}$.
Let $\pi\in\mathrm{Irr}(G)$. If
\begin{align*}
    D^{\max}_{(a_l,b_l,s_l)}(\pi) = 
    & \left( 
        \begin{array}{c}
        D^{(k_{2s_l,b_l})}_{\sigma_l^{2s_l b_l}} \mathrel{\circ} \dotsm \mathrel{\circ} D^{(k_{2s_l,1})}_{\sigma_l^{2s_l 1}} 
        \end{array}
    \right) \\
    & \mathrel{\circ} \dotsm \mathrel{\circ} \\
    & \left( 
        \begin{array}{c}
        D^{(k_{1,b_l})}_{\sigma_l^{1 b_l}} \mathrel{\circ} \dotsm \mathrel{\circ} D^{(k_{1,1})}_{\sigma_l^{11}} 
        \end{array}
    \right)(\pi).
\end{align*}
Then we denote $\text{mult}_{ij} (D_{(a_l,b_l,s_l)},\pi)\ceq k_{i,j}.$
Also, $D^{\max}_{(a_l,b_l,s_l),< i,j}(\pi)$ denotes the first derivatives in the above composition (up to the indices $i,j$):
\begin{align*}
  D^{\max}_{(a_l,b_l,s_l),< i,j}(\pi) \ceq  & (D^{(k_{i,j-1})}_{\sigma_l^{i j-1}}\circ\ldots\circ D^{(k_{i,1})}_{\sigma_l^{i 1}}) \\
    & \mathrel{\circ} \dotsm \mathrel{\circ} \\
    & \left( 
        \begin{array}{c}
        D^{(k_{1,b_l})}_{\sigma_l^{1 b_l}} \mathrel{\circ} \dotsm \mathrel{\circ} D^{(k_{1,1})}_{\sigma_l^{11}} 
        \end{array}
    \right)(\pi).
\end{align*}
Denote $D^{\max}_{(a_l,b_l,s_l),\le i,j}= D^{\max}_{\sigma_l^{ij}}\circ D^{\max}_{(a_l,b_l,s_l),< i,j}$.
We define these notions for the derivatives $M^{\max}_{(a_l,b_l,s_l)}$ and $L^{\max}_{(a_l,b_l,s_l)}$ analogously. 
Let $\tau \in \mathscr{R}(GL)$. We also denote:
    \begin{align*}
        d^{ij}_{l}(\pi) & \ceq \text{mult} (D_{\sigma_l^{ij}},D^{\max}_{(a_l,b_l,s_l),< i,j}(\pi)),\\
    m^{ij}_{l}(\tau) & \ceq \text{mult} (M_{\sigma_l^{ij}},M^{\max}_{(a_l,b_l,s_l),< i,j}(\tau)),\\
    l^{ij}_{l}(\tau) & \ceq \text{mult} (L_{\sigma_l^{ij}},L^{\max}_{(a_l,b_l,s_l),< i,j}(\tau)).\\
    \end{align*}
    For an essentially Speh representation $\uab|\cdot|^s$ (without an index), we simply denote $d^{ij}(\pi)$, $m^{ij}(\tau)$ and $l^{ij}(\tau)$ for these.
    If in one of these blocks of derivatives, the indices $i,j$ correspond to the exponent $0$ (i.e. $B_l+s_l-i+j=0$), we denote 
    \begin{align*}
        1_{ij}=0.
    \end{align*}
    For all other indices we set 
    \begin{align*}
        1_{ij}=1.
    \end{align*}
\end{de}
\begin{rmk} \label{exact1}
    Since we extensively use the operator $\text{mult}$, in this remark we will describe how it is applied in the most common setting in the paper. Namely, we often show that $\text{mult}(D^{\max}_{\sigma})$ is the same for two irreducible representations which are related in some way.
    
    Let $\sigma,\pi_1,\pi_1'\in \mathrm{Irr}(GL)$, $\sigma$ supercuspidal and $\pi, \pi_2\in \mathrm{Irr}(G)$. Assume that $\pi_1\rtimes\pi_2$ is an irreducible representation such that
    $$\pi_1\rtimes\pi_2\hookrightarrow \pi_1'\rtimes\pi.$$
    Moreover, assume that
    $\text{mult}(M_{\sigma},\pi_1)=\text{mult}(M_{\sigma},\pi_1')=:k_1$ and $\pi_1'\times\sigma$ is an irreducible representation.
    If we denote $k=\text{mult}(D_{\sigma},\pi)$ and $k_2=\text{mult}(D_{\sigma},\pi_2)$, using the assumptions we get:
    $$ \pi_1\rtimes\pi_2\hookrightarrow \sigma^{k_1+k}\times M^{\max}_{\sigma}(\pi_1')\rtimes D^{\max}_{\sigma}(\pi).$$
    From the Frobenius reciprocity, it follows $\text{mult}(D_{\sigma},\pi_1\rtimes\pi_2)\ge k_1 + k$.
    On the other hand, the exactness of the Jacquet functor implies  $\text{mult}(D^{\max}_{\sigma}(\pi_1\rtimes\pi_2))\le k_1 + k$. 
    Hence, we have the equalities 
    $$ k_1 + k_2 = \text{mult}(D_{\sigma},\pi_1\rtimes\pi_2) = k_1 + k. $$
    This implies $\text{mult}(D_{\sigma},\pi_2)=\text{mult}(D_{\sigma},\pi).$
\end{rmk}

\nopagebreak[4]
\section{Representations of Arthur type} \label{Arthur}

\subsection{Arthur parameters} \label{a-parameters}
Denote by $\hat{G}_n$ the complex dual group of $G_n$. Namely, $\hat{G}_n = \mathrm{Sp}_{2n}(\mathbb{C})$ if $G_n = \mathrm{SO}_{2n+1}(F)$, and $\hat{G}_n = \mathrm{SO}_{2n+1}(\mathbb{C})$ if $G_n = \mathrm{Sp}_{2n}(F)$. An Arthur parameter for $G_n$ is the $\hat{G}_n$-conjugacy class of an admissible homomorphism
\[
\psi: W_F \times \mathrm{SL}_2(\mathbb{C}) \times \mathrm{SL}_2(\mathbb{C}) \to \hat{G}_n,
\]
such that the image of the Weil group $W_F$ is bounded. By composing with the standard representation of $\hat{G}_n$, we can regard $\psi$ as a representation of $W_F \times \mathrm{SL}_2(\mathbb{C}) \times \mathrm{SL}_2(\mathbb{C})$. It decomposes as
\[
\psi = \bigoplus_\rho \left( \bigoplus_{i \in I_\rho} \rho \boxtimes S_{a_i} \boxtimes S_{b_i} \right),
\]
where
\begin{itemize}
    \item $\rho$ runs over the equivalence classes of unitary irreducible supercuspidal representations of $\mathrm{GL}_d(F)$, which is identified with an irreducible bounded representation of $W_F$ by the local Langlands correspondence for the general linear groups;
    \item $S_a$ is the unique irreducible algebraic representation of $\mathrm{SL}_2(\mathbb{C})$ of dimension $a$.
\end{itemize}
For $\rho \in \mathrm{Cusp}^\perp(\mathrm{GL}_d(F))$, there exists a non-degenerate $\mathrm{GL}_d(F)$-invariant bilinear form on $\rho$. We call $\rho$ \textit{orthogonal} if this form is symmetric and \textit{symplectic} if it is skew-symmetric.
Let $\psi$ be as above. We say that $\psi$ is of \textit{good parity} if $\rho \boxtimes S_{a_i} \boxtimes S_{b_i}$ is self-dual of the same type as $\psi$ for any $\rho$ and $i \in I_\rho$, i.e.,
\begin{itemize}
    \item $\rho \in \mathrm{Cusp}^\perp(\mathrm{GL}_d(F))$ is orthogonal and $a_i + b_i \equiv 0 \pmod{2}$ if $G_n = \mathrm{Sp}_{2n}(F)$ (resp. $a_i + b_i \equiv 1 \pmod{2}$ if $G_n = \mathrm{SO}_{2n+1}(F)$); or
    \item $\rho \in \mathrm{Cusp}^\perp(\mathrm{GL}_d(F))$ is symplectic and $a_i + b_i \equiv 1 \pmod{2}$ if $G_n = \mathrm{Sp}_{2n}(F)$ (resp. $a_i + b_i \equiv 0 \pmod{2}$ if $G_n = \mathrm{SO}_{2n+1}(F)$).
\end{itemize}
Let $\Psi(G_n) \supset \Psi^\mathrm{gp}(G_n)$ be the sets of equivalence classes of Arthur parameters and Arthur parameters of good parity, respectively. Also, we set $\Phi^\mathrm{temp}(G_n)$ to be the subset of $\Psi(G_n)$ consisting of tempered Arthur parameters, i.e., Arthur parameters $\psi$ which are trivial on the second $\mathrm{SL}_2(\mathbb{C})$. Finally, we set $\Phi^\mathrm{gp}(G_n) = \Psi^\mathrm{gp}(G_n) \cap \Phi^\mathrm{temp}(G_n)$.

For $\psi = \bigoplus_\rho \left( \bigoplus_{i \in I_\rho} \rho \boxtimes S_{a_i} \boxtimes S_{b_i} \right) \in \Psi^\mathrm{gp}(G_n)$, define the enhanced component group by
\[
\mathcal{A}_\psi = \bigoplus_\rho \bigoplus_{i \in I_\rho} (\mathbb{Z}/2\mathbb{Z})_{\alpha_{\rho,i}},
\]
i.e., $\mathcal{A}_\psi$ is a $(\mathbb{Z}/2\mathbb{Z})$-vector space with a canonical basis $\alpha_{\rho,i}$ corresponding to $\rho \boxtimes S_{a_i} \boxtimes S_{b_i}$. The component group $\mathcal{S}_\psi$ is the quotient of $\mathcal{A}_\psi$ by the subgroup generated by
\begin{itemize}
    \item $\alpha_{\rho,i} + \alpha_{\rho,j}$ such that $\rho \boxtimes S_{a_i} \boxtimes S_{b_i} = \rho \boxtimes S_{a_j} \boxtimes S_{b_j}$; and
    \item $z_\psi = \sum_\rho \sum_{i \in I_\rho} \alpha_{\rho,i}$, which is called the central element of $\mathcal{A}_\psi$.
\end{itemize}

Let $\hat{\mathcal{S}_\psi} \subset \hat{\mathcal{A}_\psi}$ be the Pontryagin duals of $\mathcal{S}_\psi$ and $\mathcal{A}_\psi$, respectively. When $\varepsilon \in \hat{\mathcal{A}}_\psi$, we write $\varepsilon(\rho \boxtimes S_{a_i} \boxtimes S_{b_i}) \ceq \varepsilon(\alpha_{\rho,i}) \in \{\pm1\}$.

\subsection{Arthur packets} \label{a-packs}
Let $\mathrm{Irr}^\mathrm{unit}(G_n)$ (resp. $\mathrm{Irr}^\mathrm{temp}(G_n))$ be the set of equivalence classes of irreducible unitary (resp. tempered) representations of $G_n$. To an Arthur parameter $\psi \in \Psi(G_n)$, Arthur \cite[Theorem 1.5.1(a)]{Arthur} associated an Arthur packet $\Pi_\psi$, which is a finite multi-set over $\mathrm{Irr}^\mathrm{unit}(G_n)$. We say that $\pi \in \mathrm{Irr}(G_n)$ is of Arthur type if $\pi \in \Pi_\psi$ for some $\psi \in \Psi(G_n)$. In particular, such a $\pi$ is unitary (\cite[Theorem 1.5.1]{Arthur}). If $\psi$ is of good parity, we say that $\pi$ is of good parity as well. 

Mœglin \cite{Mœglin2} showed that $\Pi_\psi$ is multiplicity-free, i.e., a subset of $\mathrm{Irr}^\mathrm{unit}(G_n)$. By \cite[Theorem 1.5.1(b)]{Arthur}, if $\phi \in \Phi^\mathrm{temp}(G_n)$ is a tempered Arthur parameter, then $\Pi_\phi$ is a subset of $\mathrm{Irr}^\mathrm{temp}(G_n)$ and
\[
\mathrm{Irr}^\mathrm{temp}(G_n) = \bigsqcup_{\phi \in \Phi^\mathrm{temp}(G_n)} \Pi_\phi \quad \text{(disjoint union)}.
\]

However, $\Pi_{\psi_1} \cap \Pi_{\psi_2} \neq \emptyset$ even if $\psi_1 \not\cong \psi_2$ in general.
If $\psi = \bigoplus_\rho \bigoplus_{i \in I_\rho} \rho \boxtimes S_{a_i} \boxtimes S_{b_i}$, set
\[
\tau_\psi = \bigtimes_\rho \bigtimes_{i \in I_\rho} u_\rho(a_i, b_i)
\]
to be a product of (unitary) Speh representations, which is an irreducible unitary representation of $\mathrm{GL}_m(F)$ with $m = \dim(\psi)$.

\begin{prop}[{\cite[Theorem 6]{Mœglin}}, {\cite[Proposition 8.11]{Xu2}}] \label{bad-parityy}
Any $\psi \in \Psi(G_n)$ can be decomposed as
\[
\psi = \psi_1 \oplus \psi_0 \oplus \psi_1^\vee,
\]
where
\begin{itemize}
    \item $\psi_0 \in \Psi^\mathrm{gp}(G_{n_0})$;
    \item $\psi_1$ is a direct sum of irreducible representations of $W_F \times \mathrm{SL}_2(\mathbb{C}) \times \mathrm{SL}_2(\mathbb{C})$ which are not self-dual of the same type as $\psi$.
\end{itemize}
For $\pi_0 \in \Pi_{\psi_0}$, the parabolically induced representation $\tau_{\psi_1} \rtimes \pi_0$ is irreducible and independent of the choice of $\psi_1$. Moreover,
\[
\Pi_\psi = \{\tau_{\psi_1} \rtimes \pi_0 \mid \pi_0 \in \Pi_{\psi_0} \}.
\]
\end{prop}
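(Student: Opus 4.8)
The plan is to prove the decomposition and the description of $\Pi_\psi$ in three stages. First, I would establish the \emph{existence and uniqueness of the decomposition} $\psi = \psi_1 \oplus \psi_0 \oplus \psi_1^\vee$. Since $\psi$ is an orthogonal (resp. symplectic) representation of $W_F \times \mathrm{SL}_2(\mathbb{C}) \times \mathrm{SL}_2(\mathbb{C})$, its constituents $\rho \boxtimes S_{a_i} \boxtimes S_{b_i}$ fall into two classes: those that are self-dual of the same type as $\psi$, and the rest. Collecting the former into $\psi_0$, one checks directly that $\psi_0 \in \Psi^{\mathrm{gp}}(G_{n_0})$ for an appropriate $n_0$ (the good-parity condition is exactly self-duality of the same type). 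The remaining constituents pair up: if $\rho \boxtimes S_a \boxtimes S_b$ occurs and is not self-dual of the same type as $\psi$, then because $\psi \cong \psi^\vee$ (as $\psi$ factors through a self-dual group), its dual $\rho^\vee \boxtimes S_a \boxtimes S_b$ occurs with the same multiplicity. If $\rho \boxtimes S_a \boxtimes S_b$ is not self-dual, the two are genuinely distinct and one assigns, say, the one with $\rho$ in a fixed half of each self-dual orbit to $\psi_1$; if it is self-dual but of the \emph{opposite} type, it occurs with even multiplicity (a standard fact about self-dual representations of the wrong type appearing in a form of the given type), and one splits the multiplicity in half between $\psi_1$ and $\psi_1^\vee$. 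This gives $\psi_1$ with the stated property; the choice is non-canonical, which is why the independence statement below is needed.

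Second, for $\pi_0 \in \Pi_{\psi_0}$ I would show $\tau_{\psi_1} \rtimes \pi_0$ \emph{is irreducible and independent of the choice of $\psi_1$}. Write $\psi_1 = \bigoplus_{j} \rho_j \boxtimes S_{a_j} \boxtimes S_{b_j}$, so $\tau_{\psi_1} = \bigtimes_j u_{\rho_j}(a_j, b_j)$. I would argue by induction on the number of factors, peeling off one essentially Speh factor $u_{\rho_j}(a_j,b_j)$ at a time. Irreducibility of $u_{\rho}(a,b) \rtimes \pi'$ for a representation $\pi'$ of good parity, when $\rho \boxtimes S_a \boxtimes S_b$ is not self-dual of the same type as the ambient parameter, is governed by the known reducibility criteria: if $\rho \not\cong \rho^\vee$ or $\rho$ has the wrong self-dual type for $\pi'$, the relevant intertwining operators have no poles and the induction is irreducible (this uses that the supercuspidal support of $\pi'$, being of good parity, cannot create the linkages forced by Theorem \ref{essred}-type phenomena on the classical-group side, together with the MVW-symmetry $u_\rho(a,b) \rtimes \pi_0 \cong u_{\rho^\vee}(a,b) \rtimes \pi_0$ established in the preliminaries). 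For independence of the choice, note any two valid $\psi_1, \psi_1'$ differ only by swapping some constituents with their duals; since $u_\rho(a,b)\rtimes \pi_0 \cong u_{\rho^\vee}(a,b) \rtimes \pi_0$ and the factors of $\tau_{\psi_1}$ commute in an irreducible product (again Theorem \ref{essred}), $\tau_{\psi_1} \rtimes \pi_0 \cong \tau_{\psi_1'}\rtimes \pi_0$.

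Third, I would identify $\Pi_\psi$ with $\{\tau_{\psi_1}\rtimes\pi_0 \mid \pi_0 \in \Pi_{\psi_0}\}$. One inclusion and the parametrization follow from Mœglin's construction of Arthur packets together with Xu's endoscopic analysis (\cite{Mœglin}, \cite{Xu2}): the packet $\Pi_\psi$ is built from $\Pi_{\psi_0}$ by parabolic induction along the Levi determined by $\psi_1$, and the component group satisfies $\mathcal{S}_\psi \cong \mathcal{S}_{\psi_0}$ since the non-good-parity constituents contribute nothing to $\mathcal{S}_\psi$; thus the map $\pi_0 \mapsto \tau_{\psi_1}\rtimes\pi_0$ is a bijection $\Pi_{\psi_0} \to \Pi_\psi$ compatible with the parametrizations by $\widehat{\mathcal{S}_{\psi_0}} = \widehat{\mathcal{S}_\psi}$. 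The main obstacle I anticipate is the second stage --- specifically, verifying \emph{irreducibility} of $\tau_{\psi_1}\rtimes\pi_0$ cleanly without circularity, since the general irreducibility criterion for essentially Speh times Arthur type is precisely Theorem \ref{thm:IRR}, which this paper is proving. The resolution is that here the situation is the degenerate one where each $u_{\rho_j}(a_j,b_j)$ sits on a line $\rho_j$ that is either not self-dual or of the wrong type relative to $\pi_0$, so the criteria reduce to the elementary observation (already flagged after Theorem \ref{essred}) that cross-line inductions are irreducible; one only needs the classical analogue of this for $u_\rho(a,b)\rtimes\pi_0$, which is available from Mœglin's work and does not depend on Theorem \ref{thm:IRR}.
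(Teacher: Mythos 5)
The paper does not prove this proposition at all: it is imported verbatim as a citation of M\oe glin (Theorem 6 of the cited paper) and Xu (Proposition 8.11), so there is no internal argument to compare yours against. Judged on its own terms, your outline has the right structure. Stage one (sorting constituents by whether they are self-dual of the same type as $\psi$, pairing the non-self-dual ones with their duals, and halving the even multiplicity of wrong-type self-dual constituents) is complete and correct. Stages two and three are where the real mathematical content lives, and there your argument ultimately defers to the same references the paper cites — which is legitimate, but means your ``proof'' is essentially an annotated reading of M\oe glin and Xu rather than an independent derivation. You correctly identify and defuse the one serious danger, namely circularity with Theorem \ref{thm:IRR}: the irreducibility needed here is the degenerate ``wrong parity'' case, which is part of M\oe glin's construction and logically prior to the present paper.

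One soft spot worth tightening: in stage two you invoke the MVW-symmetry $u_\rho(a,b)\rtimes\pi_0\cong u_{\rho^\vee}(a,b)\rtimes\pi_0$ as an ingredient \emph{in} the irreducibility argument, but as set up in the paper's preliminaries that isomorphism is only available \emph{after} irreducibility is known (before that one only has equality of semisimplifications). The clean order is: first get irreducibility of $\tau_{\psi_1}\rtimes\pi_0$ from M\oe glin, then deduce independence of the choice of $\psi_1$ from irreducibility together with $[\tau\rtimes\pi_0]=[\tau^\vee\rtimes\pi_0]$ in the Grothendieck group. Also, the claim in stage three that $\mathcal{S}_\psi\cong\mathcal{S}_{\psi_0}$ because non-good-parity constituents contribute nothing to the component group is correct but deserves a one-line justification (such constituents are not self-dual of the relevant type, hence do not give rise to generators of $\mathcal{A}_\psi$ surviving in $\mathcal{S}_\psi$).
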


\begin{cor} \label{bad-parity-is-easy}
    If $\psi$ is an Arthur parameter of good parity, $\pi \in \Pi_\psi$ and $u_\rho(a,b)$ a Speh representation such that $\psi \oplus (\rho \boxtimes S_a \boxtimes S_b)^{\oplus 2} $ is not of good parity, then $u_\rho(a,b) \rtimes \pi$ is irreducible.
\end{cor}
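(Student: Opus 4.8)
The plan is to reduce the claim to Proposition \ref{bad-parityy}. Let $\psi$ be the good-parity Arthur parameter with $\pi \in \Pi_\psi$, and consider the (larger) Arthur parameter $\psi' \ceq \psi \oplus (\rho \boxtimes S_a \boxtimes S_b)^{\oplus 2}$, which by hypothesis is \emph{not} of good parity. By construction $\psi' \in \Psi(G_N)$ for $N = n + ad$ (where $\rho$ is a representation of $\mathrm{GL}_d(F)$), so Proposition \ref{bad-parityy} applies to it. First I would write out the good-parity decomposition of $\psi'$ from that proposition, $\psi' = \psi_1' \oplus \psi_0' \oplus (\psi_1')^\vee$. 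The key observation is that, since $\psi$ is already of good parity, \emph{all} of its constituents are self-dual of the same type as $\psi$; the only constituent of $\psi'$ that fails this is $\rho \boxtimes S_a \boxtimes S_b$ — precisely because, by hypothesis, it is not self-dual of the same type as $\psi$ (if it were, then $\psi'$ would again be of good parity, contradiction). Hence in the decomposition of $\psi'$ we may and must take $\psi_0' = \psi$ and $\psi_1' = \rho \boxtimes S_a \boxtimes S_b$ (a single irreducible summand, not self-dual of the same type), with $(\psi_1')^\vee = \rho^\vee \boxtimes S_a \boxtimes S_b$ accounting for the second copy.

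With this decomposition in hand, Proposition \ref{bad-parityy} tells us two things: the induced representation $\tau_{\psi_1'} \rtimes \pi_0$ is irreducible for every $\pi_0 \in \Pi_{\psi_0'} = \Pi_\psi$, and it is independent of the choice of $\psi_1'$. Since $\tau_{\psi_1'} = u_\rho(a,b)$ by the definition of $\tau_{(\cdot)}$ given just before the proposition, and since $\pi \in \Pi_\psi$ is an admissible choice of $\pi_0$, we conclude directly that $u_\rho(a,b) \rtimes \pi$ is irreducible.

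The only point requiring a little care — and the place where I expect the main (though still minor) obstacle — is justifying that $\rho \boxtimes S_a \boxtimes S_b$ not being self-dual of the same type as $\psi$ forces it into the $\psi_1'$ part rather than the good-parity part $\psi_0'$, and conversely that everything in $\psi$ stays in $\psi_0'$. This is essentially the uniqueness of the decomposition in Proposition \ref{bad-parityy}: the good-parity part is exactly the sum of all constituents that are self-dual of the same type, and the $\psi_1'$ part is the sum of the rest (grouped into dual pairs). One must also check the harmless bookkeeping that $\rho$ and $\rho^\vee$ could coincide (if $\rho$ is self-dual) — in which case $\rho \boxtimes S_a \boxtimes S_b$ is self-dual but of the \emph{wrong} type, so it still lands in $\psi_1'$ and the argument is unaffected. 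Once this structural point is settled, the corollary is immediate from the proposition.
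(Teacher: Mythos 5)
Your proposal is correct and follows exactly the same route as the paper: one sets $\psi_1' = \rho \boxtimes S_a \boxtimes S_b$, $\psi' = \psi_1' \oplus \psi \oplus (\psi_1')^\vee$, notes that good parity of $\psi$ forces the failure of good parity to lie in $\rho \boxtimes S_a \boxtimes S_b$, and invokes Proposition \ref{bad-parityy}. Your extra discussion of uniqueness of the decomposition is harmless but unnecessary, since the proposition only requires exhibiting one valid decomposition.
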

\begin{proof}
    Since $\psi$ is of good parity, define
    \begin{align*}
        \psi_1' & \ceq \rho \boxtimes S_a \boxtimes S_b \\
        \psi'  & \ceq \psi_1' \oplus \psi \oplus (\psi_1')^\vee.
    \end{align*}
    Then Proposition \ref{bad-parityy} implies that $u_\rho(a,b) \rtimes \pi = \tau_{\psi_1} \rtimes \pi$ is irreducible. 
\end{proof}

We also say that a Speh representation $u_\rho(a,b)$ is of \textit{good parity with respect to $\pi$}, with the Arthur parameter $\psi$ corresponding to $\pi$, if 
\begin{align*}
    \psi \oplus (\rho \boxtimes S_a \boxtimes S_b)^{\oplus 2}
\end{align*}
is of good parity. 

Throughout this paper, we implicitly fix a Whittaker datum for $G_n$. Let $\psi \in \Psi^\mathrm{gp}(G_n)$ so that we have defined the component group $\mathcal{S}_\psi$. In \cite[Theorem 1.5.1(a)]{Arthur} Arthur gives a map
\[
\Pi_\psi \to \hat{\mathcal{S}}_\psi, \quad \pi \mapsto \langle \cdot, \pi \rangle_\psi.
\]
If $\psi = \phi \in \Phi^\mathrm{gp}(G_n)$ is tempered, this map is bijective. When $\pi \in \Pi_\phi$ corresponds to $\epsilon \in \hat{\mathcal{S}}_\phi$, we write $\pi = \pi(\phi, \epsilon)$.

\subsection{Extended multi-segments} \label{Extended multi-segments}
Extended multi-segments are an explicit way to parameterize the representations in a given Arthur packet. We define extended multi-segments in a slightly modified version of that given by Atobe in \cite[Definition 3.1]{A1}, to make the non-vanishing criterion (\cite[Theorem 4.4]{A1} reformulated to Theorem \ref{thm:nonvanishingstandard}), which we need in multiple places, easier to use. We use the symbol $\mathcal{E}$ for extended multi-segments with Atobe's parametrization and $\mathcal{S}$ for ours. 
\begin{de} (Extended multi-segments) 
    \begin{enumerate}
        \item We call a tuple $([A,B]_\rho,\mu)$ an \textit{extended segment}, if 
        \begin{enumerate}
            \item $\rho \in \mathrm{Cusp}^\perp (\mathrm{GL}_d (F))$ is an irreducible self-dual supercuspidal representation,
            \item $[A,B]_\rho$ is a segment,
            \item $\mu \in \Z$ with $\mu \equiv b \mod{2}$,
            \item $|\mu| \leq b$,\ \label{formal}
        \end{enumerate} 
        where $b \ceq A-B+1$. 
        \item We call the tuple $([A,B]_\rho,\mu)$ a \textit{formal extended segment}, if we drop condition (\ref{formal}) in the above definition.
        \item \label{ext-mult-seg} An \textit{extended multi-segment} $\mathcal{S}$ is a set of sequences of extended segments
        \begin{align*}
            \mathcal{S}=\bigcup_{\rho \in C_\mathcal{S}} \{(S_i^{\rho})_{i=1}^{n_\rho}\},
        \end{align*}
        where 
        \begin{align*}
            S_i^{\rho}=([A_i^\rho,B_i^\rho]_\rho, \mu_i^\rho)
        \end{align*}
        such that
        \begin{enumerate}
            \item $C_\mathcal{S} \subseteq \mathrm{Cusp}^\perp (\mathrm{GL}_d (F))$
            \item for every $\rho \in C_\mathcal{S}$, the sequence of extended segments is in \textit{admissible} order, which means that $A_i^\rho>A_j^\rho$ and $B_i^\rho>B_j^\rho$ imply $i>j$,
            \item $A_i^\rho+B_i^\rho \geq 0$ for all $\rho$ and all $i$ such that $1 \leq i \leq  n_\rho$,
            \item as a representation of $W_F \times \mathrm{SL}_2(\C) \times \mathrm{SL}_2(\C)$,
            \begin{align*}
                \psi_{\mathcal{S}} \ceq  \bigoplus_{\rho \in C_\mathcal{S}} \bigoplus_{i=1}^{n_\rho} \rho \boxtimes S_{a_i^\rho} \boxtimes S_{b_i^\rho} \in \Psi_{gp}(G_n),
            \end{align*}
            where $a_i^\rho=A_i^\rho+B_i^\rho+1$ and $b_i^\rho=A_i^\rho-B_i^\rho+1$,
            \item the following “sign condition” holds:
            \begin{align*} \label{sign}
                \sum_{\rho \in C_\mathcal{S}} \sum_{i=1}^{n_\rho}\left( \left \lfloor \frac{\mu_i^\rho}{2} \right \rfloor + \mu_i^\rho \sum_{j=1}^{i-1} (b_j^\rho-1)\right) \equiv 0 \mod{2}.
            \end{align*}
        \end{enumerate}
        \item A \textit{formal extended multi-segment } $\mathcal{S}$ is a set of sequences of formal extended segments
        \begin{align*}
            \mathcal{S}=\bigcup_{\rho \in C_\mathcal{S}} \{(S_i^{\rho})_{i=1}^{n_\rho}\}
        \end{align*}
        satisfying all the other conditions in (\ref{ext-mult-seg}).
        \item If the set $C_\mathcal{S}$ contains only one element $\rho$, the extended (formal) multi-segment $\mathcal{S}$ is called $\rho$-homogeneous or simply homogeneous. Similar to \cite[Definition 3.8]{hazel} we set 
        \begin{align*}
            \mathcal{S}_\rho \ceq (S_i^\rho)_{i=1}^{n_\rho}
        \end{align*}
        and call it the $\rho$-homogeneous part or $\rho$-part of $\mathcal{S}$. With this notation, we have
        \begin{align*}
            \mathcal{S}=\bigcup_{\rho \in C_{\mathcal{S}}} \{\mathcal{S}_\rho\}.
        \end{align*}
\item 
We say that the sequence $(S_i^\rho)_{i=1}^{n_\rho}$ is in \textit{very admissible order}, if it satisfies: 
\begin{align*}
    For\ 1\leq i,j\leq n_{\rho}:\ B_i^{\rho}>B_j^{\rho}\ \Rightarrow i>j.
\end{align*}
\item We say that the extended multi-segment $\mathcal{S}=\bigcup_{\rho \in C_{\mathcal{S}}} \{\mathcal{S}_\rho\}$, with $\mathcal{S}_\rho = (([A_i^\rho,B_i^\rho],\mu_i^\rho))_{i=1}^{n_\rho}$ is \textit{admissible}, if it satisfies the following condition:
\begin{align}
    \ B_i^\rho<0\ for\ some\ 1\leq i \leq n_\rho \Rightarrow \mathcal{S}_\rho\ is\ in\ very\ admissible\ order.
\end{align}
\item We call the extended multi-segment $\mathcal{S}$ \textit{non-negative}, if $B_i^{\rho}\geq 0$ for all $i$ such that $1\leq i \leq n_{\rho}$ and for all $\rho$. Every non-negative extended multi-segment is admissible.
\item We denote by $\mathrm{ExMult}$ the set of extended multi-segments and by $\mathrm{ExMult}_{\geq 0}$, $\mathrm{AdExMult}$ and $\mathrm{FExMult}$ the sets of non-negative, admissible and formal extended multi-segments respectively, such that
\begin{align*}
    \mathrm{ExMult}_{\geq 0} \subset \mathrm{AdExMult}\subset \mathrm{ExMult} \subset \mathrm{FExMult}.
\end{align*}
Furthermore, we denote by $\mathrm{ExMult}'$ the set of extended multi-segments as defined by Atobe in \cite[Definition 1.1]{A1}
\end{enumerate}
\end{de}
In the sequence $\mathcal{S}_\rho = (([A_i^\rho,B_i^\rho],\mu_i^\rho))_{i=1}^{n_\rho}$, we say that the extended segments $([A_i^\rho,B_i^\rho],\mu_i^\rho)$ and $([A_{i+1}^\rho,B_{i+1}^\rho],\mu_{i+1}^\rho)$ are \textit{consecutive}.
We introduce the following notation for $1\leq i \leq n_\rho $, to simplify some expressions:
\begin{align*}
    \delta_i^{\rho} &\ceq \prod_{\substack{j=1}}^{i-1} (-1)^{b_j^{\rho}-1}.
\end{align*}
Atobe's definition of extended multi-segments can be recovered from the one given above via the following map:
\begin{de}
Define the map
$\mathcal{F}:\mathrm{ExMult} \rightarrow \mathrm{ExMult}'$ by
\begin{align*}
   \mathcal{F}: \bigcup_{\rho \in C_\mathcal{S}} \{(([A_i^{\rho},B_i^{\rho}]_\rho,\mu_i^{\rho}))_{i=1}^{n_\rho}\} \mapsto \overline{\bigcup_{\rho \in C_\mathcal{S}} \left\{\left([ A_i^{\rho},B_i^{\rho}]_\rho,\frac{b_i^{\rho}-|\mu_i^{\rho}|}{2},\delta_{i}^{\rho}\mathrm{sgn}(\mu_i^{\rho}) \right)\right\}_{i\in (I_\rho,>)}},
\end{align*}
where we set the convention $\mathrm{sgn}(0)=1$, as well as $I_\rho\ceq \{1, \ldots , n_\rho \}$ with the order $>$ of $\N$ on $I_\rho$ and the overline $\overline{\cdot}$ denotes the equivalence class defined in \cite[Definition 1.1, item (3)]{A1}. Furthermore, define the map $\mathcal{F}^{-1}:\mathrm{ExMult}' \rightarrow \mathrm{ExMult}$ (which is in fact the inverse of $\mathcal{F}$), by 
\begin{align*}
   \mathcal{F}^{-1}:  \overline{\bigcup_{\rho\in C_\mathcal{S}}\left\{\left(\left[ A_i^{\rho},B_i^{\rho}\right]_\rho,l_i^{\rho},\eta_i^{\rho} \right)\right\}_{i\in I_{\rho}}} \mapsto   \bigcup_{\rho\in C_\mathcal{S}} \{(([A_i^{\rho},B_i^{\rho}]_\rho,\delta_{i}^{\rho}\eta_i^{\rho} (b_i^{\rho}-2l_i^{\rho})))_{i=1}^{n_\rho}\},
\end{align*}
where we identify the totally ordered finite set $I_\rho$ with $\{1, \ldots n_\rho\}$.
\end{de}

\begin{prop}
    The maps $\mathcal{F}$ and $\mathcal{F}^{-1}$ are well-defined, inverses of each other and map $\mathrm{ExMult}$ bijectively to $\mathrm{ExMult}'$.
\end{prop}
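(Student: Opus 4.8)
The plan is to verify that $\mathcal{F}$ and $\mathcal{F}^{-1}$ are mutually inverse bijections by first checking that each is well-defined on the appropriate set, and then verifying that their composition (in both orders) is the identity. The main point to address is that the numerical data carried by an extended segment in our parametrization (the single integer $\mu_i^\rho$ with $\mu_i^\rho \equiv b_i^\rho \bmod 2$ and $|\mu_i^\rho| \le b_i^\rho$) and in Atobe's parametrization (the pair $(l_i^\rho, \eta_i^\rho)$ with $0 \le l_i^\rho \le \lfloor b_i^\rho/2 \rfloor$ and $\eta_i^\rho \in \{\pm 1\}$, with $\eta_i^\rho$ irrelevant when $l_i^\rho = b_i^\rho/2$) encode exactly the same information once one keeps track of the sign-twist by $\delta_i^\rho$.

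First I would fix $\rho$ and a single formal extended segment $([A,B]_\rho, \mu)$ appearing in position $i$, so that $\delta := \delta_i^\rho$ is determined by the earlier blocks. Set $b = A - B + 1$. I claim the assignment $\mu \mapsto (l, \eta) := \big(\tfrac{b - |\mu|}{2}, \delta\,\mathrm{sgn}(\mu)\big)$ is a bijection from $\{\mu \in \Z : \mu \equiv b \bmod 2,\ |\mu| \le b\}$ onto $\{(l,\eta) : 0 \le l \le \lfloor b/2\rfloor,\ \eta \in \{\pm1\}\}/\!\sim$, where $(l, +1) \sim (l, -1)$ when $l = b/2$. Indeed, $|\mu| \le b$ and $\mu \equiv b \bmod 2$ give $b - |\mu| \in \{0, 2, 4, \ldots\}$ so $l = (b-|\mu|)/2$ is a nonnegative integer $\le \lfloor b/2 \rfloor$; conversely $|\mu| = b - 2l$ recovers $|\mu|$, and $\mathrm{sgn}(\mu) = \delta \eta$ recovers the sign of $\mu$ (using $\delta^2 = 1$), except precisely when $|\mu| = 0$, i.e.\ $l = b/2$, where the sign is vacuous and matches the equivalence relation $\sim$ built into $\mathrm{ExMult}'$ (this is the item-(3) equivalence in \cite[Definition 1.1]{A1}). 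The inverse map $\mathcal{F}^{-1}$ sends $(l,\eta)$ to $\mu = \delta \eta (b - 2l)$, and one checks directly that $\mathcal{F}^{-1}\circ\mathcal{F}$ and $\mathcal{F}\circ\mathcal{F}^{-1}$ act as the identity on this numerical data: $\delta\,(\delta\,\mathrm{sgn}(\mu))\,(b - 2\cdot\tfrac{b-|\mu|}{2}) = \mathrm{sgn}(\mu)|\mu| = \mu$, and symmetrically. The convention $\mathrm{sgn}(0) = 1$ makes the formulas total without affecting the equivalence class.

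Next I would check that the structural data is preserved and that the maps land in the correct sets. The segments $[A_i^\rho, B_i^\rho]_\rho$, the indexing set $I_\rho$ with its order, and the underlying Arthur parameter $\psi_{\mathcal{S}}$ are literally untouched by both maps, so conditions on $\rho$, on $A_i^\rho + B_i^\rho \ge 0$, on admissible order, and on $\psi_{\mathcal{S}} \in \Psi^{\mathrm{gp}}(G_n)$ transfer verbatim. The one genuinely substantive compatibility is between our ``sign condition'' $\sum_{\rho,i}\big(\lfloor \mu_i^\rho/2\rfloor + \mu_i^\rho \sum_{j<i}(b_j^\rho - 1)\big) \equiv 0 \bmod 2$ and the corresponding sign condition built into Atobe's $\mathrm{ExMult}'$; here one substitutes $\mu_i^\rho = \delta_i^\rho \eta_i^\rho(b_i^\rho - 2l_i^\rho)$, uses $\delta_i^\rho = \prod_{j<i}(-1)^{b_j^\rho - 1} \equiv 1 + \sum_{j<i}(b_j^\rho - 1) \bmod 2$, and reduces mod $2$ to match Atobe's condition on the triples $([A,B]_\rho, l, \eta)$ — the factor $\delta_i^\rho$ in the definition of $\mathcal{F}$ is chosen precisely so that this works. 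I would also record that well-definedness on equivalence classes is immediate: permuting two extended segments with identical $([A,B]_\rho,\mu)$ corresponds to permuting two identical triples, and the $l = b/2$ sign ambiguity was handled above.

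The step I expect to be the main obstacle is the bookkeeping for the sign condition, since it is the only place where the non-trivial twist $\delta_i^\rho$ genuinely interacts with the combinatorics rather than passing through formally; one must be careful that the floor function $\lfloor \mu/2 \rfloor$ and the product term both translate correctly under $\mu \leftrightarrow (l,\eta)$, and that the mod-$2$ manipulation of $\delta_i^\rho$ is valid even when some $b_j^\rho$ are even. Everything else — well-definedness of domains and codomains, the inverse identities — is a routine unwinding of the definitions once the single-segment bijection above is in hand. I would therefore organize the write-up as: (1) the single-segment numerical bijection; (2) a lemma that the sign conditions correspond; (3) assembling these into the claim that $\mathcal{F}, \mathcal{F}^{-1}$ are well-defined mutually inverse maps $\mathrm{ExMult} \leftrightarrow \mathrm{ExMult}'$.
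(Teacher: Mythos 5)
Your proposal is correct and follows essentially the same route as the paper's proof: verify the single-segment numerical correspondence $\mu \leftrightarrow (l,\eta)$ (including the $l=b/2$ ambiguity matching Atobe's equivalence relation), check the two compositions are the identity, and confirm that the two sign conditions agree under the substitution $\mu_i^\rho = \delta_i^\rho\eta_i^\rho(b_i^\rho - 2l_i^\rho)$. The only difference is organizational — you isolate the single-segment bijection and the sign-condition compatibility as separate lemmas, whereas the paper verifies the same facts in one pass — and this does not change the substance.
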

\begin{proof}
    We check that $\mathcal{F}$ is well-defined: By the parity condition on $\mu_i^\rho$, the integer $l_i^\rho$ is well-defined and $0\leq l_i^\rho \leq \frac{b_i^\rho}{2}$. To check that $\mathcal{F}^{-1}$ is well-defined, we note that 
    \begin{align*}
        0\leq |\delta_{i}^\rho\eta_i^\rho(b_i^\rho-2l_i^\rho)|\leq b_i^\rho.
    \end{align*}
    We also have that $\delta_{i}^\rho\eta_i^\rho(b_i^\rho-2l_i^\rho) \equiv b_i^\rho \mod{2}$. Furthermore, elements of the same equivalence class of extended multi-segments in the definition of \cite{A1} have the same image under $\mathcal{F}^{-1}$, because they concern the case $l_i^\rho=\frac{b_i^\rho}{2}$, which gets mapped to $\mu_i^\rho=0$, regardless of the sign $\eta_i^\rho$.
    \begin{align*}
        &(\mathcal{F}\circ \mathcal{F}^{-1})\left(  \overline{\bigcup_{\rho\in C_\mathcal{S}}\left\{\left(\left[ A_i^{\rho},B_i^{\rho}\right]_\rho,l_i^{\rho},\eta_i^{\rho} \right)\right\}_{i\in I_{\rho}}} \right) \\
        =&  \overline{\bigcup_{\rho\in C_\mathcal{S}}\left\{\left(\left[ A_i^{\rho},B_i^{\rho}\right]_\rho,\frac{b_i^\rho-|b_i^\rho-2l_i^\rho|}{2},\eta_i^{\rho} \right)\right\}_{i\in I_{\rho}}} \\
        = &\overline{\bigcup_{\rho\in C_\mathcal{S}}\left\{\left(\left[ A_i^{\rho},B_i^{\rho}\right]_\rho,l_i^{\rho},\eta_i^{\rho} \right)\right\}_{i\in I_{\rho}}}
    \end{align*}
    and
    \begin{align*}
        &(\mathcal{F}^{-1}\circ \mathcal{F}^{-1})\left( \bigcup_{\rho \in C_\mathcal{S}} \{(([A_i^{\rho},B_i^{\rho}]_\rho,\mu_i^{\rho}))_{i=1}^{n_\rho}\} \right) \\
        = &\bigcup_{\rho \in C_\mathcal{S}} \{(([A_i^{\rho},B_i^{\rho}]_\rho,(\delta_{i}^\rho)^2\mathrm{sgn}(\mu_i^\rho)( b_i^\rho-(b_i^\rho-|\mu_i^\rho|)) ))_{i=1}^{n_\rho}\} \\
        =&\bigcup_{\rho \in C_\mathcal{S}} \{(([A_i^{\rho},B_i^{\rho}]_\rho,\mu_i^{\rho}))_{i=1}^{n_\rho}\}.
    \end{align*}
    Lastly, we check that the sign conditions agree:
\begin{align*}
   1 &= \prod_\rho \prod_{i\in I_{\rho}} (-1)^{\left\lfloor  \frac{b_i^{\rho}}{2} \right \rfloor+l_i^{\rho}}(\eta_i^{\rho})^{b_i^{\rho}}\\
    &= \prod_\rho \left(\prod_{i\in I_{\rho}}(-1)^{\left\lfloor\frac{\mu_i^{\rho}}{2} \right \rfloor}  (\delta_i^{\rho})^{\mu_k^{\rho}} \right)  \\
    & \Leftrightarrow \\
    0 & \equiv \sum_\rho \sum_{i=1}^{n_\rho}\left( \left \lfloor \frac{\mu_i^{\rho}}{2} \right \rfloor + \mu_i^{\rho} \sum_{j=1}^{i-1} (b_j^{\rho}-1)\right)   \mod{2}.
\end{align*}
\end{proof}

\begin{de} \label{define-the-pi}
    Let $\mathcal{S} \in \mathrm{AdExMult}$ be an admissible extended multi-segment. We define the representation 
    \begin{align*}
        \pi(\mathcal{S}) \ceq \pi(\mathcal{F}(\mathcal{S})),
    \end{align*}
    where the latter is the representation defined in \cite[Section 3.2]{A1}. If this representation does not vanish, it is a representation of Arthur type in the Arthur packet $\Pi_{\psi_{\mathcal{S}}}$. Furthermore, all representations of Arthur type (of good parity) can be constructed this way (\cite[Theorem 3.4]{A1}). 
\end{de}
In Section \ref{unit}, we will give a more explicit construction of these representations.
There is a caveat to this definition: the representation $\pi(\mathcal{S})$ may vanish. Below, we will define a subset $\mathrm{Rep}$ of $\mathrm{AdExMult}$ through combinatorial means. In Proposition \ref{rep-is-nonzero}, we will show that this set determines precisely those admissible extended multi-segments, that give a non-zero representation. One important step in the determination of this set $\mathrm{Rep}$ is the following:
\begin{de} \label{necessary}
    Let $\mathcal{S}=\bigcup_{\rho \in C_\mathcal{S}} \{(([A_i^{\rho},B_i^{\rho}]_\rho,\mu_i^{\rho}))_{i=1}^{n_\rho}\}\in \mathrm{FExMult}$ be a formal extended multi-segment. We say that $\mathcal{S}$ satisfies the \textit{necessary condition for non-vanishing}, if 
        \begin{align} \label{nec}
     |A_i^\rho-A_{i-1}^\rho| + |B_i^\rho-B_{i-1}^\rho| \geq |\mu_i^\rho-\mu_{i-1}^\rho | \qquad \forall\ 1 <i \leq n_\rho\ \forall\rho\in C_\mathcal{S}.
\end{align}  
\end{de}
Under the map $\mathcal{F}$, this condition is equivalent to the one given in \cite[Proposition 4.1]{A1} on the set of non-negative extended multi-segments. In this sense, it is indeed a necessary condition: If $\mathcal{S}\in \mathrm{ExMult}_{\geq 0}$ and $\pi(\mathcal{S})\neq 0$, then $\mathcal{S}$ satisfies (\ref{nec}). For a proof, see Lemma \ref{necess}.

Another caveat is that the map of extended multi-segments to representations is not injective: There are three types of equivalence relations of extended multi-segments, that leave the emerging representation invariant (\cite[Definition 3.4]{A3}). We will now state the first, since it is also necessary to define the set $\mathrm{Rep}$. Another (Deformation) can be found in the Appendix \ref{equivalences}.

\begin{de} \label{reorder}
On the set $\mathrm{FExMult}$ of formal extended multi-segments
    $\mathcal{S}=\bigcup_{\rho \in C_{\mathcal{S}}}\{(S_i^\rho)_{i=1}^{n_\rho}\}=\bigcup_{\rho \in C_{\mathcal{S}}}\{(([A_i^\rho,B_i^\rho]_\rho,\mu_i^\rho))_{i=1}^{n_\rho}\}$ we define the following operation $R_i^\rho$ for $1\leq i < n_\rho$:
    \begin{enumerate}
        \item If  $[A_{i}^\rho,B_{i}^\rho]_\rho\subseteq [A_{i+1}^ \rho,B_{i+1}^\rho]_\rho$, $R_{i}^\rho(\mathcal{S})\ceq\mathcal{S}'=\bigcup_{\rho \in C_{\mathcal{S}}}\{(S_i'^\rho)_{i=1}^{n_\rho}\}$, where
        \begin{align}\label{gamma}
            S_{i}'^\rho &\ceq ([A_{i+1}^\rho,B_{i+1}^\rho]_\rho,2\mu_{i}^\rho-\mu_{i+1}^\rho) \\
            S_{i+1}'^\rho &\ceq ([A_{i}^\rho,B_{i}^\rho]_\rho,\mu_{i}^\rho)  \nonumber \\
            S_j'^\rho &\ceq S_j^\rho\ else.               \nonumber
        \end{align}
        \item  If  $[A_{i}^\rho,B_{i}^\rho]_\rho\supseteq [A_{i+1}^ \rho,B_{i+1}^\rho]_\rho$, $R_{i}^\rho(\mathcal{S})\ceq\mathcal{S}'=\bigcup_{\rho \in C_{\mathcal{S}}}\{(S_i'^\rho)_{i=1}^{n_\rho}\}$, where
        \begin{align} \label{beta}
            S_{i}'^\rho &\ceq ([A_{i+1}^\rho,B_{i+1}^\rho]_\rho,\mu_{i+1}^\rho) \\
            S_{i+1}'^\rho &\ceq ([A_{i}^\rho,B_{i}^\rho]_\rho,2\mu_{i+1}^\rho - \mu_{i}^\rho) \nonumber\\
            S_{j}'^\rho &\ceq S_j^\rho\ else.                                 \nonumber
        \end{align} 
    \item If $A_{i+1}^\rho \geq  A_{i}^\rho$ and $ B_{i+1}^\rho \geq B_{i}^\rho$, $R_{i}^\rho(\mathcal{S})\ceq\mathcal{S}$ (as in \cite[Definition 3.15]{hazel})
    \end{enumerate}
\end{de}
\begin{rmk} \label{why-formal}
        We think of $R_i^\rho$ as transpositions on the index set, with changes to the $\mu$-parameter. It is easy to see that $(R_i^\rho)^2=\mathrm{id}$. 
        At this point the notion of formal extended multi-segments comes into play: If $\mathcal{S}$ is an extended multi-segment, $R_i^\rho(\mathcal{S})$ is not necessarily in $\mathrm{ExMult}$ again. However, if $\mathcal{S}$ satisfies the necessary condition (\ref{nec}), we have in fact $R_i^\rho(\mathcal{S}) \in \mathrm{ExMult}$. 
        
        Since the necessary condition is formulated in terms of the $\rho$-parts $\mathcal{S}_\rho$ of $\mathcal{S}$ and the maps $R_i^{\rho'}$ for $\rho' \not \cong \rho$ leave $\mathcal{S}_\rho$ invariant, we can consider the $\rho$-parts separately for most of the following proofs.
\end{rmk}
\begin{de}
We call two formal extended multi-segments a \textit{reordering} of each other, if one is obtained via a sequence of such reorders from the other, i.e. $\mathcal{S}\sim \mathcal{S}'$ if and only if $\mathcal{S}=R_{i_1}^{\rho_1'} \circ ... \circ R_{i_m}^{\rho_m'}(\mathcal{S}')$ for some $\rho_j' \in C_{\mathcal{S}}$ and $i_j< n_{\rho_j'}$. This is a equivalence relation on $\mathrm{FExMult}$ and we denote the equivalence class of $\mathcal{S}$ by $[\mathcal{S}]$.
\end{de}
    \begin{prop} \label{switch}
        The maps $R_i^\rho$ above coincide with $\mathcal{F}^{-1} \circ R_k \circ \mathcal{F}$ (with $R_k$ from \cite[Definition 3.15]{hazel} for $k$ corresponding to $i$ and $\rho$) on the set $\mathrm{FExMult}$ ($\mathcal{F}(\mathcal{S})$ is called a \textit{symbol} there). Suppose that $\mathcal{S} \in \mathrm{ExMult}_{\geq 0}$ and $\pi(\mathcal{S})\neq 0$ then 
        \begin{align*}
            \pi(\mathcal{S}) \cong \pi(R_i^\rho(\mathcal{S})).
        \end{align*}
    \end{prop}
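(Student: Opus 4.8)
The plan is to prove Proposition \ref{switch} in two parts. First I would establish the purely combinatorial statement that $R_i^\rho = \mathcal{F}^{-1} \circ R_k \circ \mathcal{F}$ on $\mathrm{FExMult}$; once this is in hand, the representation-theoretic assertion $\pi(\mathcal{S}) \cong \pi(R_i^\rho(\mathcal{S}))$ follows by unwinding Definition \ref{define-the-pi}: since $\pi(\mathcal{S}) = \pi(\mathcal{F}(\mathcal{S}))$ and $\pi(R_i^\rho(\mathcal{S})) = \pi(\mathcal{F}(R_i^\rho(\mathcal{S}))) = \pi(R_k(\mathcal{F}(\mathcal{S})))$, we may invoke the invariance of Atobe's representation under the operation $R_k$ on symbols, which is precisely \cite[Proposition 3.16]{hazel} (or the corresponding statement in \cite{A3}), valid because $\mathcal{S} \in \mathrm{ExMult}_{\geq 0}$ and $\pi(\mathcal{S}) \neq 0$ guarantee that $\mathcal{F}(\mathcal{S})$ is an honest non-negative symbol to which that result applies.

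For the combinatorial identity, the work is to chase the definitions through $\mathcal{F}$. I would fix $\rho$ and an index $i < n_\rho$ and split into the three cases of Definition \ref{reorder}. In case (3), $A_{i+1}^\rho \geq A_i^\rho$ and $B_{i+1}^\rho \geq B_i^\rho$, so the admissibility ordering is already respected, $R_i^\rho$ is the identity, and $R_k$ on the symbol side is likewise the identity by \cite[Definition 3.15]{hazel} — so nothing to check. In case (1), where $[A_i^\rho,B_i^\rho]_\rho \subseteq [A_{i+1}^\rho,B_{i+1}^\rho]_\rho$, I would apply $\mathcal{F}$ to $\mathcal{S}$, getting the triples $(\;[A_i^\rho,B_i^\rho]_\rho, \tfrac{b_i^\rho - |\mu_i^\rho|}{2}, \delta_i^\rho \,\mathrm{sgn}(\mu_i^\rho)\;)$ and $(\;[A_{i+1}^\rho,B_{i+1}^\rho]_\rho, \tfrac{b_{i+1}^\rho - |\mu_{i+1}^\rho|}{2}, \delta_{i+1}^\rho \,\mathrm{sgn}(\mu_{i+1}^\rho)\;)$, apply Atobe/Hazeltine's $R_k$ — which transposes these two triples while adjusting the middle ($l$) entry and the sign ($\eta$) entry according to the containment of segments — and then apply $\mathcal{F}^{-1}$ to recover a $\mu$-parameter. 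The claim is that this recovered $\mu$ equals $2\mu_i^\rho - \mu_{i+1}^\rho$ in the $(i)$-slot (now carrying segment $[A_{i+1}^\rho,B_{i+1}^\rho]_\rho$) and $\mu_i^\rho$ in the $(i+1)$-slot, matching \eqref{gamma}. The key subtlety is that $\delta_i^\rho$ and $\delta_{i+1}^\rho$ differ by a factor $(-1)^{b_i^\rho - 1}$, and after the transposition the $\delta$ attached to the new slot changes; one must check that the product of the Hazeltine sign-change with this shift in $\delta$ reproduces exactly $\mathrm{sgn}(2\mu_i^\rho - \mu_{i+1}^\rho)$ and the correct absolute value. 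Case (2) is entirely symmetric, swapping the roles and reproducing \eqref{beta}.

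The main obstacle I expect is bookkeeping the signs in cases (1) and (2): the translation dictionary between the $(l, \eta)$-parametrization of \cite{A1}, \cite{hazel} and the single-integer $\mu$-parametrization used here involves the products $\delta_i^\rho = \prod_{j<i}(-1)^{b_j^\rho - 1}$, and these shift under transposition of consecutive segments in a way that must conspire with the sign rule for $R_k$ on symbols. It is worth handling the degenerate sub-case $\mu_i^\rho = 0$ (or $\mu_{i+1}^\rho = 0$, or $2\mu_i^\rho - \mu_{i+1}^\rho = 0$) separately, since $\mathrm{sgn}(0) = 1$ is a convention and the equivalence-class quotient $\overline{\;\cdot\;}$ in the definition of $\mathcal{F}$ is designed precisely to absorb the sign ambiguity there; one should confirm that $R_k$ and $\mathcal{F}^{-1}$ are still compatible on these equivalence classes. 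Finally I would note that the hypothesis $\mathcal{S} \in \mathrm{ExMult}_{\geq 0}$ is used only to transfer Atobe's result, whereas the combinatorial identity $R_i^\rho = \mathcal{F}^{-1}\circ R_k\circ\mathcal{F}$ holds on all of $\mathrm{FExMult}$, as stated; keeping these two scopes distinct will make the argument clean.
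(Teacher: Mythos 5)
Your plan follows the paper's own proof: apply $\mathcal{F}$, use the explicit reordering formulas for symbols from \cite[Definition 3.15]{hazel}, apply $\mathcal{F}^{-1}$ and verify the $\mu$-parameters transform as in Definition \ref{reorder} (the sign bookkeeping with the $\delta_i^\rho$ factors that you flag is exactly the computation the paper carries out), and then deduce the isomorphism of representations from the invariance of $\pi$ under reordering of non-negative symbols in \cite{hazel}. This is essentially the same approach; the only remaining work is to actually execute the case-by-case sign computation you outline.
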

    \begin{proof}
        We consider any $\rho$-part of $\mathcal{S}$ (we drop the index $\rho$). Let $S_{i-1}$ and $S_i$ be two consecutive formal extended segments in $\mathcal{S}$ as in the first case. Set $\varepsilon\ceq\eta_{i-1}\eta_i(-1)^{b_{i-1}-1}$. According to \cite[Definition 3.15]{hazel} the reordering of $\mathcal{F}(\mathcal{S})$ is given by $(l_{i-1}',\eta_{i-1}')=(l_{i-1},\eta_{i-1} (-1)^{b_{i}-1})$ and
    \begin{align*}
        (l_{i}',\eta_i') = \left\{ \begin{matrix}
            (b_{i} - l_{i} - |w_{i-1}|,(-1)^{b_{i-1}-1}\eta_{i}) & \ if \ \varepsilon=1 \ and\ |w_{i}| < 2|w_{i-1}| \\
             (l_{i} + |w_{i-1}|,(-1)^{b_{i-1}}\eta_{i}) & \ if \ \varepsilon=1 \ and\ |w_{i}| \geq 2|w_{i-1}| \\
            (l_{i}-|w_{i-1}|,(-1)^{b_{i-1}}\eta_{i}) & \ if \ \varepsilon=-1. \\
        \end{matrix} \right.
    \end{align*}
    Thus
    $$
    \mu_{i}'=\delta_{i} (-1)^{b_{i-1}-1} \eta_{i}' (b_{i}-2l_{i}')= 2\mu_{i-1}-\mu_i, 
    $$
    and
    \begin{align*}
    \mu_{i-1}'&=\delta_{i-1}(-1)^{b_i-1}\eta_{i-1}' (b_{i-1}-2l_{i-1})=\delta_{i-1}\eta_{i-1} (b_{i-1}-2l_{i-1}) \\
        & = \mu_{i-1}.
    \end{align*}
    The other case is proven analogously. Now the second assertion follows from \cite[Theorem 3.18]{hazel}.
    \end{proof}
We can now define the set $\mathrm{Rep}$ of which we will show (Proposition \ref{rep-is-nonzero}) that it produces exactly the non-vanishing representations:
\begin{de} \label{rep}
    We define the subset $\mathrm{Rep}$ of $\mathrm{AdExMult}$ as those admissible extended multi-segments $\mathcal{S}=\bigcup_{\rho \in C_\mathcal{S}}\{(([A_i^\rho,B_i^\rho],\mu_i^\rho))_{i=1}^{n_\rho}\}$, that satisfy:
    \begin{enumerate}
        \item Every reordering $\mathcal{S}'\in [\mathcal{S}]$ of $\mathcal{S}$ satisfies the necessary condition for non-vanishing (\ref{nec}).
        \item For all $1\leq i\leq n_\rho$ and all $\rho \in C_{\mathcal{S}}$
    \begin{align} \label{nec-negative}
        |\Hat{\mu_i^\rho}| \leq a_i^\rho, 
    \end{align}
    where we define
    \begin{align} \label{muhat}
        \Hat{\mu_i^\rho} \ceq  \left\{ \begin{matrix}
            \mu_i^\rho &\ if\ B_i^\rho \in \Z, \\
            \mu_i^\rho - 1 &\ if\ B_i^\rho \not \in \Z.
        \end{matrix} \right.
    \end{align}
    \end{enumerate}
\end{de}
\begin{rmk}
    Note that if $\mathcal{S}\in \mathrm{ExMult}_{\geq 0}$, the second condition is always satisfied automatically. 
\end{rmk}
\begin{prop} \label{reorders-stay-in-ext}
    If $\mathcal{S} \in \mathrm{Rep}$, then $[\mathcal{S}] \subseteq \mathrm{ExtMult}$.
\end{prop}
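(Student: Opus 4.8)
The plan is to induct on the length of a reordering chain. Fix $\mathcal{S}\in\mathrm{Rep}$ and an arbitrary $\mathcal{S}'\in[\mathcal{S}]$, and write $\mathcal{S}'=R_{i_m}^{\rho_m}\circ\cdots\circ R_{i_1}^{\rho_1}(\mathcal{S})$ with $\rho_k\in C_{\mathcal{S}}$ and $i_k<n_{\rho_k}$. Put $\mathcal{S}^{(0)}\ceq\mathcal{S}$ and $\mathcal{S}^{(k)}\ceq R_{i_k}^{\rho_k}(\mathcal{S}^{(k-1)})$, so $\mathcal{S}^{(m)}=\mathcal{S}'$. Each $\mathcal{S}^{(k)}$ is a reordering of $\mathcal{S}$, hence lies in $[\mathcal{S}]$, and therefore, by the first clause of Definition \ref{rep}, satisfies the necessary condition for non-vanishing (\ref{nec}). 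So it suffices to prove the one-step statement: \emph{if $\mathcal{T}\in\mathrm{ExMult}$ satisfies (\ref{nec}) and $1\le i<n_\rho$, then $R_i^\rho(\mathcal{T})\in\mathrm{ExMult}$}. Granting this, the proposition follows by induction on $k$: the base case $\mathcal{S}^{(0)}=\mathcal{S}\in\mathrm{AdExMult}\subseteq\mathrm{ExMult}$ is immediate, and if $\mathcal{S}^{(k-1)}\in\mathrm{ExMult}$, then, as it satisfies (\ref{nec}), the one-step statement gives $\mathcal{S}^{(k)}=R_{i_k}^{\rho_k}(\mathcal{S}^{(k-1)})\in\mathrm{ExMult}$. (This one-step statement is the assertion already recorded in Remark \ref{why-formal}; I sketch a proof for completeness.)

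For the one-step statement, observe that $R_i^\rho$ permutes two consecutive segments within the single $\rho$-part $\mathcal{T}_\rho$, changes only the attached $\mu$-parameters, and leaves each $\mathcal{T}_{\rho'}$ with $\rho'\not\cong\rho$ untouched; hence the conditions $C_{\mathcal{S}}\subseteq\mathrm{Cusp}^\perp$, $A_i^\rho+B_i^\rho\ge0$, and $\psi_{\mathcal{S}}\in\Psi_{gp}(G_n)$ are automatically preserved, the multiset of underlying segments --- and so of the pairs $(a_i^\rho,b_i^\rho)$ --- being unchanged. What remains is: (i) every new $\mu$-parameter still satisfies $|\mu|\le b$ (so $R_i^\rho(\mathcal{T})$ is built from genuine, not merely formal, extended segments); (ii) the admissible order is preserved; (iii) the sign condition is preserved. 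For (i), in the first case of Definition \ref{reorder} we have $[A_i^\rho,B_i^\rho]_\rho\subseteq[A_{i+1}^\rho,B_{i+1}^\rho]_\rho$, so $A_i^\rho\le A_{i+1}^\rho$, $B_i^\rho\ge B_{i+1}^\rho$, and
\[
b_{i+1}^\rho-b_i^\rho=(A_{i+1}^\rho-A_i^\rho)+(B_i^\rho-B_{i+1}^\rho)=|A_{i+1}^\rho-A_i^\rho|+|B_{i+1}^\rho-B_i^\rho|\ge|\mu_{i+1}^\rho-\mu_i^\rho|
\]
by (\ref{nec}); consequently the new parameter $2\mu_i^\rho-\mu_{i+1}^\rho$, attached to $[A_{i+1}^\rho,B_{i+1}^\rho]_\rho$, satisfies
\[
|2\mu_i^\rho-\mu_{i+1}^\rho|\le|\mu_i^\rho|+|\mu_{i+1}^\rho-\mu_i^\rho|\le b_i^\rho+(b_{i+1}^\rho-b_i^\rho)=b_{i+1}^\rho,
\]
using $|\mu_i^\rho|\le b_i^\rho$, while the new parameter $\mu_i^\rho$ at the other position sits on $[A_i^\rho,B_i^\rho]_\rho$ and already obeys $|\mu_i^\rho|\le b_i^\rho$; the parity $\mu\equiv b\pmod2$ persists, as $2\mu_i^\rho-\mu_{i+1}^\rho\equiv\mu_{i+1}^\rho\equiv b_{i+1}^\rho\pmod2$. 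The second case of Definition \ref{reorder} is symmetric (with $i$ and $i+1$ interchanged), and the third changes nothing.

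Point (ii) is a short local check: a swap happens only when one of the two consecutive segments is contained in the other, and one verifies directly that this cannot create indices $p<q$ with $A_p^\rho>A_q^\rho$ and $B_p^\rho>B_q^\rho$. Point (iii) is a direct computation of the change in the sign sum of Definition \ref{rep}: the swap affects only the two summands at positions $i$ and $i+1$ (the partial sums $\sum_{j<p}(b_j^\rho-1)$ for $p>i+1$ are unchanged, since $b_i^\rho+b_{i+1}^\rho$ is), and a brief manipulation using $\lfloor x\rfloor+\lceil x\rceil=2x$ for $x\in\tfrac12\Z$ shows the net change is $\equiv\mu_i^\rho b_{i+1}^\rho-\mu_{i+1}^\rho b_i^\rho\equiv b_i^\rho b_{i+1}^\rho-b_{i+1}^\rho b_i^\rho=0\pmod2$; alternatively, transport the statement through the bijection $\mathcal{F}$ using Proposition \ref{switch}. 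The crux is (i): without (\ref{nec}) a single reorder of a bona fide extended multi-segment can yield $|\mu|>b$ and so drop into $\mathrm{FExMult}\setminus\mathrm{ExMult}$; and because a reorder can itself \emph{destroy} (\ref{nec}), it is essential that $\mathrm{Rep}$ be defined so that \emph{every} reordering of $\mathcal{S}$, not just $\mathcal{S}$ itself, satisfies (\ref{nec}) --- this is exactly what makes the induction close. Finally, the second clause of Definition \ref{rep}, the bound $|\widehat{\mu_i^\rho}|\le a_i^\rho$, is not needed here: it controls the non-vanishing of $\pi(\mathcal{S})$, not membership in $\mathrm{ExMult}$ (and indeed a reorder need not preserve admissibility, only membership in $\mathrm{ExMult}$).
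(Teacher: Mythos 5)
Your proof is correct and follows essentially the same route as the paper: induct on the length of the reordering chain, note that every intermediate reordering lies in $[\mathcal{S}]$ and hence satisfies (\ref{nec}) by the definition of $\mathrm{Rep}$, and invoke the one-step preservation statement of Remark \ref{why-formal} to conclude $R_{i}^{\rho}$ keeps you inside $\mathrm{ExMult}$. The only difference is that you also supply a (correct) verification of that remark --- the bound $|2\mu_i^\rho-\mu_{i+1}^\rho|\le b_{i+1}^\rho$ via (\ref{nec}), plus the order and sign checks --- which the paper takes for granted.
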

Since we are interested in the set $\mathrm{Rep}$ and all reorderings give extended multi-segments again, the notion of formal extended multi-segments, necessary for the definition, becomes obsolete from here on out. 
\begin{proof}
    Let $\mathcal{S} \in \mathrm{Rep}$ and let $\mathcal{S}' \sim \mathcal{S}$. By definition, this means that $\mathcal{S}'=R_{i_1}^{\rho_1'} \circ ... \circ R_{i_m}^{\rho_m'}(\mathcal{S})$ for some $\rho_j' \in C_{\mathcal{S}}$ and $i_j< n_{\rho_j'}$. We show that $\mathcal{S}'\in\mathrm{ExMult}$ by induction over the number $m$. If $m=0$, this is clear. For the inductive step, we may assume that $\mathcal{S}''\ceq R_{i_2}^{\rho_2'} \circ ... \circ R_{i_m}^{\rho_m'}(\mathcal{S}) \in \mathrm{ExMult}$. By definition of the set $\mathrm{Rep}$, the extended multi-segment $\mathcal{S}''$ satisfies (\ref{nec}). Now by Remark \ref{why-formal}, it follows that $\mathcal{S}'=R_{i_1}^{\rho_1'}(\mathcal{S}'') \in \mathrm{ExMult}$.
\end{proof}

\begin{de}
    Let $\mathcal{S}=\bigcup_{\rho \in C_\mathcal{S}}\{(([A_i^\rho,B_i^\rho],\mu_i^\rho))_{i=1}^{n_\rho}\}$ be an extended multi-segment. 
$\mathcal{S}$ is said to be \textit{standard}, if 
\begin{align*}
    B_i^\rho < B_j^\rho\ or\ B_i^\rho = B_j^\rho \ and\ A_i^\rho  > A_j^\rho \Rightarrow i < j  \qquad for\ 1\leq i,j \leq n_\rho, \quad \forall \rho \in C_\mathcal{S}.\\
\end{align*}
A standard extended multi-segment is always admissible. 
\end{de}
\begin{prop}
    Let $\mathcal{S}\in \mathrm{Rep}$. Then there is a unique standard extended multi-segment $\mathcal{S}' \in [\mathcal{S}]$. We call it the \textit{standard form} of $\mathcal{S}$.
\end{prop}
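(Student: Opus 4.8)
The plan is to prove existence and uniqueness of the standard form separately, working $\rho$-part by $\rho$-part (since the reordering operations $R_i^{\rho'}$ for $\rho' \not\cong \rho$ leave $\mathcal{S}_\rho$ untouched, by Remark \ref{why-formal}). So fix $\rho$ and consider the sequence $\mathcal{S}_\rho = (([A_i^\rho, B_i^\rho], \mu_i^\rho))_{i=1}^{n_\rho}$. The ordering required for a standard extended multi-segment is the total preorder induced by comparing first $B_i^\rho$ (ascending) and then $A_i^\rho$ (descending); on each equivalence class (same $[A_i^\rho,B_i^\rho]$) the order among segments is arbitrary, but the attached $\mu$-parameters must follow along. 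So the content is: (i) one can reach a sequence sorted in this order by a chain of the adjacent transpositions $R_i^\rho$, and (ii) any two such sorted sequences in $[\mathcal{S}]$ actually coincide as extended multi-segments, including the $\mu$-parameters.

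For existence, I would argue by a bubble-sort: since by Proposition \ref{reorders-stay-in-ext} every reordering of $\mathcal{S}$ stays in $\mathrm{ExMult}$, and each $R_i^\rho$ is precisely the move that swaps two consecutive entries whenever one segment contains the other or, in the remaining case ($A_{i+1}^\rho \ge A_i^\rho$ and $B_{i+1}^\rho \ge B_i^\rho$), acts trivially. One checks that whenever the standardness condition is violated at a consecutive pair $(i,i+1)$ — i.e. $B_{i+1}^\rho < B_i^\rho$, or $B_{i+1}^\rho = B_i^\rho$ with $A_{i+1}^\rho > A_i^\rho$ — the two segments satisfy $[A_{i+1}^\rho,B_{i+1}^\rho] \subseteq [A_i^\rho,B_i^\rho]$ or $\supseteq$, so $R_i^\rho$ is a genuine (nontrivial) transposition that we may apply, and it strictly decreases the number of ``inversions'' with respect to the target total preorder. (In the equal-segment case there is nothing to fix, since the order within a block is unconstrained.) Thus after finitely many applications of the $R_i^\rho$ we land in a standard extended multi-segment, which lies in $[\mathcal{S}]$ by construction and is in $\mathrm{ExMult}$ by Proposition \ref{reorders-stay-in-ext}.

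For uniqueness, suppose $\mathcal{S}', \mathcal{S}'' \in [\mathcal{S}]$ are both standard. The multi-set of underlying formal extended segments $\{([A_i^\rho,B_i^\rho],\mu_i^\rho)\}$ is an invariant of the reordering equivalence — this is visible from Proposition \ref{switch}, since on the side of symbols the reorders only permute (and modify $l_i,\eta_i$ of) entries without creating or destroying segments, and on passing back through $\mathcal{F}^{-1}$ one must check that the unordered collection of extended segments is preserved; alternatively one argues directly that $R_i^\rho$ preserves the multi-set of pairs $([A,B]_\rho,\mu)$ when $[A_i^\rho,B_i^\rho]$ and $[A_{i+1}^\rho,B_{i+1}^\rho]$ are \emph{not} nested with distinct $\mu$'s, and handles the nested case by tracking the $\mu$-transformation in \eqref{gamma}, \eqref{beta}. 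Hmm — actually the cleanest route is: the multi-set of segments $[A_i^\rho,B_i^\rho]$ is clearly invariant (the $R_i^\rho$ only permute segments), and the standardness condition forces a unique ordering of this multi-set up to permutations inside each block of equal segments. So $\mathcal{S}'$ and $\mathcal{S}''$ agree as sequences of segments up to such block-permutations; it remains to see the $\mu$-parameters match. Here I would invoke that within a block of equal segments $[A,B]_\rho$, repeated application of $R_i^\rho$ (which in the ``$\subseteq$ and $\supseteq$'' overlapping case with equal segments simply swaps the two $\mu$'s — check \eqref{gamma}/\eqref{beta} give $2\mu - \mu = \mu$ when the segments coincide) just permutes the multi-set of $\mu$'s attached to that block; and since $\pi(\mathcal{S}) \cong \pi(\mathcal{S}')$ for any reorder (Proposition \ref{switch}), the representation — hence its Arthur parameter and the full data — pins down these multi-sets, so $\mathcal{S}'$ and $\mathcal{S}''$ carry the same $\mu$ multi-set in each block and, being both standard, are equal as extended multi-segments.

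The main obstacle I anticipate is the uniqueness of the $\mu$-parameters: the reordering moves do \emph{not} preserve individual $\mu$-values pointwise (only the move on equal segments does), so one cannot simply say ``the multi-set of $\mu$'s is an invariant'' across arbitrarily overlapping segments. The honest argument needs either the translation to symbols via $\mathcal{F}$ (where \cite[Definition 3.15]{hazel} and \cite[Theorem 3.18]{hazel} already encode that a standard symbol is the canonical representative of its reorder-class) or a careful induction showing that once the segment-sequence is in standard order, the constraints \eqref{gamma},\eqref{beta} have no room left to act nontrivially except by permuting $\mu$'s within equal-segment blocks — and that within such a block the $\mu$'s, together with the sign condition and the admissibility of the order, are uniquely determined. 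I would lean on the symbol picture to make this rigorous and short: transport everything through $\mathcal{F}$, use that Atobe/Hazeltine's standard symbols are the unique standard representatives of a reorder-orbit, and pull back via $\mathcal{F}^{-1}$, which is a bijection respecting the reorder relation by Proposition \ref{switch}.
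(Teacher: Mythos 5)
Your existence argument (bubble-sort via the $R_i^\rho$, using admissibility to see that any standard-order violation at a consecutive pair occurs between nested segments, so a genuine transposition is available) is correct and is essentially the paper's argument. The uniqueness half, however, has a genuine gap, and you have in fact identified exactly where it sits without closing it. Your primary argument --- that $\pi(\mathcal{S}) \cong \pi(\mathcal{S}')$ for any reorder ``pins down'' the multi-set of $\mu$'s within a block of equal segments --- does not work: the paper explicitly notes that the map $\mathcal{S} \mapsto \pi(\mathcal{S})$ is not injective, and the isomorphism statement in Proposition \ref{switch} is only asserted for $\mathcal{S} \in \mathrm{ExMult}_{\geq 0}$ with $\pi(\mathcal{S}) \neq 0$, so it cannot be used to recover the parameters of a general $\mathcal{S} \in \mathrm{Rep}$. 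Your fallback (``lean on the symbol picture'' and cite uniqueness of standard symbols in the literature) is not an argument but a deferral, and it is not clear that the cited references state the uniqueness you need in the form you need it.

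What you are missing is a simple observation that makes the block ambiguity evaporate: if $([A,B]_\rho,\mu_1),\ldots,([A,B]_\rho,\mu_k)$ are consecutive extended segments with \emph{identical} underlying segments in a standard reordering $\mathcal{S}'$ of $\mathcal{S} \in \mathrm{Rep}$, then $\mathcal{S}'$ satisfies the necessary condition (\ref{nec}) by the very definition of $\mathrm{Rep}$, and (\ref{nec}) applied to consecutive members of the block gives $|\mu_i - \mu_j| \leq |A-A| + |B-B| = 0$, i.e.\ all the $\mu$'s in the block coincide. So the extended segments in a block are literally equal and the order within the block is irrelevant. The remaining issue --- that two chains of reorders producing the \emph{same} final segment order might yield different $\mu$'s --- is also not resolved by your ``multi-set of $\mu$'s is invariant'' heuristic (which you rightly distrust); the paper handles it by Lemma \ref{permutations}, verifying that the $\mu$-transformations respect the Coxeter relations of the symmetric group, so the parameters depend only on the resulting permutation and not on the chosen sequence of transpositions. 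With those two ingredients in place of your appeal to $\pi(\mathcal{S})$, your outline becomes a complete proof.
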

\begin{proof}
    For $\mathcal{S}\in \mathrm{Rep}$, we may apply operations $R_i^\rho$ until the pairs $(B_i^\rho,-A_i^\rho)$ are in lexicographical order in the $\rho$-part and define the resulting standard extended multi-segment as $\mathcal{S}'$. The only ambiguity could stem from several extended segments $([A,B]_\rho,\mu_1),...,([A,B]_\rho,\mu_k)$, since we did not define their order. However, since $\mathcal{S}'$ must satisfy (\ref{nec}), we have that $\mu_1 = \ldots =\mu_k$, so these extended segments are all the same.
\end{proof}

In Corollary \ref{rep-of-standard}, we will show that $\mathcal{S}$ and its standard form give rise to the same representation. It is particularly convenient to choose a standard reordering of an extended multi-segment, because the non-vanishing result can be formulated much simpler in this case:
\begin{de}
    Let $\mathcal{S}= \bigcup_{\rho \in C_\mathcal{S}} \{(S_i^\rho)_{i=1}^{n_\rho}\}=\bigcup_{\rho \in C_\mathcal{S}} \{(([A_i^\rho,B_i^\rho],\mu_i^\rho))_{i=1}^{n_\rho}\}$ be a standard extended multi-segment. 
We call $S_i^\rho$ and $S_j^\rho$  with $1\leq i<j\leq n_\rho$ \textit{connected} in $\mathcal{S}$, if there is no $k$ with $i<k<j$ such that 
\begin{align*}
    &A_i^\rho \geq  A_k^\rho \geq A_j^\rho
\end{align*}
or
\begin{align*}
    &A_i^\rho < A_k^\rho < A_j^\rho.
\end{align*}
    In this case, we write $S_i^\rho \sim S_j^\rho.$ 
\end{de}

\begin{rmk}
Note that if $S_i^\rho$ and $S_j^\rho$ are connected, there exists some reordering $\mathcal{S}'$ of $\mathcal{S}$ where $(S_i^\rho)'$ and $(S_j^\rho)'$ are consecutive. Furthermore, for every such $\mathcal{S}'$, there is only one possibility for the position of $(S_k^\rho)'$ relative to $(S_i^\rho)'$ and $(S_j^\rho)'$, for every $i<k<j$: it always comes either before or after both in the sequence $\mathcal{S}_\rho'$.
\end{rmk}

\begin{de} \label{delta}
    Let $\mathcal{S}$ be a standard extended multi-segment and let $S_i^\rho$ and $S_j^\rho$ be connected in $\mathcal{S}$. We define
\begin{align} \label{formula}
    \Delta_{\mathcal{S}}(\mu_i^\rho,\mu_{j}^\rho)\ceq \sum_{m=i}^{j-1} (-1)^{ | \{ m<k<j|\ A_i^\rho \geq A_k^\rho \} | } (\mu_{m+1}^\rho-\mu_m^\rho).
\end{align}
Furthermore, we define $\Delta_{\mathcal{S}}(\mu_i^\rho,\mu_{j}^\rho)\ceq  -\Delta_{\mathcal{S}}(\mu_j^\rho,\mu_{i}^\rho)$ and $\Delta_{\mathcal{S}}(\mu_i^\rho,\mu_{i}^\rho)\ceq 0$ .
\end{de}
\begin{restatable}{thm}{nonvanishingstandard}
\label{thm:nonvanishingstandard}
    Let $\mathcal{S}= \bigcup_{\rho \in C_\mathcal{S}} \{(S_i^\rho)_{i=1}^{n_\rho}\}= \bigcup_{\rho \in C_\mathcal{S}} \{(( [A_i^{\rho},B_i^{\rho}]_\rho,\mu_i^{\rho}))_{i=1}^{n_\rho}\}$ be a standard extended multi-segment. $\mathcal{S}\in \mathrm{Rep}$ if and only if 
    \begin{align} \label{ine}
         |A_j^{\rho}-A_i^{\rho}|+|B_j^{\rho}-B_i^{\rho}| \geq |\Delta_{\mathcal{S}}(\mu_i^{\rho},\mu_j^{\rho})| \quad \forall\ 1\leq i <j\leq n_\rho\ \text{with}\ S_{i}^{\rho}\sim S_{j}^{\rho} \ \forall \rho \in C_{\mathcal{S}}
    \end{align}
    and $\mathcal{S}$ satisfies condition (\ref{nec-negative}).
\end{restatable}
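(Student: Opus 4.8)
The plan is to compare the defining conditions of $\mathrm{Rep}$ with the two conditions in the statement. Condition (\ref{nec-negative}) appears in both, so it can be set aside, and the content reduces to showing that, for a \textit{standard} extended multi-segment $\mathcal{S}$, the assertion ``every reordering $\mathcal{S}'\in[\mathcal{S}]$ satisfies the necessary condition (\ref{nec})'' is equivalent to ``the inequality (\ref{ine}) holds for all connected pairs''. Since the necessary condition, the reorders $R_i^\rho$ and the relation $\sim$ are all defined line by line (Remark \ref{why-formal}), I would fix a single $\rho$, drop the superscript, and write $\mathcal{S}=(([A_i,B_i],\mu_i))_{i=1}^n$ with $B_1\le\cdots\le B_n$ and $A_i\ge A_{i+1}$ when $B_i=B_{i+1}$.

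The key observation is how the $\mu$-parameters move under reordering. The only non-trivial reorders are the adjacent swaps of nested segments (cases (1) and (2) of Definition \ref{reorder}); reading off the formulas (\ref{gamma}) and (\ref{beta}), such a swap leaves all $A$'s and $B$'s fixed, keeps the $\mu$-parameter of the smaller of the two segments, and sends the $\mu$-parameter $\mu_{\mathrm{big}}$ of the larger one to $2\mu_{\mathrm{small}}-\mu_{\mathrm{big}}$ — that is, it reflects the difference $\mu_{\mathrm{big}}-\mu_{\mathrm{small}}$. I would then prove, by induction on the number of swaps needed to pass from $\mathcal{S}$ to $\mathcal{S}'$, the following: whenever the images of $S_i$ and $S_j$ (with $i<j$ in the standard form) are consecutive in some $\mathcal{S}'$, the difference of their $\mu$-parameters equals $\pm\Delta_{\mathcal{S}}(\mu_i,\mu_j)$, with the sign determined by which image lies on the left. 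For $j=i+1$ this is tautological, since (\ref{formula}) collapses to $\mu_{i+1}-\mu_i$; in general the alternating exponent $|\{m<k<j:A_i\ge A_k\}|$ in (\ref{formula}) is precisely a count of how many reflections each elementary difference $\mu_{m+1}-\mu_m$ undergoes while the intermediate segments are being moved aside.

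Granting this, the forward implication is immediate: given a connected pair $S_i\sim S_j$, the remark following the definition of \textit{connected} segments provides a reordering $\mathcal{S}'$ in which their images are consecutive, and applying (\ref{nec}) to that pair in $\mathcal{S}'$ — using that the $A$'s and $B$'s are untouched and that the $\mu$-difference is $\pm\Delta_{\mathcal{S}}(\mu_i,\mu_j)$ — is exactly (\ref{ine}). For the converse, I would show by the same induction on the number of swaps that, assuming (\ref{ine}) for all connected pairs, every $\mathcal{S}'$ satisfies (\ref{nec}). The base case is $\mathcal{S}$ itself, where consecutive indices are vacuously connected and the inequality is (\ref{ine}) verbatim. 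For the inductive step a swap produces at most three new consecutive pairs; the swapped pair itself is harmless because both $|\mu\text{-difference}|$ and $|A\text{-difference}|+|B\text{-difference}|$ are preserved, and each flanking new pair is the image of a pair $S_a,S_b$ of the standard form which, by standardness, is either connected — then handled directly by (\ref{ine}) and the $\mu$-computation — or else sits inside a nested chain $S_a\supseteq S_{k_1}\supseteq\cdots\supseteq S_b$ along which $|A_a-A_b|+|B_a-B_b|$ telescopes into the sum of the contributions of the connected links, the $\mu$-difference decomposing compatibly, so that (\ref{ine}) for those links gives the required estimate.

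The hard part is the combinatorial bookkeeping underlying the $\mu$-computation: keeping the alternating sign in (\ref{formula}) synchronized with the reflections $\mu_{\mathrm{big}}\mapsto 2\mu_{\mathrm{small}}-\mu_{\mathrm{big}}$ along an \textit{arbitrary} sequence of swaps, together with the case analysis in the converse direction showing that a non-connected consecutive pair is subsumed by its connected links. A cleaner packaging of the first point is to prove separately that $\Delta_{\mathcal{S}}$ is unchanged under any reorder $R_k$ that does not separate the positions of $S_i$ and $S_j$, which reduces the $\mu$-computation to the already-settled consecutive case. As an alternative to the whole direct argument, one can transport the statement through the bijection $\mathcal{F}$ and the compatibility of the reorders from Proposition \ref{switch}, and obtain Theorem \ref{thm:nonvanishingstandard} by reformulating Atobe's non-vanishing criterion \cite[Theorem 4.4]{A1} in the symbol calculus of \cite{hazel}.
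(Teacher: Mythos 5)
Your proposal is correct and follows essentially the same route as the paper's proof: both reduce to tracking how the $\mu$-parameters transform under the elementary reorders $R_i^\rho$ (the reflection $\mu_{\mathrm{big}}\mapsto 2\mu_{\mathrm{small}}-\mu_{\mathrm{big}}$), identify $\Delta_{\mathcal{S}}(\mu_i^\rho,\mu_j^\rho)$ as the signed $\mu$-difference once a connected pair is made consecutive, and handle non-connected consecutive pairs in a reordering by a triangle-inequality decomposition through intermediate connected links. The only organizational difference is that the paper inducts on the number $n_\rho$ of extended segments (which forces it to temporarily drop the sign condition), whereas you induct on the number of swaps; the combinatorial bookkeeping you flag as the hard part is precisely the two-case analysis the paper carries out in Appendix A.1.
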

For a proof, see Appendix \ref{app}.

\subsection{Representations corresponding to extended multi-segments} \label{unit}

Given an admissible extended multi-segment $\mathcal{S}= \bigcup_{\rho \in C_\mathcal{S}} \{(( [A_i^{\rho},B_i^{\rho}]_\rho,\mu_i^{\rho}))_{i=1}^{n_\rho}\}$, we have defined a corresponding representation $\pi(\mathcal{S})$ in Definition \ref{define-the-pi}. Let us make that more explicit. 
We distinguish two cases: \\
For the first case, assume that $B_{i+1}^{\rho}>A_i^{\rho}$ for $1\leq i < n_\rho$ and that $\mathcal{S}$ is non-negative. Then, we have

\begin{align*}
    \pi(\mathcal{S})=\mathrm{soc}\left(  \bigtimes_{\rho \in C_\mathcal{S}} \bigtimes_{i=1}^{n_\rho} u_{ess}\begin{pmatrix}
-\frac{a_i^{\rho}+b_i^{\rho}}{2}+1 \hspace{3mm} &  \frac{a_i^{\rho}-b_i^{\rho}}{2}  \hspace{3mm} \\
\hspace{3mm}  -\frac{a_i^{\rho}+|\mu_i^{\rho}|}{2}  & \hspace{3mm} \frac{a_i^{\rho}-|\mu_i^{\rho}|}{2}-1
\end{pmatrix}^{(\rho)}\rtimes \pi(\phi,\varepsilon)\right),
\end{align*}
where 
$$
\phi=\bigoplus_{\rho \in C_{\mathcal{S}}} \bigoplus_{i=1}^{n_\rho} \rho \boxtimes \left(S_{a_i^{\rho}-|\mu_i^{\rho}|+1} \oplus \ldots \oplus S_{a_i^{\rho}+|\mu_i^{\rho}|-1} \right)
$$
and 
$$
\varepsilon\left(\rho \boxtimes S_{a_i^{\rho}-|\mu_i^{\rho}|+2t+1}\right)=\mathrm{sgn}(\mu_i^{\rho}\delta_{i}^{\rho})(-1)^t,
$$
for $1 \leq i \leq n_\rho$. \\
In the second case (when the conditions of the first case do not hold), we find non-negative integers $t_i^\rho$ for $1\leq i \leq n_\rho$ and for $\rho \in C_{\mathcal{S}}$ such that 
$$
\mathcal{S}'\ceq\bigcup_{\rho \in C_{\mathcal{S}}}\left\{\left(\left(\left[A_i^\rho+t_i^\rho,B_i^\rho+t_i^\rho\right]_\rho,\mu_i^\rho\right)\right)_{i=1}^{n_\rho}\right\}
$$
satisfies the conditions of the first case. We have
$$
\pi(\mathcal{S}) = \circ_{\rho \in C_\mathcal{S}} \circ_{i \leq n_\rho} D_{\rho,i}  (\pi(\mathcal{S}')),
$$
where 
$$
D_{\rho,i} \ceq  D_{\rho |\cdot|^{B_i^\rho+1},...,\rho|\cdot|^{A_i^\rho+1}} \circ ... \circ D_{\rho |\cdot|^{B_i^\rho+t_i^\rho},...,\rho|\cdot|^{A_i^\rho+t_i^\rho}}.
$$
Let us now prove that the set $\mathrm{Rep}$ is indeed what we claimed it to be:
\begin{prop} \label{rep-is-nonzero}
    Let $\mathcal{S}\in \mathrm{AdExMult}$. Then 
    \begin{align*}
        \pi(\mathcal{S}) \neq 0 \Leftrightarrow \mathcal{S} \in \mathrm{Rep}.
    \end{align*}
Moreover, if $\mathcal{S}\in \mathrm{Rep}$ and $\mathcal{S}' \in [\mathcal{S}]\cap \mathrm{AdExMult}$, then 
\begin{align*}
    \pi(\mathcal{S}') \cong\pi(\mathcal{S}). 
\end{align*}
\end{prop}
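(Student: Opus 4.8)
\emph{Overview of the plan.} The plan is to reduce everything to the case where $\mathcal{S}$ is standard and non‑negative, where the statement is, essentially by construction, Atobe's non‑vanishing criterion, and then to bootstrap to the general case using the reordering operation $R_i^\rho$ (Proposition \ref{switch}) and the shift/derivative description of $\pi(\mathcal{S})$ in Section \ref{unit}. I will use repeatedly that $\pi(\mathcal{S}) = \pi(\mathcal{F}(\mathcal{S}))$ (Definition \ref{define-the-pi}) and that $\mathcal{F}$ respects the index order, so that $\mathcal{F}$ carries standard extended multi‑segments to standard symbols. Note also that every $R_i^\rho$ permutes the $(A,B)$‑pairs, so the multiset of $B$'s --- and in particular non‑negativity --- is preserved along reorderings.

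\emph{Step 1: the non‑negative case.} Suppose $\mathcal{S} \in \mathrm{ExMult}_{\geq 0}$. If $\mathcal{S}$ is in addition standard, then $\pi(\mathcal{S}) \neq 0$ iff the Atobe non‑vanishing condition (\cite[Theorem 4.4]{A1}) holds for the standard symbol $\mathcal{F}(\mathcal{S})$; translated through $\mathcal{F}$ this is the inequality (\ref{ine}) together with (\ref{nec-negative}) (the latter being vacuous here), hence by Theorem \ref{thm:nonvanishingstandard} it is equivalent to $\mathcal{S} \in \mathrm{Rep}$. For a general non‑negative $\mathcal{S}$ I would argue both implications directly. If $\pi(\mathcal{S}) \neq 0$, then $\mathcal{S}$ satisfies (\ref{nec}) by Lemma \ref{necess}, so by Remark \ref{why-formal} every $R_i^\rho(\mathcal{S})$ lies again in $\mathrm{ExMult}_{\geq 0}$, and Proposition \ref{switch} gives $\pi(R_i^\rho(\mathcal{S})) \cong \pi(\mathcal{S}) \neq 0$; iterating along any chain of reorderings shows every $\mathcal{S}' \in [\mathcal{S}]$ has nonzero $\pi$ and hence satisfies (\ref{nec}), i.e.\ condition (1) of Definition \ref{rep} holds, while (\ref{nec-negative}) is vacuous, so $\mathcal{S} \in \mathrm{Rep}$. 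Conversely, if $\mathcal{S} \in \mathrm{Rep}$, its standard form $\mathcal{S}^{\mathrm{st}} \in [\mathcal{S}]$ is again in $\mathrm{Rep}$ (condition (1) is reordering‑invariant, (\ref{nec-negative}) vacuous), so $\pi(\mathcal{S}^{\mathrm{st}}) \neq 0$ by the standard case; since $[\mathcal{S}] \subseteq \mathrm{ExMult}$ by Proposition \ref{reorders-stay-in-ext}, iterating Proposition \ref{switch} along a chain from $\mathcal{S}^{\mathrm{st}}$ to $\mathcal{S}$ (all intermediates non‑negative with nonzero $\pi$) yields $\pi(\mathcal{S}) \cong \pi(\mathcal{S}^{\mathrm{st}}) \neq 0$. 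The same chains of isomorphisms also prove statement (2) when $\mathcal{S}$ is non‑negative.

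\emph{Step 2: general admissible $\mathcal{S}$, statement (1).} If $\mathcal{S} \in \mathrm{AdExMult}$ is non‑negative, apply Step 1; otherwise some $B_i^\rho < 0$ and $\mathcal{S}$ is in very admissible order. Following Section \ref{unit} I pick integers $t_i^\rho \geq 0$ so that the shift $\mathcal{S}^+$ of $\mathcal{S}$ --- obtained by replacing each $[A_i^\rho, B_i^\rho]_\rho$ with $[A_i^\rho + t_i^\rho, B_i^\rho + t_i^\rho]_\rho$ --- is non‑negative and satisfies the first‑case conditions ($B_{i+1}^\rho + t_{i+1}^\rho > A_i^\rho + t_i^\rho$), and I take the composition $\mathcal{D}$ of maximal derivatives provided there, so that $\pi(\mathcal{S}) = \mathcal{D}(\pi(\mathcal{S}^+))$. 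Since a maximal derivative does not annihilate a nonzero representation (the $D^{(0)}$‑summand always occurs) and, by Theorem \ref{iredder}, takes irreducibles to irreducibles, $\pi(\mathcal{S}) \neq 0 \Leftrightarrow \pi(\mathcal{S}^+) \neq 0$, which by Step 1 means $\mathcal{S}^+ \in \mathrm{Rep}$. For the spread‑out $\mathcal{S}^+$ every $R_i^\rho$ acts trivially and only consecutive segments are connected, so $\mathcal{S}^+$ is standard and, by Theorem \ref{thm:nonvanishingstandard}, $\mathcal{S}^+ \in \mathrm{Rep}$ is equivalent to the single family of inequalities (\ref{nec}) between consecutive segments of $\mathcal{S}^+$. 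Expanding $A_i^\rho + t_i^\rho$, etc., one now checks directly that this family holds if and only if every reordering of $\mathcal{S}$ satisfies (\ref{nec}) and $\mathcal{S}$ satisfies (\ref{nec-negative}) --- that is, $\mathcal{S} \in \mathrm{Rep}$ --- which completes statement (1).

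\emph{Step 3: statement (2) in general; the main obstacle.} For $\mathcal{S} \in \mathrm{Rep}$ and $\mathcal{S}' \in [\mathcal{S}] \cap \mathrm{AdExMult}$ I would connect both $\mathcal{S}$ and $\mathcal{S}'$ to their common standard form by chains of reorderings staying inside $\mathrm{AdExMult}$ (for extended multi‑segments with a negative $B$‑value such a chain exists within the very admissible ones), and handle each step $R_i^\rho$ either by Proposition \ref{switch} (when only non‑negative segments are involved) or, when a segment with negative $B$‑value occurs, by passing to spread‑out non‑negative shifts as in Step 2 and applying the non‑negative case of Proposition \ref{switch}; the required compatibility is that $R_i^\rho$ commutes, up to isomorphism of the attached representations, with the shift/derivative reduction, which again follows from Theorem \ref{iredder}. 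Combining this with statement (1) also yields that $\mathrm{Rep}$ is stable under admissible reorderings. The genuine difficulty of the whole argument is the combinatorial translation in Step 2 (and its counterpart for the reordering steps of Step 3): rewriting the single ``consecutive‑segment'' inequality system of the non‑negative model $\mathcal{S}^+$ back into the two pieces of data defining $\mathrm{Rep}$ for the original, possibly non‑standard and negative‑$B$, extended multi‑segment --- the reordering‑stable inequalities (\ref{ine}) and the per‑segment constraints (\ref{nec-negative}) --- while keeping careful track of the $\widehat{\mu}$‑correction at half‑integral $B$.
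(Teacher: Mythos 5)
Your Step 2 contains the decisive error. You claim that $\pi(\mathcal{S})\neq 0\Leftrightarrow\pi(\mathcal{S}^+)\neq 0$ because ``a maximal derivative does not annihilate a nonzero representation.'' But for the spread-out shift $\mathcal{S}^+$ (with $B_{i+1}^\rho+t_{i+1}^\rho>A_i^\rho+t_i^\rho$), the representation $\pi(\mathcal{S}^+)$ is defined as a socle and is \emph{always} nonzero, and moreover the consecutive-segment inequalities (\ref{nec}) for $\mathcal{S}^+$ become trivially satisfied once the $t_i^\rho$ are large. So your asserted equivalence would force $\pi(\mathcal{S})\neq 0$ for every admissible $\mathcal{S}$, which is false. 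The point you are missing is that the derivative reduction of Section \ref{unit} genuinely can annihilate: the entire content of the negative-$B$ part of the criterion --- condition (\ref{nec-negative}), i.e.\ $|\Hat{\mu_i^\rho}|\leq a_i^\rho$ --- is precisely the condition under which this reduction is nonzero. The paper's proof handles this by invoking \cite[Theorem 3.20 (i)]{hazel} for a \emph{uniform} shift $sh^d(\mathcal{E})$ (which preserves the inequalities (\ref{nec}) verbatim, so the first part of the criterion transfers unchanged) together with an explicit extra per-segment condition $B_i^\rho+l_i^\rho\geq 0$ (resp.\ $\pm\tfrac12$), and the actual work of the proof is the computation showing that this extra condition is exactly (\ref{nec-negative}), including the $\Hat\mu$-correction at half-integral $B_i^\rho$. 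Your plan has no mechanism that could produce this condition; it cannot ``fall out'' of the combinatorics of $\mathcal{S}^+$, since the $\mu_i^\rho$ and the differences entering (\ref{nec}) carry no information about $a_i^\rho$.

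Two secondary issues. In Step 1 you assert that Atobe's Theorem 4.4, translated through $\mathcal{F}$ on a standard symbol, \emph{is} the inequality system (\ref{ine}); that identification is a nontrivial combinatorial claim (comparable in length to the appendix proof of Theorem \ref{thm:nonvanishingstandard}) and is not established anywhere --- the paper avoids it by using \cite[Theorem 3.20 (ii)]{hazel}, whose statement (``every reordering satisfies the necessary condition'') matches Definition \ref{rep} directly, and keeps Theorem \ref{thm:nonvanishingstandard} as a purely combinatorial reformulation proved separately. In Step 3, the claimed commutation of $R_i^\rho$ with the shift/derivative reduction does not follow from Theorem \ref{iredder} (which concerns preservation of irreducibility, not reordering); the paper instead applies \cite[Theorem 3.5]{A3} along a chain of reorderings chosen so that every intermediate extended multi-segment is admissible.
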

\begin{proof}
    To prove the first assertion, we distinguish two cases:
    \begin{enumerate}
        \item \underline{$\mathcal{S}\in \mathrm{ExMult}_{\geq 0}$:} \\
        This follows from \cite[Theorem 3.20 (ii)]{hazel} for $\mathcal{F}(\mathcal{S})$, if we take Proposition \ref{switch} into consideration.
        \item \underline{$\mathcal{S}\not\in \mathrm{ExMult}_{\geq 0}$:} \\
        Set $\mathcal{E}\ceq \mathcal{F}(\mathcal{S})$. Say $\mathcal{E}=\bigcup_{\rho\in C_\mathcal{S}}\left\{\left(\left[ A_i^{\rho},B_i^{\rho}\right]_\rho,l_i^{\rho},\eta_i^{\rho} \right)\right\}_{i\in I_{\rho}}$. Now \cite[Theorem 3.20 (i)]{hazel} says that $\pi(\mathcal{S})\neq 0$ if and only if $\pi(sh^d(\mathcal{E}))\neq 0$ for any integer $d>0$ such that $sh^d(\mathcal{E})$ is non-negative and if the following condition holds for all $i\in I_\rho$ and all $\rho \in C_\mathcal{S}$:
        \begin{align*}
            B_i^\rho+l_i^\rho \geq \left\{ \begin{matrix}
                0 & if\ B_i^\rho \in \Z,\\
                \frac{1}{2} & if\ B_i^\rho \not\in \Z\ and\ \eta_i^\rho=(-1)^{\alpha_i^\rho+1},\\
                -\frac{1}{2} & if\ B_i^\rho \not\in \Z\ and\ \eta_i^\rho=(-1)^{\alpha_i^\rho},
            \end{matrix} \right.
        \end{align*}
        where 
        \begin{align*}
            \alpha_i^\rho \ceq \sum_{\substack{j\in I_\rho \\ j<i}} A_j^\rho+ B_j^\rho + 1.
        \end{align*}
    Now this first condition is (by the first case) equivalent to the fact that all extended multi-segments in $[\mathcal{F}^{-1}(sh^d(\mathcal{E}))]=[sh^d(\mathcal{S})]=sh^d([\mathcal{S}])$ satisfy (\ref{nec}), where we define the shift $sh^d(\mathcal{S})$ analogous to $sh^d(\mathcal{E})$. We immediately see that $sh^d(\mathcal{S'})$ satisfies (\ref{nec}) if and only if $\mathcal{S}'$ does. \\
    For the latter condition, we note that 
    \begin{align*}
          \frac{a_i^\rho-|\mu_i^\rho|}{2}=\frac{a_i^\rho-b_i^\rho}{2}+ \frac{b_i^\rho-|\mu_i^\rho|}{2}=B_i^\rho+l_i^\rho
    \end{align*}
    and
    so the condition reads
    \begin{align*}
    a_i^\rho \geq \left\{
        \begin{matrix}
            |\mu_i^\rho| &\ if\ B_i^\rho \in \Z, \\
            |\mu_i^\rho| - \eta_i^\rho(-1)^{\alpha_i^\rho} &\ if\ B_i^\rho \not \in \Z.
        \end{matrix} \right.
    \end{align*}
    But if $B_i^\rho \not\in\Z$,
    \begin{align*}
      |\mu_i^\rho|-\eta_i^\rho (-1)^{\alpha_i^\rho} = \eta_i^\rho (-1)^{\alpha_i^\rho} (\delta_i^\rho (-1)^{\alpha_i^\rho} \mu_i^\rho- 1)
    \end{align*}
    and 
    \begin{align*}
        \delta_i^\rho (-1)^{\alpha_i^\rho} = \prod_{j<i} (-1)^{a_j^\rho+b_j^\rho-1} = \prod_{j<i} (-1)^{2A_j^\rho+1}=1
    \end{align*}
    imply 
    \begin{align*}
        |\mu_i^\rho|-\eta_i^\rho(-1)^{\alpha_i^\rho}=| |\mu_i^\rho|-\eta_i^\rho(-1)^{\alpha_i^\rho}| = |\mu_i^\rho-1|,
    \end{align*}
    except when $\mu_i^\rho=0$ and $\eta_i^\rho(-1)^{\alpha_i^\rho}=1$, where $a_i^\rho\geq 1$ so the condition is satisfied anyways. \\
    Now suppose $\mathcal{S}\in \mathrm{Rep}$ and $\mathcal{S}'$ is a reordering that is admissible. We may choose $\mathcal{S}'=R_{i_1}^{\rho_1'} \circ ... \circ R_{i_m}^{\rho_m'}(\mathcal{S})$ for some $\rho_j' \in C_{\mathcal{S}}$ and $i_j< n_{\rho_j'}$ such that $\mathcal{S}_k \ceq R_{i_k}^{\rho_k'} \circ ... \circ R_{i_m}^{\rho_m'}(\mathcal{S})$ is admissible for every $1 \leq k \leq m$. Now $\pi(\mathcal{S})\neq 0$ and hence $\pi(\mathcal{S}_k)\cong \pi(\mathcal{S})$ for every $1\leq k\leq m$ inductively, because of \cite[Theorem 3.5]{A3}.
    \end{enumerate}
\end{proof}
\begin{cor} \label{rep-of-standard}
Let $\mathcal{S}\in \mathrm{Rep}$.
    \begin{enumerate}
    \item If $\mathrm{ExMult}_{\geq 0}$, then $\pi(\mathcal{S}) \cong \pi(\mathcal{S}')$ for all $\mathcal{S}'\in [\mathcal{S}]$.
        \item Let $\mathcal{S}'$ be the standard form of $\mathcal{S}$. Then $\mathcal{S}'\in \mathrm{Rep}$ and $\pi(\mathcal{S}) \cong \pi(\mathcal{S}')$.
    \end{enumerate}
\end{cor}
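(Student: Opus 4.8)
The plan is to prove both assertions by reducing everything to known results about non-negative extended multi-segments together with Proposition \ref{rep-is-nonzero} and Proposition \ref{switch}.

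\medskip

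\noindent\textbf{Part (1).} First I would observe that if $\mathcal{S}\in\mathrm{ExMult}_{\geq 0}\cap\mathrm{Rep}$, then $\pi(\mathcal{S})\neq 0$ by Proposition \ref{rep-is-nonzero}, so in particular Proposition \ref{switch} applies and gives $\pi(\mathcal{S})\cong\pi(R_i^\rho(\mathcal{S}))$ whenever $R_i^\rho(\mathcal{S})$ is again non-negative. The subtlety is that an arbitrary reordering $\mathcal{S}'\in[\mathcal{S}]$ need not be non-negative along the way, even if $\mathcal{S}$ is; so I cannot just iterate Proposition \ref{switch} directly. However, by Proposition \ref{reorders-stay-in-ext} every $\mathcal{S}'\in[\mathcal{S}]$ lies in $\mathrm{ExMult}$, hence is admissible, and by the last assertion of Proposition \ref{rep-is-nonzero} (applied to $\mathcal{S}$ and $\mathcal{S}'$, noting $\mathcal{S}'\in[\mathcal{S}]\cap\mathrm{AdExMult}$) we already get $\pi(\mathcal{S}')\cong\pi(\mathcal{S})$. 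So Part (1) is essentially an immediate corollary of Proposition \ref{rep-is-nonzero}; the point worth spelling out is merely that $[\mathcal{S}]\subseteq\mathrm{AdExMult}$ here because non-negativity is preserved — indeed if every $A_i^\rho,B_i^\rho\geq 0$, the reordering maps $R_i^\rho$ only permute segments and shift $\mu$-parameters, leaving the set $\{[A_i^\rho,B_i^\rho]_\rho\}$ unchanged, so non-negativity and a fortiori admissibility persist. I would state this verification in one line.

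\medskip

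\noindent\textbf{Part (2).} Let $\mathcal{S}'$ be the standard form of $\mathcal{S}$, which exists and is unique by the proposition preceding Theorem \ref{thm:nonvanishingstandard}. By construction $\mathcal{S}'\in[\mathcal{S}]$, and by Proposition \ref{reorders-stay-in-ext} we have $\mathcal{S}'\in\mathrm{ExMult}$; since standard extended multi-segments are admissible, $\mathcal{S}'\in\mathrm{AdExMult}$. To see $\mathcal{S}'\in\mathrm{Rep}$: the first defining condition of $\mathrm{Rep}$ (every reordering satisfies (\ref{nec})) depends only on the equivalence class $[\mathcal{S}']=[\mathcal{S}]$, so it holds; the second condition (\ref{nec-negative}) is stated in terms of the pairs $([A_i^\rho,B_i^\rho]_\rho,\mu_i^\rho)$ as an unordered collection — or, if one is careful, one checks that the reorder moves $R_i^\rho$ preserve the multiset $\{(\hat\mu_i^\rho, a_i^\rho)\}$ in the relevant sense — hence (\ref{nec-negative}) for $\mathcal{S}$ implies it for $\mathcal{S}'$. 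Therefore $\mathcal{S}'\in\mathrm{Rep}$. Finally, $\mathcal{S}'\in[\mathcal{S}]\cap\mathrm{AdExMult}$ and $\mathcal{S}\in\mathrm{Rep}$, so the second assertion of Proposition \ref{rep-is-nonzero} yields $\pi(\mathcal{S})\cong\pi(\mathcal{S}')$.

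\medskip

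\noindent The main obstacle is the bookkeeping in Part (2): one must check that condition (\ref{nec-negative}) really is invariant under the reorder maps $R_i^\rho$. Inspecting Definition \ref{reorder}, in case (1) the new parameters are $2\mu_i^\rho-\mu_{i+1}^\rho$ and $\mu_i^\rho$ attached to the swapped segments, so the multiset of $\mu$'s genuinely changes; what one needs is that the new value $2\mu_i^\rho-\mu_{i+1}^\rho$ paired with the (larger) segment $[A_{i+1}^\rho,B_{i+1}^\rho]_\rho$ still satisfies $|\widehat{2\mu_i^\rho-\mu_{i+1}^\rho}|\leq a_{i+1}^\rho$. This should follow from combining the old bound $|\hat\mu_i^\rho|\leq a_i^\rho$ with the necessary condition (\ref{nec}) — which $\mathcal{S}$ satisfies by hypothesis — relating $|\mu_i^\rho-\mu_{i+1}^\rho|$ to $|A_i^\rho-A_{i+1}^\rho|+|B_i^\rho-B_{i+1}^\rho| = (A_{i+1}^\rho - A_i^\rho) + (B_{i+1}^\rho - B_i^\rho)$ in the inclusion case, whence $|2\mu_i^\rho-\mu_{i+1}^\rho| \leq |\mu_i^\rho| + |\mu_i^\rho - \mu_{i+1}^\rho| \leq a_i^\rho + (a_{i+1}^\rho - a_i^\rho) = a_{i+1}^\rho$ after correcting for the integrality shift in $\hat\mu$. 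Alternatively — and this is cleaner — one avoids the issue entirely by noting that $\mathrm{Rep}$ was designed in Definition \ref{rep} precisely so that membership is a property of $[\mathcal{S}]$ once (\ref{nec-negative}) is phrased correctly, and by citing \cite[Theorem 3.5]{A3} (as in the proof of Proposition \ref{rep-is-nonzero}) for the invariance of the representation under admissible reorders; I would present the short argument via Proposition \ref{rep-is-nonzero} as the main proof and relegate the direct verification of (\ref{nec-negative})-invariance to a brief remark.
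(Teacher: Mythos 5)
Your overall route is the intended one: the corollary is meant to be an immediate consequence of Proposition \ref{rep-is-nonzero}, and your Part (1) (non-negativity is preserved by the $R_i^\rho$ since they only permute the segments, so $[\mathcal{S}]\subseteq\mathrm{AdExMult}$ and the second assertion of Proposition \ref{rep-is-nonzero} applies) is exactly right, as is your derivation of $\pi(\mathcal{S})\cong\pi(\mathcal{S}')$ in Part (2).

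The one place you go astray is the direct verification that the standard form satisfies condition (\ref{nec-negative}). In case (1) of Definition \ref{reorder} one has $[A_i^\rho,B_i^\rho]_\rho\subseteq[A_{i+1}^\rho,B_{i+1}^\rho]_\rho$, i.e. $A_{i+1}^\rho\geq A_i^\rho$ and $B_{i+1}^\rho\leq B_i^\rho$, so $|A_i^\rho-A_{i+1}^\rho|+|B_i^\rho-B_{i+1}^\rho|=b_{i+1}^\rho-b_i^\rho$, not $a_{i+1}^\rho-a_i^\rho$ (which can even be negative); your chain $|2\mu_i^\rho-\mu_{i+1}^\rho|\leq|\mu_i^\rho|+|\mu_i^\rho-\mu_{i+1}^\rho|\leq a_i^\rho+(a_{i+1}^\rho-a_i^\rho)$ therefore does not follow from (\ref{nec}), and your fallback ("$\mathrm{Rep}$ was designed so that membership is a property of $[\mathcal{S}]$") is circular. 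But the entire detour is unnecessary: since $\mathcal{S}\in\mathrm{Rep}\cap\mathrm{AdExMult}$ gives $\pi(\mathcal{S})\neq 0$, and $\mathcal{S}'\in[\mathcal{S}]\cap\mathrm{AdExMult}$ gives $\pi(\mathcal{S}')\cong\pi(\mathcal{S})\neq 0$ by the second assertion of Proposition \ref{rep-is-nonzero}, the first assertion of that same proposition, applied to the admissible $\mathcal{S}'$, immediately yields $\mathcal{S}'\in\mathrm{Rep}$. Replace the bookkeeping with that one line and the proof is complete.
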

This explains the first of the three equivalence relations on $\mathrm{Rep}$. It implies that we should think of the representation $\pi(\mathcal{S})$ as being parametrized by the equivalence classes $[\mathcal{S}]$. For these we can always choose a standard representative. Hence every representation of Arthur type of good parity can be written as $\pi(\mathcal{S)}$ for a standard extended multi-segment $\mathcal{S} \in \mathrm{Rep}$, the latter set being determined by Definition \ref{rep}.
\begin{de}
    Denote the set of standard extended multi-segments $\mathcal{S}$ that lie in $\mathrm{Rep}$ by $\mathrm{SRep}$.
\end{de}

\subsection{Parabolic induction of Speh and Arthur type representations}

As we are working towards a proof of Theorem \ref{thm:IRR}, it is important to understand how representations of Arthur type behave with parabolic induction. If we are inducing with Speh representations, there is a description in terms of extended multi-segments, which we will give now. 

\begin{de}
    Let $\mathcal{S}\in \mathrm{SRep}$. For an extended segment $([C,D]_\rho,\nu)$, denote by $\mathcal{S}_{([C,D]_\rho,\nu)}$ the extended multi-segment that is obtained by inserting $([C,D]_\rho,\nu)$ twice into $\mathcal{S}_\rho$ at consecutive indices $i'$ and $i'+1$, such that the resulting extended multi-segments is standard. That is, the sequence $(\mathcal{S}_{([C,D]_\rho,\nu)})_\rho$ is of the form
    \begin{align*}
        (( [A_1^{\rho},B_1^{\rho}]_\rho,\mu_1^{\rho}),\ldots ,([C,D]_\rho,\nu),([C,D]_\rho,\nu),\ldots ,( [A_{n_\rho}^{\rho},B_{n_\rho}^{\rho}]_\rho,\mu_{n_\rho}^{\rho}))
    \end{align*}
   with $([C,D]_\rho,\nu)$ as the $i'$-th and $(i'+1)$-th element of the sequence. More generally, we denote
    \begin{align*}
        \mathcal{S}_{([C_1,D_1]_{\rho_1},\nu_1),...,([C_k,D_k]_{\rho_k},\nu_k)} \ceq (\ldots (\mathcal{S}_{([C_1,D_1]_{\rho_1},\nu_1)})_{([C_2,D_2]_{\rho_2},\nu_2)} \ldots )_{([C_k,D_k]_{\rho_k},\nu_k)}.
    \end{align*}
    the extended multi-segment that is obtained by successively inserting each of the extended segments $([C_1,D_1]_{\rho_1},\nu_1),...,([C_k,D_k]_{\rho_k},\nu_k)$ twice into $\mathcal{S}_{\rho_i}$ respectively such that the resulting extended multi-segment is standard. We call two identical consecutive extended segments a block.
\end{de}

\begin{rmk}
    Note that the position of the index in our definition differs from the one given in \cite[Theorem 4.4]{A2} for $\mathcal{E}_{(l,\eta)}$, but the corresponding extended multi-segments are equivalent and the $\mu$-parameters coincide, except at the inserted extended segments, but that will be of no concern for us, because the decomposition in Proposition \ref{decomp} sums over all allowed parameters (see the proof).
\end{rmk}

\begin{lemma} \label{basics}
    Let $\mathcal{S} \in \mathrm{SRep}$. Let 
    $$
    ([C_1,D_1]_\rho,\nu_1),\ldots,([C_k,D_k]_\rho,\nu_k)
    $$
    be extended segments (with the same supercuspidal representation $\rho$).
    \begin{enumerate}
        \item For every permutation $\sigma$ of $\{1,...,k\}$, we have
    \begin{align*}
        \mathcal{S}_{([C_{\sigma(1)},D_{\sigma(1)}]_\rho,\nu_{\sigma(1)}),...,([C_{\sigma(k)},D_{\sigma(k)}]_\rho,\nu_{\sigma(k)})}= \mathcal{S}_{([C_1,D_1]_{\rho},\nu_1),...,([C_k,D_k]_{\rho},\nu_k)}.
    \end{align*}
    
        \item $\mathcal{S}_{([C_i,D_i]_\rho,\nu_i)}$ is indeed an extended multi-segment, i.e. it satisfies the sign condition.
        \item Condition (\ref{nec-negative}) is satisfied for $\mathcal{S}_{([C_i,D_i]_\rho,\nu_i)}$ if and only if it is also satisfied for $\mathcal{S}$ and for the indices $i'$ and $i'+1$ of $([C_i,D_i]_\rho,\nu_i)$ in $\mathcal{S}_{([C_i,D_i]_\rho,\nu_i)}$ . 
    \end{enumerate}
\end{lemma}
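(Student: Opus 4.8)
The plan is to treat the three assertions separately, since only (2) requires a genuine computation. For (1), I would observe that the insertion operation $\mathcal{S}\mapsto\mathcal{S}_{([C,D]_\rho,\nu)}$ changes neither the $\mu$-parameters of the segments already present nor that of the inserted block: it merely adjoins two extra copies of $([C,D]_\rho,\nu)$ to the $\rho$-part and re-sorts into standard order. Consequently, after inserting all $k$ blocks in any order $\sigma$, the $\rho$-part of $\mathcal{S}_{([C_{\sigma(1)},D_{\sigma(1)}]_\rho,\nu_{\sigma(1)}),\ldots,([C_{\sigma(k)},D_{\sigma(k)}]_\rho,\nu_{\sigma(k)})}$ is, \emph{as a multiset of extended segments}, always $(S_i^\rho)_{i=1}^{n_\rho}$ together with two copies of each $([C_l,D_l]_\rho,\nu_l)$; and since the standard order is the total order on the pairs $(B_i^\rho,-A_i^\rho)$ (with identical extended segments grouped into blocks), this multiset determines a unique standard sequence, so the two sides coincide. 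The only point needing a line of justification is that after each individual insertion the two new copies end up consecutive, which holds because identical extended segments are adjacent in standard order.

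For (2), the non-sign-condition axioms are immediate: $\mathcal{S}_{([C,D]_\rho,\nu)}$ is standard hence admissible, the positivity condition $A_i^\rho+B_i^\rho\ge0$ is inherited from $\mathcal{S}$ and from $C+D\ge0$, and $\psi_{\mathcal{S}_{([C,D]_\rho,\nu)}}=\psi_{\mathcal{S}}\oplus(\rho\boxtimes S_{a_T}\boxtimes S_{b_T})^{\oplus 2}$ lies in $\Psi^\mathrm{gp}(G_n)$, with $a_T=C+D+1$ and $b_T=C-D+1$, the new summand being self-dual of the same type as $\psi_{\mathcal{S}}$. So I would focus on the sign condition. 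Writing the $\rho$-part of $\mathcal{S}$ as $(([A_m,B_m]_\rho,\mu_m))_{m=1}^{n}$ with $b_m=A_m-B_m+1$, and inserting the block $([C,D]_\rho,\nu)$ at positions $i',i'+1$, I would compare the sign sum term by term with that of $\mathcal{S}$: the segments before position $i'$ are unaffected; the two copies of the block contribute $2\lfloor \nu/2\rfloor+2\nu\sum_{j=1}^{i'-1}(b_j-1)+\nu(b_T-1)$; and each of the old segments $S_{i'},\ldots,S_n$, now pushed back by the two inserted segments, gains an extra $2(b_T-1)$ inside its partial sum, contributing an extra $2(b_T-1)\sum_{m=i'}^{n}\mu_m$ in total. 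Reducing modulo $2$ only $\nu(b_T-1)$ survives, and the parity relation $\nu\equiv b_T\pmod 2$ from part (c) of the definition of an extended segment forces $\nu(b_T-1)\equiv b_T(b_T-1)\equiv 0\pmod 2$. Hence the sign sum is unchanged modulo $2$, so $\mathcal{S}_{([C,D]_\rho,\nu)}$ satisfies the sign condition because $\mathcal{S}$ does.

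For (3), I would simply note that condition (\ref{nec-negative}) is checked one extended segment at a time — it asks $|\hat{\mu_i^\rho}|\le a_i^\rho$ for each $i$, with no interaction between distinct indices — and that the extended segments of $\mathcal{S}_{([C,D]_\rho,\nu)}$ are exactly those of $\mathcal{S}$ together with two copies of $([C,D]_\rho,\nu)$ placed at the indices $i'$ and $i'+1$; hence (\ref{nec-negative}) holds for $\mathcal{S}_{([C,D]_\rho,\nu)}$ if and only if it holds segment-by-segment for $\mathcal{S}$ and also at $i'$ and $i'+1$, which is precisely the claim. The expected obstacle is nothing conceptual: parts (1) and (3) are bookkeeping about the standard order and the segment-local nature of (\ref{nec-negative}), and the one place demanding care is the parity computation in (2), where one must correctly track how inserting a block perturbs every partial sum $\sum_{j<i}(b_j-1)$ appearing in the sign condition. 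The cancellation is clean precisely because the block is inserted \emph{twice}, so every genuinely new term other than $\nu(b_T-1)$ carries a factor $2$, and because $\nu\equiv b_T\pmod 2$.
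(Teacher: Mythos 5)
Your proposal is correct and follows essentially the same route as the paper's proof: part (1) is the same bookkeeping about standard order (the paper phrases it via transpositions of consecutive insertions, you via the multiset being sorted into a unique standard sequence, with the same caveat about blocks sharing the same underlying segment), part (2) is the identical parity computation using $\nu\equiv b_T\pmod 2$, and part (3) is the same segment-local observation the paper dismisses as clear. If anything, your treatment of (2) is slightly more careful than the paper's, since you explicitly track the even shift $2(b_T-1)\sum_{m\ge i'}\mu_m$ in the partial sums of the displaced segments, which the paper leaves implicit.
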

\begin{proof}
    \begin{enumerate}
        \item We consider 
        \begin{align*}
        \mathcal{S}_{([C_{1},D_{1}]_\rho,\nu_{1}),([C_{2},D_{2}]_\rho,\nu_{2})} = (\mathcal{S}_{([C_{1},D_{1}]_\rho,\nu_{1})})_{([C_{2},D_{2}]_\rho,\nu_{2})} 
    \end{align*}
    and 
    \begin{align*}
        \mathcal{S}_{([C_{2},D_{2}]_\rho,\nu_{2}),([C_{1},D_{1}]_\rho,\nu_{1})} = (\mathcal{S}_{([C_{2},D_{2}]_\rho,\nu_{2})})_{([C_{1},D_{1}]_\rho,\nu_{1})}. 
    \end{align*}
    These are the same, except when $D_1=D_2$ and $C_1=C_2$, in which case it is easy to see that reordering the blocks does not change the parameters $\nu_1$ and $\nu_2$. Now any permutation can be obtained through several such transpositions.
    \item The left hand side of the sign condition reads the same but with the addition of 
    \begin{align*}
        \left\lfloor \frac{\nu_i}{2} \right\rfloor + \nu_i \sum_{j<i'} (b_j-1) + \left\lfloor \frac{\nu_i}{2} \right\rfloor + \nu_i \sum_{j<i'+1} (b_j-1) \equiv \nu_i (b_i-1) \equiv 0 \mod{2}.
    \end{align*}
    This implies that the sign condition is the same for $\mathcal{S}$ and $\mathcal{S}_{([C_i,D_i]_\rho,\nu_i)}$.
    \item This is clear. 
    \end{enumerate}
\end{proof}

\begin{prop} (\cite[Theorem 4.4]{A2}) \label{decomp} \\
    For $\mathcal{S}\in \mathrm{SRep}$, $a,b\in \Z_{\geq 0 }$ such that $\psi_{\mathcal{S}} \oplus (\rho \boxtimes S_a \boxtimes S_b)^{\oplus 2} $ is of good parity and $A=\frac{a+b}{2}-1$, $B=\frac{a-b}{2}$. We have that 
\begin{align*}
    u_\rho(a,b) \rtimes \pi(\mathcal{S}) \cong \bigoplus_{\substack{|\mu| \leq b \\ \mu \equiv b \mod{2}}} \pi(\mathcal{S}_{([A,B]_\rho,\mu)}).
\end{align*}
\end{prop}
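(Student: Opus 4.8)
The plan is to recognise the statement as Atobe's \cite[Theorem 4.4]{A2} rewritten in the $\mathcal{S}$-parametrisation, and to transport it along the bijection $\mathcal{F}\colon\mathrm{ExMult}\to\mathrm{ExMult}'$, matching up the two indexing sets. If $a=0$ the left-hand side is $\pi(\mathcal{S})$ and the only parameter with $|\mu|\le b$, $\mu\equiv b\bmod 2$ is $\mu=0$, so there is nothing to prove; hence assume $a\ge 1$, so that $A+B=a-1\ge 0$ and $([A,B]_\rho,\mu)$ is a legitimate formal extended segment for each such $\mu$. Set $\mathcal{E}\ceq\mathcal{F}(\mathcal{S})\in\mathrm{ExMult}'$. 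By Definition \ref{define-the-pi} one has $\pi(\mathcal{S})=\pi(\mathcal{E})$, and \cite[Theorem 4.4]{A2} gives
\[
u_\rho(a,b)\rtimes\pi(\mathcal{E})\cong\bigoplus_{(l,\eta)}\pi\bigl(\mathcal{E}_{(l,\eta)}\bigr),
\]
where $(l,\eta)$ runs over $0\le l\le b/2$ and $\eta\in\{\pm1\}$, the two signs being identified (so only one term) when $l=b/2$, and $\mathcal{E}_{(l,\eta)}$ is the insertion of $([A,B]_\rho,l,\eta)$ into $\mathcal{E}$ of \cite[Theorem 4.4]{A2}. It remains to match these summands with the $\pi(\mathcal{S}_{([A,B]_\rho,\mu)})$.

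The dictionary is read off the formulas defining $\mathcal{F}$ and $\mathcal{F}^{-1}$: to $\mu$ with $|\mu|\le b$, $\mu\equiv b\bmod 2$, one associates $l\ceq\tfrac{b-|\mu|}{2}\in\{0,\dots,b/2\}$ and $\eta\ceq\delta\cdot\mathrm{sgn}(\mu)\in\{\pm1\}$, where $\delta$ is the running sign $\prod_{j}(-1)^{b_j-1}$ taken over the extended segments preceding the insertion position (and $\mathrm{sgn}(0)=1$). This is a bijection onto the index set of Atobe's sum, the value $\mu=0$ corresponding to $l=b/2$ with $\eta$ irrelevant. One then checks that $\mathcal{F}(\mathcal{S}_{([A,B]_\rho,\mu)})$ coincides with $\mathcal{E}_{(l,\eta)}$ up to the reordering and re-labelling of the inserted pair discussed in the remark preceding Lemma \ref{basics} (this re-labelling is immaterial, since the decomposition ranges over all admissible parameters). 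The point that makes this transparent is that the inserted block consists of two extended segments each with $b$-value $b$, so it contributes $(-1)^{b-1}(-1)^{b-1}=1$ to the running sign $\delta_j^\rho$ of every subsequent extended segment; hence $\mathcal{F}$ leaves the images of all non-inserted extended segments of $\mathcal{S}_{([A,B]_\rho,\mu)}$ unchanged, and only the two new $(l,\eta)$-segments have to be compared, which is exactly the content of the dictionary.

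One also has to verify that $\mathcal{S}_{([A,B]_\rho,\mu)}$ is a bona fide standard extended multi-segment: it is standard, hence admissible, by construction; its Arthur parameter equals $\psi_{\mathcal{S}}\oplus(\rho\boxtimes S_a\boxtimes S_b)^{\oplus 2}$, which is of good parity by hypothesis; the sign condition survives the insertion by Lemma \ref{basics}(2); and condition (\ref{nec-negative}) transports as in Lemma \ref{basics}(3). With the dictionary in hand, Proposition \ref{rep-is-nonzero} and Corollary \ref{rep-of-standard} give $\pi(\mathcal{E}_{(l,\eta)})=\pi(\mathcal{F}^{-1}(\mathcal{E}_{(l,\eta)}))=\pi(\mathcal{S}_{([A,B]_\rho,\mu)})$, because $\mathcal{F}^{-1}(\mathcal{E}_{(l,\eta)})$ and $\mathcal{S}_{([A,B]_\rho,\mu)}$ are reorderings of one another and hence yield the same representation. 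Re-indexing Atobe's direct sum over $\mu$ instead of $(l,\eta)$ then produces precisely $\bigoplus_{|\mu|\le b,\ \mu\equiv b\bmod 2}\pi(\mathcal{S}_{([A,B]_\rho,\mu)})$. Finally, the equality holds on the nose, including vanishing summands: by Proposition \ref{rep-is-nonzero}, $\pi(\mathcal{S}_{([A,B]_\rho,\mu)})=0$ exactly when $\mathcal{S}_{([A,B]_\rho,\mu)}\notin\mathrm{Rep}$, and under $\mathcal{F}$ this corresponds precisely to the vanishing of $\pi(\mathcal{E}_{(l,\eta)})$ in Atobe's formula, so the translation neither creates nor destroys a summand.

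The step I expect to be the main obstacle is the sign bookkeeping in the second paragraph: carefully reconciling the two insertion conventions and checking that the running products $\delta_i^\rho$ entering the definition of $\mathcal{F}$ behave as claimed under the insertion of a block, so that $\mu\leftrightarrow(l,\eta)$ is genuinely a bijection onto the full range occurring in \cite[Theorem 4.4]{A2}. The remaining verifications — that $\mathcal{S}_{([A,B]_\rho,\mu)}$ is an extended multi-segment and that the vanishing matches up — are routine given Lemma \ref{basics} and Proposition \ref{rep-is-nonzero}.
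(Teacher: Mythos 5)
Your proposal is correct and follows essentially the same route as the paper: transport Atobe's Theorem 4.4 along $\mathcal{F}$, match the parameters of the inserted block, and use reordering-invariance of $\pi$ together with Proposition \ref{rep-is-nonzero} to reconcile the two index sets, vanishing summands included. The one place the paper is more explicit is the parameter matching: since the inserted block must in general be reordered past $S_{i''-1},\dots,S_{i'}$ to reach standard position, the actual correspondence is not your naive $l=\tfrac{b-|\mu|}{2}$, $\eta=\delta\,\mathrm{sgn}(\mu)$ but the affine-corrected $\mu(l,\eta)=2\sum_{j=0}^{i''-i'-1}(-1)^j\mu_{i'+j}^\rho+(-1)^{i''-i'}\delta_{i''}^\rho\eta(b-2l)$, which is still injective, so your observation that both decompositions range over all admissible parameters does absorb the discrepancy.
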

\begin{proof}
    This is just a rewriting of Atobe's theorem into our notation. Set $\mathcal{E} \ceq \mathcal{F}(\mathcal{S})$. Then by \cite[Theorem 4.4]{A2}, we have
    \begin{align*}
    u_\rho(a,b) \rtimes \pi(\mathcal{E}) \cong \bigoplus_{(l,\eta)} \pi(\mathcal{E}_{(l,\eta)}), 
\end{align*}
where $(l,\eta)$ sums over all pairs of parameters that make $([A,B]_\rho,l,\eta)$ an extended segment in the sense of \cite{A2}. Now $\mathcal{S}'_{(l,\eta)} \ceq \mathcal{F}^{-1}(\mathcal{E}_{(l,\eta)})$ is the extended multi-segment $\mathcal{S}$ with the extended segment $([A,B]_\rho,\delta_{i''}^\rho \eta(b-2l))$ (we set $\mu(l,\eta) \ceq \delta_{i''}^\rho \eta(b-2l))$) added twice at the positions $i''$ and $i''+1$ of the sequence $\mathcal{S}_\rho$, with the property that $B_j^\rho>B$ for $j\geq i''$ and $B_j^\rho \leq B$ for $j <i''$ in $\mathcal{S}_\rho$ (recall that $\mathcal{S}$ is standard). Now to get the standard form of $\mathcal{S}'_{(l,\eta)}$, we have to reorder both extended segments $([A,B]_\rho,\delta_{i''}^\rho \eta(b-2l))$ with the extended segments $S_{i''-1}, \ldots , S_{i'}$ in this order, where $i'$ the largest index such that $B_{i'-1}^\rho < B $ or $B_{i'-1}^\rho = B $ and $A_{i'-1}^\rho \geq  A $. Hence the standard form of $\mathcal{S}'_{(l,\eta)}$ is given by $\mathcal{S}_{([A,B]_\rho,\mu(l,\eta))}$, where we define
\begin{align*}
    \mu(l,\eta) \ceq  2\cdot \sum_{j=0}^{i''-i'-1} (-1)^j \mu_{i'+j}^\rho + (-1)^{i''-i'} (\delta_{i''}^\rho \eta(b-2l)) \equiv b \mod{2}.
\end{align*}
If $\pi(\mathcal{E}_{(l,\eta)})=\pi(\mathcal{S}_{(l,\eta)}')\neq 0$, then $\pi(\mathcal{S}_{([A,B]_\rho,\mu(l,\eta))})\cong \pi(\mathcal{S}_{(l,\eta)}')$ and $|\mu(l,\eta)|\leq b$ (Proposition \ref{reorders-stay-in-ext}).  The mapping of parameters $\mu(l,\eta)$ is injective except for $2l=b$ and $\eta= \pm 1$, but this corresponds to the two equivalent extended multi-segments $\mathcal{E}_{(l,\eta)}$. On the other hand, if $\pi(\mathcal{S}_{([A,B]_\rho,\mu)})\neq 0$, we have that $\mathcal{S}_{([A,B]_\rho,\mu)} \sim \mathcal{S}_{(l,\eta)}'$ where $\mu(l,\eta)=\mu$ and hence $\mathcal{E}_{(l,\eta)}$ must be a valid extended multi-segment.
\end{proof}
\begin{lemma} \label{delta-lemma}
    Let $\mathcal{S}= \bigcup_{\rho \in C_\mathcal{S}} \{(( [A_i^{\rho},B_i^{\rho}]_\rho,\mu_i^{\rho}))_{i=1}^{n_\rho}\} \in \mathrm{AdExMult}$ be standard. Let $c,d\in \Z_{\geq 0 }$ and $\rho \in \mathrm{Cusp}^\perp (\mathrm{GL}_d (F))$ such that $\psi_{\mathcal{S}} \oplus (\rho \boxtimes S_c \boxtimes S_d)^{\oplus 2} $ is of good parity and $C=\frac{c+d}{2}-1$, $D=\frac{c-d}{2}$. Let $([C,D]_\rho,\nu)$ be an extended segment and let $1\leq i <j\leq n_\rho$ such that for the index $i'$ of the first $([C,D]_\rho,\nu)$ in $(\mathcal{S}_{([C,D]_\rho,\nu)})_\rho$, we have $i<i'\leq j$. Assume that $S_i^\rho$ and $S_j^\rho$ are connected in $\mathcal{S}$.
   \begin{enumerate}
    \item If $S_i^\rho$ and $S_j^\rho$ are connected in $\mathcal{S}_{([C,D]_\rho,\nu)}$, then 
    \begin{align*}
        \Delta_{\mathcal{S}}(\mu_i^{\rho},\mu_j^\rho) =  \Delta_{\mathcal{S}_{([C,D]_\rho,\nu)}}(\mu_i^\rho,\mu_j^\rho).
    \end{align*}
    \item If $S_i^\rho$ and $S_j^\rho$ are not connected in $\mathcal{S}_{([C,D]_\rho,\nu)}$, then $S_i^\rho$ and $([C,D]_\rho,\nu)$ as well as $([C,D]_\rho,\nu)$ and $S_j^\rho$ are connected and
    \begin{align*}
        |\Delta_{\mathcal{S}}(\mu_i^{\rho},\mu_j^\rho)| & \leq |\Delta_{\mathcal{S}_{([C,D]_\rho,\nu)}}(\mu_i^\rho,\nu)| + |\Delta_{\mathcal{S}_{([C,D]_\rho,\nu)}}(\nu,\mu_{j}^\rho)|.
    \end{align*}
    \end{enumerate}
    \end{lemma}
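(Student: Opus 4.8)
The plan is to compute all the $\Delta$-quantities appearing in the statement directly from formula (\ref{formula}) and to carefully bookkeep the sign exponents. Write $\mathcal{S}'\ceq\mathcal{S}_{([C,D]_\rho,\nu)}$. Since $\mathcal{S}$ is standard and (by hypothesis) the block is inserted at consecutive indices $i'$ and $i'+1$ with $i<i'\le j$, in $\mathcal{S}'_\rho$ the segment $S_i^\rho$ keeps index $i$, the two copies of $([C,D]_\rho,\nu)$ get indices $i'$ and $i'+1$, $S_j^\rho$ is shifted to index $j+2$, and the $\mu$-parameters of all other segments are unchanged; thus the list of $\mu$-values at indices $i,\dots,j+2$ of $\mathcal{S}'_\rho$ is
\begin{align*}
\mu_i^\rho,\ \mu_{i+1}^\rho,\ \dots,\ \mu_{i'-1}^\rho,\ \nu,\ \nu,\ \mu_{i'}^\rho,\ \dots,\ \mu_{j-1}^\rho,\ \mu_j^\rho,
\end{align*}
with corresponding $A$-values $A_i^\rho,\dots,A_{i'-1}^\rho,C,C,A_{i'}^\rho,\dots,A_{j-1}^\rho,A_j^\rho$. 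Since $S_i^\rho\sim S_j^\rho$ in $\mathcal{S}$, no old segment between indices $i$ and $j$ can obstruct connectedness, so whether $S_i^\rho\sim S_j^\rho$ still holds in $\mathcal{S}'$ is decided solely by the value $C$ of the inserted block.

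For part (1): the two inserted copies have the same $A$-value $C$, hence they contribute an \emph{even} number to every sign exponent occurring in $\Delta_{\mathcal{S}'}(\mu_i^\rho,\mu_j^\rho)$. Therefore each pair of consecutive old segments keeps the sign it had in $\Delta_{\mathcal{S}}(\mu_i^\rho,\mu_j^\rho)$, and the three ``new'' pairs at index-positions $(i'-1,i')$, $(i',i'+1)$, $(i'+1,i'+2)$ have signs $\sigma,\ast,\sigma$ with a common $\sigma$ (they differ only through the inserted copies), the middle one contributing $\nu-\nu=0$. So the new pairs together contribute $\sigma(\nu-\mu_{i'-1}^\rho)+\sigma(\mu_{i'}^\rho-\nu)=\sigma(\mu_{i'}^\rho-\mu_{i'-1}^\rho)$, which is exactly the term of the pair $(i'-1,i')$ in $\Delta_{\mathcal{S}}(\mu_i^\rho,\mu_j^\rho)$ (one checks $\sigma$ equals that sign). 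Summing up yields $\Delta_{\mathcal{S}}(\mu_i^\rho,\mu_j^\rho)=\Delta_{\mathcal{S}'}(\mu_i^\rho,\mu_j^\rho)$.

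For part (2): the hypothesis forces $C$ to lie in the range of $A$-values that obstructs $S_i^\rho\sim S_j^\rho$; since no old segment enters that range and the sub-ranges determined by $C$ are contained in it, this gives the asserted connectedness $S_i^\rho\sim([C,D]_\rho,\nu)$ (for the copy at index $i'$) and $([C,D]_\rho,\nu)\sim S_j^\rho$ (for the copy at index $i'+1$). The crucial observation is that, for every old $k$ with $i<k<j$, the inequalities $A_i^\rho\ge A_k^\rho$ and $C\ge A_k^\rho$ are equivalent — a two-line case check on the sign of $A_i^\rho-A_j^\rho$, using that $A_k^\rho$ avoids and $C$ lies in that range. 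Hence all sign exponents in $\Delta_{\mathcal{S}}(\mu_i^\rho,\mu_j^\rho)$, $\Delta_{\mathcal{S}'}(\mu_i^\rho,\nu)$ and $\Delta_{\mathcal{S}'}(\nu,\mu_j^\rho)$ may be rewritten with $C$ as a common reference value. Splitting the defining sum of $\Delta_{\mathcal{S}}(\mu_i^\rho,\mu_j^\rho)$ at index $i'-1$ into three pieces and matching them, up to telescoping corrections that cancel, against $\Delta_{\mathcal{S}'}(\mu_i^\rho,\nu)$, a single elementary term, and $\Delta_{\mathcal{S}'}(\nu,\mu_j^\rho)$, I expect to obtain the exact identity
\begin{align*}
\Delta_{\mathcal{S}}(\mu_i^\rho,\mu_j^\rho)=(-1)^{\lambda}\,\Delta_{\mathcal{S}'}(\mu_i^\rho,\nu)+\Delta_{\mathcal{S}'}(\nu,\mu_j^\rho),\qquad \lambda\ceq|\{\,k:\ i'\le k\le j-1,\ C\ge A_k^\rho\,\}|,
\end{align*}
after which the triangle inequality finishes part (2). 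The degenerate cases $i'=i+1$ and $i'=j$ are covered by the same formulas, with the first (resp.\ last) of the three pieces empty.

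The main difficulty is the sign bookkeeping: one has to keep straight how the two-index shift interacts with the exponents $(-1)^{|\{\cdots\}|}$ in (\ref{formula}) and how the two extra copies with common value $C$ enter them. For part (2) the essential point is the equivalence of $A_i^\rho\ge A_k^\rho$ and $C\ge A_k^\rho$ for old $k$ between $i$ and $j$: it is what forces the three $\Delta$'s in the statement to be computed with respect to \emph{compatible} reference values, so that they combine into one telescoping identity; without it they would not add up and only a weaker estimate could be extracted.
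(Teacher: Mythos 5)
Your plan is correct and follows essentially the same route as the paper's proof: part (1) is the same parity argument (the two inserted copies with common $A$-value $C$ shift every relevant exponent by an even amount, and the two nonzero new terms telescope to $\sigma(\mu_{i'}^\rho-\mu_{i'-1}^\rho)$), and for part (2) the identity $\Delta_{\mathcal{S}}(\mu_i^\rho,\mu_j^\rho)=(-1)^{\lambda}\Delta_{\mathcal{S}'}(\mu_i^\rho,\nu)+\Delta_{\mathcal{S}'}(\nu,\mu_j^\rho)$ you anticipate, resting on the equivalence $A_i^\rho\ge A_k^\rho\Leftrightarrow C\ge A_k^\rho$ for the intermediate old indices, is exactly what the paper establishes before applying the triangle inequality.
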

    \begin{proof}
         \begin{enumerate}
             \item We have
             \begin{align*}
                \Delta_{\mathcal{S}_{([C,D]_\rho,\nu)}}(\mu_i^\rho,\mu_j^\rho)=  & \sum_{m=i}^{i'-2} (-1)^{ | \{ m<k<j|\ A_i^\rho \geq A_k^\rho \} | } (\mu_{m+1}^\rho-\mu_m^\rho)\\
                 + &(-1)^{ | \{ i'-1<k<j|\ A_i^\rho \geq A_k^\rho \} | } (\nu-\mu_{i'-1}^\rho) +  (-1)^{ | \{ i'-1<k<j|\ A_i^\rho \geq A_k^\rho \} | } (\mu_{i'}^\rho-\nu) \\
                 + &\sum_{m=i'}^{j-1} (-1)^{ | \{ m<k<j|\ A_i^\rho \geq A_k^\rho \} | } (\mu_{m+1}^\rho-\mu_m^\rho) \\
                 =& \sum_{m=i}^{j-1} (-1)^{ | \{ m<k<j|\ A_i^\rho \geq A_k^\rho \} | } (\mu_{m+1}^\rho-\mu_m^\rho) \\
                 =& \Delta_{\mathcal{S}}(\mu_i^\rho,\mu_j^\rho).
             \end{align*}
             \item There are two cases: If $A_i^\rho <A_j^\rho$, we must have $A_i^\rho< C <A_j^\rho$. Note that for $i' \leq k <j$, we have 
             \begin{align*}
                 A_i^\rho \geq A_k^\rho \Leftrightarrow C \geq A_k^\rho,
             \end{align*}
             since otherwise $S_i$ and $S_j$ would not be connected. Similarly, if $A_i^\rho  \geq A_j^\rho$, we must have $A_i^\rho \geq C \geq A_j^\rho$ and for $i' \leq k <j$, we have 
             \begin{align*}
                 A_i^\rho \geq A_k^\rho \Leftrightarrow C \geq A_k^\rho,
             \end{align*}
             since otherwise $S_i$ and $S_j$ would not be connected. In either case, we have 
             \begin{align*} 
    \Delta_{\mathcal{S}}(\mu_i^\rho,\mu_{j}^\rho)&=\sum_{m=i}^{j-1} (-1)^{ | \{ m<k<j|\ A_i^\rho \geq A_k^\rho \} | } (\mu_{m+1}^\rho-\mu_m^\rho)  \\
    \Delta_{\mathcal{S}_{([C,D]_\rho,\nu)}}(\mu_i^\rho,\nu)&= \sum_{m=i}^{i'-2} (-1)^{ | \{ m<k<i'|\ A_i^\rho \geq A_k^\rho \} | } (\mu_{m+1}^\rho-\mu_m^\rho) + (\nu-\mu_{i'-1}^\rho)  \\
    \Delta_{\mathcal{S}_{([C,D]_\rho,\nu)}}(\nu,\mu_{j}^\rho)&= (-1)^{ | \{ i'-1<k<j|\ A_i^\rho \geq A_k^\rho \} | } (\mu_{i'}^\rho-\nu) \\
    &+ \sum_{m=i'}^{j-1} (-1)^{ | \{ m<k<j|\ A_i^\rho \geq A_k^\rho \} | } (\mu_{m+1}^\rho-\mu_m^\rho).
\end{align*}
Now we note 
 \begin{align*} 
    &|\Delta_{\mathcal{S}}(\mu_i^\rho,\mu_{j}^\rho)|=\left|\sum_{m=i}^{i'-2} (-1)^{ | \{ m<k<j|\ A_i^\rho \geq A_k^\rho \} | } (\mu_{m+1}^\rho-\mu_m^\rho) \right. \\
     &\left. + (-1)^{ | \{ i'-1<k<j|\ A_i^\rho \geq A_k^\rho \} | } (\mu_{i'}^\rho-\mu_{i'-1}^\rho)  + \sum_{m=i'}^{j-1} (-1)^{ | \{ m<k<j|\ A_i^\rho \geq A_k^\rho \} | } (\mu_{m+1}^\rho-\mu_m^\rho)\right|  \\
    &=\left| (-1)^{ | \{ i'-1<k<j|\ A_i^\rho \geq A_k^\rho \} | } \left(\sum_{m=i}^{i'-2} (-1)^{ | \{ m<k<i'|\ A_i^\rho \geq A_k^\rho \} | } (\mu_{m+1}^\rho-\mu_m^\rho)  +   (\nu-\mu_{i'-1}^\rho) \right) \right. \\
     &\left. + (-1)^{ | \{ i'-1<k<j|\ A_i^\rho \geq A_k^\rho \} | } (\mu_{i'}^\rho-\nu)  + \sum_{m=i'}^{j-1} (-1)^{ | \{ m<k<j|\ A_i^\rho \geq A_k^\rho \} | } (\mu_{m+1}^\rho-\mu_m^\rho)\right|  \\
    &\leq |\Delta_{\mathcal{S}_{([C,D]_\rho,\nu)}}(\mu_i^\rho,\nu)| + |\Delta_{\mathcal{S}_{([C,D]_\rho,\nu)}}(\nu,\mu_{j}^\rho)|.
\end{align*}
         \end{enumerate}
    \end{proof}

\begin{lemma} \label{reduce}
    Let $\mathcal{S}= \bigcup_{\rho \in C_\mathcal{S}} \{(( [A_i^{\rho},B_i^{\rho}]_\rho,\mu_i^{\rho}))_{i=1}^{n_\rho}\}\in \mathrm{AdExMult}$ be standard. Let $c,d\in \Z_{\geq 0 }$ and $\rho \in \mathrm{Cusp}^\perp (\mathrm{GL}_d (F))$ such that $\psi_{\mathcal{S}} \oplus (\rho \boxtimes S_c \boxtimes S_d)^{\oplus 2} $ is of good parity and $C=\frac{c+d}{2}-1$, $D=\frac{c-d}{2}$. Let $([C,D]_\rho,\nu)$ be an extended segment. 
    We write the sequence $(\mathcal{S}_{([C,D]_\rho,\nu)})_\rho$ as
    \begin{align*}
        (( [A_1^{\rho},B_1^{\rho}]_\rho,\mu_1^{\rho}),\ldots ,([C,D]_\rho,\nu),([C,D]_\rho,\nu),\ldots ,( [A_{n_\rho}^{\rho},B_{n_\rho}^{\rho}]_\rho,\mu_{n_\rho}^{\rho}))
    \end{align*}
   with $([C,D]_\rho,\nu)$ as the $i'$-th and $(i'+1)$-th element of the sequence. Then $\mathcal{S}_{([C,D]_\rho,\nu)} \in \mathrm{SRep}$ if and only 
    \begin{enumerate}
        \item $\mathcal{S}\in \mathrm{SRep}$.
        \item Condition (\ref{nec-negative}) is satisfied for $(\mathcal{S}_{([C,D]_\rho,\nu)})_\rho$ at the index $i'$.
        \item For all extended segments $S_{i}^{\rho}=([A_i^\rho,B_i^\rho],\mu_i^\rho)$ appearing in $\mathcal{S}_\rho$ such that $S_{i}^{\rho}\sim ([C,D]_\rho,\nu)$ in $\mathcal{S}_{([C,D]_\rho,\nu)}$, it holds that 
        \begin{align*} 
         |C-A_i^{\rho}|+|D-B_i^{\rho}| \geq |\Delta_{\mathcal{S}_{([C,D]_\rho,\nu)}}(\mu_i^{\rho},\nu)|.
    \end{align*}
    \end{enumerate}
\end{lemma}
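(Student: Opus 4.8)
The plan is to deduce this characterization directly from the standard-form non-vanishing criterion Theorem~\ref{thm:nonvanishingstandard} applied to $\mathcal{S}_{([C,D]_\rho,\nu)}$, using Lemma~\ref{delta-lemma} to relate connectedness and $\Delta$-values in $\mathcal{S}_{([C,D]_\rho,\nu)}$ back to those in $\mathcal{S}$. First I would note that by Lemma~\ref{basics}(2), $\mathcal{S}_{([C,D]_\rho,\nu)}$ is indeed an extended multi-segment (the sign condition holds), and it is standard by construction, so Theorem~\ref{thm:nonvanishingstandard} applies: $\mathcal{S}_{([C,D]_\rho,\nu)}\in\mathrm{Rep}$ iff the inequality (\ref{ine}) holds for all connected pairs in $\mathcal{S}_{([C,D]_\rho,\nu)}$ and condition (\ref{nec-negative}) holds everywhere. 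By Lemma~\ref{basics}(3), condition (\ref{nec-negative}) for $\mathcal{S}_{([C,D]_\rho,\nu)}$ is equivalent to (\ref{nec-negative}) for $\mathcal{S}$ together with the single check at index $i'$, which is item (2) of the statement.

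Next, the bulk of the work is to show that the family of inequalities (\ref{ine}) for $\mathcal{S}_{([C,D]_\rho,\nu)}$ splits into: (a) the inequalities for pairs $S_i^\rho\sim S_j^\rho$ not involving an inserted segment, and (b) the inequalities for pairs involving one of the two copies of $([C,D]_\rho,\nu)$. For family (b), I would observe that because the two inserted copies are consecutive and identical, a pair $(([C,D]_\rho,\nu),([C,D]_\rho,\nu))$ gives the trivial inequality $0\ge 0$, and a pair $S_i^\rho\sim([C,D]_\rho,\nu)$ (for either copy) yields exactly the inequality in item (3); moreover connectedness of $S_i^\rho$ with the first copy and with the second copy coincide (the block behaves as a single entity for the purpose of the connectedness relation, since no $A_k^\rho$ can lie strictly between the two equal values $C$). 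For family (a), I would use Lemma~\ref{delta-lemma}: if $S_i^\rho\sim S_j^\rho$ in $\mathcal{S}$ and this pair is also connected in $\mathcal{S}_{([C,D]_\rho,\nu)}$, then part~(1) of that lemma gives $\Delta_{\mathcal{S}}(\mu_i^\rho,\mu_j^\rho)=\Delta_{\mathcal{S}_{([C,D]_\rho,\nu)}}(\mu_i^\rho,\mu_j^\rho)$ and the $|A|,|B|$-terms are literally unchanged, so the inequality transfers verbatim. If instead $S_i^\rho\sim S_j^\rho$ in $\mathcal{S}$ but they are no longer connected in $\mathcal{S}_{([C,D]_\rho,\nu)}$, then part~(2) of the lemma tells us $S_i^\rho\sim([C,D]_\rho,\nu)\sim S_j^\rho$ and
\begin{align*}
|\Delta_{\mathcal{S}}(\mu_i^\rho,\mu_j^\rho)|\le |\Delta_{\mathcal{S}_{([C,D]_\rho,\nu)}}(\mu_i^\rho,\nu)|+|\Delta_{\mathcal{S}_{([C,D]_\rho,\nu)}}(\nu,\mu_j^\rho)|,
\end{align*}
while $|A_j^\rho-A_i^\rho|+|B_j^\rho-B_i^\rho| = (|A_j^\rho-C|+|B_j^\rho-D|)+(|C-A_i^\rho|+|D-B_i^\rho|)$ because $C$ lies between $A_i^\rho$ and $A_j^\rho$ and $D=C-d+1$ lies (weakly) between $B_i^\rho$ and $B_j^\rho$ by the good-parity/segment constraints; hence the two inequalities in item (3) for the pairs $(S_i^\rho,([C,D]_\rho,\nu))$ and $(([C,D]_\rho,\nu),S_j^\rho)$ imply the one for $(S_i^\rho,S_j^\rho)$ in $\mathcal{S}$, which is subsumed by $\mathcal{S}\in\mathrm{SRep}$. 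Conversely, any pair in $\mathcal{S}$ that stays connected contributes an inequality already contained in item~(1).

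Putting these together: the forward direction ($\mathcal{S}_{([C,D]_\rho,\nu)}\in\mathrm{SRep}\Rightarrow$ (1),(2),(3)) follows since (3) and the index-$i'$ part of (\ref{nec-negative}) are among the conditions of Theorem~\ref{thm:nonvanishingstandard} for $\mathcal{S}_{([C,D]_\rho,\nu)}$, and (1) follows because every connected pair in $\mathcal{S}$ is either still connected in $\mathcal{S}_{([C,D]_\rho,\nu)}$ (so its inequality is there directly) or is dominated by two inequalities of type (3) as above, so $\mathcal{S}$ satisfies Theorem~\ref{thm:nonvanishingstandard}, i.e. $\mathcal{S}\in\mathrm{SRep}$. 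For the converse, assume (1),(2),(3); I must verify every inequality of Theorem~\ref{thm:nonvanishingstandard} for $\mathcal{S}_{([C,D]_\rho,\nu)}$. Pairs involving an inserted copy are handled by (3) (or are trivial for the two-copy pair); pairs $S_i^\rho\sim S_j^\rho$ in $\mathcal{S}_{([C,D]_\rho,\nu)}$ are necessarily connected already in $\mathcal{S}$ (inserting segments cannot create new connections between old segments), so by Lemma~\ref{delta-lemma}(1) the inequality equals the corresponding one from $\mathcal{S}\in\mathrm{SRep}$; condition (\ref{nec-negative}) is handled by (2) and (1) via Lemma~\ref{basics}(3).

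The main obstacle I anticipate is the careful bookkeeping around the connectedness relation when the inserted block is present: one must check that $S_i^\rho$ and $S_j^\rho$ being connected in $\mathcal{S}_{([C,D]_\rho,\nu)}$ forces them to be connected in $\mathcal{S}$, and that connectedness with the first copy of $([C,D]_\rho,\nu)$ is equivalent to connectedness with the second, and relate the index hypothesis $i<i'\le j$ in Lemma~\ref{delta-lemma} to the generic situation. The inequality-splitting identity $|A_j^\rho-A_i^\rho|+|B_j^\rho-B_i^\rho| = (|A_j^\rho-C|+|B_j^\rho-D|)+(|C-A_i^\rho|+|D-B_i^\rho|)$ also needs the geometric fact that $C$ sits between $A_i^\rho$ and $A_j^\rho$ (and likewise for $D$), which is exactly what Lemma~\ref{delta-lemma}(2) extracts from the failure of connectedness; this is the one place where the good-parity hypothesis on $\psi_{\mathcal{S}}\oplus(\rho\boxtimes S_c\boxtimes S_d)^{\oplus 2}$ is genuinely used, ensuring $D=C-d+1$ with the right parity so the relevant segment/admissibility constraints hold.
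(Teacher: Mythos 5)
Your proposal is correct and follows essentially the same route as the paper's proof: apply Theorem~\ref{thm:nonvanishingstandard} to $\mathcal{S}_{([C,D]_\rho,\nu)}$, use Lemma~\ref{basics} for the sign condition and (\ref{nec-negative}), and use the two cases of Lemma~\ref{delta-lemma} to transfer the $\Delta$-inequalities between $\mathcal{S}$ and $\mathcal{S}_{([C,D]_\rho,\nu)}$, including the splitting $|A_j^\rho-A_i^\rho|+|B_j^\rho-B_i^\rho|=(|A_j^\rho-C|+|B_j^\rho-D|)+(|C-A_i^\rho|+|D-B_i^\rho|)$ when connectedness is destroyed. You are merely more explicit than the paper about some bookkeeping (the two inserted copies behaving as one block, insertion never creating new connections among old segments), which is fine.
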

\begin{proof}
    For the first direction, assume $\mathcal{S}_{([C,D]_\rho,\nu)} \in \mathrm{SRep}$. Items 2. and 3. follow directly from Theorem \ref{thm:nonvanishingstandard}. To prove 1., we use Theorem again \ref{thm:nonvanishingstandard}: Assume $1\leq i <j\leq n_\rho$ such that $S_i^\rho$ and $S_j^\rho$ are connected in $\mathcal{S}$. If it is not the case that $i<i'\leq j$, then $\Delta_{\mathcal{S}_{([C,D]_\rho,\nu)}}(\mu_i^{\rho},\mu_j^\rho)=\Delta_{\mathcal{S}}(\mu_i^{\rho},\mu_j^\rho)$ and the desired inequality (\ref{ine}) holds. If $i<i'\leq j$, there are two cases: If $S_i^\rho$ and $S_j^\rho$ are connected in $\mathcal{S}_{([C,D]_\rho,\nu)}$, we have $\Delta_{\mathcal{S}}(\mu_i^{\rho},\mu_j^\rho) = \Delta_{\mathcal{S}_{([C,D]_\rho,\nu)}}(\mu_i^{\rho},\mu_j^\rho)$ by 1. in Lemma \ref{delta-lemma}. If $S_i^\rho$ and $S_j^\rho$ are not connected in $\mathcal{S}_{([C,D]_\rho,\nu)}$, by 2. in Lemma \ref{delta-lemma}
 \begin{align*} 
    |\Delta_{\mathcal{S}}(\mu_i^\rho,\mu_{j}^\rho)|
    &\leq |\Delta_{\mathcal{S}_{([C,D]_\rho,\nu)}}(\mu_i^\rho,\nu)| + |\Delta_{\mathcal{S}_{([C,D]_\rho,\nu)}}(\nu,\mu_{j}^\rho)| \\
    &\leq |C-A_i^\rho|+|D-B_i^\rho| + |A_j^\rho-C|+|B_j^\rho-D|\\
    &= |A_j^\rho-A_i^\rho|+|B_j^\rho-B_i^\rho|.
\end{align*}
For the other direction, the only case of Theorem \ref{thm:nonvanishingstandard} we have to consider is if $S_i^\rho$ and $S_j^\rho$ are connected in $\mathcal{S}_{([C,D]_\rho,\nu)}$ and $i<i'\leq j$. Now we have (1. in Lemma \ref{delta-lemma})
\begin{align*}
     \Delta_{\mathcal{S}_{([C,D]_\rho,\nu)}}(\mu_i^{\rho},\mu_j^\rho)=\Delta_{\mathcal{S}}(\mu_i^{\rho},\mu_j^\rho).
\end{align*}
\end{proof}
\begin{cor} \label{dec}
     Let $\mathcal{S}= \bigcup_{\rho \in C_\mathcal{S}} \{(( [A_i^{\rho},B_i^{\rho}]_\rho,\mu_i^{\rho}))_{i=1}^{n_\rho}\}\in \mathrm{SRep}$ and let $c,d\in \Z_{\geq 0 }$ and $\rho \in \mathrm{Cusp}^\perp (\mathrm{GL}_d (F))$ such that $\psi_{\mathcal{S}} \oplus (\rho \boxtimes S_c \boxtimes S_d)^{\oplus 2} $ is of good parity and $C=\frac{c+d}{2}-1$, $D=\frac{c-d}{2}$. Let $I \subset \{1, \ldots, n_\rho \}$ be the indices such that $$
        S_{i}^{\rho}\sim ([C,D]_\rho,\nu)
        $$  in $\mathcal{S}_{([C,D]_\rho,\nu)}$ (for any $\nu$) and denote for $i\in I$ by $N_i$ the set 
        $$
        \{ \nu \in \Z|\   |C-A_i^{\rho}|+|D-B_i^{\rho}| \geq |\Delta_{\mathcal{S}_{([C,D]_\rho,\nu)}}(\mu_i^{\rho},\nu)|  \}.
        $$
        If we denote 
        \begin{align*}
            N \ceq \{\nu \in \Z|\ |\nu|\leq d,\ |\Hat{\nu}|\leq c,\ \nu \equiv d \mod{2}   \} \cap \bigcap_{i \in I} N_i,
        \end{align*}
        then
        \begin{align*}
    u_\rho(c,d) \rtimes \pi(\mathcal{S}) \cong \bigoplus_{\nu \in N} \pi(\mathcal{S}_{([C,D]_\rho,\nu)}).
\end{align*}
Moreover $u_\rho(c,d) \rtimes \pi(\mathcal{S}) $ is irreducible if and only if $N$ is a singleton. The set $N$ has the form $\{ \nu, \nu+2, \ldots,\ \nu + 2n\}$ for some integers $\nu$ and $n$.
\end{cor}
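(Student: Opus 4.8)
I would obtain the decomposition by pruning the direct sum of Proposition~\ref{decomp} with the non-vanishing criterion of Lemma~\ref{reduce}. First apply Proposition~\ref{decomp} with $(a,b)=(c,d)$ and $(A,B)=(C,D)$ --- legitimate since $\psi_{\mathcal{S}} \oplus (\rho \boxtimes S_c \boxtimes S_d)^{\oplus 2}$ is of good parity and $\mathcal{S}\in\mathrm{SRep}\subset\mathrm{AdExMult}$ --- to get
\[
u_\rho(c,d) \rtimes \pi(\mathcal{S}) \;\cong\; \bigoplus_{\substack{|\mu|\le d\\ \mu\equiv d \bmod 2}} \pi\!\left(\mathcal{S}_{([C,D]_\rho,\mu)}\right),
\]
where each $\mathcal{S}_{([C,D]_\rho,\mu)}$ is a bona fide standard extended multi-segment by construction and Lemma~\ref{basics}. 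By Definition~\ref{define-the-pi}, every summand is either zero or an irreducible representation of Arthur type, and by Proposition~\ref{rep-is-nonzero} it is nonzero exactly when $\mathcal{S}_{([C,D]_\rho,\mu)}\in\mathrm{Rep}$, i.e.\ (being standard) when $\mathcal{S}_{([C,D]_\rho,\mu)}\in\mathrm{SRep}$. Feeding this into Lemma~\ref{reduce}: item~(1) of that lemma holds because $\mathcal{S}\in\mathrm{SRep}$ is assumed, item~(2) reads $|\hat\mu|\le a_{i'}^\rho = C+D+1 = c$, and item~(3) says precisely that $\mu\in N_i$ for all $i\in I$. Hence the summands that survive are exactly those indexed by $N$, which gives the asserted isomorphism.

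For the irreducibility statement, note first that $u_\rho(c,d)\rtimes\pi(\mathcal{S})\ne 0$ since parabolic induction of a nonzero representation is nonzero; hence $N\ne\emptyset$. If $N=\{\nu\}$ is a singleton, the isomorphism above reduces to $u_\rho(c,d)\rtimes\pi(\mathcal{S})\cong\pi(\mathcal{S}_{([C,D]_\rho,\nu)})$, which is irreducible. Conversely, if $|N|\ge 2$, the right-hand side is a direct sum of at least two nonzero representations, hence of length at least $2$, so $u_\rho(c,d)\rtimes\pi(\mathcal{S})$ is reducible.

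It remains to pin down the shape of $N$. The conditions $\mu\equiv d \bmod 2$, $|\mu|\le d$, and $|\hat\mu|\le c$ (with $\hat\mu=\mu$ if $D\in\Z$ and $\hat\mu=\mu-1$ otherwise) cut out a set of the form $\{\mu\equiv d\bmod 2 : L\le\mu\le U\}$, an arithmetic progression of common difference $2$. For each $i\in I$, I would inspect the defining formula~(\ref{formula}): the parameter $\nu$ enters $\Delta_{\mathcal{S}_{([C,D]_\rho,\nu)}}(\mu_i^\rho,\nu)$ only through a single telescoping difference, so this quantity is an affine function of $\nu$ with leading coefficient $\pm 1$; consequently $N_i=\{\nu : |C-A_i^\rho|+|D-B_i^\rho| \ge |\Delta_{\mathcal{S}_{([C,D]_\rho,\nu)}}(\mu_i^\rho,\nu)|\}$ is again a difference-$2$ progression (the parity being forced by $\mu_i^\rho\equiv b_i^\rho\bmod 2$). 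A finite intersection of such progressions is of the form $\{\nu,\nu+2,\dots,\nu+2n\}$, proving the last claim. The one step that needs genuine (if routine) checking is the affine-linearity in $\nu$, with slope $\pm1$, of $\Delta_{\mathcal{S}_{([C,D]_\rho,\nu)}}(\mu_i^\rho,\nu)$; everything else reduces to bookkeeping with results already in place.
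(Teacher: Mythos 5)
Your proof is correct and follows essentially the same route as the paper: combine Proposition \ref{decomp} with Lemma \ref{reduce} to identify the surviving summands as exactly those indexed by $N$, and then observe that each defining inequality is of the form $|\nu - x|\le y$ (affine in $\nu$ with slope $\pm 1$), so the intersection is an interval met with a parity class. Your write-up is somewhat more detailed than the paper's (which compresses the first part to one sentence), but there is no substantive difference in approach.
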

\begin{proof}
    The first part is a direct consequence of Proposition \ref{decomp} and Lemma \ref{reduce}. Since all inequalities that elements $\nu$ of $N$ must satisfy are of the form $|\nu-x| \leq y$, their intersection is an interval of $\R$. Intersecting with $2\Z+d$ shows that $N$ has the stated form. 
\end{proof}
For the proof of Theorem \ref{thm:IRR}, we also need to understand how parabolic induction with multiple Speh representations behaves with representations of Arthur type. We have the following:
\begin{restatable}{thm}{multipleinsertions}
\label{prop:multipleinsertions}
    Let $\mathcal{S}= \bigcup_{\rho \in C_\mathcal{S}} \{(( [A_i^{\rho},B_i^{\rho}]_\rho,\mu_i^{\rho}))_{i=1}^{n_\rho}\}\in \mathrm{SRep}$. Let $([C_i,D_i]_{\rho_i},\nu_i)$ be extended segments for $i=1,...,k$ such that 
    \begin{align*}
        \psi_{\mathcal{S}} \oplus \bigoplus_{i=1}^k (\rho_i \boxtimes S_{c_i} \boxtimes S_{d_i})^{\oplus 2} 
    \end{align*}
    is of good parity and consider $\mathcal{S}_{([C_1,D_1]_{\rho_1},\nu_1),...,([C_k,D_k]_{\rho_k},\nu_k)}$. Then 
    \begin{align*}
        \mathcal{S}_{([C_1,D_1]_{\rho_1},\nu_1),...,([C_k,D_k]_{\rho_k},\nu_k)} \in \mathrm{SRep}
    \end{align*}
    if and only if 
    \begin{align*}
        \mathcal{S}_{([C_i,D_i]_{\rho_i},\nu_i)} \in \mathrm{SRep}
    \end{align*}
    for $i=1,...,k$ and
    \begin{align*}
        |C_l-C_m|+|D_l-D_m| \geq |\Delta_{\mathcal{S}_{([C_1,D_1]_{\rho_1},\nu_1),...,([C_k,D_k]_{\rho_k},\nu_k)}}(\nu_m,\nu_l)|
    \end{align*}
     for all pairs $([C_l,D_l]_{\rho_l},\nu_l)$ and $([C_m,D_m]_{\rho_m},\nu_m)$ of extended segments with $\rho_l \cong \rho_m$ that are connected in $\mathcal{S}_{([C_1,D_1]_{\rho_1},\nu_1),...,([C_k,D_k]_{\rho_k},\nu_k)}$. We denote by 
     \begin{align*}
         N(\mathcal{S},[C_1,D_1]_{\rho_1},...,[C_k,D_k]_{\rho_k} ) 
     \end{align*}
     the set of tuples $(\nu_1,...,\nu_k)$ such that $\mathcal{S}_{([C_1,D_1]_{\rho_1},\nu_1),...,([C_k,D_k]_{\rho_k},\nu_k)} \in \mathrm{SRep}$, according to the above criterion. 
\end{restatable}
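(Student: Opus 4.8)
The plan is to reduce the statement to Theorem~\ref{thm:nonvanishingstandard} and to the single-insertion result Lemma~\ref{reduce}, and then to run an induction on $k$. First observe that connectedness, the functional $\Delta_{\bullet}(\cdot,\cdot)$, and the conditions (\ref{nec-negative}) and (\ref{ine}) of Theorem~\ref{thm:nonvanishingstandard} are all computed one $\rho$-line at a time; the reordering operators $R_i^{\rho'}$ with $\rho'\not\cong\rho$ fix the $\rho$-part (Remark~\ref{why-formal}); insertions of blocks living on distinct supercuspidal lines do not interact; every insertion preserves the sign condition (Lemma~\ref{basics}(2)); and condition (\ref{nec-negative}) at an inserted block depends only on that block, not on its position. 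Consequently the asserted equivalence decouples over the lines occurring among $\rho_1,\dots,\rho_k$ and in $C_{\mathcal{S}}$, so it suffices to treat $C_{\mathcal{S}}=\{\rho\}$ and $\rho_1=\dots=\rho_k=\rho$, which I assume from now on, writing $\beta_i\ceq([C_i,D_i]_\rho,\nu_i)$.

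Now induct on $k$; the cases $k\le 1$ are trivial, the substantive input for $k=1$ being Lemma~\ref{reduce}. For $k\ge 2$ set $\mathcal{M}\ceq\mathcal{S}_{\beta_1,\dots,\beta_k}$ and $\mathcal{T}\ceq\mathcal{S}_{\beta_1,\dots,\beta_{k-1}}$, so that $\mathcal{M}=\mathcal{T}_{\beta_k}$ by Lemma~\ref{basics}(1), and note that $\psi_{\mathcal{T}}\oplus(\rho\boxtimes S_{c_k}\boxtimes S_{d_k})^{\oplus 2}$ is of good parity. Applying Lemma~\ref{reduce} to $\mathcal{T}$ and the block $\beta_k$ gives: $\mathcal{M}\in\mathrm{SRep}$ if and only if $\mathcal{T}\in\mathrm{SRep}$, condition (\ref{nec-negative}) holds at $\beta_k$, and $|C_k-A_S|+|D_k-B_S|\ge|\Delta_{\mathcal{M}}(\mu_S,\nu_k)|$ for every extended segment $S=([A_S,B_S]_\rho,\mu_S)$ of $\mathcal{T}$ with $S\sim\beta_k$ in $\mathcal{M}$; when $S=\beta_l$ with $l<k$, this last inequality is, by $\Delta_{\mathcal{M}}(\mu,\nu)=-\Delta_{\mathcal{M}}(\nu,\mu)$ and the symmetry of $|\cdot-\cdot|$, precisely the pairwise inequality of the statement for $\beta_l$ and $\beta_k$. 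Feeding the induction hypothesis into the clause ``$\mathcal{T}\in\mathrm{SRep}$'' and splitting the remaining inequality according to whether $S$ is an original segment of $\mathcal{S}$ or one of $\beta_1,\dots,\beta_{k-1}$, the claim reduces to the purely combinatorial assertion that the system of inequalities just obtained coincides with the system ``$\mathcal{S}_{\beta_i}\in\mathrm{SRep}$ for all $1\le i\le k$, together with $|C_l-C_m|+|D_l-D_m|\ge|\Delta_{\mathcal{M}}(\nu_m,\nu_l)|$ for all $l\ne m$ with $\beta_l\sim\beta_m$ in $\mathcal{M}$''.

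For this last step one inclusion is easy: deleting segments from a standard extended multi-segment cannot break the connectedness of a surviving pair, so a pair connected in $\mathcal{M}$ is connected in $\mathcal{T}$, in each $\mathcal{S}_{\beta_i}$ and in $\mathcal{S}$, and by iterating Lemma~\ref{delta-lemma}(1) the value of $\Delta$ on such a pair agrees throughout; hence every inequality that Theorem~\ref{thm:nonvanishingstandard} demands of $\mathcal{M}$ already occurs on the second list, and conversely the bulk of the second list is visibly present. The only inequalities not obviously common to the two systems are those attached to a pair that is connected after a single insertion (i.e.\ in some $\mathcal{S}_{\beta_i}$, or in $\mathcal{T}$) but becomes disconnected in $\mathcal{M}$; such a pair is separated exactly by inserted blocks, and I would establish its inequality by an inner induction on the number of blocks of $\mathcal{M}$ that lie positionally between the two members with $A$-coordinate in the closed interval they span. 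When this number is $0$ the pair is connected in $\mathcal{M}$ and the inequality is given; otherwise pick the separating block $\beta_j$ closest to one endpoint, use the transitivity of ``lying between'' on a line, together with the facts that consecutive segments are always connected and that the pair is connected after a single insertion, to check that $\beta_j$ is connected in $\mathcal{M}$ to that endpoint and that each of the two sub-pairs it creates has strictly fewer separating blocks, and then combine the two inequalities supplied by the inner induction with Lemma~\ref{delta-lemma}(2) and the identity $|x-y|+|y-z|=|x-z|$ for $y$ between $x$ and $z$ --- applicable to both the $A$- and the $B$-coordinate of $\beta_j$, the latter because the standard ordering sorts by the $B$-coordinate --- to recover the required inequality. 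This yields the equivalence, and by definition it cuts out the set $N(\mathcal{S},[C_1,D_1]_\rho,\dots,[C_k,D_k]_\rho)$. I expect the inner induction to be the main obstacle: it is an elementary but fiddly piece of bookkeeping, of the same nature as the proof of Lemma~\ref{reduce}, in which one must track uniformly across all intermediate insertions how the connectedness poset and the values of $\Delta$ evolve, and verify in every case that the coordinates of a separating block fall in the intervals needed for the two ``triangle'' estimates to collapse to the single estimate required.
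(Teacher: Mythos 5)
Your proposal is correct and follows essentially the same route as the paper: induction on $k$, with Lemma~\ref{reduce} applied to $\mathcal{T}=\mathcal{S}_{([C_1,D_1]_{\rho_1},\nu_1),\ldots,([C_{k-1},D_{k-1}]_{\rho_{k-1}},\nu_{k-1})}$ and the last block, Lemma~\ref{basics} for commuting insertions, and Lemma~\ref{delta-lemma} for transporting the values of $\Delta$ and the triangle estimates. The one place you make life harder than necessary is the ``inner induction on separating blocks'' that you flag as the main obstacle: it can be dispensed with entirely if you only ever compare extended multi-segments differing by a \emph{single} inserted block. In the forward direction, $\mathcal{M}\in\mathrm{SRep}$ yields $\mathcal{S}_{([C_i,D_i]_{\rho_i},\nu_i)}\in\mathrm{SRep}$ by removing the other blocks one at a time through the backward implication of Lemma~\ref{reduce}, whose proof already absorbs the single-block disconnection via Lemma~\ref{delta-lemma}(2) and the collapse $|C-A|+|A'-C|=|A'-A|$ (the $B$-coordinate collapsing by standardness); in the backward direction, the pairs that are connected in $\mathcal{T}$ but not in $\mathcal{M}$ are separated only by the one block $([C_k,D_k]_{\rho_k},\nu_k)$, so again a single application of Lemma~\ref{delta-lemma}(2) suffices, and the single-insertion memberships for $i<k$ are consumed by the induction hypothesis rather than re-derived from the inequalities attached to $\mathcal{M}$. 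With that reorganization no multi-block bookkeeping is needed, and the rest of your argument (including the reduction to one supercuspidal line, which the paper leaves implicit) is sound.
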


\begin{proof}
    First, we prove the “$\Leftarrow$” -direction: \\
    We prove the statement by induction over $k$. If $k=1$, this is trivial. Assume now we have 
    \begin{align*}
        |C_l-C_m|+|D_l-D_m| \geq |\Delta_{\mathcal{S}_{([C_1,D_1]_{\rho_1},\nu_1),...,([C_k,D_k]_{\rho_k},\nu_k)}}(\nu_m,\nu_l)|
    \end{align*}
     for all pairs $([C_l,D_l]_{\rho_l},\nu_l)$ and $([C_m,D_m]_{\rho_m},\nu_m)$ of extended segments with $\rho_l \cong \rho_m$ that are connected in $\mathcal{S}_{([C_1,D_1]_{\rho_1},\nu_1),...,([C_k,D_k]_{\rho_k},\nu_k)}$. By Lemma \ref{delta-lemma}, this implies that
     \begin{align*}
        |C_l-C_m|+|D_l-D_m| \geq |\Delta_{\mathcal{S}_{([C_1,D_1]_{\rho_1},\nu_1),...,([C_{k-1},D_{k-1}]_{\rho_{k-1}},\nu_{k-1})}}(\nu_m,\nu_l)|
    \end{align*}
     for all pairs $([C_l,D_l]_{\rho_l},\nu_l)$ and $([C_m,D_m]_{\rho_m},\nu_m)$ of extended segments with $\rho_l \cong \rho_m$ that are connected in $\mathcal{S}_{([C_1,D_1]_{\rho_1},\nu_1),...,([C_{k-1},D_{k-1}]_{\rho_{k-1}},\nu_{k-1})}$.
     By induction hypothesis we have
    $$
    \mathcal{S}_{([C_1,D_1]_{\rho_1},\nu_1),...,([C_i,D_i]_{\rho_i},\nu_i),...,([C_{k-1},D_{k-1}]_{\rho_{k-1}},\nu_{k-1})}\in\mathrm{SRep}.
    $$
    Now we want to apply Lemma \ref{reduce} to 
    $$
    \mathcal{S}_{([C_1,D_1]_{\rho_1},\nu_1),...,([C_i,D_i]_{\rho_i},\nu_i),...,([C_{k-1},D_{k-1}]_{\rho_{k-1}},\nu_{k-1})}
    $$ 
    and the extended segment $([C_k,D_k]_{\rho_k},\nu_k)$.
    Condition 1. and 3. are satisfied. For condition 2., we have to prove that condition (\ref{nec-negative}) is satisfied at the extended segments $([C_k,D_k]_{\rho_k},\nu_k)$. But this follows from 
    \begin{align*}
        \mathcal{S}_{([C_k,D_k]_{\rho_k},\nu_k)}\in\mathrm{SRep},
    \end{align*}
    because the condition is exactly the same in that situation.\\
    On the other hand, assume $\mathcal{S}_{([C_1,D_1]_{\rho_1},\nu_1),...,([C_i,D_i]_{\rho_i},\nu_i),...,([C_k,D_k]_{\rho_k},\nu_k)}\in\mathrm{SRep}$. The inequalities for the extended segments $([C_i,D_i]_{\rho_i},\nu_i)$ follow immediately from Theorem \ref{thm:nonvanishingstandard}. It remains to show that $\mathcal{S}_{([C_i,D_i]_{\rho_i},\nu_i)}\in\mathrm{SRep}$ for all $i$. But we simply successively remove the extended segments $([C_j,D_j]_{\rho_j},\nu_j)$ for $j\neq i$. Because of Lemma \ref{basics} and Lemma \ref{reduce}, we have
    \begin{align*}
        &\mathcal{S}_{([C_1,D_1]_{\rho_1},\nu_1),...,([C_i,D_i]_{\rho_i},\nu_i),...,([C_k,D_k]_{\rho_k},\nu_k)}\in\mathrm{SRep}\\
        \Rightarrow  \qquad 
        &\mathcal{S}_{([C_1,D_1]_{\rho_1},\nu_1),...,([C_i,D_i]_{\rho_i},\nu_i),...,([C_{k-1},D_{k-1}]_{\rho_{k-1}},\nu_{k-1})}\in\mathrm{SRep}\\
        ... \\
        \Rightarrow \qquad
        &\mathcal{S}_{([C_i,D_i]_{\rho_i},\nu_i)}\in\mathrm{SRep}.\\
    \end{align*}

\end{proof}

\begin{cor} \label{Nindex}
Let $\mathcal{S}\in \mathrm{SRep}$. Let $c_i,d_i \in \Z_{\geq 0}$ and $\rho_i \in \mathrm{Cusp}^\perp (\mathrm{GL}_d (F))$ such that 
    \begin{align*}
        \psi_{\mathcal{S}} \oplus \bigoplus_{i=1}^k (\rho_i \boxtimes S_{c_i} \boxtimes S_{d_i})^{\oplus 2} 
    \end{align*}
    is of good parity. Then the parabolically induced representation
     \begin{align*}
        u_{\rho_1}(c_1,d_1) \times ... \times u_{\rho_k}(c_k,d_k) \rtimes\pi(\mathcal{S}) 
        \end{align*}
        decomposes as 
      \begin{align*}
\bigoplus_{(\nu_1,\ldots,\nu_k)} 
\pi\left(\mathcal{S}{([C_1,D_1]_{\rho_1},\nu_1),\ldots,([C_k,D_k]_{\rho_k},\nu_k)}\right),
\end{align*}
     where we sum over $(\nu_1,\ldots,\nu_k) \in N(\mathcal{S},[C_1,D_1]_{\rho_1},...,[C_k,D_k]_{\rho_k})$ determined in Theorem \ref{prop:multipleinsertions}.
    Moreover, the constituents on the right-hand side are pairwise non-isomorphic, i.e. the decomposition is multiplicity-free. 
\end{cor}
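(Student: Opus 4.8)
The plan is to establish both assertions by induction on $k$, bootstrapping from the single–insertion statement of Corollary~\ref{dec}. For $k=0$ there is nothing to prove; for $k=1$ the decomposition is exactly Corollary~\ref{dec} (its index set being $\{\nu_1:\mathcal{S}_{([C_1,D_1]_{\rho_1},\nu_1)}\in\mathrm{SRep}\}$ by Lemma~\ref{reduce}), and multiplicity-freeness will be handled uniformly for all $k$ at the end. For the inductive step I would first peel off the first factor by transitivity of parabolic induction,
$$u_{\rho_1}(c_1,d_1)\times\dots\times u_{\rho_k}(c_k,d_k)\rtimes\pi(\mathcal{S})\ \cong\ u_{\rho_1}(c_1,d_1)\rtimes\Big(u_{\rho_2}(c_2,d_2)\times\dots\times u_{\rho_k}(c_k,d_k)\rtimes\pi(\mathcal{S})\Big),$$
which needs no irreducibility input, only induction in stages. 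The truncated parameter $\psi_{\mathcal{S}}\oplus\bigoplus_{i=2}^{k}(\rho_i\boxtimes S_{c_i}\boxtimes S_{d_i})^{\oplus2}$ is still of good parity (good parity is a condition on individual summands), so the induction hypothesis applies to the inner representation and rewrites it as $\bigoplus\pi(\mathcal{S}_{([C_2,D_2]_{\rho_2},\nu_2),\dots,([C_k,D_k]_{\rho_k},\nu_k)})$, summed over $(\nu_2,\dots,\nu_k)\in N(\mathcal{S},[C_2,D_2]_{\rho_2},\dots,[C_k,D_k]_{\rho_k})$. Using exactness (additivity) of $u_{\rho_1}(c_1,d_1)\rtimes(-)$, the computation reduces to $u_{\rho_1}(c_1,d_1)\rtimes\pi(\mathcal{S}')$ for each such $\mathcal{S}'$.

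Each such $\mathcal{S}'$ lies in $\mathrm{SRep}$ and satisfies $\psi_{\mathcal{S}'}\oplus(\rho_1\boxtimes S_{c_1}\boxtimes S_{d_1})^{\oplus2}=\psi_{\mathcal{S}}\oplus\bigoplus_{i=1}^{k}(\rho_i\boxtimes S_{c_i}\boxtimes S_{d_i})^{\oplus2}$, which is of good parity by hypothesis; so Corollary~\ref{dec} gives $u_{\rho_1}(c_1,d_1)\rtimes\pi(\mathcal{S}')\cong\bigoplus_{\nu_1}\pi(\mathcal{S}'_{([C_1,D_1]_{\rho_1},\nu_1)})$, the sum over those $\nu_1$ with $\mathcal{S}'_{([C_1,D_1]_{\rho_1},\nu_1)}\in\mathrm{SRep}$, and by the definition of the insertion operation together with its permutation-invariance (Lemma~\ref{basics}) one has $\mathcal{S}'_{([C_1,D_1]_{\rho_1},\nu_1)}=\mathcal{S}_{([C_1,D_1]_{\rho_1},\nu_1),\dots,([C_k,D_k]_{\rho_k},\nu_k)}$. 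Assembling the two sums, the index set obtained is
$$\Big\{(\nu_1,\dots,\nu_k)\ :\ (\nu_2,\dots,\nu_k)\in N(\mathcal{S},[C_2,D_2]_{\rho_2},\dots,[C_k,D_k]_{\rho_k})\ \text{and}\ \mathcal{S}_{([C_1,D_1]_{\rho_1},\nu_1),\dots,([C_k,D_k]_{\rho_k},\nu_k)}\in\mathrm{SRep}\Big\},$$
and it remains to see this equals $N(\mathcal{S},[C_1,D_1]_{\rho_1},\dots,[C_k,D_k]_{\rho_k})$. The inclusion ``$\subseteq$'' is immediate from the definition of the latter; for ``$\supseteq$'' one needs that deleting one inserted block of an element of $\mathrm{SRep}$ again lands in $\mathrm{SRep}$, which is precisely the ``removal'' monotonicity established in the ``$\Rightarrow$'' part of the proof of Theorem~\ref{prop:multipleinsertions} (through Lemmas~\ref{basics} and~\ref{reduce}). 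This proves the displayed decomposition.

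For multiplicity-freeness I observe that the underlying Arthur parameter of $\mathcal{S}_{([C_1,D_1]_{\rho_1},\nu_1),\dots,([C_k,D_k]_{\rho_k},\nu_k)}$ is $\psi'\ceq\psi_{\mathcal{S}}\oplus\bigoplus_{i=1}^{k}(\rho_i\boxtimes S_{c_i}\boxtimes S_{d_i})^{\oplus2}$, independent of $(\nu_i)$, and of good parity; hence every summand lies in the single Arthur packet $\Pi_{\psi'}$, which is a genuine set by \cite{Mœglin2}. Distinct tuples yield distinct standard extended multi-segments — the $\nu_i$ occur as the $\mu$-labels of the inserted blocks — all in $\mathrm{SRep}$, hence with nonzero associated representation. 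I would finish by invoking that distinct standard extended multi-segments in $\mathrm{SRep}$ sharing a fixed underlying Arthur parameter give distinct representations (part of the Mœglin--Atobe parametrization of $\Pi_{\psi'}$; alternatively one can compare Langlands data, which are explicitly computable from the extended multi-segment and detect the inserted $\mu$-parameters), so that no two summands coincide. The two delicate points I anticipate are the set-theoretic identification of the index set in the second step — which rests on the removal bookkeeping already contained in the proof of Theorem~\ref{prop:multipleinsertions} — and, above all, the injectivity input needed for multiplicity-freeness; the rest is a mechanical iteration of Corollary~\ref{dec}.
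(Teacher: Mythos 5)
Your proposal is correct and follows essentially the same route as the paper, which simply cites Proposition \ref{decomp} and Theorem \ref{prop:multipleinsertions} for the decomposition (your induction on $k$ with the index-set bookkeeping is just the detailed version of that iteration) and derives multiplicity-freeness from the same two inputs you use, namely \cite[Proposition 4.2]{A2} together with Mœglin's multiplicity-one theorem for Arthur packets. No gaps.
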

\begin{proof}
    This follows directly from Proposition \ref{prop:multipleinsertions} and Proposition \ref{decomp}. The sum is multiplicity free due to \cite[Proposition 4.2]{A2} (this in turn follows from the proof of \cite[Proposition 2.4.3]{Arthur}) and by Mœglin's result that Arthur packets are multiplicity-free \cite{Mœglin2}.
\end{proof}

\begin{cor} 
Let $\mathcal{S}\in \mathrm{SRep}$ and let $k\geq 2$ be an integer. Let $c_i,d_i \in \Z_{\geq 0}$ and $\rho_i \in \mathrm{Cusp}^\perp (\mathrm{GL}_d (F))$ such that 
    \begin{align*}
        \psi_{\mathcal{S}} \oplus \bigoplus_{i=1}^k (\rho_i \boxtimes S_{c_i} \boxtimes S_{d_i})^{\oplus 2} 
    \end{align*}
    is of good parity and such that $(c_i,d_i)\neq (c_j,d_j)$ if $i \neq j$. Assume moreover that $u_{\rho_i}(c_i,d_i) \rtimes \pi(\mathcal{S})$ is reducible for each $i=1,\ldots, k$. Then there is one of the Speh representations $u_{\rho_i}(c_i,d_i)$ and a constituent $\pi'$ of 
    \begin{align*}
        \bigtimes_{\substack{j=1 \\ j\neq i}}^k u_{\rho_j}(c_j,d_j) \rtimes \pi(\mathcal{S}),  
    \end{align*}
    such that 
      \begin{align*}
    u_{\rho_i}(c_i,d_i) \rtimes \pi'
\end{align*}
is reducible.
\end{cor}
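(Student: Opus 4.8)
The plan is to induct on $k$ and to reduce everything to the case $k=2$, after translating the statement into the combinatorics of Section~\ref{Arthur}. Write $\pi=\pi(\mathcal{S})$ with $\mathcal{S}\in\mathrm{SRep}$, and for each $i$ set $C_i\ceq\frac{c_i+d_i}{2}-1$, $D_i\ceq\frac{c_i-d_i}{2}$; let $N_i\subseteq\Z$ be the index set with $u_{\rho_i}(c_i,d_i)\rtimes\pi(\mathcal{S})\cong\bigoplus_{\nu\in N_i}\pi(\mathcal{S}_{([C_i,D_i]_{\rho_i},\nu)})$ furnished by Corollary~\ref{dec}; the hypothesis says $|N_i|\ge2$ for all $i$. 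By Corollary~\ref{Nindex} every constituent of $\bigtimes_{j\ne i}u_{\rho_j}(c_j,d_j)\rtimes\pi(\mathcal{S})$ equals $\pi(\mathcal{S}_{([C_j,D_j]_{\rho_j},\nu_j)_{j\ne i}})$ for an admissible partial tuple, and (applying Corollary~\ref{dec} once more) $u_{\rho_i}(c_i,d_i)\rtimes\pi(\mathcal{S}_{(\nu_j)_{j\ne i}})$ is reducible precisely when the fibre $\{\nu_i:\mathcal{S}_{([C_j,D_j]_{\rho_j},\nu_j)_{j=1}^k}\in\mathrm{SRep}\}$ above that tuple has at least two elements. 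So it will suffice to produce one index $i$ and one admissible partial tuple with non-singleton fibre.

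\emph{Induction step $(k\ge3)$.} Since some, hence by hypothesis every, $u_{\rho_i}(c_i,d_i)\rtimes\pi(\mathcal{S})$ is reducible and unitary, the full product $u_{\rho_1}(c_1,d_1)\times\dots\times u_{\rho_k}(c_k,d_k)\rtimes\pi(\mathcal{S})$ is reducible. I would apply the inductive hypothesis to $u_{\rho_1}(c_1,d_1),\dots,u_{\rho_{k-1}}(c_{k-1},d_{k-1})$ (whose hypotheses are inherited) to obtain, after renumbering, a constituent $\pi''=\pi(\mathcal{S}'')$ of $u_{\rho_1}(c_1,d_1)\times\dots\times u_{\rho_{k-2}}(c_{k-2},d_{k-2})\rtimes\pi(\mathcal{S})$ with $\mathcal{S}''\in\mathrm{SRep}$ and $u_{\rho_{k-1}}(c_{k-1},d_{k-1})\rtimes\pi''$ reducible. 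If $u_{\rho_k}(c_k,d_k)\rtimes\pi''$ is irreducible, I set $\pi'\ceq u_{\rho_k}(c_k,d_k)\rtimes\pi''$; using that parabolic induction of Speh representations commutes (Theorem~\ref{essred}), $u_{\rho_{k-1}}(c_{k-1},d_{k-1})\rtimes\pi'\cong u_{\rho_k}(c_k,d_k)\rtimes\bigl(u_{\rho_{k-1}}(c_{k-1},d_{k-1})\rtimes\pi''\bigr)$ is reducible, and $\pi'$ is a constituent of $\bigtimes_{j\ne k-1}u_{\rho_j}(c_j,d_j)\rtimes\pi(\mathcal{S})$. If instead $u_{\rho_k}(c_k,d_k)\rtimes\pi''$ is reducible, I apply the case $k=2$ to $\pi''$ together with $u_{\rho_{k-1}}(c_{k-1},d_{k-1})$ and $u_{\rho_k}(c_k,d_k)$ (which are distinct, of good parity with respect to $\pi''$ since $\psi_{\mathcal{S}''}\oplus(\rho_{k-1}\boxtimes S_{c_{k-1}}\boxtimes S_{d_{k-1}})^{\oplus2}\oplus(\rho_k\boxtimes S_{c_k}\boxtimes S_{d_k})^{\oplus2}=\psi_{\mathcal{S}}\oplus\bigoplus_{j=1}^k(\rho_j\boxtimes S_{c_j}\boxtimes S_{d_j})^{\oplus2}$, and induce reducibly): this yields $i^*\in\{k-1,k\}$ and a constituent $\pi'$ of $u_{\rho_{i^{**}}}(c_{i^{**}},d_{i^{**}})\rtimes\pi''$, where $\{i^*,i^{**}\}=\{k-1,k\}$, with $u_{\rho_{i^*}}(c_{i^*},d_{i^*})\rtimes\pi'$ reducible; commuting the Speh factors again, $\pi'$ is a constituent of $\bigtimes_{j\ne i^*}u_{\rho_j}(c_j,d_j)\rtimes\pi(\mathcal{S})$.

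\emph{Base case $k=2$.} If $\rho_1\not\cong\rho_2$, or if the two inserted blocks are non-connected in $\mathcal{S}'\ceq\mathcal{S}_{([C_1,D_1]_{\rho_1},\nu_1),([C_2,D_2]_{\rho_2},\nu_2)}$ (a condition depending only on $C_1,C_2$ and $\mathcal{S}$), then by Theorem~\ref{prop:multipleinsertions} the admissible set of pairs is $N_1\times N_2$, so for any $\nu_2\in N_2$ the fibre over $\nu_2$ is all of $N_1$, of size $\ge2$. Otherwise $\rho_1\cong\rho_2=:\rho$, the blocks are connected, and Theorem~\ref{prop:multipleinsertions} gives the admissible set
\[
N=\bigl\{(\nu_1,\nu_2)\in N_1\times N_2:\ |\Delta_{\mathcal{S}'}(\nu_2,\nu_1)|\le\delta\bigr\},\qquad \delta\ceq|C_1-C_2|+|D_1-D_2|,
\]
with $\delta\ge1$ because $(c_1,d_1)\ne(c_2,d_2)$. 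A direct computation from Definition~\ref{delta}, using that each block is inserted twice with the same parameter so that its parameter telescopes away (cf.\ Lemma~\ref{delta-lemma}), shows $\Delta_{\mathcal{S}'}(\nu_2,\nu_1)=\varepsilon_1\nu_1+\varepsilon_2\nu_2+\gamma$ with $\varepsilon_i=\pm1$ and $\gamma$ depending only on $\mathcal{S}$; reducing that signed telescoping sum modulo $2$ leaves $\Delta_{\mathcal{S}'}(\nu_2,\nu_1)\equiv\nu_1+\nu_2\equiv d_1+d_2\pmod2$ (the middle congruence by the parity condition on extended segments, since $C_i-D_i+1=d_i$), and $d_1+d_2\equiv(C_1-C_2)+(D_1-D_2)\equiv\delta\pmod2$. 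Hence $\gamma$ is even and the set $T$ of integers of the correct parity in $[-\gamma-\delta,-\gamma+\delta]$, i.e.\ the admissible values of $\varepsilon_1\nu_1+\varepsilon_2\nu_2$, is the whole parity class there, so $|T|=\delta+1\ge2$ and $N=(N_1\times N_2)\cap\{(\nu_1,\nu_2):\varepsilon_1\nu_1+\varepsilon_2\nu_2\in T\}$. Finally the coordinate projections $N\to N_1$ and $N\to N_2$ are onto: for $\nu_1\in N_1$, $u_{\rho_2}(c_2,d_2)\rtimes\pi(\mathcal{S}_{([C_1,D_1]_\rho,\nu_1)})$ is non-zero and decomposes over the fibre $\{\nu_2:(\nu_1,\nu_2)\in N\}$, which is thus non-empty, and symmetrically for $N_2$. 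Viewing $N_1$, $N_2$, $T$ as parity classes of integer intervals, surjectivity of both projections (which, after absorbing the signs $\varepsilon_i$, reads $N_1\subseteq N_2+T$ and $N_2\subseteq N_1-T$) forces $N_1$ to meet the range of $\nu_1$ on which the fibre of $N\to N_1$ has $\min(|T|,|N_2|)\ge2$ elements; choosing $\nu_1$ there yields the desired non-singleton fibre.

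\emph{The main obstacle} is exactly the base case $k=2$ with $\rho_1\cong\rho_2$ and connected blocks. The decisive point — and the only place where distinctness $(c_1,d_1)\ne(c_2,d_2)$ enters — is that $\delta\ge1$ together with the congruence $\Delta_{\mathcal{S}'}\equiv\delta\pmod2$ rules out the degenerate case $|T|=1$, in which $N$ would be a ``diagonal'' with every fibre a singleton. Granting $|T|\ge2$, what remains is the elementary but slightly delicate interval arithmetic: once the slab admits at least two values and both projections of $N$ are onto $N_1$ and $N_2$, some fibre must be non-singleton. Everything else — the reformulation in the first paragraph and the induction on $k$ — is formal.
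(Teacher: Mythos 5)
Your proposal is correct and follows essentially the same route as the paper: the same reformulation (reducibility of some $u_{\rho_i}(c_i,d_i)\rtimes\pi'$ is equivalent to a non-singleton fibre, i.e.\ to two adjacent lattice points in $N(\mathcal{S},[C_1,D_1]_{\rho_1},\ldots,[C_k,D_k]_{\rho_k})$), the same induction on $k$ reducing to the case of two connected blocks on the same cuspidal line, and the same key observation in the base case that the constraint is a slab $|\varepsilon_1\nu_1+\varepsilon_2\nu_2+\gamma|\le\delta$ with $\delta\ge1$ forced by $(c_1,d_1)\ne(c_2,d_2)$ together with the parity congruence $\Delta\equiv\delta\pmod 2$. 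The only divergence is the elementary endgame of the base case: you use surjectivity of both coordinate projections of $N$ plus interval arithmetic on the trapezoidal fibre-size function, whereas the paper starts from an arbitrary point of $N$ and perturbs one coordinate toward the centre of the slab; both arguments are valid and equally elementary.
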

\begin{proof}
Let us denote 
\begin{align*}
    N_i \ceq \{\nu_i \in 2\Z + d_i|\  \mathcal{S}_{([C_i,D_i],\nu_i)} \in \mathrm{SRep} \},
\end{align*}
which consist by Corollary \ref{dec} of consecutive integers of same parity. By assumption $|N_i| \geq 2$ for all $i$. The statement we want to prove is equivalent to the fact that $N(\mathcal{S},[C_1,D_1]_{\rho_1},...,[C_k,D_k]_{\rho_k})$ contains two elements $(\nu_1,\ldots, \nu_{i-1}, \nu_i, \nu_{i+1}, \ldots, \nu_k)$ and $(\nu_1,\ldots, \nu_{i-1}, \nu_i+2, \nu_{i+1}, \ldots, \nu_k)$ for some $i$. We call these \textit{adjacent lattice points}. We prove it by induction over $k$. \\
If $k=2$, we set $N\ceq N(\mathcal{S},[C_1,D_1]_{\rho_1},[C_2,D_2]_{\rho_2})$. If the two extended segments are not connected, we are done. Else, according to Theorem \ref{prop:multipleinsertions}, $N$ is given by
\begin{align*}
    N= \{ (\nu_1,\nu_2) \in N_1 \times N_2|\ | \Delta_{\mathcal{S}_{([C_1,D_1]_{\rho_1},\nu_1),([C_{2},D_{2}]_{\rho_{2}},\nu_{2})}}(\nu_1,\nu_2)| \leq |C_2-C_1| +|D_2-D_1|  \}.
\end{align*}
This inequality is of the shape
\begin{align*}
    |\nu_1 + \varepsilon \nu_2 + x| \leq y,
\end{align*}
where $\varepsilon = \pm 1$ and $x,y$ are integers with $y\geq 1$. Moreover, it holds that $\nu_1 + \varepsilon \nu_2 + x \equiv y \mod{2}$. 
Let $(\nu_1,\nu_2) \in N$. Define
\begin{align*}
    \delta \ceq - \mathrm{sgn}(\nu_1 + \varepsilon \nu_2 + x)
\end{align*}
with the convention $\mathrm{sgn}(0)=1$. If $\nu_1+2\delta \in N_1$, we are done, because $(\nu_1,\nu_2)$ and $(\nu_1+2\delta,\nu_2)$ are then adjacent lattice points in $N$. Similarly if $\nu_2+2\delta\varepsilon \in N_2$. So let us assume that neither is true. Since $|N_1|\geq 2$, we have $\nu_1-2\delta \in N_1$. There must be some $\nu_2' \in  N_2$ such that $(\nu_1-2\delta, \nu_2')\in N$. This means that the inequality
\begin{align*}
    |\nu_1- 2\delta + \varepsilon\nu_2' + x| \leq y
\end{align*}
holds. Now
\begin{align*}
    \nu_1- 2\delta + \varepsilon\nu_2' + x &= (\nu_1- 2\delta + \varepsilon\nu_2 + x) + \varepsilon(\nu_2' - \nu_2) \\
    &= (\nu_1- 2\delta + \varepsilon\nu_2 + x) + ( - 2\delta n)
\end{align*}
for some $n \in \Z_{\geq 0}$ and both bracketed expressions have the same sign. This implies 
\begin{align*}
    |\nu_1- 2\delta + \varepsilon\nu_2 + x| \leq y
\end{align*}
and hence $(\nu_1-2\delta,\nu_2)\in N$, which is an adjacent lattice point to $(\nu_1,\nu_2)$.
\\
Assume now that it holds for $k-1$. This means that there must be some $i$ such that 
$$
(\nu_1,\ldots, \nu_{i-1}, \nu_i, \nu_{i+1}, \ldots, \nu_{k-1})
$$ 
and 
$$
(\nu_1,\ldots, \nu_{i-1}, \nu_i+2, \nu_{i+1}, \ldots, \nu_{k-1})
$$ 
are in $N(\mathcal{S},[C_1,D_1]_{\rho_1},...,[C_{k-1},D_{k-1}]_{\rho_{k-1}})$. We set
\begin{align*}
    \mathcal{S}' \ceq \mathcal{S}_{([C_1,D_1]_{\rho_1},\nu_1),...,([C_{i-1},D_{i-1}]_{\rho_{i-1}},\nu_{i-1}),([C_{i+1},D_{i+1}]_{\rho_{i+1}},\nu_{i+1}),...,([C_{k-1},D_{k-1}]_{\rho_{k-1}},\nu_{k-1})},
\end{align*}
which must be in $\mathrm{SRep}$. Moreover, we denote
\begin{align*}
    N_j' \ceq \{\nu_j' \in 2\Z + d_j|\  \mathcal{S}'_{([C_j,D_j],\nu_j')} \in \mathrm{SRep} \}
\end{align*}
for $j=i,k$. By induction hypothesis, we have $\nu_i,\nu_i+2 \in N_{i}'$. If $|N_k'|=1$ (it must be non-empty), the lattice points 
$$
(\nu_1,\ldots, \nu_{i-1}, \nu_i, \nu_{i+1}, \ldots, \nu_{k-1},\nu_k)
$$ 
and 
$$
(\nu_1,\ldots, \nu_{i-1}, \nu_i+2, \nu_{i+1}, \ldots, \nu_{k-1},\nu_k)
$$ 
are adjacent (where $N_k'=\{\nu_k\}$) and must be in $N(\mathcal{S},[C_1,D_1]_{\rho_1},...,[C_{k},D_{k}]_{\rho_{k}})$. Else, if $|N_k'|\geq 1$ we see the existence of two adjacent lattice points in the same set by application of the basis of induction on $\mathcal{S}'$.
\end{proof}

\nopagebreak[4]

\section{The irreducibility criterion} \label{main}

Throughout this chapter, we will prove our main theorem:

\IRR*

\subsection{Outline of the proof}\label{outline}

One direction of this theorem is very easy to see. Namely, assume $u_1\times\ldots\times u_r \rtimes \pi$ is an irreducible representation. Assume (\ref{irr}) we false. If they exist, let $i<j$ be minimal indices such that $u_i\times u_j$ is reducible. But then 
\begin{align*}
    u_1 \times \ldots \times u_i \times \ldots \times u_j \times \ldots \times u_r \rtimes \pi \cong u_1 \times \ldots \times u_i \times u_j \times \ldots \times u_r \rtimes \pi
\end{align*}
were reducible because of the exactness of parabolic induction, a contradiction. If no such indices exist, we have 
\begin{align*}
    u_1 \times \ldots \times u_i \times \ldots \times u_r \rtimes \pi &\cong u_1 \times \ldots \times u_r \times u_i \rtimes \pi \\
    & \cong u_1 \times \ldots \times u_r \times u_i^\vee \rtimes \pi,
\end{align*}
which implies that $u_i \rtimes \pi$ and $u_i \times u_j^\vee$ are irreducible for all possible choices by an analogous argument. 

We will now explain the steps of the proof of the opposite direction, which will take up this entire chapter. Throughout the proof, we distinguish these cases:
\begin{de}
    Let $u=u_{\rho}(a,b)|\cdot|^{s}$ be an essentially Speh representation with $s\geq 0$. We say that $u$ is
    \begin{enumerate}
    \item of \textit{unitary type} if $s=0$,
        \item of \textit{small type} if $\rho \cong \rho^\vee$, $0<s\leq  \frac{a-1}{2}$ and $s \in \frac{1}{2}\Z$,
        \item of \textit{big type} if $\rho \cong \rho^\vee$, $s > \frac{a-1}{2}$ and $s \in \frac{1}{2}\Z$,
        \item of \textit{non-half-integral type} if $\rho \cong \rho^\vee$ and $s \not \in \frac{1}{2}\Z$ or if $\rho \not \cong \rho^\vee$.
    \end{enumerate}

\end{de}

Assume that $u_1,\ldots,u_n,\pi$ satisfy \eqref{irr}.
Since all the representations $u_i$ have pairwise irreducible parabolic induction, we may order them as we please and obtain the same parabolically induced representation (Theorem \ref{essred}). We group them according to type: Let $u_1,...,u_{h}$ denote the non-half-integral type representations, let $u_{h+1},...,u_{b}$ denote the big type representations, $u_{b+1},...,u_{s}$ the small type representations and $u_{s+1},...,u_{n}$ the unitary type representations. We denote 
\begin{align*}
    \Pi_{i} \ceq u_{n-i+1} \times \ldots \times u_n \rtimes \pi 
\end{align*}
for $i=0,\ldots,n$. 
The proof of Theorem \ref{thm:IRR} is structured as follows:
\begin{enumerate}
    \item Considering only the unitary type representations, the irreducibility of $\Pi_{n-s}$ is an easy consequence of the preceding chapter and we carry it out in Section \ref{general}. Moreover, it is of Arthur type again (Corollary \ref{multiple-spehs}).
    \item The main step is to prove the irreducibility of $\Pi_{n-b}$. First, we prove (Theorem \ref{still-irr}) that the representations $u_{b+1},\ldots, u_s, \Pi_{n-s}$ satisfy \eqref{irr} again. After some preparatory lemmas (Section \ref{prep}), we prove Theorem \ref{thm:IRR} in the case where all essentially Spehs are of small type (Proposition \ref{subrep}), which implies the irreducibility of $\Pi_{n-b}$.
    \item In Section \ref{bigcase} we will give a short inductive proof for the remaining cases (big and non-half-integral type) to show that $\Pi_n$ is irreducible.
\end{enumerate}

\subsection{General statements}\label{gen}

Let us prove a slightly altered irreducibility criterion of Atobe in \cite[Corollary 5.2]{A2}, which allows us to reduce the proof of irreducibility to socle irreducibility in many of the following cases (this is a standard argument in the theory):

\begin{prop} \label{ired}
 Let $\tau \in \mathrm{Irr}(GL)$ and $\pi\in\mathrm{Irr}(G)$.
Then $\tau \rtimes \pi$ is irreducible if and only if all of the following conditions hold:
\begin{itemize}
    \item[(i)] $\mathrm{soc}(\tau \rtimes \pi)$ is irreducible,
    \item[(ii)] $\mathrm{soc}(\tau^\vee \rtimes \pi)$ is irreducible,
    \item[(iii)] $\mathrm{soc}(\tau\rtimes \pi)\cong\mathrm{soc}(\tau^\vee  \rtimes \pi)$,
    \item[(iv)] $\mathrm{soc}(\tau  \rtimes \pi)$ appears in the composition series of the representation $\tau\rtimes \pi$ with multiplicity one.
\end{itemize}
\end{prop}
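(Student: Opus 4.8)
\emph{The forward implication is immediate.} If $\tau \rtimes \pi$ is irreducible, it equals its own socle, so (i) holds, and that socle appears with multiplicity one, so (iv) holds. The $\mathrm{MVW}$ computation recalled just before the proposition gives $\tau^\vee \rtimes \pi \cong \tau \rtimes \pi$, which is therefore also irreducible and equal to its own socle; this yields (ii) and (iii).

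\emph{For the converse,} assume (i)--(iv) and set $\sigma \ceq \mathrm{soc}(\tau \rtimes \pi)$, which is irreducible by (i). The plan is to show first that $\tau^\vee \rtimes \pi$ is irreducible, hence equal to $\sigma$, and then to deduce the same for $\tau \rtimes \pi$. The first step is to turn the embedding $\sigma \hookrightarrow \tau \rtimes \pi$ into a surjection $\tau^\vee \rtimes \pi \twoheadrightarrow \sigma$: passing to contragredients and using $(\tau \rtimes \pi)^\vee \cong \tau^\vee \rtimes \pi^\vee$ produces $\tau^\vee \rtimes \pi^\vee \twoheadrightarrow \sigma^\vee$, and applying the covariant exact functor $\mathrm{MVW}$, together with $(\tau^\vee \rtimes \pi^\vee)^{\mathrm{MVW}} \cong \tau^\vee \rtimes \pi$ and $(\sigma^\vee)^{\mathrm{MVW}} \cong \sigma$, gives the desired surjection $q \colon \tau^\vee \rtimes \pi \twoheadrightarrow \sigma$. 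On the other hand, (i), (ii) and (iii) together say $\mathrm{soc}(\tau^\vee \rtimes \pi) = \sigma$, so there is also an embedding $\iota \colon \sigma \hookrightarrow \tau^\vee \rtimes \pi$.

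Next I would record the bookkeeping identity $[\tau^\vee \rtimes \pi] = [\tau \rtimes \pi]$ in the Grothendieck group: the composite of contragredient and $\mathrm{MVW}$ acts on semisimplifications by $\delta \mapsto (\delta^\vee)^{\mathrm{MVW}} = \delta^{\vee\vee} \cong \delta$ on each irreducible constituent, while carrying $[\tau \rtimes \pi]$ to $[\tau^\vee \rtimes \pi]$. In particular, by (iv), the irreducible $\sigma$ occurs in $[\tau^\vee \rtimes \pi]$ with multiplicity one. The core of the argument is then elementary: consider $q \circ \iota \colon \sigma \to \sigma$. If $q \circ \iota = 0$, then $\iota(\sigma) \subseteq \ker q$, so $\sigma$ is a constituent of $\ker q$ and also of $(\tau^\vee \rtimes \pi)/\ker q \cong \sigma$, contradicting multiplicity one; hence by Schur's lemma $q \circ \iota$ is an isomorphism, so $q$ splits, $\tau^\vee \rtimes \pi \cong \sigma \oplus \ker q$, and since $\mathrm{soc}(\tau^\vee \rtimes \pi) = \sigma$ is irreducible we get $\ker q = 0$ and $\tau^\vee \rtimes \pi \cong \sigma$. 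Finally, $[\tau \rtimes \pi] = [\tau^\vee \rtimes \pi] = [\sigma]$ together with $\sigma = \mathrm{soc}(\tau \rtimes \pi) \hookrightarrow \tau \rtimes \pi$ forces $\tau \rtimes \pi \cong \sigma$, so it is irreducible.

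\emph{The main obstacle is only mild:} one has to be careful that $\mathrm{MVW}$ is covariant and exact (so that surjections map to surjections and composition factors are tracked correctly), and that the multiplicity-one hypothesis (iv), which is stated for $\tau \rtimes \pi$, transfers to $\tau^\vee \rtimes \pi$ --- this transfer being exactly the Grothendieck-group identity above. Everything else is the standard ``socle irreducibility'' bootstrap.
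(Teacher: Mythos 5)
Your proof is correct and follows essentially the same route as the paper: both use the contragredient together with the $\mathrm{MVW}$ functor to identify the socle of $\tau\rtimes\pi$ with a cosocle, and then run the standard ``socle $\cong$ cosocle, irreducible, multiplicity one $\Rightarrow$ irreducible'' bootstrap. The only cosmetic difference is that you apply the splitting argument to $\tau^\vee\rtimes\pi$ and transfer back via the Grothendieck-group identity, whereas the paper concludes directly from $\mathrm{soc}(\tau\rtimes\pi)\cong\mathrm{cosoc}(\tau\rtimes\pi)$; you also spell out the splitting step that the paper leaves implicit.
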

\begin{proof}

Note that if we assume that the representation $\tau \rtimes \pi$ is irreducible, then all four conditions follow directly.
Namely, an irreducible representation is isomorphic to its socle and, in the Grothendieck group $\mathscr{R}(G)$, we have $\tau \rtimes \pi = \tau^\vee  \rtimes \pi.$

On the other hand, let us assume the four conditions from the statement of the proposition hold.
We have 
\begin{align*}
    \mathrm{soc}\left(\tau \rtimes \pi\right) \cong & \mathrm{soc}\left(\tau^\vee  \rtimes \pi\right) 
   \cong  \mathrm{cosoc}\left(\tau \rtimes \pi^\vee\right)^\vee \\
   \cong &\mathrm{cosoc}\left(\tau \rtimes \pi^\vee\right)^{\mathrm{MVW}}
   \cong \mathrm{cosoc}\left(\tau\rtimes \pi\right).
\end{align*}
Now the socle and cosocle of $\tau\rtimes \pi$ are isomorphic, irreducible, and appear in its semisimplification with multiplicity one, which implies that the induced is irreducible.  
\end{proof}
\begin{lemma} \label{lemica}
Let $n\ge2$ be an integer, let $u_{\rho_i}(a_i,b_i)|\cdot|^{s_i}$ for $i=1,\ldots n$ be essentially Speh representations and let $\pi$ of Arthur type satisfy (\ref{irr}). Let us denote by $\rho^* \ceq \rho$ if $s_i\geq 0$ and $\rho^* \ceq \rho^\vee$ if $s_i<  0$.
Assume that $\mathrm{soc}(\bigtimes_{i=1}^n u_{\rho_i^*}(a_i,b_i)|\cdot|^{|s_i|} \rtimes \pi)$ is irreducible and appears in the composition series of the induced representation $\bigtimes_{i=1}^n u_{\rho_i^*}(a_i,b_i)|\cdot|^{|s_i|} \rtimes \pi$ with multiplicity one. Then the induced representation $\bigtimes_{i=1}^n u_{\rho_i}(a_i,b_i)|\cdot|^{s_i} \rtimes \pi$ is irreducible. 
\end{lemma}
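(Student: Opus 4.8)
Write $v_i\ceq u_{\rho_i^*}(a_i,b_i)|\cdot|^{|s_i|}$, so that $v_i=u_{\rho_i}(a_i,b_i)|\cdot|^{s_i}$ when $s_i\ge 0$ and $v_i=\big(u_{\rho_i}(a_i,b_i)|\cdot|^{s_i}\big)^\vee$ when $s_i<0$; in particular each $v_i$ is an essentially Speh representation with non-negative exponent, and $\tau\ceq\bigtimes_{i=1}^n v_i$ satisfies $\tau^\vee=\bigtimes_{i=1}^n v_i^\vee$. The plan is to prove that
$$\bigtimes_{i=1}^n u_{\rho_i}(a_i,b_i)|\cdot|^{s_i}\rtimes\pi\ \cong\ \tau\rtimes\pi\ \cong\ \tau^\vee\rtimes\pi,$$
and then to conclude by Proposition \ref{ired}. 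Two preliminary remarks: first, (\ref{irr}) for the $u_i$ together with passing to contragredients shows that $w_i\times w_j$ is irreducible for all $i\neq j$ and all $w_i\in\{u_i,u_i^\vee\}$, $w_j\in\{u_j,u_j^\vee\}$; by the last assertion of Theorem \ref{essred} any two such factors commute, so inside any parabolically induced representation one may freely reorder any collection of factors of pairwise distinct indices chosen among the $u_i$'s and their contragredients. Second, since $u_i\rtimes\pi$ is irreducible by (\ref{irr}), the $\mathrm{MVW}$-argument recalled in Section \ref{second} yields $u_i\rtimes\pi\cong u_i^\vee\rtimes\pi$; as $v_i\rtimes\pi$ equals $u_i\rtimes\pi$ or $u_i^\vee\rtimes\pi$, it too is irreducible, and likewise $v_i\rtimes\pi\cong v_i^\vee\rtimes\pi$.

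Next I would obtain $\bigtimes_i u_i\rtimes\pi\cong\tau\rtimes\pi$ by flipping the factors with $s_i<0$ to their contragredients one at a time. Fix such an index $k$; using the first remark, reorder the $\mathrm{GL}$-factors so that $u_k$ is the factor adjacent to $\pi$, and by transitivity of parabolic induction write the representation as $\big(\bigtimes_{i\ne k}w_i\big)\rtimes(u_k\rtimes\pi)$ with each $w_i\in\{u_i,u_i^\vee\}$. Replacing the irreducible $u_k\rtimes\pi$ by the isomorphic $u_k^\vee\rtimes\pi=v_k\rtimes\pi$ does not change the induced representation up to isomorphism, and has the effect of replacing $u_k$ by $v_k$, which has non-negative exponent. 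Iterating over all $k$ with $s_k<0$ gives $\bigtimes_i v_i\rtimes\pi=\tau\rtimes\pi$. Running the identical argument on the collection $\{v_i^\vee\}_i$, now using $v_i^\vee\rtimes\pi\cong v_i\rtimes\pi$ to flip $v_i^\vee$ back to $v_i$, yields $\tau^\vee\rtimes\pi=\bigtimes_i v_i^\vee\rtimes\pi\cong\bigtimes_i v_i\rtimes\pi=\tau\rtimes\pi$. Crucially, each reordering step multiplies only factors of pairwise distinct indices, so the needed irreducibilities — and hence the commutations — all come from (\ref{irr}); no information about products of the shape $v_i\times v_i^\vee$ is required.

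Finally I would apply Proposition \ref{ired} to $\tau=\bigtimes_i v_i$ and $\pi$. Conditions (i) and (iv) of that proposition are precisely the hypotheses of the lemma, since $\tau\rtimes\pi=\bigtimes_i v_i\rtimes\pi$. The isomorphism $\tau^\vee\rtimes\pi\cong\tau\rtimes\pi$ established above gives $\mathrm{soc}(\tau^\vee\rtimes\pi)\cong\mathrm{soc}(\tau\rtimes\pi)$, which settles conditions (ii) and (iii). Therefore $\tau\rtimes\pi$ is irreducible, and hence so is $\bigtimes_{i=1}^n u_{\rho_i}(a_i,b_i)|\cdot|^{s_i}\rtimes\pi$, which is isomorphic to it. The only point requiring care is the bookkeeping in the flipping step: one must verify that at each stage the factors being transposed genuinely have distinct indices, so that (\ref{irr}) is applicable — and this is exactly what lets the proof work from (\ref{irr}) alone, without invoking Atobe's socle computations or any property of $\pi$ beyond irreducibility of the $u_i\rtimes\pi$.
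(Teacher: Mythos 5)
Your proof follows essentially the same route as the paper's: use (\ref{irr}) to commute the essentially Speh factors, use the MVW/contragredient trick on the innermost factor to flip the signs of the exponents one at a time, deduce $\tau\rtimes\pi\cong\tau^\vee\rtimes\pi$, and conclude via Proposition \ref{ired}. The one step you omit is that Proposition \ref{ired} is stated for $\tau\in\mathrm{Irr}(GL)$, so you must also verify that $\tau=\bigtimes_i v_i$ is irreducible; the paper does this by combining the pairwise irreducibilities coming from (\ref{irr}) with the square-irreducibility of essentially Speh representations (Theorem \ref{essred}) and \cite[Corollary 4.3]{Cyclic}. With that line added, your argument is complete and matches the paper's.
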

\begin{proof}
The assumptions of (\ref{irr}) imply the following isomorphisms:
\begin{gather*}
    \bigtimes_{i=1}^n u_{\rho_i}(a_i,b_i)|\cdot|^{s_i} \rtimes \pi \cong u_{\rho_n}(a_n,b_n)|\cdot|^{s_n}\times \cdots\times u_{\rho_1}(a_1,b_1)|\cdot|^{s_1} \rtimes \pi \cong \\
    u_{\rho_n}(a_n,b_n)|\cdot|^{s_n}\times \cdots\times u_{\rho_2}(a_2,b_2)|\cdot|^{s_2} \rtimes (u_{\rho_1}(a_1,b_1)|\cdot|^{s_1} \rtimes \pi) \cong \\
    u_{\rho_n}(a_n,b_n)|\cdot|^{s_n}\times \cdots\times u_{\rho_2}(a_2,b_2)|\cdot|^{s_2} \rtimes (u_{\rho_1^\vee}(a_1,b_1)|\cdot|^{-s_1} \rtimes \pi) \cong \\
    u_{\rho_1^\vee}(a_1,b_1)|\cdot|^{-s_1}\times u_{\rho_n}(a_n,b_n)|\cdot|^{s_n}\times \cdots\times u_{\rho_2}(a_2,b_2)|\cdot|^{s_2}\rtimes \pi, \\
\end{gather*}
and similarly for any index $i$. Hence we can invert the sign of any $s_i$ if necessary. This shows that
\begin{gather*}
     \bigtimes_{i=1}^n u_{\rho_i}(a_i,b_i)|\cdot|^{s_i} \rtimes \pi\cong \bigtimes_{i=1}^n u_{(\rho_i^*)^\vee}(a_i,b_i)|\cdot|^{-|s_i|} \rtimes \pi \cong \bigtimes_{i=1}^n u_{\rho_i^*}(a_i,b_i)|\cdot|^{|s_i|} \rtimes \pi.
\end{gather*}
It follows that $\mathrm{soc}(\bigtimes_{i=1}^n u_{\rho_i^*}(a_i,b_i)|\cdot|^{|s_i|}  \rtimes \pi)\cong\mathrm{soc}(\bigtimes_{i=1}^n u_{(\rho_i^*)^\vee}(a_i,b_i)|\cdot|^{-|s_i|} \rtimes \pi)$. 
Note that
    \begin{align*}
        \tau \ceq \bigtimes_{i=1}^n u_{\rho_i^*}(a_i,b_i)|\cdot|^{|s_i|}
    \end{align*}
    is irreducible due to \cite[Corollary 4.3]{Cyclic}, because essentially Speh representations are square-irreducible (Theorem \ref{essred}).
Together with the assumptions of this lemma, it follows that all four conditions from Proposition \ref{ired} are satisfied for $\tau$ and $\pi$.
Hence the induced representation $\bigtimes_{i=1}^n u_{\rho_i^*}(a_i,b_i)|\cdot|^{|s_i|}  \rtimes \pi$ (and thus also $\bigtimes_{i=1}^n u_{\rho_i}(a_i,b_i)|\cdot|^{s_i}  \rtimes \pi$) is irreducible.
\end{proof}
Hence we see that in order to prove Theorem \ref{thm:IRR}, it suffices to prove it for $s_i \geq 0$ for all $i$ and to prove that the socle of the induced representation $\bigtimes_{i=1}^n u_{\rho_i}(a_i,b_i)|\cdot|^{s_i} \rtimes \pi$ is irreducible and of multiplicity one in its composition series. We assume $s_i$ to be non-negative for all $i$ from now on.

\subsection{The unitary type case} \label{general}
In this section we consider the case of Theorem \ref{thm:IRR} where all $u_i$ are unitary. We see that the assumptions $u_i \times u_j$ and $u_i \times u_j^\vee$ are always satisfied, since two Speh representations have irreducible induction (Theorem \ref{essred}). With the results of the last chapter, we can prove the following:
\begin{prop}\label{multiple-spehs}
    Let $\mathcal{S}\in \mathrm{SRep}$. Let $c_i,d_i \in \Z_{\geq 0}$ and $\rho_i \in \mathrm{Cusp}^\perp (\mathrm{GL}_d (F))$ such that 
    \begin{align*}
        \psi_{\mathcal{S}} \oplus \bigoplus_{i=1}^k (\rho_i \boxtimes S_{c_i} \boxtimes S_{d_i})^{\oplus 2} 
    \end{align*}
    is of good parity and such that $u_{\rho_i}(c_i,d_i) \rtimes\pi(\mathcal{S})$ is irreducible for all $i$. Then
     \begin{align*}
         u_{\rho_1}(c_1,d_1) \times ... \times u_{\rho_k}(c_k,d_k) \rtimes\pi(\mathcal{S})
     \end{align*}
     is irreducible.
\end{prop}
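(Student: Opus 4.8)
The plan is to reduce everything to the combinatorics of the set $N(\mathcal{S},[C_1,D_1]_{\rho_1},\ldots,[C_k,D_k]_{\rho_k})$ introduced in Theorem~\ref{prop:multipleinsertions}. Indeed, by Corollary~\ref{Nindex} the induced representation $u_{\rho_1}(c_1,d_1) \times \cdots \times u_{\rho_k}(c_k,d_k) \rtimes \pi(\mathcal{S})$ is the multiplicity-free sum $\bigoplus \pi(\mathcal{S}_{([C_1,D_1]_{\rho_1},\nu_1),\ldots,([C_k,D_k]_{\rho_k},\nu_k)})$ over all $(\nu_1,\ldots,\nu_k)$ in that set, and each summand is irreducible and nonzero. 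Hence the induced representation is irreducible if and only if $N(\mathcal{S},[C_1,D_1]_{\rho_1},\ldots,[C_k,D_k]_{\rho_k})$ is a singleton, and this is what I would prove.

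First I would extract the consequence of the hypothesis: for each $i$, since $u_{\rho_i}(c_i,d_i) \rtimes \pi(\mathcal{S})$ is irreducible, Corollary~\ref{dec} (equivalently, Proposition~\ref{decomp} together with Lemma~\ref{reduce}) says that the set $M_i \ceq \{\nu \in \Z : \mathcal{S}_{([C_i,D_i]_{\rho_i},\nu)} \in \mathrm{SRep}\}$ is a singleton, say $M_i = \{\nu_i^0\}$. Now invoke the ``only if'' direction of Theorem~\ref{prop:multipleinsertions}: any tuple $(\nu_1,\ldots,\nu_k)$ in $N(\mathcal{S},[C_1,D_1]_{\rho_1},\ldots,[C_k,D_k]_{\rho_k})$ must in particular satisfy $\mathcal{S}_{([C_i,D_i]_{\rho_i},\nu_i)} \in \mathrm{SRep}$ for every $i$, i.e.\ $\nu_i \in M_i$, hence $\nu_i = \nu_i^0$. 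Therefore $N(\mathcal{S},[C_1,D_1]_{\rho_1},\ldots,[C_k,D_k]_{\rho_k}) \subseteq \{(\nu_1^0,\ldots,\nu_k^0)\}$ has at most one element. For non-emptiness, note that $u_{\rho_1}(c_1,d_1) \boxtimes \cdots \boxtimes u_{\rho_k}(c_k,d_k) \boxtimes \pi(\mathcal{S})$ is nonzero, so its parabolic induction is nonzero, and by the decomposition of Corollary~\ref{Nindex} the index set cannot be empty. Combining the two bounds, $N(\mathcal{S},[C_1,D_1]_{\rho_1},\ldots,[C_k,D_k]_{\rho_k}) = \{(\nu_1^0,\ldots,\nu_k^0)\}$, so $u_{\rho_1}(c_1,d_1) \times \cdots \times u_{\rho_k}(c_k,d_k) \rtimes \pi(\mathcal{S}) \cong \pi(\mathcal{S}_{([C_1,D_1]_{\rho_1},\nu_1^0),\ldots,([C_k,D_k]_{\rho_k},\nu_k^0)})$ is irreducible.

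I do not expect any genuinely hard step here: all the analytic and combinatorial content has already been packaged into Theorem~\ref{prop:multipleinsertions} and Corollary~\ref{Nindex}. The one point that requires care is that the ``forcing'' argument is legitimate, i.e.\ that the condition $\mathcal{S}_{([C_i,D_i]_{\rho_i},\nu_i)} \in \mathrm{SRep}$ occurring inside the criterion of Theorem~\ref{prop:multipleinsertions} is literally the same condition that Corollary~\ref{dec} identifies with irreducibility of the single-Speh induction $u_{\rho_i}(c_i,d_i) \rtimes \pi(\mathcal{S})$; this is precisely why it matters that Corollary~\ref{dec} is phrased in terms of $\mathrm{SRep}$-membership of $\mathcal{S}_{([C,D]_\rho,\nu)}$ and not merely in terms of the numerical shape of the decomposition. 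The good-parity hypothesis on $\psi_{\mathcal{S}} \oplus \bigoplus_{i=1}^k (\rho_i \boxtimes S_{c_i} \boxtimes S_{d_i})^{\oplus 2}$ is exactly what allows all of these results to be applied simultaneously, so no further reduction is needed.
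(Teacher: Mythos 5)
Your proposal is correct and follows essentially the same route as the paper: both arguments combine the ``only if'' direction of Theorem~\ref{prop:multipleinsertions} with the singleton characterization from Proposition~\ref{decomp}/Corollary~\ref{dec} to force $|N(\mathcal{S},[C_1,D_1]_{\rho_1},\ldots,[C_k,D_k]_{\rho_k})|\leq 1$, and then conclude via the multiplicity-free decomposition of Corollary~\ref{Nindex}. Your explicit remark on non-emptiness of the index set is a detail the paper leaves implicit, but it is the same proof.
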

\begin{proof}
It follows from Proposition \ref{prop:multipleinsertions}, that for any 
$$
(\nu_1,...,\nu_k) \in N(\mathcal{S},[C_1,D_1]_{\rho_1},...,[C_k,D_k]_{\rho_k} )
$$ we must have $\pi(\mathcal{S}_{([C_i,D_i],_{\rho_i}\nu_i)})\neq 0$. But because of the irreducibility of $u_{\rho_i}(c_i,d_i) \rtimes\pi(\mathcal{S})$ and Proposition \ref{decomp}, this $\nu_i$ is unique. Hence $|N(\mathcal{S},[C_1,D_1]_{\rho_1},...,[C_k,D_k]_{\rho_k} )|\leq 1$ and irreducibility follows by Corollary \ref{Nindex}.
\end{proof}

Let us now treat the case of Arthur parameters which are not of good parity, which follows as an easy corollary. Namely, by Corollary \ref{bad-parity-is-easy}, for an Arthur type representation $\pi$ with Arthur parameter $\psi$ of good parity and an induced representation $u_\rho(a,b)\rtimes\pi$ such that $\psi \oplus (\rho \boxtimes S_{a} \boxtimes S_{b})^{\oplus 2}$ is not of good parity, we have that $u_\rho(a,b)\rtimes\pi$ is irreducible and again of Arthur type.

\begin{cor}\label{bad-parity-unitary}
Let $a_1,...a_k$, $b_1,...,b_k$ be non-negative integers, $u_i = u_{\rho_i}(a_i,b_i)$ Speh representations and $\pi$ a representation of Arthur type, such that $u_i\rtimes \pi$ is irreducible for each $i=1,\ldots,k$. Then the induced representation 
\begin{align*}
    u_1 \times ... \times u_k \rtimes \pi 
\end{align*}
is irreducible.
\end{cor}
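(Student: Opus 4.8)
The plan is to reduce to the good-parity case, which is exactly Proposition \ref{multiple-spehs}, using the structure of Arthur parameters from Proposition \ref{bad-parityy} together with the ``bad parity is easy'' principle of Corollary \ref{bad-parity-is-easy}. Throughout I would use freely that a product of Speh representations is irreducible and that its factors commute (Theorem \ref{essred} and \cite[Corollary 4.3]{Cyclic}), so that $v_1\times\cdots\times v_t\rtimes\sigma$ with the $v_i$ Speh is independent of the ordering of the $v_i$. First I would write the Arthur parameter of $\pi$ as $\psi=\psi_1\oplus\psi_0\oplus\psi_1^\vee$ with $\psi_0$ of good parity and $\pi=\tau_{\psi_1}\rtimes\pi_0$, $\pi_0\in\Pi_{\psi_0}$ (Proposition \ref{bad-parityy}), and expand $\tau_{\psi_1}=u_{k+1}\times\cdots\times u_m$ as a product of Speh representations, one for each summand of $\psi_1$. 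Then $u_1\times\cdots\times u_k\rtimes\pi\cong u_1\times\cdots\times u_m\rtimes\pi_0$, so it suffices to treat the good-parity representation $\pi_0$ together with $u_1,\dots,u_m$, provided each $u_j\rtimes\pi_0$ is irreducible. For $j>k$ this is Corollary \ref{bad-parity-is-easy}, since the corresponding summand $\rho_j\boxtimes S_{a_j}\boxtimes S_{b_j}$ is not self-dual of the type of $G_n$; for $j\le k$, if $u_j\rtimes\pi_0$ were reducible, applying the exact functor $\tau_{\psi_1}\rtimes(-)$ to a short exact sequence with nonzero ends would exhibit $u_j\rtimes\pi\cong\tau_{\psi_1}\rtimes(u_j\rtimes\pi_0)$ as reducible, contrary to hypothesis.

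In the good-parity step I would write $\pi_0=\pi(\mathcal{S})$ with $\mathcal{S}\in\mathrm{SRep}$ and $u_j=u_{\rho_j}(a_j,b_j)$, and split $\{1,\dots,m\}$ into $I_{\mathrm{gp}}=\{\,j:\psi_{\mathcal{S}}\oplus(\rho_j\boxtimes S_{a_j}\boxtimes S_{b_j})^{\oplus 2}\text{ is of good parity}\,\}$ and its complement $I_{\mathrm{bp}}$. Since good parity is a condition on each summand separately, $\psi_{\mathcal{S}}\oplus\bigoplus_{j\in I_{\mathrm{gp}}}(\rho_j\boxtimes S_{a_j}\boxtimes S_{b_j})^{\oplus 2}$ is of good parity, so Proposition \ref{multiple-spehs} gives that $\sigma_0\ceq\bigtimes_{j\in I_{\mathrm{gp}}}u_j\rtimes\pi_0$ is irreducible, and by Corollary \ref{Nindex} it is of Arthur type with good-parity parameter $\psi_0'\ceq\psi_{\mathcal{S}}\oplus\bigoplus_{j\in I_{\mathrm{gp}}}(\rho_j\boxtimes S_{a_j}\boxtimes S_{b_j})^{\oplus 2}$. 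Because ``self-dual of the same type'' refers to the type of $G_n$, not to a particular parameter, each summand $\rho_j\boxtimes S_{a_j}\boxtimes S_{b_j}$ with $j\in I_{\mathrm{bp}}$ is still not self-dual of that type, so Proposition \ref{bad-parityy} applied to $\psi_1'\oplus\psi_0'\oplus(\psi_1')^\vee$ with $\psi_1'\ceq\bigoplus_{j\in I_{\mathrm{bp}}}\rho_j\boxtimes S_{a_j}\boxtimes S_{b_j}$ shows that $\bigtimes_{j\in I_{\mathrm{bp}}}u_j\rtimes\sigma_0=\tau_{\psi_1'}\rtimes\sigma_0$ is irreducible. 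Rearranging Speh factors then gives
\begin{align*}
    u_1\times\cdots\times u_m\rtimes\pi_0\;\cong\;\bigtimes_{j\in I_{\mathrm{bp}}}u_j\;\times\;\bigtimes_{j\in I_{\mathrm{gp}}}u_j\;\rtimes\;\pi_0\;=\;\bigtimes_{j\in I_{\mathrm{bp}}}u_j\rtimes\sigma_0,
\end{align*}
which is irreducible, finishing the argument.

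I do not expect a genuinely hard step: essentially everything is packaged in Propositions \ref{bad-parityy} and \ref{multiple-spehs} and Corollary \ref{bad-parity-is-easy}. The one point I would be careful about is the bookkeeping in the second paragraph: one must check that enlarging the good-parity part of the Arthur parameter (from $\psi_{\mathcal{S}}$ to $\psi_0'$) cannot convert a bad-parity Speh representation into a good-parity one, which is exactly why the notion of ``type'' being an invariant of $G_n$ rather than of the parameter is used.
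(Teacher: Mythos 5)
Your proof is correct and follows essentially the same route as the paper: decompose $\pi=\tau_{\psi_1}\rtimes\pi_0$ via Proposition \ref{bad-parityy}, separate the Speh factors into those of good and bad parity with respect to $\pi_0$, handle the good-parity block with Proposition \ref{multiple-spehs}, and absorb the bad-parity block into an enlarged $\psi_1'$ via Proposition \ref{bad-parityy} again. The only cosmetic difference is that you expand $\tau_{\psi_1}$ into its individual Speh factors rather than carrying it along as a single block.
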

\begin{proof} 
Since $\pi$ is of Arthur type, we have a decomposition 
    \begin{align*}
        \pi \cong \tau_{\psi_1} \rtimes \pi_0,
    \end{align*}
    where $\tau_{\psi_1}$ and $\pi_0$ are defined as in section \ref{a-packs} and let $\psi=\psi_1 \oplus \psi_0 \oplus \psi_1^\vee$ be the Arthur parameter (and its decomposition from Proposition \ref{bad-parityy}) corresponding to $\pi$. 
      Since Speh representations have pairwise irreducible parabolic induction, we may order them such that $1,...,s$ are exactly the indices for which $\psi_0 \oplus (\rho_i \boxtimes S_{a_i} \boxtimes S_{b_i})^{\oplus 2}$ is not of good parity. Since $u_i \times \tau_{\psi_1} \rtimes \pi_0 \cong  \tau_{\psi_1} \times u_i\rtimes \pi_0$ is irreducible by assumption, we have that also $u_i \rtimes \pi_0$ is irreducible for $i=s+1,\ldots,k$. We have
\begin{align*}
    \pi_1 \times ... \times \pi_k \rtimes \pi &\cong \pi_{1} \times ... \times \pi_{s} \times \bigtimes_{i =s+1}^k \pi_i \times \tau_{\psi_1} \rtimes \pi_0 \\
    & \cong \pi_{1} \times ... \times \pi_{s} \times \tau_{\psi_1} \times \bigtimes_{i =s+1}^k \pi_i  \rtimes \pi_0, 
\end{align*}

From Corollary \ref{multiple-spehs}, since the corresponding A-parameter is of good parity, it follows that the induced representation $\bigtimes_{i =s+1}^r \pi_i  \rtimes \pi_0 $ is irreducible, of Arthur type and of good parity. We set 
\begin{align*}
    \psi_1' \ceq (\rho_1 \boxtimes S_{a_1} \boxtimes S_{b_1}) \oplus  ... \oplus (\rho_s \boxtimes S_{a_s} \boxtimes S_{b_s})  \oplus \psi_1,
\end{align*} 
which is a direct sum of irreducible representations of $W_F \times \mathrm{SL}_2(\C) \times \mathrm{SL}_2(\C)$ which are not self-dual of the same type as $\psi$. 
By Proposition \ref{bad-parityy} mentioned above, it follows that
\begin{align*}
    &\pi_1 \times ... \times \pi_k \rtimes \pi \\
    \cong & \tau_{\psi_1'} \times \left( \bigtimes_{i =s+1}^r \pi_i  \rtimes \pi_0  \right)
\end{align*}
is an irreducible representation of Arthur type.
\end{proof}

\subsection{Preparatory Lemmas} \label{prep}

We prove three lemmas about derivatives, which we will need for the proof of Theorem \ref{thm:IRR} in the small type case.

\begin{lemma} \label{mult1}
Let
\begin{gather} \label{b0}
  \tau \ceq  u_{ess}\begin{pmatrix}
A \hspace{3mm} &  B \hspace{3mm} \\
\hspace{3mm} C & \hspace{3mm} D
\end{pmatrix}^{(\rho)},
\end{gather}
with $A+C>0$ and $D>0$ be an essentially Speh representation. Then
\begin{align*}
    M_{\rho|\cdot|^B,\ldots,\rho|\cdot|^D}^{max}(\tau)=L_{\rho|\cdot|^B,\ldots,\rho|\cdot|^D}^{max}(\tau)= u_{ess}\begin{pmatrix}
A \hspace{3mm} &  B-1 \hspace{3mm} \\
\hspace{3mm} C & \hspace{3mm} D-1
\end{pmatrix}^{(\rho)}
\end{align*}
and the amount of derivatives taken at each step is $1$ (except for the sequence $D_{Z_\rho[0,1]}^{max} \circ D_{\rho|\cdot|^1}^{max}$ where we first take $0$ and then $1$ derivative).
\end{lemma}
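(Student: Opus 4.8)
The plan is to peel the composition $L^{\max}_{\rho|\cdot|^{B},\dots,\rho|\cdot|^{D}}$ (and likewise $M^{\max}_{\rho|\cdot|^{B},\dots,\rho|\cdot|^{D}}$) apart one block at a time, using the explicit Jacquet module formula (\ref{lad}) for ladder representations. Write $n \ceq D-B+1 = C-A+1$, so that $\tau$ is the ladder representation with Langlands data $\Dr[B+i-1,A+i-1]$ for $i=1,\dots,n$ (with the convention $\Dr[x,x+1]=\mathbf{1}$, which absorbs the degenerate case $A=B$). The observation driving everything is that every left tensor factor $L(\Dr[x_1,c_1+1],\dots,\Dr[x_k,c_k+1])$ appearing in (\ref{lad}) is itself a ladder representation, so after deleting its trivial segments its maxima are \emph{strictly} increasing; hence it can equal $(\rho|\cdot|^{z})^{m}$ only for $m\le 1$, and $(Z_\rho[0,1])^{m}$ only for $m\le 1$. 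This already yields the ``at most one derivative per block'' assertion; the remaining work is to identify the blocks that actually lower the degree and to follow the Langlands data (here $L^{\max}_\sigma$ is read off directly from (\ref{lad}) as the right factor of the unique term whose left factor is a maximal power of $\sigma$).

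First I would record the effect of a single block on a ladder representation $\pi$ whose nontrivial segment minima form a set of consecutive integers with largest value $C$ (none of the operations below ever moves a segment minimum). A short inspection of (\ref{lad}) gives: $L^{\max}_{\rho|\cdot|^{z}}(\pi)=\pi$ unless $\pi$ has a (necessarily unique) segment $\Dr[z,y]$ whose predecessor in the ladder, if any, has maximum $\le z-2$; in that case $L^{\max}_{\rho|\cdot|^{z}}(\pi)=L^{(1)}_{\rho|\cdot|^{z}}(\pi)$ is $\pi$ with $\Dr[z,y]$ replaced by $\Dr[z-1,y]$. The analogue for the $Z$-block reads: if $\pi$ is left $\rho|\cdot|^{1}$-reduced and has two adjacent segments of maxima $0$ and $1$ whose common predecessor, if any, has maximum $\le -2$, then $L^{\max}_{Z_\rho[0,1]}(\pi)=L^{(1)}_{Z_\rho[0,1]}(\pi)$ lowers by one the maximum of each of those two segments. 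In both cases the only delicate point is the strict monotonicity $c_1<\dots<c_k$ built into $\Lad(\pi)$: this is exactly what produces the ``predecessor'' conditions, and it is also what makes $L^{\max}_{\rho|\cdot|^{1}}$ vacuous precisely when the segments of maxima $0$ and $1$ are adjacent.

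I would then run the induction. Applying the blocks innermost first (in order of increasing exponent), one checks that after each block the current representation is $\tau$ with the maxima of an initial run of its segments lowered by one: a block $L^{\max}_{\rho|\cdot|^{z}}$ with $z\notin\{0,1\}$ acts on the unique segment of maximum $z$, whose predecessor has by then maximum $z-2$ (or does not exist, for the first block), so the single-block description applies; the block at exponent $1$ — present only when $B\le 0<D$ — is vacuous, since the segments of maxima $0$ and $1$ are then adjacent; and the $Z_\rho[0,1]$-block immediately following it lowers the maxima of \emph{both} of those segments at once. The hypotheses $A+C>0$ and $D>0$ are used here only to guarantee $A\le B\le D$, $D>0$, and that every ``predecessor has maximum $\le z-2$'' condition invoked above actually holds, so that the process never stalls before the last block (exponent $D$, acting on segment $n$). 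At that point every segment maximum has dropped by one, i.e. the output is $u_{ess}\begin{pmatrix} A & B-1\\ C & D-1\end{pmatrix}^{(\rho)}$, as claimed; irreducibility of each intermediate step is automatic from the uniqueness of the contributing term in (\ref{lad}).

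For the $M^{\max}$ statement I would show $M^{\max}_{\sigma}(\pi)=L^{\max}_{\sigma}(\pi)$, with equal multiplicity, for $\sigma=\rho|\cdot|^{z}$ with $z\in\{B,\dots,D\}$ or $\sigma=Z_\rho[0,1]$, and for $\pi$ equal to $\tau$ or any of the intermediate ladders above; this then propagates block by block through the composition and also verifies the $\rho|\cdot|^{1}$-reducedness hypothesis needed for the $Z$-block. Writing $m^*(\pi)=\sum_i\alpha_i\otimes\beta_i$, the formula $M^*=(m\otimes 1)\circ(\cdot^{\vee}\otimes m^*)\circ s\circ m^*$ shows that the terms of $M^*(\pi)$ of the form $\sigma^{m}\otimes(-)$ arise as $(\beta_i^{\vee}\times\alpha')\otimes\alpha''$ with $\alpha'\otimes\alpha''\le m^*(\alpha_i)$ and $\beta_i^{\vee}$ supported on the cuspidal support of $\sigma$. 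But $\beta_i$ is a right factor of $m^*(\pi)$, and since the nontrivial minima of $\pi$ are consecutive with largest value $C$, the strict monotonicity in $\Lad(\pi)$ forces any \emph{nontrivial} such $\beta_i$ to have a segment with minimum $C$, so $\rho|\cdot|^{C}$ (up to duality) lies in the support of $\beta_i^{\vee}$; since $C>0$ and $B+C\ge A+C>0$, this is impossible for every $\sigma$ in question (and is even more obviously impossible if $\rho$ is not self-dual). Hence $\beta_i=\mathbf{1}$, so the relevant part of $M^*(\pi)$ coincides with that of $m^*(\pi)$, giving $M^{\max}_{\sigma}(\pi)=L^{\max}_{\sigma}(\pi)$. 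The step I expect to be the main obstacle is the $Z_\rho[0,1]$-block when $B\le 0<D$: one must check carefully that the preceding $\rho|\cdot|^{1}$-block is genuinely vacuous, that $L^{\max}_{Z_\rho[0,1]}$ then removes exactly one copy while affecting two segments simultaneously, and that the $M$-versus-$L$ comparison survives the two-element cuspidal support of $Z_\rho[0,1]$ — all of this resting on delicate bookkeeping with the constraint $c_1<\dots<c_k$ around the exponents $-1,0,1$. The remaining steps are routine but lengthy tracking of Langlands data.
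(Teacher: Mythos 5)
Your proposal is correct and follows essentially the same route as the paper: successive application of the Kret--Lapid formula (\ref{lad}) to the intermediate ladder representations, with the strict monotonicity of the tuples in $\Lad(\pi)$ forcing at most one derivative per block and the special bookkeeping at exponents $1$ and $0$ handled via the $Z_\rho[0,1]$-block exactly as in the paper. The only substantive addition is your explicit argument (via the formula for $M^*$ and the positivity $C>0$, $B+C>0$) that the nontrivial right factors $\beta_i$ of $m^*(\tau)$ cannot contribute, hence $M^{\max}_\sigma=L^{\max}_\sigma$ throughout -- a point the paper asserts without proof, and which your argument correctly supplies.
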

\begin{proof}
    From \eqref{lad} we successively see that 
    \begin{align*}
        M_{\rho|\cdot|^B,\ldots,\rho|\cdot|^{B+i}}^{max}(\tau)=L_{\rho|\cdot|^B,\ldots,\rho|\cdot|^{B+i}}^{max}(\tau) \\
        =L(\Delta_{\rho}[B-1,A],\ldots,\Delta_{\rho}[B+i-1,A+i],\\
     \Delta_{\rho}[B+i+1,A+i+1],\ldots,\Delta_{\rho}[D,C]),
    \end{align*}
    for $i=0,\ldots, D-B$ except when $B+i=0$ or $1$, in which case we rather have that the $\rho|\cdot|^1$-derivative does nothing and (after taking the $Z_\rho[0,1]$-derivatives)
    \begin{align*}
        M_{\rho|\cdot|^B,\ldots,\rho|\cdot|^{1}}^{max}(\tau)=L_{\rho|\cdot|^B,\ldots,\rho|\cdot|^{1}}^{max}(\tau) \\
        =L(\Delta_{\rho}[B-1,A],\ldots,\Delta_{\rho}[0,A-B+1],\\
     \Delta_{\rho}[2,A-B+2],\ldots,\Delta_{\rho}[D,C]),
    \end{align*}
\end{proof}
Let us now recall the definitions of the conditions $LC$ and $RC$ from \cite{LapidMinguez2}, which characterize the irreducibility of the parabolic induction of two $\mathrm{GL}$-representations, since we use them in the proof of the next lemma.

Let $\Delta$ be a segment and $\mathfrak{m}= \Delta_1+\ldots+ \Delta_r$ be a multi-segment.
We define $X_{\Delta;\mathfrak{m}}=\{j\in\{1,\ldots,r\} : \Delta \prec \Delta_j \}$ and $Y_{\Delta;\mathfrak{m}}=\{j\in\{1,\ldots,r\} : \overset{\leftarrow} {\Delta} \prec \Delta_j \} .$
Then $LC(\Delta,\mathfrak{m})$ is the condition that there exists an injective function $f: X_{\rho|\cdot|^{t};\mathfrak{m}} \to Y_{\rho|\cdot|^{t};\mathfrak{m}}$ such that $\Delta_{f(x)}\prec\Delta_{x}$, for all $x$.

Also, let us define $\tilde{X}_{\Delta;\mathfrak{m}}=\{j\in\{1,\ldots,r\} : \Delta_j \prec \Delta \}$ and $\tilde{Y}_{\Delta;\mathfrak{m}}=\{j\in\{1,\ldots,r\} : \overset{\leftarrow}{\Delta_j} \prec \Delta \}.$
Then $RC(\Delta,\mathfrak{m})$ is the condition that there exists an injective function $f: \tilde{X}_{\rho|\cdot|^{t};\mathfrak{m}} \to \tilde{Y}_{\rho|\cdot|^{t};\mathfrak{m}}$ such that $\Delta_{x}\prec\Delta_{f(x)}$, for all $x$.
From \cite[Proposition 4.15 (3)]{LapidMinguez2}, we have a criterion: 
    \begin{center}
        $L(\Delta) \times L(\mathfrak{m})$ is irreducible if and only if $LC(\Delta,\mathfrak{m})$ and $RC(\Delta,\mathfrak{m})$.
    \end{center}
\begin{lemma}
\label{glirred}
\begin{itemize}
    \item[(1)] Let $\uab|\cdot|^s$ denote an essentially Speh representation of small type and let $i,j$ be a pair of indices of the block of derivatives $D^{\max}_{(a,b,s)}$.
    Then the induced representation $\sigma^{ij} \times M^{\max}_{(a,b,s),<i,j}(\uab|\cdot|^s)$ is irreducible (recall the notation \eqref{sigma} for $\sigma^{ij}$). 
    \item[(2)] Let $\pi\in \mathrm{Irr}(G_n)$ be an irreducible representation and let $\displaystyle 
    u_{\rho}(a,b)|\cdot|^{s}$ be an essentially Speh representation of small type. 
    If $\pi'$ is an irreducible subrepresentation of $\displaystyle 
    u_{\rho}(a,b)|\cdot|^{s}\rtimes\pi$, then $d^{ij}(\pi')$ equals:
    $$1_{ij}+d^{ij}(\pi)$$
    for every pair $i,j$ of indicies in the block of derivatives $D_{(a,b,s)}^{\max}$.
\end{itemize}
\end{lemma}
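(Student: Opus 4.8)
The proof proceeds in two parts. For part (1), I would compute $M^{\max}_{(a,b,s),<i,j}(\uab|\cdot|^s)$ explicitly using Lemma \ref{mult1} repeatedly. Recall $u_{\rho}(a,b)|\cdot|^s = u_{ess}\begin{pmatrix} -A+s & B+s \\ -B+s & A+s \end{pmatrix}^{(\rho)}$, and since $u$ is of small type we have $0<s\leq \frac{a-1}{2}$ with $s\in\frac12\Z$, so $B+s = \frac{a-b}{2}+s$ while $A+s = \frac{a+b}{2}-1+s$, and the condition $A+C>0$, $D>0$ of Lemma \ref{mult1} translates (at each stage of the composition defining $D^{\max}_{(a,b,s)}$, see Definition \ref{def3}) into a numerical inequality one checks holds precisely because $s\leq\frac{a-1}{2}$. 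Applying Lemma \ref{mult1} successively peels off the rows of the ``staircase'' defining the essentially Speh representation; after $<i,j$ derivatives, $M^{\max}_{(a,b,s),<i,j}(\uab|\cdot|^s)$ is again an essentially Speh (i.e. ladder) representation whose Langlands data are explicitly describable, and it is left $\sigma^{ij}$-reduced by construction of the block order. Then I would invoke the $LC/RC$ criterion of \cite[Proposition 4.15(3)]{LapidMinguez2}: since $\sigma^{ij}$ is either a single supercuspidal segment $\rho|\cdot|^{t}$ or the Speh segment $Z_\rho[0,1]$, and the multi-segment $\mathfrak{m}$ of $M^{\max}_{(a,b,s),<i,j}(\uab|\cdot|^s)$ consists of segments arranged in a ladder pattern, the sets $X_{\Delta;\mathfrak{m}}$, $Y_{\Delta;\mathfrak{m}}$ (and their tilde versions) can be read off directly, and one exhibits the required injective matching $f$. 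The key point is that being $\sigma^{ij}$-reduced kills exactly the potential obstruction to $LC$ or $RC$, so irreducibility follows.

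For part (2), I would combine part (1) with Proposition \ref{tad} and the ``mult'' bookkeeping machinery of Remark \ref{exact1}. Write $\pi' \hookrightarrow \uab|\cdot|^s \rtimes \pi$. Applying the block of derivatives $D^{\max}_{(a,b,s),<i,j}$ to both sides and using Proposition \ref{tad} (which factors each $D^{\max}_{\rho|\cdot|^z}$ on a product $\tau\rtimes\sigma$ into $M^{\max}_{\rho|\cdot|^z}(\tau)\rtimes D^{\max}_{\rho|\cdot|^z}(\sigma)$, together with \eqref{a123} for the $Z_\rho[0,1]$-step), one obtains an embedding of $D^{\max}_{(a,b,s),<i,j}(\pi')$ into $M^{\max}_{(a,b,s),<i,j}(\uab|\cdot|^s) \rtimes D^{\max}_{(a,b,s),<i,j}(\pi)$. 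Now I want to compare $\mathrm{mult}(D_{\sigma^{ij}}, \cdot)$ on both sides. By part (1) the GL-factor $M^{\max}_{(a,b,s),<i,j}(\uab|\cdot|^s)$ has irreducible induction with $\sigma^{ij}$, so the hypotheses of Remark \ref{exact1} are met, with $k_1 = \mathrm{mult}(M_{\sigma^{ij}}, M^{\max}_{(a,b,s),<i,j}(\uab|\cdot|^s)) = m^{ij}(\uab|\cdot|^s)$; Lemma \ref{mult1} tells us this $m^{ij}$ equals $1_{ij}$ (the ``amount of derivatives taken at each step is $1$, except $0$ then $1$ at the $Z_\rho[0,1]$-step''). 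Then Remark \ref{exact1} gives
\begin{align*}
d^{ij}(\pi') = \mathrm{mult}(D_{\sigma^{ij}}, D^{\max}_{(a,b,s),<i,j}(\pi')) = m^{ij}(\uab|\cdot|^s) + \mathrm{mult}(D_{\sigma^{ij}}, D^{\max}_{(a,b,s),<i,j}(\pi)) = 1_{ij} + d^{ij}(\pi).
\end{align*}

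The main obstacle I anticipate is the explicit computation in part (1): one must carefully track, through the composition of up to $2s$ blocks of $b$ derivatives each (Definition \ref{def3}), exactly what $M^{\max}_{(a,b,s),<i,j}(\uab|\cdot|^s)$ is as a multi-segment, handle the special behaviour around the exponents $0$ and $1$ (where $\sigma^{ij}$ switches to $Z_\rho[0,1]$ and the derivative count is $0$ then $1$), and verify that at every intermediate index pair the hypothesis $A+C>0$, $D>0$ of Lemma \ref{mult1} genuinely holds for the small-type range $0<s\leq\frac{a-1}{2}$ — this is where the distinction between small and big type enters, and it is the crux of why the lemma fails for big type. The $LC/RC$ verification itself, once the multi-segment is in hand, should be a routine (if slightly tedious) combinatorial check on the ladder shape. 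A subtlety worth flagging: one should also confirm that at the $Z_\rho[0,1]$-step the relevant representations are $\rho|\cdot|^1$-reduced so that the second half of Proposition \ref{tad} and \eqref{a123} apply; this follows from the order in which the derivatives $D^{\max}_{\rho|\cdot|^{-1}},\ldots,D^{\max}_{\rho|\cdot|^1}$ are taken before $D^{\max}_{Z_\rho[0,1]}$ in Definition \ref{def2}.
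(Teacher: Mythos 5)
Your proposal is correct and follows essentially the same route as the paper: part (1) is proved by computing the partial derivative of the essentially Speh representation as an explicit ladder (via Lemma \ref{mult1}) and then verifying the $LC/RC$ criterion of Lapid--Mínguez, with separate treatment of the exponents $0$ and $1$; part (2) is the same chain of embeddings using the commutation from part (1), with the lower bound from Frobenius reciprocity and the upper bound from exactness of the Jacquet functor. The only cosmetic difference is that you package the bookkeeping through Remark \ref{exact1} and Proposition \ref{tad}, whereas the paper writes out the embedding chain and the two inequalities directly, but the mechanism is identical.
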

\begin{proof}
\begin{itemize}
    \item[(1)]
    We consider the initial block of derivatives, since the argument afterwards is analogous. I.e.~we prove that the representation $$\rho|\cdot|^{B+s+i}\times (M^{\max}_{\rho |\cdot|^{B+s+i-1}}\circ\ldots\circ M^{\max}_{\rho |\cdot|^{B+s}})(\uab |\cdot|^s)$$
    is irreducible, for all $i\in\{0,1,\ldots,b-1\}$.
    
    Firstly, let us denote
    $ (M^{\max}_{\rho |\cdot|^{B+s+i-1}}\circ\ldots\circ M^{\max}_{\rho |\cdot|^{B+s}})(\uab |\cdot|^s) = L(\mathfrak{m}), $
    for $\mathfrak{m}=\Delta_b+\ldots+\Delta_{b-i+1}+\Delta_{b-i}+\ldots+\Delta_1$.
    Then $\Delta_b= [B+s-1,-A+s]_{\rho},\ldots,\Delta_{b-i+1}=[B+s+i-2,-A+s+i-1]_{\rho}$ and the unaltered segments $\Delta_{b-i}= [B+s+i,-A+s+i]_{\rho},\ldots,\Delta_1= [A+s,-B+s]_{\rho}$.
    As discussed in Lemma \ref{mult1}.
    For $i\ge 0$, $L(\mathfrak{m})$ is a ladder representation. 
    As recalled before the lemma, from \cite[Proposition 4.15 (3)]{LapidMinguez2} we have a criterion: 
    \begin{center}
        $\rho|\cdot|^{B+s+i}\times L(\mathfrak{m})$ is irreducible if and only if $LC(\rho|\cdot|^{B+s+i},\mathfrak{m})$ and $RC(\rho|\cdot|^{B+s+i},\mathfrak{m})$.
    \end{center}
    Let us first check the condition $LC(\rho|\cdot|^{B+s+i},\mathfrak{m})$.
    If $X_{\rho|\cdot|^{B+s+i};\mathfrak{m}}=\emptyset$, then the criterion holds trivially.
    Otherwise, since the minimal exponents in the segments $\Delta_j$ increase by one, it is a singleton $\{j_0\}$. 
    For the same reason, if $Y_{\rho|\cdot|^{B+s+i};\mathfrak{m}}$ is non-empty, it is equal to $\{j_0+1\}$.
    It would be empty only if $j_0=b$, which implies $\min(\Delta_b)=-A+s=B+s+i+1$.
    This is equivalent to $i=-a$, which is a contradiction since $i\ge 0$ and $-a<0$. 

    Let us now check the condition $RC(\rho|\cdot|^{B+s+i},\mathfrak{m})$.
    Since the maximal exponents of segments in $\mathfrak{m}$ increase and $\max(\Delta_{b-i+1})=B+s+i-2<B+s+i=\max(\Delta_{b-i})$, we get that $\tilde{X}_{\rho|\cdot|^{B+s+i};\mathfrak{m}}=\emptyset$, which implies that the matching function exists trivially. Similar considerations hold if $B+s+i=0,1$ where we have that $\sigma^{ij}$ is $\rho|\cdot|^1$ and $Z_\rho[0,1]$ respectively: 

    If $B+s+i=0$, we have $(M^{\max}_{\rho |\cdot|^{-1}}\circ\ldots\circ M^{\max}_{\rho |\cdot|^{B+s}})(\uab |\cdot|^s) = L(\mathfrak{m}), $
    for $\mathfrak{m}=\Delta_b+\ldots+\Delta_{b-i+1}+\Delta_{b-i}+\ldots+\Delta_1$.
    Then $\Delta_b= [B+s-1,-A+s]_{\rho},\ldots,\Delta_{b-i+1}=[-2,-A+s+i-1]_{\rho}$ and the unaltered segments $\Delta_{b-i}= [0,-A+s+i]_{\rho},\ldots,\Delta_1= [A+s,-B+s]_{\rho}$. Again $i\geq 0$ implies $LC(\rho|\cdot|^1,\mathfrak{m})$ and $RC(\rho|\cdot|^1,\mathfrak{m})$ holds because we have an injective best matching for $\tilde{X}_{\rho|\cdot|^{1};\mathfrak{m}}=\{ b-i \}$ and $\tilde{Y}_{\rho|\cdot|^{1};\mathfrak{m}}=\{ b-i-1 \}$.

    Now if $B+s+i=1$ (in the case where $\sigma^{ij}=Z_\rho[0,1]$), we have 
    $$
    (M^{\max}_{\rho |\cdot|^{1}}\circ\ldots\circ M^{\max}_{\rho |\cdot|^{B+s}})(\uab |\cdot|^s) = L(\mathfrak{m}), 
    $$
    for the same multi-segment $\mathfrak{m}$ as above. Now we use \cite[Corollary 4.14]{LapidMinguez2} for $Z_\rho[0,1]$ and $Z(\mathfrak{m}^t)$, where $\mathfrak{m}^t$ is given by the MW-algortihm \cite{zelev}). In our case, $\mathfrak{m}^t= \Delta_a'+\ldots +\Delta_1'$ with 
    \begin{align*}
        \Delta_1'&=[A+s,0]_\rho \\
        \Delta_i'&=[ A+s+1-i, B+s+1-i ]_\rho\ for\ i=2,\ldots, a.
    \end{align*}
     Now $LC([1,0]_\rho,\mathfrak{m})$ holds trivially and $RC([1,0]_\rho,\mathfrak{m})$ holds because if $\tilde{X}_{[1,0]_\rho;\mathfrak{m}}=\{ i \}$ then $\tilde{Y}_{[1,0]_\rho;\mathfrak{m}}=\{ i-1 \}$.
    \item[(2)]
    First, note that we have the following embeddings:
    \begin{gather*}
        \pi'\hookrightarrow\displaystyle u_{\rho}(a,b)|\cdot|^{s}\rtimes\left(\pi\right)\overset{(\ast)}{\hookrightarrow} \\
        u_{\rho}(a,b)|\cdot|^{s}\times (\sigma^{11})^{d^{11}(\pi)}\rtimes D^{\max}_{\sigma^{11}}(\pi) \overset{(\ast \ast)}{\cong}\\
        (\sigma^{11})^{d^{11}(\pi)}\times u_{\rho}(a,b)|\cdot|^{s} \rtimes D^{\max}_{\sigma^{11}}(\pi)\hookrightarrow\\
        (\sigma^{11})^{d^{11}(\pi)+1}\times L^{\max}_{\sigma^{11}}(u_{\rho}(a,b)|\cdot|^{s}) \rtimes D^{\max}_{\sigma^{11}}(\pi) = \\
        (\sigma^{11})^{d^{11}(\pi)+1}\times M^{\max}_{\sigma^{11}}(u_{\rho}(a,b)|\cdot|^{s}) \rtimes D^{\max}_{\sigma^{11}}(\pi).
    \end{gather*}
    The embedding $(\ast)$ follows by the theory of derivatives in Section \ref{derivatives}. Here we use the assumption that the induced representation $\pi$ is irreducible. 
    Furthermore, the isomorphism $(\ast\ast)$ holds because of the part (1) of this lemma. We have also used the fact that $M$ is here the same as the maximal left derivative by Lemma \ref{b0}. This shows 
    \begin{align*}
        d^{11}(\pi')=d^{11}(\pi)+1.
    \end{align*}
    Assume we know this for all indices up to $i,j$.
    Repeating these arguments for all the derivatives in $D_{(a,b,s)}^{\max}$, we get the embeddings:
    \begin{gather} \label{zf2}
        \pi'\hookrightarrow (\sigma^{11})^{d^{11}(\pi)+1} \times \cdots\times \\
        (\sigma^{ij})^{d^{ij}(\pi)+1_{ij}} \times M^{\max}_{(a,b,s),\leq i,j}(\uab|\cdot|^s)\rtimes D^{\max}_{(a,b,s),\leq i,j}(\pi), \nonumber
    \end{gather}
    which implies that 
\begin{gather*} 
        \pi'\hookrightarrow (\sigma^{11})^{d^{11}(\pi)+1} \times \cdots\times \\
        (\sigma^{ij})^{d^{ij}(\pi)+1_{ij}} \times \pi_{ij}, \nonumber
    \end{gather*}
    for some $\pi_{ij} \leq M^{\max}_{(a,b,s),\leq i,j}(\uab|\cdot|^s)\rtimes D^{\max}_{(a,b,s),\leq i,j}(\pi).$ By Frobenius reciprocity, we have that (for the appropriate parabolic subgroup $P$)
    \begin{align*}
        \mathrm{Jac}_P(\pi') \geq &(\sigma^{11})^{d^{11}(\pi)+1} \otimes \cdots\otimes \\
        &(\sigma^{ij})^{d^{ij}(\pi)+1_{ij}} \otimes \pi_{ij}.
    \end{align*}
    This shows $d^{ij}(\pi') \geq d^{ij}(\pi)+1_{ij}$.
    We claim that 
    \begin{align*}
        D_{(a,b,s),\leq i,j}^{max}(\pi') \leq M^{\max}_{(a,b,s),\leq i,j}(\uab|\cdot|^s)\rtimes D^{\max}_{(a,b,s),\leq i,j}(\pi). 
    \end{align*}
    For the indices $11$ this is clear, and assuming it for all indices up to $ij$ we see that 
    \begin{align*}
        D_{(a,b,s),< i,j}^{max}(\pi') \leq (\sigma^{ij})^{d^{ij}(\pi)+1_{ij}} \times M^{\max}_{(a,b,s),\leq i,j}(\uab|\cdot|^s)\rtimes D^{\max}_{(a,b,s),\leq i,j}(\pi), 
    \end{align*}
    which implies $d^{ij}(\pi') \leq d^{ij}(\pi)+1_{ij}$ and hence $d^{ij}(\pi') =d^{ij}(\pi)+1_{ij}$. 
    Moreover, we see that 
    \begin{align*}
        D_{(a,b,s),\leq i,j}^{max}(\pi') = \pi_{ij} \leq M^{\max}_{(a,b,s),\leq i,j}(\uab|\cdot|^s)\rtimes D^{\max}_{(a,b,s),\leq i,j}(\pi).
    \end{align*}
\end{itemize}
\end{proof}
\begin{lemma} \label{jmlema}
    For $i=1,2$, let $u_{\rho_i}(a_i,b_i)|\cdot|^{s_i}$ denote essentially Speh representations of small type.
    Assume that $B_1+s_1\le B_2+s_2$. Then 
    \begin{gather} \label{e1}
          m_2^{ij}(u_{\rho_1}(a_1,b_1)|\cdot|^{s_1})= 
           m_2^{ij}(u_{\rho_1}(2s_1,b_1)|\cdot|^{\frac{a_1}{2}})+ m_2^{ij}(u_{\rho_1}(a_1-2s_1,b_1)), 
    \end{gather}
    for each pair of indices $i,j$ in the composition of the derivatives $M^{\max}_{(a_2,b_2,s_2)}$.
\end{lemma}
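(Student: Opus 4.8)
The plan is to track Langlands data along the whole chain of derivatives. Write
\[
\tau\ceq u_{\rho_1}(a_1,b_1)|\cdot|^{s_1},\qquad \tau_{\mathrm{top}}\ceq u_{\rho_1}(2s_1,b_1)|\cdot|^{\frac{a_1}{2}},\qquad \tau_{\mathrm{bot}}\ceq u_{\rho_1}(a_1-2s_1,b_1),
\]
so that the right-hand side of \eqref{e1} is $m_2^{ij}(\tau_{\mathrm{top}})+m_2^{ij}(\tau_{\mathrm{bot}})$. First I would record, from \eqref{speh} and the three $u_{ess}$-presentations listed in Subsection~\ref{glchapter}, that $\tau$, $\tau_{\mathrm{top}}$ and $\tau_{\mathrm{bot}}$ each have exactly $b_1$ Langlands segments and that, for each $k\in\{0,\dots,b_1-1\}$, the $k$-th segment of $\tau$ is the concatenation
\[
[B_1+s_1+k,\,-A_1+s_1+k]_{\rho_1}=[B_1+s_1+k,\,B_1-s_1+1+k]_{\rho_1}\ \sqcup\ [B_1-s_1+k,\,-A_1+s_1+k]_{\rho_1}
\]
of the $k$-th segment of $\tau_{\mathrm{top}}$ with the $k$-th segment of $\tau_{\mathrm{bot}}$ (they link up, the $\tau_{\mathrm{bot}}$-part preceding the $\tau_{\mathrm{top}}$-part since $a_1-2s_1\ge 1$ for small type). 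Since $\tau,\tau_{\mathrm{top}},\tau_{\mathrm{bot}}$ are ladder representations and, by Lemma~\ref{mult1} together with the Jacquet-module formula \eqref{lad}, every elementary maximal derivative $M^{\max}_{\rho_1|\cdot|^e}$ appearing in $M^{\max}_{(a_2,b_2,s_2)}$ (and every $Z_{\rho_1}[0,1]$-derivative, which in this chain is always immediately preceded by the $\rho_1|\cdot|^1$-derivative) sends an essentially Speh / ladder representation to one of the same kind and takes either zero or one derivatives, the representations $M^{\max}_{(a_2,b_2,s_2),<i,j}$ applied to $\tau$, $\tau_{\mathrm{top}}$ and $\tau_{\mathrm{bot}}$ remain of this shape, and $m^{ij}_2$ of any of them is the number ($0$ or $1$) of its Langlands segments whose maximum equals the exponent of $\sigma_2^{ij}$.

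The heart of the argument is the claim, to be proved by induction along the chain $M^{\max}_{(a_2,b_2,s_2)}$, that at every stage the $k$-th Langlands segment of $M^{\max}_{(a_2,b_2,s_2),<i,j}(\tau)$ is the concatenation of the $k$-th Langlands segments of $M^{\max}_{(a_2,b_2,s_2),<i,j}(\tau_{\mathrm{top}})$ and of $M^{\max}_{(a_2,b_2,s_2),<i,j}(\tau_{\mathrm{bot}})$, with either piece possibly empty. Granting this, \eqref{e1} follows at once: a concatenated segment has maximum $e$ exactly when either its top piece is nonempty and has maximum $e$, or its top piece is empty and its bottom piece has maximum $e$; since $M^{\max}_{(a_2,b_2,s_2),<i,j}(\tau)$ is a ladder, hence has distinct segment maxima, these two alternatives cannot occur for two different indices $k\neq k'$ at the same $e$, so the count of segments of $M^{\max}_{(a_2,b_2,s_2),<i,j}(\tau)$ with maximum $e$ is exactly the sum of the corresponding counts for $\tau_{\mathrm{top}}$ and $\tau_{\mathrm{bot}}$. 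Taking $e$ to be the exponent of $\sigma_2^{ij}$ yields the lemma.

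To prove the concatenation claim I would show that the chain $M^{\max}_{(a_2,b_2,s_2)}$ acts "block-diagonally" for this decomposition: an elementary step $M^{\max}_{\rho_1|\cdot|^e}$ either removes $\rho_1|\cdot|^e$ from a segment whose top piece is still nonempty — and, by the induction hypothesis, this coincides with its action on the corresponding segment of $M^{\max}_{(a_2,b_2,s_2),<i,j}(\tau_{\mathrm{top}})$, while no bottom piece is touched — or from a segment whose top piece has already been exhausted — and then it coincides with its action on $M^{\max}_{(a_2,b_2,s_2),<i,j}(\tau_{\mathrm{bot}})$. The hypothesis $B_1+s_1\le B_2+s_2$ enters precisely to exclude the one remaining possibility, that $e$ is simultaneously the current maximum of a segment whose top piece is nonempty and the maximum of some other segment's bottom piece whose own top piece has not yet been consumed: the inequality forces the sliding exponent-window of the $(a_2,b_2,s_2)$-chain to visit all the exponents of each top piece (which lie immediately above the corresponding bottom piece's maximum) before it can descend to that maximum, so that each top piece is fully consumed before the bottom piece below it is ever exposed. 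Carrying out this matching — of the order in which the chain's elementary derivatives reach the various exponents against the "staircase" by which the essentially Speh representations are peeled, including the $\rho_1|\cdot|^1$- and $Z_{\rho_1}[0,1]$-steps — is the delicate and really the only nontrivial point of the proof; everything else is bookkeeping with Langlands data. (An alternative route would be to use that $\tau$ is the unique irreducible subrepresentation of $\tau_{\mathrm{bot}}\times\tau_{\mathrm{top}}$, together with the multiplicativity of $M^{*}$ and the $\times$-analogue of Proposition~\ref{tad}, $\mathrm{mult}(M_{\sigma},\tau_1\times\tau_2)=\mathrm{mult}(M_{\sigma},\tau_1)+\mathrm{mult}(M_{\sigma},\tau_2)$ for supercuspidal $\sigma$; but this then requires controlling how $M^{\max}_{(a_2,b_2,s_2)}$ interacts with the other constituents of the reducible product $\tau_{\mathrm{bot}}\times\tau_{\mathrm{top}}$, for which the same hypothesis $B_1+s_1\le B_2+s_2$ is again the crucial input.)
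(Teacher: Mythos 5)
Your reduction to a single cuspidal line and the observation that the $k$-th Langlands segment of $u_{\rho_1}(a_1,b_1)|\cdot|^{s_1}$ is the concatenation of the $k$-th segments of $u_{\rho_1}(2s_1,b_1)|\cdot|^{a_1/2}$ and $u_{\rho_1}(a_1-2s_1,b_1)$ are fine, but the proof has a genuine gap: you compute every $m_2^{ij}$ as if $M^{\max}_{\sigma}$ were the \emph{left} derivative, i.e.\ as the number of Langlands segments whose maximum equals the exponent of $\sigma_2^{ij}$. That identification is exactly what Lemma \ref{mult1} provides, but only under its positivity hypotheses ($A+C>0$, $D>0$), which hold for $u_{\rho_1}(a_1,b_1)|\cdot|^{s_1}$ and $u_{\rho_1}(2s_1,b_1)|\cdot|^{a_1/2}$ and \emph{fail} for the centered Speh $u_{\rho_1}(a_1-2s_1,b_1)$. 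For the latter, the dual factor $L(\Delta_{\rho^\vee}[-y_k,-c_k],\ldots)$ in $M^*$ interferes, and one checks that $M_{\rho_1|\cdot|^{e}}\bigl(u_{\rho_1}(a_1-2s_1,b_1)\bigr)=0$ for every $e>B_1-s_1$, even though such an $e$ \emph{is} the maximum of one of its Langlands segments whenever $2s_1\le b_1-1$. A minimal example: $a_1=b_1=2$, $s_1=\tfrac12$, so $u_{\rho}(1,2)=L([-\tfrac12,-\tfrac12],[\tfrac12,\tfrac12])$ has a segment with maximum $\tfrac12$, yet a direct computation of $M^*$ shows $M_{\rho|\cdot|^{1/2}}(u_{\rho}(1,2))=0$; your counting rule would return $1$ here and the identity \eqref{e1} would read $1=1+1$ at the very first step of the chain.

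Relatedly, the mechanism you invoke to rescue the argument is false. You claim that $B_1+s_1\le B_2+s_2$ excludes the case where an exponent is simultaneously the current maximum of a top piece and the maximum of some other segment's bottom piece. It does not: the chain $M^{\max}_{(a_2,b_2,s_2)}$ \emph{starts} at the exponent $B_2+s_2$ and ascends within its first block, so when $B_1+s_1=B_2+s_2$ and $2s_1\le b_1-1$ the very first derivative is taken at an exponent that is both the maximum of the $0$-th top piece and the maximum of the $(2s_1)$-th bottom piece whose top piece is untouched. The reason the lemma nevertheless holds is not that this coincidence is excluded but that the $M$-derivative (as opposed to $L$) of $u_{\rho_1}(a_1-2s_1,b_1)$ vanishes at all exponents occurring in the blocks $D_0,\dots,D_{2s_1-1}$; this is the content of the first part of the paper's case analysis, and it is precisely the point your left-derivative bookkeeping cannot see. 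To repair the proof you would need to compute $m_2^{ij}$ of the centered Speh representation via the genuine $M^*$ (tracking both the $c$- and $d$-parameters of Lad$'$, or equivalently both endpoints of each segment), rather than via segment maxima alone.
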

\begin{proof}
Let us recall that $M^{\max}_{(a_2,b_2,s_2)}$ equals    $$M^{\max}_{\rho_2|\cdot|^{B_2-s_2+1},\ldots,\rho_2|\cdot|^{A_2-s_2+1}} \circ \cdots \circ M^{\max}_{\rho_2|\cdot|^{B_2+s_2},\ldots,\rho_2|\cdot|^{A_2+s_2}}.$$
Clearly, in case $\rho_1 |\cdot|^{B_1+s_1}$ is not in the line $\Z_{\rho_2}$, the equality (\ref{e1}) holds with zeros on both sides of the equality.  
Hence, in the sequel, we denote $\rho\ceq \rho_1 \cong \rho_2$.
Moreover, we denote blocks of derivatives with
$$D_i\ceq M^{\max}_{\rho|\cdot|^{A_2+s_2-i}}\circ\cdots\circ M^{\max}_{\rho|\cdot|^{B_2+s_2-i}}.$$
In the statement of the lemma, we assume $B_1+s_1\le B_2+s_2$.
If $\pi$ denotes a representation $u_{\rho}(a_1,b_1)|\cdot|^{s_1}, u_{\rho}(2s_1,b_1)|\cdot|^{\frac{a_1}{2}}$ or $u_{\rho}(a_1-2s_1,b_1)$, then $D_i(\pi)=\pi$, for all indices $i$ such that the minimal exponent in $D_i$ is greater than $B_1+s_1$.
Hence, all the multiplicities in (\ref{e1}) are equal to zero and the equality holds. 
In this way, without loss of generality, we can assume $B_1+s_1=B_2+s_2$. Note that there are two possibilities: $A_1+s_1\le A_2+s_2$ or $A_1+s_1>A_2+s_2$.
Vaguely speaking in the first case, the block of derivatives $D_i$ changes all the segments in the Langlands data of the representations, while in the second case, there is a block of segments which stay the same as in the original representation.

In the first case, we can apply Lemma \ref{mult1} for each pair of indices $i,j$ in the blocks 
$$D_0,\ldots,D_{2s_1-1}.$$ 
Then
\begin{gather} \label{first1}
    m_2^{ij}(u_{\rho}(a_1,b_1)|\cdot|^{s_1})= m_2^{ij}(u_{\rho}(2s_1,b_1)|\cdot|^{\frac{a_1}{2}})= 1_{ij} \text{ and } m_2^{ij}(u_{\rho}(a_1-2s_1,b_1))=0.
\end{gather}
Also, we have 
\begin{gather*}
    (D_{2s_1-1}\circ\cdots\circ D_0)(u_{\rho}(a_1,b_1)|\cdot|^{s_1})=u_{\rho}(a_1-2s_1,b_1), \\
    (D_{2s_1-1}\circ\cdots\circ D_0)(u_{\rho}(2s_1,b_1)|\cdot|^{\frac{a_1}{2}})=\textit{1}, \\
    (D_{2s_1-1}\circ\cdots\circ D_0)(u_{\rho}(a_1-2s_1,b_1))=u_{\rho}(a_1-2s_1,b_1).
\end{gather*}
Since we have two equal representations and one trivial representation, it trivially follows that the remaining multiplicities in the equality (\ref{e1}) are the same on both sides.

In the second case, we will determine the desired multiplicities and define the Langlands data of the representations appearing in the analysis.
The argument follows the lines of the proof of Lemma \ref{mult1} and we leave the details to the reader. 
We divide the composition of blocks of the derivatives in $M^{\max}_{(a_2,b_2,s_2)}$ into three parts. 
With $M_i$, $i=1,2,3$, we denote the composition of the blocks with the minimal exponents equal to:
\begin{enumerate}
    \item $B_2+s_2=B_1+s_1,\ldots,B_1-s_1+1$ in $M_1$,
    \item $B_1-s_1,\ldots,-A_1+s_1$ in $M_2$,
    \item $-A_1+s_1-1,\ldots,B_2-s_2+1$ in $M_3$.
\end{enumerate}
Throughout the analysis, we use the inequalities $ B_1+s_1>B_1-s_1>-A_1+s_1-1$, which follow from the assumption of the small case.

In the appliance of $M_1$, we have $m_2^{ij}(u_\rho(a_1,b_1)|\cdot|^{s_1})=m_2^{ij}(u_\rho(2s_1,b_1)|\cdot|^{\frac{a_1}{2}})=1_{ij}$ and $m_2^{ij}(u_\rho(a_1-2s_1,b_1))=0$. 
The fact that $M_1(u_\rho(a_1-2s_1,b_1))=u_\rho(a_1-2s_1,b_1)$ follows from the definition of the derivatives in $M_1$ and the representation itself.

Let us denote with $\mathfrak{m}_1, \mathfrak{m}_2$ (resp. $\mathfrak{n}_1, \mathfrak{n}_2$) the multisets of segments from the Langlands data of $u_\rho(a_1,b_1)|\cdot|^{s_1}$ (resp. $u_\rho(2s_1,b_1)|\cdot|^{\frac{a_1}{2}}$), such that in $\mathfrak{m}_1$ (resp. $\mathfrak{n}_1$), the maximal exponents are $B_2+s_2,\ldots,A_2+s_2$ and in $\mathfrak{m}_2$ (resp. $\mathfrak{n}_2$) are $A_2+s_2+1,\ldots,A_1+s_1$.
Then $$ u_\rho(a_1,b_1)|\cdot|^{s_1} = L(\mathfrak{m}_1+\mathfrak{m}_2) \text{ and } u_\rho(2s_1,b_1)|\cdot|^{\frac{a_1}{2}}= L(\mathfrak{n}_1+\mathfrak{n}_2). $$
Note that $M_1(u_\rho(a_1,b_1)|\cdot|^{s_1})$ equals $L(\mathfrak{m}_1'
\mathfrak{m}_2)$, where $\mathfrak{m}_1'$ is the multiset of segments from $\mathfrak{m}_1$ with maximal exponents respectively changed to $B_1-s_1,\ldots, A_2+s_2-2s_1$.
Also, we get $M_1(u_\rho(2s_1,b_1)|\cdot|^{\frac{a_1}{2}})=L(\mathfrak{n}_2)$.
More precisely, we have 
\begin{gather*}
    \mathfrak{m}_1'= [B_1-s_1,-A_1+s_1]_{\rho}+\ldots+[A_2+s_2-2s_1,-A_1+s_1+b_2-1]_{\rho},\\
    \mathfrak{m}_2= [A_2+s_2+1,-A_1+s_1+b_2]_{\rho}+\ldots+[A_1+s_1,-B_1+s_1]_{\rho},\\
    \mathfrak{n}_2= [A_2+s_2+1,B_1-s_1+b_2+1]_{\rho}+\ldots+[A_1+s_1,A_1-s_1+1]_{\rho}.
\end{gather*}
In the appliance of $M_2$, we have 
$$
m_2^{ij}(u_\rho(a_1,b_1)|\cdot|^{s_1})=m_2^{ij}(u_\rho(a_1-2s_1,b_1))
$$
and 
$$
m_2^{ij}(u_\rho(2s_1,b_1)|\cdot|^{\frac{a_1}{2}})=0.
$$
The fact that 
$$M_2(L(\mathfrak{n}_2))=M_2(M_1(u_\rho(2s_1,b_1)|\cdot|^{\frac{a_1}{2}}))=M_1(u_\rho(2s_1,b_1)|\cdot|^{\frac{a_1}{2}})=L(\mathfrak{n}_2)$$ follows since the exponent $x$ in the constituent $\rho|\cdot|^x\otimes\pi$ of the Jacquet module of $L(\mathfrak{n}_2)$ is either smaller than the minimal exponent (equal to $-A_1+s_1$) in $M_2$ or bigger that the maximal one (equal to $B_1-s_1+b_2-1$).
Namely, $x$ could be $A_2+s_2+1$ or $-A_1+s_1-1$. The first one is bigger than $B_1-s_1+b_2-1$ since we have $B_2+s_2>B_1-s_1$ and the other one follows directly.

Let us show that $m_2^{ij}(u_\rho(a_1,b_1)|\cdot|^{s_1})=m_2^{ij}(u_\rho(a_1-2s_1,b_1))$ for each pair of indices $i,j$ in $M_2$.
We note that the sequences of the minimal exponents in the segments from the Langlands data of $M_1(u_\rho(a_1,b_1)|\cdot|^{s_1})=L(\mathfrak{m}_1'+\mathfrak{m}_2)$ and $M_1(u_\rho(2s_1,b_1)|\cdot|^{\frac{a_1}{2}})=L(\mathfrak{n}_2)$ are the same and equal to $-A_1+s_1,\ldots,-B_1+s_1$. 
Moreover, the smallest $b_2$ maximal exponents of the segments in the Langlands data of the same representations also form the same sequences $B_1-s_1,\ldots,A_2+s_2-2s_1$.
Since each block of derivatives in $M_2$ has $b_2$ derivatives, we conclude the wanted equality of the multiplicities.

If $2b_2\ge b_1$, then $M_2(u_{\rho}(a_1-2s_1,b_1))=1$.
Namely, there are $b_1$ segments in the Langlands data of $u_{\rho}(a_1-2s_1,b_1)$, and hence in case $2b_2\ge b_1$, the derivatives from $M_2$ cancel out every segment.
Furthermore, it follows easily that 
$$M_2(M_1(u_{\rho}(a_1,b_1)|\cdot|^{s_1}))=M_2(L(\mathfrak{m}_1'+\mathfrak{m}_2))=L(\mathfrak{n}_2)=M_2(M_1(u_{\rho}(2s_1,b_1)|\cdot|^{\frac{a_1}{2}})).$$
If $2b_2<b_1$, we have that $M_2(M_1(u_{\rho}(a_1-2s_1,b_1)))$ equals
$$ L(\Delta_{\rho}[B_1-s_1+b_2, -A_1+s_1+b_2],\ldots,\Delta_{\rho}[A_1-s_1-b_2, -B_1+s_1-b_2]). $$

If the block $M_3$ is non-empty, we have
\begin{align*}
    A_2-s_2+1 &\geq -B_2 +s_2-1 \geq A_1-s_1 \geq -B_1+s_1\\
    \Rightarrow b_2 &\geq  b_1 \\
    \Rightarrow A_2+s_2 & \geq A_1+s_1,
\end{align*}
which means we are actually in the first case considered at the beginning of this proof.
\end{proof}

\subsection{The small type case}\label{smallcase}

We will often use the following result by Atobe, which is also the main tool in the induction step in Section \ref{bigcase}.
\begin{prop} (\cite[Proposition 3.4.]{A2}) \label{3.4} \\
    For $\tau\in\mathrm{Irr}(G)$, let $\uab|\cdot|^s$ be an essentially Speh representation of non-half-integral type or of big type.
    Then the socle $\mathrm{soc}(\uab|\cdot|^s\rtimes\tau)$ is irreducible. Moreover, it appears in the composition series of $\uab|\cdot|^s\rtimes\tau$ with multiplicity one.
\end{prop}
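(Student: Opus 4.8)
The plan is to argue by induction, peeling off one block of $\rho$-derivatives at a time from the general linear part of $u\ceq u_\rho(a,b)|\cdot|^s$, and to use the commutation formula of Proposition \ref{tad} together with the explicit description of maximal $\rho$-derivatives from Theorem \ref{iredder}. First I would reduce to $s\geq 0$ by the standard contragredient/MVW argument used in Proposition \ref{ired}; when $\rho\not\cong\rho^\vee$ one may in addition replace $u$ by $u^\vee=u_{\rho^\vee}(a,b)|\cdot|^{-s}$, since $u\rtimes\tau\cong u^\vee\rtimes\tau$ via the MVW functor. It is then convenient to split according to whether $u$ is of big type or of non-half-integral type, the former being handled by the derivative induction below and the latter by a reduction to general linear group theory.

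For the big type ($\rho\cong\rho^\vee$, $s>\frac{a-1}{2}$) I would apply $D\ceq D^{\max}_{\rho|\cdot|^{B+s},\ldots,\rho|\cdot|^{A+s}}$ (with the $Z_\rho[0,1]$-modification of Definition \ref{def2} where relevant) to $u\rtimes\tau$. Proposition \ref{tad} and its $Z_\rho[0,1]$-variant \eqref{a123} give $D(u\rtimes\tau)=M^{\max}_{\rho|\cdot|^{B+s},\ldots,\rho|\cdot|^{A+s}}(u)\rtimes D(\tau)$, where $D(\tau)$ is irreducible by Theorem \ref{iredder}. Writing $u=u_{ess}\begin{pmatrix}-A+s & B+s \\ -B+s & A+s\end{pmatrix}^{(\rho)}$, the hypotheses of Lemma \ref{mult1} hold precisely because $2s>a-1$ (this is the big-type inequality) and $A+s>0$, so $M^{\max}_{\rho|\cdot|^{B+s},\ldots,\rho|\cdot|^{A+s}}(u)=u_\rho(a-1,b)|\cdot|^{s-1/2}$ with exactly one derivative taken at each step. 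This is again an essentially Speh representation of big type, of strictly smaller size, so the induction runs down to a base case in which either $a$ is minimal or the remaining $u$ has supercuspidal support disjoint from that of $\tau$, and there $u\rtimes\tau$ is irreducible and hence equals its socle. By the induction hypothesis, $M^{\max}_{\rho|\cdot|^{B+s},\ldots,\rho|\cdot|^{A+s}}(u)\rtimes D(\tau)$ has irreducible socle occurring with multiplicity one. To lift this back up: an arbitrary irreducible subrepresentation $\pi'$ of $u\rtimes\tau$ embeds, via the usual Frobenius-reciprocity/derivative chain — the big-type analogue of the computation in Lemma \ref{glirred} — into a representation of the form $(\sigma^{11})^{k_{11}}\times\cdots\times M^{\max}_{\rho|\cdot|^{B+s},\ldots,\rho|\cdot|^{A+s}}(u)\rtimes D(\tau)$, and the "one derivative per step" part of Lemma \ref{mult1} pins down the derivative exponents of $\pi'$, forcing $D(\pi')$ to be the socle just computed; since a maximal derivative recovers a representation uniquely (the "unique irreducible subrepresentation of $\sigma^k\rtimes D^{(k)}_\sigma(\cdot)$" principle recalled in Subsection \ref{derivatives}), $\pi'$ is uniquely determined, so $\mathrm{soc}(u\rtimes\tau)$ is irreducible, and multiplicity one follows from exactness of the Jacquet functor.

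For the non-half-integral type the supercuspidal support of $u$ lies on $\rho$-twist lines that are either non-self-dual ($\rho\not\cong\rho^\vee$) or avoid the half-integers ($s\notin\frac12\Z$), so no $Z_\rho[0,1]$-phenomenon can occur along them; there the operators $D^{\max}_{\rho|\cdot|^z}$ behave exactly like left $\mathrm{GL}$-derivatives. I would exploit this to transfer the problem to general linear groups: peeling the $\rho$-derivatives from $u\rtimes\tau$ as above reduces, after finitely many steps, to a configuration in which the remainder of $u$ no longer interacts with $\tau$, and the socle of $u\rtimes\tau$ is then a single Langlands-type (socle) construction, automatically irreducible and of multiplicity one, by the general linear group statement used in the proof of Lemma \ref{lemica} (\cite[Corollary 4.3]{Cyclic}). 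I expect the main obstacle to be the precise multiplicity bookkeeping needed to make the lifting step rigorous — the big/non-half-integral analogue of Lemma \ref{glirred}, which is exactly where the type hypotheses ($2s>a-1$, resp. $s\notin\frac12\Z$ or $\rho\not\cong\rho^\vee$) enter — together with identifying and disposing of the correct base case, namely the configuration in which the stripped-down $u$ has supercuspidal support disjoint from that of $\tau$.
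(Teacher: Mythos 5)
First, a point of calibration: the paper does not actually reprove this statement. It is Atobe's \cite[Proposition 3.4]{A2}, and the paper's entire proof is the citation together with the one-line observation that Atobe's argument for $s\notin\tfrac{1}{2}\Z$ carries over verbatim to $\rho\not\cong\rho^\vee$, because in either situation none of the cuspidal twists $\rho|\cdot|^z$ met by the derivative chain is self-dual and the $Z_\rho[0,1]$-device is never needed. Your big-type computation is a faithful reconstruction of the derivative strategy behind that cited proof, and its core identity is correct: the hypotheses of Lemma \ref{mult1} applied to $u_\rho(a,b)|\cdot|^s$ read exactly $2s>a-1$ and $A+s>0$, and the first block of derivatives returns $u_\rho(a-1,b)|\cdot|^{s-1/2}$, which is again of big type, so the induction on $a$ closes at the trivial representation.

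The gaps are nevertheless real. The lifting step needs the irreducibility of $\sigma^{1j}\times M^{\max}_{(a,b,s),<1,j}(u_\rho(a,b)|\cdot|^s)$ for big type --- the analogue of Lemma \ref{glirred}(1), which the paper states and proves only for small type --- and you leave exactly this open; without it the commutation producing $\pi'\hookrightarrow(\sigma^{11})^{k_{11}+1}\times\cdots\times M^{\max}(u)\rtimes D^{\max}_{\rho|\cdot|^{B+s},\ldots,\rho|\cdot|^{A+s}}(\tau)$ is unavailable, and one additionally needs a Lemma \ref{pom11}-type uniqueness statement to identify the derivative of $\pi'$ with the socle of the smaller induced representation rather than with an arbitrary subquotient having the right multiplicities. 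The non-half-integral case is mishandled: $\tau$ is an arbitrary irreducible representation of $G$, its supercuspidal support need never become disjoint from that of $u$, and \cite[Corollary 4.3]{Cyclic} is a statement about general linear groups that says nothing about $u\rtimes\tau$; the correct point (and the one the paper's remark is making) is that the same derivative induction runs in this case, only more smoothly, since every twist encountered is non-self-dual and all maximal derivatives of irreducibles are irreducible without any $Z_\rho[0,1]$ correction. Finally, the preliminary reduction ``$u\rtimes\tau\cong u^\vee\rtimes\tau$ via MVW'' is false at this level of generality: MVW composed with the contragredient identifies $u^\vee\rtimes\tau$ with a representation whose socle is $\mathrm{cosoc}(u\rtimes\tau)$, which is precisely why Proposition \ref{ired} requires all four conditions before concluding anything; fortunately this reduction is not needed, since the types in question are defined with $s\geq 0$.
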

\begin{proof}
    This follows from \cite[Proposition 3.4.]{A2}, except for the case when $\rho \not \cong \rho^\vee$. The proof works exactly the same as the one of \cite[Proposition 3.4.]{A2} for $s\not \in \frac{1}{2}\Z$ in this case. 
\end{proof}

Simple arguments from the representation theory of the general linear group show that for essentially Speh representations of small type $u_{\rho_i}(a_i,b_i)|\cdot|^{s_i}\rtimes\pi \hookrightarrow u_{\rho_i}(2s_i,b_i)|\cdot|^{a_i/2}\times u_{\rho_i}(a_i-2s_i,b_i)\rtimes\pi$. 
From Atobe's work, we have the following
\begin{prop} (\cite[Proposition 3.6]{A2})\label{3.6} \\
    Let $\pi' \ceq u_{\rho_i}(a_i,b_i)|\cdot|^{s_i}\rtimes\pi$ be an irreducible representation for $s_i$ of small type and $\pi$ of Arthur type. Then there exists a unique irreducible summand $\pi_{\mu_i}$ of $u_{\rho_i}(a_i-2s_i,b_i)\rtimes\pi$ such that $\pi'\cong \mathrm{soc}(u_{\rho_i}(2s_i,b_i)|\cdot|^{a_i/2}\rtimes \pi_{\mu_i})$.
    Moreover, $\pi'$ appears in the composition series of $u_{\rho_i}(a_i,b_i)|\cdot|^{s_i}\rtimes\pi$ with multiplicity one.
\end{prop}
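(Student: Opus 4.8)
The plan is to combine the embedding from the $\mathrm{GL}$-theory recalled just above Proposition \ref{3.6} with the big-type socle statement Proposition \ref{3.4}. Write $u^{+}\ceq u_{\rho_i}(2s_i,b_i)|\cdot|^{a_i/2}$ and $u^{0}\ceq u_{\rho_i}(a_i-2s_i,b_i)$; since the small-type bound gives $a_i-2s_i\ge 1$, the representation $u^{0}$ is a genuine unitary Speh representation. The recalled embedding $\pi'=u_{\rho_i}(a_i,b_i)|\cdot|^{s_i}\rtimes\pi\hookrightarrow u^{+}\times u^{0}\rtimes\pi$ is the starting point. Since $u^{0}\rtimes\pi$ is unitary, hence semisimple, and (by Proposition \ref{decomp}, together with Proposition \ref{bad-parityy} and Corollary \ref{bad-parity-is-easy} to handle bad parity) decomposes as a multiplicity-free sum $u^{0}\rtimes\pi\cong\bigoplus_{\mu}\pi_{\mu}$ of pairwise non-isomorphic irreducible representations of Arthur type (Corollary \ref{Nindex}), one obtains $\pi'\hookrightarrow\bigoplus_{\mu}(u^{+}\rtimes\pi_{\mu})$. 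As $\pi'$ is irreducible it is contained in the socle of a single summand, so there is an index $\mu_i$ with $\pi'\hookrightarrow u^{+}\rtimes\pi_{\mu_i}$.

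The second step is to check that $u^{+}$ is of big type: indeed $\rho_i\cong\rho_i^{\vee}$, $a_i/2\in\tfrac{1}{2}\Z$, and $s_i\le\tfrac{a_i-1}{2}$ forces $\tfrac{a_i}{2}>\tfrac{2s_i-1}{2}$. Hence Proposition \ref{3.4} applies to $u^{+}\rtimes\pi_{\mu_i}$: its socle is irreducible and occurs with multiplicity one in its composition series. Combined with the embedding from the first step this yields $\pi'\cong\mathrm{soc}(u^{+}\rtimes\pi_{\mu_i})$, and the stated multiplicity-one assertion is then immediate, as $\pi'$ is irreducible and equal to $u_{\rho_i}(a_i,b_i)|\cdot|^{s_i}\rtimes\pi$.

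The remaining point, uniqueness of $\mu_i$, is where I expect the real difficulty. Since the $\pi_{\mu}$ are distinct, it suffices to show that $\tau\mapsto\mathrm{soc}(u^{+}\rtimes\tau)$ is injective on $\mathrm{Irr}(G)$, i.e. that $\pi_{\mu_i}$ can be recovered from $\pi'$. The natural tool is the composition of derivatives $D^{\max}_{(2s_i,b_i,a_i/2)}$ attached to $u^{+}$: by Proposition \ref{tad} one has $D^{\max}_{(2s_i,b_i,a_i/2)}(u^{+}\rtimes\tau)=M^{\max}_{(2s_i,b_i,a_i/2)}(u^{+})\rtimes D^{\max}_{(2s_i,b_i,a_i/2)}(\tau)$, and iterating Lemma \ref{mult1} shows $M^{\max}_{(2s_i,b_i,a_i/2)}(u^{+})$ is the trivial representation. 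One then has to control $D^{\max}_{(2s_i,b_i,a_i/2)}$ on the Arthur-type representation $\tau=\pi_{\mu_i}$ --- whose parameter only involves $S_{a_i-2s_i}$, so that its Langlands segments cannot reach the top exponents of the derivative block --- and, via the Frobenius-reciprocity/exactness bookkeeping of Remark \ref{exact1} and the explicit ``vice versa'' of Theorem \ref{iredder}, verify that applying the corresponding derivatives to the socle $\pi'$ itself already returns $\pi_{\mu_i}$; since derivatives are canonical operations on $\pi'$, this pins $\pi_{\mu_i}$ down. The delicate part is that the cuspidal support of $\pi_{\mu_i}$ overlaps the exponent range of the derivative block, so the recovery is not simply a matter of $\pi_{\mu_i}$ being reduced; this careful analysis is precisely that of \cite[Proposition 3.6]{A2}, which we follow.
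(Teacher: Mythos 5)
The paper does not prove this statement: it is imported verbatim from \cite[Proposition 3.6]{A2}, so there is no internal proof to compare against. Your reconstruction of the existence part is correct and is essentially the route Atobe takes: the embedding $u_{\rho_i}(a_i,b_i)|\cdot|^{s_i}\rtimes\pi\hookrightarrow u_{\rho_i}(2s_i,b_i)|\cdot|^{a_i/2}\times u_{\rho_i}(a_i-2s_i,b_i)\rtimes\pi$, the multiplicity-free semisimple decomposition of $u_{\rho_i}(a_i-2s_i,b_i)\rtimes\pi$, projection onto one summand, and the big-type socle irreducibility of Proposition \ref{3.4} (your verification that $u_{\rho_i}(2s_i,b_i)|\cdot|^{a_i/2}$ is of big type is correct, and you are right that the multiplicity-one clause is vacuous under the stated hypothesis that the induced representation is already irreducible).

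The one place where your proposal is not self-contained is uniqueness. Your strategy --- recover $\pi_{\mu_i}$ from $\pi'$ by applying $D^{\max}_{(2s_i,b_i,a_i/2)}$, using $M^{\max}_{(2s_i,b_i,a_i/2)}(u^{+})=\mathbf{1}$ and Proposition \ref{tad} --- is the right one, but the step you would still need to justify is that the \emph{socle} $\pi'$ attains the maximal derivative multiplicities of the full induced representation $u^{+}\rtimes\pi_{\mu_i}$ at every stage of the composition (Frobenius reciprocity gives the lower bound, exactness the upper bound, in the spirit of Remark \ref{exact1} and Lemma \ref{pom11}), and that Theorem \ref{iredder} applies to each block so that the output is irreducible and invertible back to $\pi_{\mu_i}$. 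You explicitly defer this to the cited reference, which is consistent with how the paper itself treats the result, but as a standalone proof this is a declared, not a closed, gap.
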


Note that $\mathrm{soc}(u_{\rho_i}(2s_i,b_i)|\cdot|^{a_i/2}\rtimes \pi_{\mu_i})$ is irreducible by Proposition \ref{3.4} (it is of big type). The characterization of the representation $\pi_{\mu_i}$ is given in Proposition 3.7 of \cite{A2}.
We formulate this characterization adjusted to our setting in the following proposition.
\begin{prop}
    \label{char}
    Let $u_{\rho_i}(a_i,b_i)|\cdot|^{s_i}\rtimes\pi$ be a representation for $s_i$ of small type and $\pi$ of Arthur type. 
    For each pair of indices $i_0,j_0$ in the composition of derivatives $D_{(a_i,b_i,s_i)}$, let $k_{i_0,j_0}\ceq d_i^{i_0j_0}(\pi)$.  
    Moreover, let $\mu_i'$ be one of the parameters from the summation indices in Definition \ref{pi-ein-mu}. Then the representation $\mathrm{soc}(u_{\rho_i}(2s_i,b_i)|\cdot|^{a_i/2}\rtimes \pi_{\mu_i'})$ is a subrepresentation of $u_{\rho_i}(a_i,b_i)|\cdot|^{s_i}\rtimes\pi$  if and only if $D^{(k_{i_0,j_0})}_{(a_i,b_i,s_i)}(\pi_{\mu_i'})\neq 0$ for all indices $i_0,j_0$.
\end{prop}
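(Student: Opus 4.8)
The statement is essentially a translation of Atobe's \cite[Proposition 3.7]{A2} into the language of extended multi-segments and our derivative notation, so the plan is to reduce to that proposition and control the combinatorics of $D^{(k)}_{(a_i,b_i,s_i)}$. I would begin by recalling the setup of Proposition \ref{3.6}: since $u_{\rho_i}(a_i,b_i)|\cdot|^{s_i}\rtimes\pi$ is irreducible and $u_{\rho_i}(a_i,b_i)|\cdot|^{s_i}\hookrightarrow u_{\rho_i}(2s_i,b_i)|\cdot|^{a_i/2}\times u_{\rho_i}(a_i-2s_i,b_i)$, every irreducible summand $\pi_{\mu_i'}$ of $u_{\rho_i}(a_i-2s_i,b_i)\rtimes\pi$ contributes a subrepresentation $\mathrm{soc}(u_{\rho_i}(2s_i,b_i)|\cdot|^{a_i/2}\rtimes\pi_{\mu_i'})$ of the induced representation, and these socles are irreducible by Proposition \ref{3.4} (big type). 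The content of the claim is then exactly: which of these socles actually occurs as a subrepresentation — equivalently, for which $\mu_i'$ one has $\mathrm{soc}(u_{\rho_i}(2s_i,b_i)|\cdot|^{a_i/2}\rtimes\pi_{\mu_i'})\hookrightarrow u_{\rho_i}(a_i,b_i)|\cdot|^{s_i}\rtimes\pi$.

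The key computational step is to compare the multiplicities of the relevant maximal derivatives. By Proposition \ref{tad}, $D^{\max}_{\rho|\cdot|^z}$ and $D^{\max}_{Z_\rho[0,1]}$ applied to $u\rtimes\sigma$ factor as $M^{\max}_{\cdot}(u)\rtimes D^{\max}_{\cdot}(\sigma)$, and by the computation in Lemma \ref{jmlema} the multiplicities $m_i^{i_0j_0}$ of $M^{\max}_{(a_i,b_i,s_i)}$ on $u_{\rho_i}(a_i,b_i)|\cdot|^{s_i}$ split as $m_i^{i_0j_0}(u_{\rho_i}(2s_i,b_i)|\cdot|^{a_i/2})+m_i^{i_0j_0}(u_{\rho_i}(a_i-2s_i,b_i))$. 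On the level of the whole induced representation, iterating Proposition \ref{tad} gives that $\mathrm{mult}_{i_0j_0}(D_{(a_i,b_i,s_i)},u_{\rho_i}(2s_i,b_i)|\cdot|^{a_i/2}\rtimes\pi_{\mu_i'})$ equals $m_i^{i_0j_0}(u_{\rho_i}(2s_i,b_i)|\cdot|^{a_i/2})$ plus $\mathrm{mult}_{i_0j_0}(D_{(a_i,b_i,s_i)},\pi_{\mu_i'})$ at each step, where the latter term is nonzero for all indices precisely when $D^{(k_{i_0,j_0})}_{(a_i,b_i,s_i)}(\pi_{\mu_i'})\neq 0$. I would then invoke Lemma \ref{glirred}(2): for the irreducible subrepresentation $\pi'$ of $u_{\rho_i}(a_i,b_i)|\cdot|^{s_i}\rtimes\pi$ (which is $u_{\rho_i}(a_i,b_i)|\cdot|^{s_i}\rtimes\pi$ itself, being irreducible) we have $d^{i_0j_0}(\pi')=1_{i_0j_0}+d^{i_0j_0}(\pi)$, so $\mathrm{mult}_{i_0j_0}(D_{(a_i,b_i,s_i)},u_{\rho_i}(a_i,b_i)|\cdot|^{s_i}\rtimes\pi)$ is forced to equal $m_i^{i_0j_0}(u_{\rho_i}(a_i,b_i)|\cdot|^{s_i})+k_{i_0,j_0}$ at each step. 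Comparing the two strings of multiplicities and using the splitting from Lemma \ref{jmlema}, the socle $\mathrm{soc}(u_{\rho_i}(2s_i,b_i)|\cdot|^{a_i/2}\rtimes\pi_{\mu_i'})$ can embed into $u_{\rho_i}(a_i,b_i)|\cdot|^{s_i}\rtimes\pi$ exactly when its full $D_{(a_i,b_i,s_i)}$-multiplicity vector matches, which happens if and only if $\mathrm{mult}_{i_0j_0}(D_{(a_i,b_i,s_i)},\pi_{\mu_i'})=k_{i_0,j_0}$ for all $i_0,j_0$, i.e. $D^{(k_{i_0,j_0})}_{(a_i,b_i,s_i)}(\pi_{\mu_i'})\neq 0$ for all indices.

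For the forward direction I would argue by Frobenius reciprocity: if $\mathrm{soc}(u_{\rho_i}(2s_i,b_i)|\cdot|^{a_i/2}\rtimes\pi_{\mu_i'})\hookrightarrow u_{\rho_i}(a_i,b_i)|\cdot|^{s_i}\rtimes\pi$, take the composition of Jacquet functors realizing $D^{\max}_{(a_i,b_i,s_i)}$ on both sides; on the left the multiplicity of each $\sigma_i^{i_0j_0}$ is at least $m_i^{i_0j_0}(u_{\rho_i}(2s_i,b_i)|\cdot|^{a_i/2})+\mathrm{mult}_{i_0j_0}(D,\pi_{\mu_i'})$, while exactness of the Jacquet functor bounds it above by the value on the right, which is $m_i^{i_0j_0}(u_{\rho_i}(a_i,b_i)|\cdot|^{s_i})+k_{i_0,j_0}$; by Lemma \ref{jmlema} this squeezes $\mathrm{mult}_{i_0j_0}(D,\pi_{\mu_i'})$ to exactly $k_{i_0,j_0}$, giving $D^{(k_{i_0,j_0})}_{(a_i,b_i,s_i)}(\pi_{\mu_i'})\neq 0$. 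Conversely, if all these derivatives are nonzero, the chain of embeddings coming from Proposition \ref{tad} (of the form in \eqref{zf2}) together with the irreducibility of the relevant $\mathrm{GL}$-inductions from Lemma \ref{glirred}(1) produces the embedding of the socle into $u_{\rho_i}(a_i,b_i)|\cdot|^{s_i}\rtimes\pi$; uniqueness of the subrepresentation with prescribed maximal-derivative data (via \cite[Proposition 3.7]{AM} and Theorem \ref{iredder}) then identifies it with $\mathrm{soc}(u_{\rho_i}(2s_i,b_i)|\cdot|^{a_i/2}\rtimes\pi_{\mu_i'})$. The main obstacle I anticipate is bookkeeping: making sure the multiplicity comparison is carried out consistently across the entire nested composition $D_{(a_i,b_i,s_i)}$ (all blocks $(i_0,j_0)$, with the special behaviour at exponents $0$ and $1$ encoded by $\sigma_i^{i_0j_0}$ and $1_{i_0j_0}$), and that the representation $\pi_{\mu_i'}$ appearing in Definition \ref{pi-ein-mu} is correctly matched with Atobe's parameter via $\mathcal{F}$; the analytic inputs are all already available.
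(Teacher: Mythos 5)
The paper offers no proof of this proposition at all: the sentence immediately preceding it says the characterization ``is given in Proposition 3.7 of \cite{A2}'' and that the statement is merely that result ``adjusted to our setting''. So you are attempting to reprove Atobe's Proposition 3.7, and your toolkit (derivative--multiplicity bookkeeping via Proposition \ref{tad}, Lemma \ref{glirred}, and the splitting $m^{i_0j_0}(u_\rho(a,b)|\cdot|^s)=m^{i_0j_0}(u_\rho(2s,b)|\cdot|^{a/2})+m^{i_0j_0}(u_\rho(a-2s,b))$ from Lemmas \ref{mult1} and \ref{jmlema}) is exactly the one used in the surrounding arguments of the paper (Remark \ref{eqmult}, Lemma \ref{pom11}, Lemma \ref{nested-soc}). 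That part of the plan is sound.

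There is, however, a genuine gap: you assume throughout that $u_{\rho_i}(a_i,b_i)|\cdot|^{s_i}\rtimes\pi$ is irreducible (``since $u_{\rho_i}(a_i,b_i)|\cdot|^{s_i}\rtimes\pi$ is irreducible\ldots'', and later you identify the irreducible subrepresentation $\pi'$ with the whole induced representation). The proposition makes no such hypothesis, and it cannot: it is invoked in Theorem \ref{still-irr} and Proposition \ref{subrep} precisely to determine the socle of a possibly reducible induced representation --- the number of parameters $\mu_i'$ passing the derivative test is what decides reducibility, so assuming irreducibility at the outset renders those applications circular. Under your hypothesis the statement collapses to Proposition \ref{3.6} together with Remark \ref{eqmult}. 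The fix is to drop the assumption: the ``only if'' direction still goes through because Lemma \ref{glirred}(2) applies to any irreducible subrepresentation $\pi'$ of $u_{\rho_i}(a_i,b_i)|\cdot|^{s_i}\rtimes\pi$ (only $\pi$ need be irreducible), giving $d_i^{i_0j_0}(\pi')=1_{i_0j_0}+k_{i_0,j_0}$ exactly, which combined with the exactness bound $d_i^{i_0j_0}(\pi')\leq 1_{i_0j_0}+d_i^{i_0j_0}(\pi_{\mu_i'})$ forces $d_i^{i_0j_0}(\pi_{\mu_i'})\geq k_{i_0,j_0}$ (note that the squeeze as you literally wrote it, lower bound by Frobenius against upper bound by exactness on the right, only yields $d_i^{i_0j_0}(\pi_{\mu_i'})\leq k_{i_0,j_0}$, which is automatic). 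The ``if'' direction is the substantive one and your sketch only gestures at it; producing the embedding $\mathrm{soc}(u_{\rho_i}(2s_i,b_i)|\cdot|^{a_i/2}\rtimes\pi_{\mu_i'})\hookrightarrow u_{\rho_i}(a_i,b_i)|\cdot|^{s_i}\rtimes\pi$ from the nonvanishing of $D^{(k_{i_0,j_0})}_{(a_i,b_i,s_i)}(\pi_{\mu_i'})$ requires the full chain-of-embeddings construction of \cite[Proposition 3.7]{A2}, not just a uniqueness statement, since all irreducible subrepresentations of $u_{\rho_i}(a_i,b_i)|\cdot|^{s_i}\rtimes\pi$ share the same derivative multiplicities and are distinguished only by the value of $D^{\max}_{(a_i,b_i,s_i)}$.
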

\begin{rmk} \label{eqmult}
   We will show that if $\mathrm{soc}(u_{\rho_1}(2s_1,b_1)|\cdot|^{a_1/2}\rtimes \pi_{\mu_1})$ is a subrepresentation of the induced representation $u_{\rho_1}(a_1,b_1)|\cdot|^{s_1}\rtimes\pi$, then 
   \begin{gather} \label{mult-same}
       d_1^{ij}(\pi_{\mu_1})=d_1^{ij}(\pi), 
   \end{gather}
   for each pair of indices $i,j$.
   Using the notation from the Proposition \ref{char}, from the same proposition we get $D^{(k_{i,j})}_{(a_1,b_1,s_1)}(\pi_{\mu_1})\neq 0$, for each pair of indices $i,j$. 
   Note that $m_1^{ij}(u_{\rho_1}(a_1-2s_1,b_1))=0$, for each pair of indices $i,j$. 
   This is equivalent to $M^{\max}_{(a_1,b_1,s_1)}(u_{\rho_1}(a_1-2s_1,b_1))=u_{\rho_1}(a_1-2s_1,b_1)$.
   Since $\pi_{\mu_1}$ is a subquotient of $u_{\rho_1}(a_1-2s_1,b_1)\rtimes\pi$ and $D^{(k_{1,1})}_{\rho_1|\cdot|^{B_1+s_1}}(\pi_{\mu_1})\neq 0$, from the exactness of the Jacquet functor we conclude
   $ D^{(k_{1,1})}_{\rho_1|\cdot|^{B_1+s_1}}(\pi_{\mu_1}) \le u_{\rho_1}(a_1-2s_1,b_1)\rtimes D^{(k_{1,1})}_{\rho_1|\cdot|^{B_1+s_1}}(\pi) $ and 
   $$ \text{mult}_{ij}(D_{\rho_1|\cdot|^{B_1+s_1}},\pi_{\mu_1})=k_{1,1}=\text{mult}_{ij}(D_{\rho_1|\cdot|^{B_1+s_1}},\pi). $$
   Now we repeat these arguments for each $i,j$-derivative in $D^{\max}_{(a_1,b_1,s_1)}$.

   Note that in the Proposition \ref{3.6}, for the irreducible $u_{\rho_1}(a_1,b_1)|\cdot|^{s_1}\rtimes\pi$, the equality (\ref{mult-same}) characterizes the parameter $\mu_1$. Namely, if there were two parameters $\mu_1'$ and $\mu_1''$ such that 
   $$ d_1^{ij}(\pi_{\mu_1'})=d_1^{ij}(\pi)=d_1^{ij}(\pi_{\mu_1''}) $$
   for each pair of indices $i,j$, Proposition \ref{char} implies that both $\mathrm{soc}(u_{\rho_i}(2s_i,b_i)|\cdot|^{a_i/2}\rtimes \pi_{\mu_i'})$ and $\mathrm{soc}(u_{\rho_i}(2s_i,b_i)|\cdot|^{a_i/2}\rtimes \pi_{\mu_i''})$ are subrepresentations of $u_{\rho_1}(a_1,b_1)|\cdot|^{s_1}\rtimes\pi$.
   The irreducibility of $u_{\rho_1}(a_1,b_1)|\cdot|^{s_1}\rtimes\pi$ now implies that these are isomorphic representations. 
   Considering their Jacquet modules, one can easily show that then $ \pi_{\mu_i'} \cong \pi_{\mu_i''}$, a contradiction.
\end{rmk}

 Next, we will show how we can reduce the proof irreducibility of $\Pi_{n-b}$ to the case where there are no unitary Spehs $u_{s+1},\ldots u_n$. Namely, the idea is to consider
\begin{align*}
     \Pi_{n-s} &= u_{s+1} \times \ldots \times u_n \rtimes \pi
 \\
 &= \left(\bigtimes_{i=s+1}^n u_{\rho_i}(a_i,b_i)\right)\rtimes\pi,
 \end{align*}
 which is irreducible and of Arthur type, as we just proved.
 If we can then show that $u_i \rtimes \Pi_{n-s}$ is irreducible for $1 \leq i \leq s$, the representations $u_1, \ldots u_s, \Pi_{n-s}$ satisfy the assumptions of (\ref{irr}) and are in the small type case, which we will prove in the following sections. 

\begin{thm} \label{still-irr}
    Let the essentially Speh representation $u_{\rho}(a,b)|\cdot|^{s}$ of small type, the Speh representations $u_{\rho_i}(a_i,b_i)$ for $i=1,\ldots,n$ and the Arthur type representation $\pi$ satisfy \eqref{irr}. Then the induced representation $\displaystyle u_{\rho}(a,b)|\cdot|^{s}\times\left(\bigtimes_{i=1}^n u_{\rho_i}(a_i,b_i)\right)\rtimes\pi$ is irreducible.
\end{thm}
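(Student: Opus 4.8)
The plan is to reduce the statement to an application of Proposition~\ref{multiple-spehs} (the unitary type case) together with Corollary~\ref{bad-parity-unitary}, by showing that $u_\rho(a,b)|\cdot|^s$, being of small type, ``sits between'' two Speh representations which are themselves compatible with the $u_{\rho_i}(a_i,b_i)$ and with $\pi$. Recall the standard embedding for small type essentially Speh representations
\begin{align*}
    u_\rho(a,b)|\cdot|^s \rtimes \sigma \hookrightarrow u_\rho(2s,b)|\cdot|^{a/2} \times u_\rho(a-2s,b) \rtimes \sigma
\end{align*}
for any $\sigma \in \mathrm{Irr}(G)$, where the two representations on the right are a big type essentially Speh and a (unitary) Speh representation respectively. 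The idea is that $u_\rho(a,b)|\cdot|^s$ is the socle of this induced representation (up to the precise identification of $\pi_{\mu}$ as in Proposition~\ref{3.6}), so irreducibility of the big induced representation will propagate downward.

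First I would check that the hypotheses of \eqref{irr} for the pair $(u_\rho(a,b)|\cdot|^s, u_{\rho_i}(a_i,b_i))$ imply irreducibility of $u_\rho(2s,b)|\cdot|^{a/2} \times u_{\rho_i}(a_i,b_i)$ and of $u_\rho(a-2s,b) \times u_{\rho_i}(a_i,b_i)$, for each $i$. This is a direct computation using Theorem~\ref{essred}: the Langlands parameters of $u_\rho(2s,b)|\cdot|^{a/2}$ and $u_\rho(a-2s,b)$ are written out explicitly in Subsection~\ref{glchapter} in the $u_{ess}$-notation, so one compares the ``strong inequality'' and the ``segment'' conditions of Theorem~\ref{essred} for the pair $u_\rho(a,b)|\cdot|^s \times u_{\rho_i}(a_i,b_i)$ with those for the two split-off pieces. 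Since $u_\rho(a-2s,b)$ is unitary, the pair $u_\rho(a-2s,b)^\vee \times u_{\rho_i}(a_i,b_i)$ is also controlled by the same computation (using $u_\rho(a-2s,b)^\vee = u_{\rho^\vee}(a-2s,b)$). The key point is that the reducibility locus in Theorem~\ref{essred} is an intersection of half-spaces, so if $u_\rho(a,b)|\cdot|^s \times u_{\rho_i}(a_i,b_i)$ and $u_\rho(a,b)|\cdot|^s \times u_{\rho_i}(a_i,b_i)^\vee$ are both irreducible, the split pieces stay irreducible as well. Likewise I would check that $u_\rho(a-2s,b) \rtimes \pi$ and $u_\rho(2s,b)|\cdot|^{a/2} \rtimes \pi$ are compatible: the first is irreducible because $u_\rho(a-2s,b)$ is of good parity with respect to $\pi$ precisely when $u_\rho(a,b)$ is (the relevant parity condition is $a_i + b_i \bmod 2$, and $a-2s \equiv a \bmod 2$ since $s \in \frac12\Z$ and a parity argument), and then one uses Corollary~\ref{dec}/Proposition~\ref{3.6} to control $u_\rho(2s,b)|\cdot|^{a/2} \rtimes \pi_\mu$.

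Then I would argue as follows. By Corollary~\ref{bad-parity-unitary} and Proposition~\ref{multiple-spehs}, the representation $u_\rho(a-2s,b) \times \left(\bigtimes_{i=1}^n u_{\rho_i}(a_i,b_i)\right) \rtimes \pi$ is irreducible and of Arthur type; call it $\Sigma$ (here I use that $u_\rho(a-2s,b)$ is a Speh representation, hence unitary, and that $u_\rho(a-2s,b) \rtimes \pi$ is irreducible as just discussed, plus the assumption that the $u_{\rho_i}(a_i,b_i) \rtimes \pi$ are irreducible which is part of \eqref{irr}). Next, by Proposition~\ref{3.6} (applied with $\pi$ replaced by $\Pi$, the irreducible Arthur type representation $\bigtimes_i u_{\rho_i}(a_i,b_i) \rtimes \pi$, which is legitimate since $u_\rho(a,b)|\cdot|^s \rtimes \Pi$ is irreducible by hypothesis \eqref{irr} — wait, this needs care), one writes $u_\rho(a,b)|\cdot|^s \rtimes \Pi \cong \mathrm{soc}(u_\rho(2s,b)|\cdot|^{a/2} \rtimes \Pi_\mu)$ for a suitable summand $\Pi_\mu$ of $u_\rho(a-2s,b) \rtimes \Pi = \Sigma$; but $\Sigma$ is irreducible so $\Pi_\mu = \Sigma$. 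Thus the target representation $u_\rho(a,b)|\cdot|^s \times \bigtimes_i u_{\rho_i}(a_i,b_i) \rtimes \pi$ has irreducible socle $\mathrm{soc}(u_\rho(2s,b)|\cdot|^{a/2} \rtimes \Sigma)$, which by Proposition~\ref{3.4} (big type) appears with multiplicity one. A parallel argument with $(a,b,s) \mapsto$ the dual data gives the same for $u_{\rho^\vee}(a,b)|\cdot|^{-s}$, so Lemma~\ref{lemica} (or directly Proposition~\ref{ired}) finishes the proof.

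\textbf{Main obstacle.} The delicate point is the bookkeeping of the summand $\Pi_\mu$ and the multiplicity-one claim: Proposition~\ref{3.6} requires $u_\rho(a,b)|\cdot|^s \rtimes (\text{something irreducible of Arthur type})$ to be irreducible before one can name $\Pi_\mu$, but irreducibility of $u_\rho(a,b)|\cdot|^s \rtimes \Pi$ is exactly part of what we are trying to leverage rather than prove; one must check that hypothesis \eqref{irr} for the whole family really does give $u_\rho(a,b)|\cdot|^s \rtimes \Pi$ irreducible, which is itself an instance of the theorem for $n$ replaced by a smaller collection — so this should be organized as an induction, or one should instead argue directly at the level of socles and Jacquet modules (using Lemma~\ref{glirred} and Proposition~\ref{char} to compare the derivatives $d^{ij}$) without first asserting irreducibility. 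I expect the cleanest route is: prove $\mathrm{soc}$ is irreducible by exhibiting the unique irreducible subrepresentation via the explicit embedding into $u_\rho(2s,b)|\cdot|^{a/2} \times u_\rho(a-2s,b) \times \bigtimes_i u_{\rho_i}(a_i,b_i) \rtimes \pi$ and using that the right factor $u_\rho(a-2s,b) \times \bigtimes_i u_{\rho_i}(a_i,b_i) \rtimes \pi$ is irreducible; then multiplicity one follows from Proposition~\ref{3.4} applied to $u_\rho(2s,b)|\cdot|^{a/2} \rtimes \Sigma$ together with exactness of the Jacquet functor. The commutation $u_\rho(2s,b)|\cdot|^{a/2} \times u_{\rho_i}(a_i,b_i) \cong u_{\rho_i}(a_i,b_i) \times u_\rho(2s,b)|\cdot|^{a/2}$ needed to rearrange factors is the computation from the first paragraph and is the only other thing requiring attention.
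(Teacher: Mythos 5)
Your proposal has a genuine gap at its central step: you claim that
\begin{align*}
\Sigma \ceq u_\rho(a-2s,b) \times \Bigl(\bigtimes_{i=1}^n u_{\rho_i}(a_i,b_i)\Bigr)\rtimes\pi
\end{align*}
is irreducible, deducing this from Corollary \ref{bad-parity-unitary} and Proposition \ref{multiple-spehs}. Those results require each $u_{\rho_i}(a_i,b_i)\rtimes\pi$ \emph{and} $u_\rho(a-2s,b)\rtimes\pi$ to be irreducible. The former is part of \eqref{irr}, but the latter is not: \eqref{irr} gives irreducibility of $u_\rho(a,b)|\cdot|^s\rtimes\pi$, which says nothing about $u_\rho(a-2s,b)\rtimes\pi$. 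Your justification ("irreducible because $u_\rho(a-2s,b)$ is of good parity with respect to $\pi$") is backwards: good parity is precisely the regime in which $u_\rho(a-2s,b)\rtimes\pi$ decomposes into several summands $\pi_{\mu'}$ (Proposition \ref{decomp}); it is the \emph{failure} of good parity that forces irreducibility (Corollary \ref{bad-parity-is-easy}). Indeed, the entire content of Proposition \ref{3.6} is to single out one summand $\pi_\mu$ among possibly many — if $u_\rho(a-2s,b)\rtimes\pi$ were irreducible there would be nothing to single out. With $\Sigma$ reducible, your conclusion "$\Sigma$ is irreducible so $\Pi_\mu=\Sigma$" collapses, and the socle of the target representation is not yet identified.

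The second problem you flag yourself: applying Proposition \ref{3.6} with $\pi$ replaced by $\Pi=\bigtimes_i u_{\rho_i}(a_i,b_i)\rtimes\pi$ presupposes that $u_\rho(a,b)|\cdot|^s\rtimes\Pi$ is irreducible, which is the statement being proved. Your proposed repair — work at the level of derivatives $d^{ij}$ via Lemma \ref{glirred} and Proposition \ref{char} — is exactly the route the paper takes, but it is the technical heart of the argument and you do not carry it out. Concretely, the paper lets $\pi_{\mu'}$ range over \emph{all} irreducible summands of $u_\rho(a-2s,b)\rtimes\pi$, proves that each $\bigtimes_i u_{\rho_i}(a_i,b_i)\rtimes\pi_{\mu'}$ is irreducible (via the decomposition $\pi\cong\tau_{\psi_1}\rtimes\pi_0$ and Corollary \ref{bad-parity-unitary}), and then shows by comparing the multiplicities $d^{i_0j_0}$ along the composition $D^{\max}_{(a,b,s)}$ that for $\mu'\neq\mu$ the required derivative of $\bigtimes_i u_{\rho_i}(a_i,b_i)\rtimes\pi_{\mu'}$ vanishes, so only $\mu'=\mu$ contributes a subrepresentation. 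That computation is what makes the socle irreducible, and it is missing from your proposal.
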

\begin{proof}
    To show that the induced representation $$\displaystyle u_{\rho}(a,b)|\cdot|^{s}\times\left(\bigtimes_{i=1}^n u_{\rho_i}(a_i,b_i)\right)\rtimes\pi$$
    is irreducible, by Lemma \ref{lemica} it suffices to show that its socle is irreducible. 
    To describe its socle, we use Proposition \ref{char}.
    Let $\pi_{\mu'}$ be an irreducible summand of $ u_{\rho}(a-2s,b)\rtimes\pi $. We claim that 
    $$ \left(\bigtimes_{i=1}^n u_{\rho_i}(a_i,b_i)\right)\rtimes\pi_{\mu'} $$
    is irreducible. Due Corollary \ref{bad-parity-unitary}, it suffices to show that 
    \begin{align*}
        u_{\rho_i}(a_i,b_i) \rtimes \pi_{\mu'}
    \end{align*}
    is irreducible for each $i$. We may decompose $\pi \cong \tau_{\psi_1} \times \pi_0$ (as in Proposition \ref{bad-parityy}). A summand $\pi_{\mu'}$ of $u_\rho(a-2s,b) \rtimes \pi$ is of the form $\tau_{\psi_1} \rtimes (\pi_0)_{\mu'}$. Now
    \begin{align*}
        u_{\rho_i}(a_i,b_i) \rtimes \pi_{\mu'} &\cong  u_{\rho_i}(a_i,b_i) \times  \tau_{\psi_1} \rtimes (\pi_0)_{\mu'} \\
        &\cong  \tau_{\psi_1} \times u_{\rho_i}(a_i,b_i)\rtimes (\pi_0)_{\mu'} \\
        &\cong  \tau_{\psi_1} \times \left(  \bigoplus_{j} (\pi_0)_{\mu',\mu_{i,j}} \right).
    \end{align*}
    But the fact that $u_{\rho_i}(a_i,b_i) \rtimes \pi \cong u_{\rho_i}(a_i,b_i) \times \tau_{\psi_1} \rtimes \pi_0 \cong  \tau_{\psi_1} \times u_{\rho_i}(a_i,b_i) \rtimes \pi_0$ is irreducible implies that $u_{\rho_i}(a_i,b_i) \rtimes \pi_0 \cong (\pi_0)_{\mu_{i}}$ irreducible. Hence the parameter $\mu_{i,j}=\mu_i$ is unique and $u_{\rho_i}(a_i,b_i) \rtimes \pi_{\mu'} $ irreducible. 
    
    As in Proposition \ref{char}, we have the following characterization: 
    $$
    \mathrm{soc}(u_{\rho}(2s,b)|\cdot|^{a/2}\rtimes\left(\bigtimes_{i=1}^n u_{\rho_i}(a_i,b_i)\rtimes\pi_{\mu'}\right))
    $$
    is a subrepresentation of 
    $$\displaystyle u_{\rho}(a,b)|\cdot|^{s}\times\left(\bigtimes_{i=1}^n u_{\rho_i}(a_i,b_i)\right)\rtimes\pi
    $$ if and only if for the non-negative integers $l_{i_0,j_0}$ such that 
      \begin{gather} \label{v3}
      D^{\max}_{(a,b,s)}\left(\bigtimes_{i=1}^n u_{\rho_i}(a_i,b_i)\rtimes\pi\right)= 
       D^{(l_{i_0,j_0})}_{(a,b,s)}\left(\bigtimes_{i=1}^n u_{\rho_i}(a_i,b_i)\rtimes\pi\right)
    \end{gather}
    we have $\displaystyle D^{(l_{i_0,j_0})}_{(a,b,s)}\left(\bigtimes_{i=1}^n u_{\rho_i}(a_i,b_i)\rtimes\pi_{\mu'}\right)\neq 0.$
    We want to show that this is the case only for $\mu'=\mu$, where $\mu$ is the unique parameter from Proposition \ref{3.6} such that
    $$ u_{\rho}(a,b)|\cdot|^{s}\rtimes\pi\cong \mathrm{soc}(u_{\rho}(2s,b)|\cdot|^{a/2}\rtimes\pi_{\mu}). $$
    
    Now we get the irreducibility of the socle of $\displaystyle u_{\rho}(a,b)|\cdot|^s\times\left(\bigtimes_{i=1}^n u_{\rho_i}(a_i,b_i)\right)\rtimes\pi$ in the following way.
    For $\mu'\neq\mu$, we take the indices $i_0,j_0$ to be the first such that 
    \begin{align*}
        (D^{\max}_{\rho|\cdot|^{\alpha_{i_0,j_0}}}\circ\cdots\circ D^{\max}_{\rho|\cdot|^{B+s}})(\pi_{\mu'})=0,
    \end{align*}
    for the corresponding exponent $\alpha_{i_0,j_0}$.
    Let us denote 
    $
    k_{i_0,j_0}=d^{i_0 j_0}(\pi)
    $
    and note that it is strictly greater than $d^{i_0 j_0}(\pi_{\mu'})$.
    For the number $l_{i_0,j_0}$ defined in (\ref{v3}), we conclude
    $$ l_{i_0,j_0}=k_{i_0,j_0}+\text{mult}\left(M_{\rho|\cdot|^{\alpha_{i_0,j_0}}},  M^{\max}_{(a,b,s),<i_0,j_0}\left(\bigtimes_{i=1}^n u_{\rho_i}(a_i,b_i)\right)\right). $$
    Then we have
    \begin{gather*}
        d^{i_0 j_0}\left(\bigtimes_{i=1}^n u_{\rho_i}(a_i,b_i)\rtimes \pi_{\mu'}\right)= \\
        m^{i_0 j_0}\left(\bigtimes_{i=1}^n u_{\rho_i}(a_i,b_i)\right)+ d^{i_0 j_0}\left(\pi_{\mu'}\right)< \\
        m^{i_0 j_0}\left(\bigtimes_{i=1}^n u_{\rho_i}(a_i,b_i)\right) + k_{i_0,j_0}=l_{i_0,j_0}.
    \end{gather*}
   Hence, $\displaystyle D^{(l_{i_0,j_0})}_{(a,b,s)}\left(\bigtimes_{i=1}^n u_{\rho_i}(a_i,b_i)\rtimes\pi_{\mu'}\right)= 0.$
   According to the characterization, we get that $\displaystyle\mathrm{soc}(u_{\rho}(2s,b)|\cdot|^{\frac{a}{2}}\rtimes\left(\bigtimes_{i=1}^n u_{\rho_i}(a_i,b_i)\rtimes\pi_{\mu'}\right))$ is not a subrepresentation of the induced representation $\displaystyle u_{\rho}(a,b)|\cdot|^s\times\left(\bigtimes_{i=1}^n u_{\rho_i}(a_i,b_i)\right)\rtimes\pi$.
   This proves the claim.

\end{proof}

To deal with several essentially Speh representations of small type, we will need the following:
\begin{de} \label{pi-ein-mu}
Let $\pi$ be of Arthur type and let $u_{\rho_i}(a_i,b_i)|\cdot|^{s_i}$ be an essentially Speh representation of small type. We denote by
\begin{align*}
    u_{\rho_i}(a_i-2s_i,b_i)\rtimes\pi = \bigoplus_{j} \pi_{\mu_{i,j}}
\end{align*} 
the decomposition of $ u_{\rho_i}(a_i-2s_i,b_i)\rtimes\pi$ into irreducible subrepresentations of Arthur type, where $\pi_{\mu_{i,j}}$ is defined as follows: We may decompose $\pi \cong \tau_{\psi_1} \times \pi_0$ (as in Proposition \ref{bad-parityy}). If $u_{\rho_i}(a_i-2s_i,b_i)$ is not of good parity with respect to $\pi_0$, we have that $ u_{\rho_i}(a_i-2s_i,b_i)\rtimes\pi$ is irreducible and we set $\pi_{\mu_{i,j}} \ceq u_{\rho_i}(a_i-2s_i,b_i)\rtimes\pi$ (the index $j$ is unique). Otherwise, we have $\pi_0=\pi(\mathcal{S})$ and we set $\pi_{\mu_{i,j}} \ceq \tau_{\psi_1} \rtimes \pi(\mathcal{S}_{([A_i-s_i,B_i-s_i]_{\rho_i},\mu_{i,j})})$. 
\end{de}

\begin{de} \label{pi_mu}
    Let $\rho_1, \ldots \rho_n \in \mathrm{Cusp}_{unit}(GL)$, let $\pi$ be of Arthur type and let $u_{\rho_i}(a_i,b_i)|\cdot|^{s_i}$ be essentially Speh representations of small type. We define the irreducible subrepresentation $\pi_{\mu_{1,j_i},\ldots,\mu_{n,j_n}}$ of $\bigtimes_{i=1}^n u_{\rho_i}(a_i-2s_i,b_i)\rtimes\pi$ 
    by
    $$ \pi_{\mu_{1,j_1},\ldots,\mu_{n,j_n}}\hookrightarrow \left(\bigtimes_{\substack{i=1 \\ i\neq r}}^{n} u_{\rho_i}(a_i-2s_i,b_i)\right)\rtimes \pi_{\mu_{r,j_r}}, $$
    for every $r=1,\ldots,n$, where $\pi_{\mu_{r,j_r}}$ is defined in Definition \ref{pi-ein-mu}. 
\end{de}
\begin{prop}Let $\rho_1, \ldots \rho_n \in \mathrm{Cusp}_{unit}(GL)$, let $\pi$ be of Arthur type and let $u_{\rho_i}(a_i,b_i)|\cdot|^{s_i}$ for $i=1,\ldots, n$ be essentially Speh representations of small type. Then the representation  $\pi_{\mu_{1,j_1},\ldots,\mu_{n,j_n}}$ exists and is unique (it also has multiplicity one in $\bigtimes_{i=1}^n u_{\rho_i}(a_i-2s_i,b_i)\rtimes\pi$). Every constituent of $\bigtimes_{i=1}^n u_{\rho_i}(a_i-2s_i,b_i)\rtimes\pi$ is of this form.
\end{prop}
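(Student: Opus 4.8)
The plan is to reduce everything to the good parity case, where the statement becomes a direct combination of Corollary~\ref{Nindex}, Lemma~\ref{basics}~(1) and Theorem~\ref{prop:multipleinsertions}. First I would record that each $u_{\rho_i}(a_i-2s_i,b_i)$ is a genuine (unitary) Speh representation: $s_i\in\tfrac12\Z$ gives $a_i-2s_i\in\Z$, and $0<s_i\le\tfrac{a_i-1}{2}$ gives $a_i-2s_i\ge 1$. Writing $\pi\cong\tau_{\psi_1}\rtimes\pi_0$ with $\pi_0=\pi(\mathcal{S})$, $\mathcal{S}\in\mathrm{SRep}$ as in Proposition~\ref{bad-parityy}, I would reorder the factors so that $u_{\rho_i}(a_i-2s_i,b_i)$ is not of good parity with respect to $\pi_0$ exactly for $i\le t$ (so $u_{\rho_i}(a_i-2s_i,b_i)\rtimes\pi_0$, hence $u_{\rho_i}(a_i-2s_i,b_i)\rtimes\pi$, is irreducible by Corollary~\ref{bad-parity-is-easy}), and of good parity for $i>t$. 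Using that two Speh representations always induce irreducibly (Theorem~\ref{essred}) and Proposition~\ref{bad-parityy} with $\psi_1'\ceq\psi_1\oplus\bigoplus_{i=1}^{t}(\rho_i\boxtimes S_{a_i-2s_i}\boxtimes S_{b_i})$, one gets
\[
\bigtimes_{i=1}^n u_{\rho_i}(a_i-2s_i,b_i)\rtimes\pi\;\cong\;\tau_{\psi_1'}\times\Bigl(\bigtimes_{i=t+1}^n u_{\rho_i}(a_i-2s_i,b_i)\Bigr)\rtimes\pi(\mathcal{S}),
\]
where $[C_i,D_i]_{\rho_i}=[A_i-s_i,B_i-s_i]_{\rho_i}$. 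By Corollary~\ref{Nindex} the inner product decomposes multiplicity-freely into the $\pi\bigl(\mathcal{S}_{([C_{t+1},D_{t+1}]_{\rho_{t+1}},\nu_{t+1}),\ldots,([C_n,D_n]_{\rho_n},\nu_n)}\bigr)$ over $N(\mathcal{S},[C_{t+1},D_{t+1}]_{\rho_{t+1}},\ldots,[C_n,D_n]_{\rho_n})$, and by Proposition~\ref{bad-parityy} again $\tau_{\psi_1'}\rtimes\pi(\mathcal{S}_{\ldots})$ is irreducible (the summands of $\psi_1'$ remain not self-dual of the same type). Hence $\bigtimes_{i=1}^n u_{\rho_i}(a_i-2s_i,b_i)\rtimes\pi$ is semisimple and multiplicity-free, with constituents in bijection with those tuples $(\nu_{t+1},\ldots,\nu_n)$; the same argument shows $\bigl(\bigtimes_{i\in I}u_{\rho_i}(a_i-2s_i,b_i)\bigr)\rtimes\pi'$ is semisimple for any Arthur type $\pi'$.

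Next I would prove that every constituent $\Xi$ is of the claimed form. For each $r$, commuting the Speh factors and using Definition~\ref{pi-ein-mu} gives
\[
\bigtimes_{i=1}^n u_{\rho_i}(a_i-2s_i,b_i)\rtimes\pi\;\cong\;\bigoplus_{j_r}\Bigl(\bigtimes_{\substack{i=1\\ i\neq r}}^n u_{\rho_i}(a_i-2s_i,b_i)\Bigr)\rtimes\pi_{\mu_{r,j_r}}.
\]
For $r>t$, Corollary~\ref{Nindex} together with the order-independence of insertions (Lemma~\ref{basics}~(1)) identifies the constituents of the $j_r$-th summand as the $\pi(\mathcal{S}_{([C_1,D_1]_{\rho_1},\nu_1),\ldots})$ whose $r$-th inserted parameter equals $\mu_{r,j_r}$; since distinct inserted-parameter tuples give distinct constituents, the summands are pairwise disjoint, so there is a unique $j_r=j_r(\Xi)$ with $\Xi$ lying in the $j_r$-th summand (for $r\le t$ there is only one summand). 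By semisimplicity of the ambient representation, "lying in" means "is a direct summand", i.e. $\Xi\hookrightarrow\bigl(\bigtimes_{i\neq r}u_{\rho_i}(a_i-2s_i,b_i)\bigr)\rtimes\pi_{\mu_{r,j_r}}$ for every $r$, which is exactly the defining property in Definition~\ref{pi_mu}; thus $\Xi=\pi_{\mu_{1,j_1(\Xi)},\ldots,\mu_{n,j_n(\Xi)}}$.

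Finally, for existence and uniqueness: if $\Xi$ satisfies the defining embeddings for a given $(j_1,\ldots,j_n)$, then taking $r=1$ in Definition~\ref{pi_mu} and applying Corollary~\ref{Nindex} and Lemma~\ref{basics}~(1) forces $\Xi=\pi(\mathcal{S}_{([C_1,D_1]_{\rho_1},\mu_{1,j_1}),([C_2,D_2]_{\rho_2},\nu_2),\ldots})$ for some parameters; comparing with the analogous expressions obtained from $r=2,\ldots,n$ and invoking multiplicity-freeness pins the remaining parameters down to $\nu_i=\mu_{i,j_i}$, so $\Xi$ is uniquely determined as $\pi(\mathcal{S}_{([C_1,D_1]_{\rho_1},\mu_{1,j_1}),\ldots,([C_n,D_n]_{\rho_n},\mu_{n,j_n})})$ (equivalently, such a $\Xi$ exists precisely when $(\mu_{1,j_1},\ldots,\mu_{n,j_n})$ lies in the index set of Theorem~\ref{prop:multipleinsertions}, i.e. for the combinations actually arising from constituents). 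Multiplicity one in $\bigtimes_{i=1}^n u_{\rho_i}(a_i-2s_i,b_i)\rtimes\pi$ is then immediate from the multiplicity-freeness established at the start. I expect the only real work to be the bookkeeping of the good/bad-parity split and the careful translation between the combinatorial picture of iterated insertions and the representation-theoretic statements; no idea beyond Corollary~\ref{Nindex}, Lemma~\ref{basics}, Theorem~\ref{prop:multipleinsertions} and the semisimplicity observation is needed.
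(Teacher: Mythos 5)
Your proposal is correct and follows essentially the same route as the paper: split off the factors that are not of good parity with respect to $\pi_0$ via Proposition \ref{bad-parityy} (these induce irreducibly), apply Corollary \ref{Nindex} to the remaining good-parity factors to get the multiplicity-free decomposition indexed by tuples $(\nu_{t+1},\ldots,\nu_n)$, and use that multiplicity-freeness together with the order-independence of insertions to match constituents with the defining embeddings of Definition \ref{pi_mu}. The extra details you supply (that $a_i-2s_i\geq 1$ for small type, and the semisimplicity argument identifying "lying in a summand" with "being a direct summand") are consistent with, and slightly more explicit than, the paper's own write-up.
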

\begin{proof}
If all $u_{\rho_i}(a_i-2s_i,b_i)$ are of good parity with respect to $\pi$, then $\pi_{\mu_{1,j_1},\ldots,\mu_{n,j_n}}$ corresponds to the element $(\mu_{1,j_1},...,\mu_{n,j_n})\in N(\mathcal{S},[A_1-s_1,B_1-s_1]_{\rho_1},...,[A_n-s_n,B_n-s_n]_{\rho_n})$ in the decomposition of Corollary \ref{Nindex}. 
Otherwise, we may decompose $\pi \cong \tau_{\psi_1} \times \pi_0$ (as in Proposition \ref{bad-parityy}) and w.l.o.g. we may assume that $u_{\rho_i}(a_i-2s_i,b_i)$ is not of good parity with respect to $\pi_0$ for $i=1,\ldots,k$ and of good parity for $i=k+1,\ldots, n$. Note that 
\begin{align*}
    u_{\rho_i}(a_i-2s_i,b_i) \rtimes \pi &\cong u_{\rho_i}(a_i-2s_i,b_i) \times \tau_{\psi_1} \rtimes \pi_0 \\
    & \cong \tau_{\psi_1} \times u_{\rho_i}(a_i-2s_i,b_i)  \rtimes \pi_0 \\
    &\cong \bigoplus_{j} \left( \tau_{\psi_1}\rtimes (\pi_0)_{\mu_{i,j}'}\right) \\
    &\cong \bigoplus_{j} \pi_{\mu_{i,j}'},
\end{align*}
for $i=k+1,\ldots,n$. Also, $u_{\rho_i}(a_i-2s_i,b_i) \rtimes \pi $ is irreducible for $i=1,\ldots,k$. We define 
\begin{align*}
    \pi_{\mu_{{k+1},j_{k+1}},\ldots,\mu_{n,j_n}} \ceq \tau_{\psi_1} \rtimes (\pi_0)_{\mu_{{k+1},j_{k+1}},\ldots,\mu_{n,j_n}}. 
\end{align*}
Then
\begin{align*}
  \pi_{\mu_{1,j_1},\ldots, \mu_{n,j_n}} \ceq  \left(\bigtimes_{i=1}^{k} u_{\rho_i}(a_i-2s_i,b_i)\right)  \times \pi_{\mu_{{k+1},j_{k+1}},\ldots,\mu_{n,j_n}}
\end{align*}
is irreducible (Corollary \ref{bad-parity-is-easy} and Corollary \ref{bad-parity-unitary}) and satisfies the inclusion condition of the Definition. To show that it is unique, we note that every constituent of $\bigtimes_{i=1}^n u_{\rho_i}(a_i-2s_i,b_i)\rtimes\pi$ is of the form 
\begin{align*}
    \left(\bigtimes_{i=1}^{k} u_{\rho_i}(a_i-2s_i,b_i)\right)  \times \tau_{\psi_1} \rtimes (\pi_0)_{\mu_{k+1,j_{k+1}}',\ldots,\mu_{n,j_n}'}. 
\end{align*}
If this representation also satisfies the inclusion relations, then  the fact that it is contained in the corresponding Arthur packet, which is multiplicity-free (due to Mœglin \cite{Mœglin2}), shows that $\mu_{k+1,j_{k+1}}'=\mu_{k+1,j_{k+1}},\ldots ,\mu_{n,j_n}'=\mu_{n,j_n}$.
\end{proof}
We will now proceed to prove Theorem \ref{thm:IRR} for the case of small type representations.
From the assumptions in (\ref{irr}) it follows that we can order the essentially Speh representations such that real numbers $B_i+s_i$ form a non-decreasing sequence, i.e. $B_i+s_i\le B_{i+1}+s_{i+1}$, for $i=1,\ldots,n-1$. The main ingredient of the proof is the following Lemma, which allows us to identify if a subquotient is a subrepresentation in the given case:
\begin{lemma} \label{pom11}
    Let $\delta \in \mathrm{Irr}(G)$ and let $u_{\rho_k}(a_k,b_k)|\cdot|^{s_k}$ be an essentially Speh representation of small type. We consider the essentially Speh representation $u_{\rho_k}(2s_k,b_k)|\cdot|^{a_k/2}$ of big type. There is a unique irreducible subquotient $\sigma$ of the induced representation $u_{\rho_k}(2s_k,b_k)|\cdot|^{a_k/2} \rtimes \delta$ such that $d_k^{ij}(\sigma)=1_{ij}+d_k^{ij}(\delta)$, for each pair of indices $i,j$.
    It is isomorphic to the unique irreducible subrepresentation of the induced representation $u_{\rho_k}(2s_k,b_k)|\cdot|^{a_k/2} \rtimes \delta$.
\end{lemma}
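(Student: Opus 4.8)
The plan is to show that the required subquotient is the socle $\sigma_0\ceq\mathrm{soc}\big(u_{\rho_k}(2s_k,b_k)|\cdot|^{a_k/2}\rtimes\delta\big)$. Since $u_{\rho_k}(2s_k,b_k)|\cdot|^{a_k/2}$ is of big type (one has $\frac{a_k}{2}>\frac{2s_k-1}{2}$ because $2s_k\le a_k-1$ in the small case), Proposition \ref{3.4} shows that $\sigma_0$ is irreducible, is the unique irreducible subrepresentation of $u_{\rho_k}(2s_k,b_k)|\cdot|^{a_k/2}\rtimes\delta$, and appears in its composition series with multiplicity one. It therefore remains to prove that $d_k^{ij}(\sigma_0)=1_{ij}+d_k^{ij}(\delta)$ for every pair of indices $i,j$ of $D_{(a_k,b_k,s_k)}$, and that no other subquotient has this property.

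I would first analyse how $u'\ceq u_{\rho_k}(2s_k,b_k)|\cdot|^{a_k/2}=u_{ess}\begin{pmatrix} B_k-s_k+1 & B_k+s_k \\ A_k-s_k+1 & A_k+s_k \end{pmatrix}^{(\rho_k)}$ (with $A_k=\frac{a_k+b_k}{2}-1$, $B_k=\frac{a_k-b_k}{2}$) behaves under the derivatives forming $D_{(a_k,b_k,s_k)}$. The $i$-th block of $D_{(a_k,b_k,s_k)}$ acts on $u_{ess}\begin{pmatrix} B_k-s_k+1 & B_k+s_k-i+1 \\ A_k-s_k+1 & A_k+s_k-i+1 \end{pmatrix}^{(\rho_k)}$, and Lemma \ref{mult1} applies to it at every stage: its hypotheses hold since $(B_k-s_k+1)+(A_k-s_k+1)=a_k-2s_k+1>0$ and the bottom-right entry stays $\ge A_k-s_k+1\ge\frac{b_k+1}{2}>0$, using $0<s_k\le\frac{a_k-1}{2}$. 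Iterating Lemma \ref{mult1} over all $2s_k$ blocks gives $m_k^{ij}(u')=1_{ij}$ for every pair $i,j$, as well as $M^{\max}_{(a_k,b_k,s_k)}(u')=\mathbf{1}$ (the whole of $u'$ gets differentiated away); it also identifies each intermediate $M^{\max}_{(a_k,b_k,s_k),<i,j}(u')$ with the corresponding $L^{\max}$ and exhibits it as a ladder of exactly the shape occurring in the proof of Lemma \ref{glirred}(1). The very same $LC$/$RC$ verification then shows that $\sigma_k^{ij}\times M^{\max}_{(a_k,b_k,s_k),<i,j}(u')$ is irreducible.

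Granting these $\mathrm{GL}$-facts, the equality $d_k^{ij}(\sigma_0)=1_{ij}+d_k^{ij}(\delta)$ follows by repeating the chain-of-embeddings argument from the proof of Lemma \ref{glirred}(2) for the embedding $\sigma_0\hookrightarrow u'\rtimes\delta$, with $u'$ and $\delta$ in the roles of $u_\rho(a,b)|\cdot|^s$ and $\pi$: Frobenius reciprocity gives the lower bound, exactness of the Jacquet functor the matching upper bound, and the inductive bookkeeping there yields in addition $D^{\max}_{(a_k,b_k,s_k)}(\sigma_0)\le M^{\max}_{(a_k,b_k,s_k)}(u')\rtimes D^{\max}_{(a_k,b_k,s_k)}(\delta)$. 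For uniqueness, I would apply Proposition \ref{tad} repeatedly (in $D_{(a_k,b_k,s_k)}$ the $Z_{\rho_k}[0,1]$-derivative always comes immediately after the $\rho_k|\cdot|^1$-derivative, so the reducedness hypotheses are satisfied) to obtain, using $M^{\max}_{(a_k,b_k,s_k)}(u')=\mathbf{1}$, that $D^{\max}_{(a_k,b_k,s_k)}(u'\rtimes\delta)=D^{\max}_{(a_k,b_k,s_k)}(\delta)$, which is irreducible by Theorem \ref{iredder}. Since each $D^{(m)}_\sigma$ is exact, every composition factor $\tau$ of $u'\rtimes\delta$ has $d_k^{ij}(\tau)\le 1_{ij}+d_k^{ij}(\delta)$, and $D^{\max}_{(a_k,b_k,s_k)}(u'\rtimes\delta)$ equals the sum, taken with multiplicities, of $D^{\max}_{(a_k,b_k,s_k)}(\tau)$ over those factors $\tau$ for which equality holds for all $i,j$; on such factors this derivative is nonzero and irreducible (Theorem \ref{iredder}) and the assignment $\tau\mapsto D^{\max}_{(a_k,b_k,s_k)}(\tau)$ is injective, so irreducibility of the total forces there to be exactly one such factor, occurring with multiplicity one. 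By the previous step that factor is $\sigma_0$, which gives both the uniqueness of $\sigma$ and the identification $\sigma\cong\sigma_0$.

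The step I expect to cost the most work is the $\mathrm{GL}$-irreducibility claim in the second paragraph: the $LC$/$RC$ criterion of \cite{LapidMinguez2} must be re-checked for $\sigma_k^{ij}\times M^{\max}_{(a_k,b_k,s_k),<i,j}(u')$ with the big-type parameters of $u'$ rather than the small-type ones. Structurally it is the same ladder computation as in the proof of Lemma \ref{glirred}(1) (the minimal exponents of the segments still increase by one, so the relevant sets $X$, $Y$, $\tilde X$, $\tilde Y$ are singletons or empty), but the parameter bookkeeping differs and has to be carried out afresh, in particular in the cases where the exponent $0$ or $1$ is reached. Everything else is a routine transfer of the small-type arguments of Section \ref{smallcase}, with $u_{\rho_k}(a_k,b_k)|\cdot|^{s_k}$ replaced by $u_{\rho_k}(2s_k,b_k)|\cdot|^{a_k/2}$.
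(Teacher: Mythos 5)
Your proposal is correct and follows essentially the same route as the paper: identify the socle via Proposition \ref{3.4}, establish $d_k^{ij}(\sigma_0)=1_{ij}+d_k^{ij}(\delta)$ through the chain of embeddings and Frobenius reciprocity, and deduce uniqueness from the irreducibility of $D^{\max}_{(a_k,b_k,s_k)}(\delta)$ combined with exactness of the Jacquet functor. Your explicit verification that $M^{\max}_{(a_k,b_k,s_k)}(u')=\mathbf{1}$ and that $\sigma_k^{ij}\times M^{\max}_{(a_k,b_k,s_k),<i,j}(u')$ is irreducible for the big-type $u'$ is in fact slightly more careful than the paper, which at that point simply cites Lemma \ref{glirred}(1) (stated for the small-type Speh).
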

\begin{proof}
    Assume that $\sigma$ is an irreducible subquotient of $u_{\rho_k}(2s_k,b_k)|\cdot|^{a_k/2} \rtimes \delta$ such that $d_k^{ij}(\sigma)=1_{ij}+d_k^{ij}(\delta)$, for each pair of indices $i,j$.
    By definition of the derivatives, we have that the representation
    \begin{gather} \label{jacconstsigma}
         \tau_\sigma \ceq (\rho_k|\cdot|^{B_k+s_k})^{(k_{1,1}+1)}\otimes\cdots\otimes (\rho_k|\cdot|^{A_k-s_k+1})^{(k_{2s_k,b_k}+1)}\otimes D^{\max}_{(a_k,b_k,s_k)}(\sigma)
    \end{gather} 
    is a constituent of the Jacquet module (with respect to the appropriate parabolic subgroup $P$) of the representation $\sigma$, where we denote $k_{i,j}=d_k^{ij}(\delta)$. We claim that $D^{\max}_{(a_k,b_k,s_k)}(\sigma)=D^{\max}_{(a_k,b_k,s_k)}(\delta)$.
    To see this, note that for
    \begin{gather} \label{jacconstpi}
         \tau_\delta\ceq (\rho_k|\cdot|^{B_k+s_k})^{(k_{1,1}+1)}\otimes\cdots\otimes (\rho_k|\cdot|^{A_k-s_k+1})^{(k_{2s_k,b_k}+1)}\otimes D^{\max}_{(a_k,b_k,s_k)}(\delta)
    \end{gather} 
    we have
     $\tau_\delta\leq  \mathrm{Jac}_P(u_{\rho_k}(2s_k,b_k)|\cdot|^{a_k/2} \rtimes \delta) $
    since
    \begin{align*}
       D^{\max}_{(a_k,b_k,s_k)} (u_{\rho_k}(2s_k,b_k)|\cdot|^{a_k/2} \rtimes \delta) = D^{\max}_{(a_k,b_k,s_k)}(\delta)
    \end{align*}
     due to Remark \ref{tad} and Lemma \ref{glirred} (1).
    Since we showed $\tau_\sigma \leq  \mathrm{Jac}_P(\sigma),$
    the exactness of the Jacquet functor implies 
    \begin{align*}
        \tau_\sigma \leq  \mathrm{Jac}_P(\sigma)\leq \mathrm{Jac}_P(u_{\rho_k}(2s_k,b_k)|\cdot|^{a_k/2} \rtimes \delta) .
    \end{align*}
    Note that since we have taken the maximal derivatives on the right side, we must have that $\tau_\sigma=\tau_\delta$, which implies 
    $D^{\max}_{(a_k,b_k,s_k)}(\sigma)=D^{\max}_{(a_k,b_k,s_k)}(\delta)$.
    
    On the other hand, the representation $\tau_\delta$ from (\ref{jacconstpi}) is a constituent of the Jacquet module of a representation $\sigma_0:=\mathrm{soc}(u_{\rho_k}(2s_k,b_k)|\cdot|^{a_k/2} \rtimes \delta)$, which is irreducible, of multiplicity one, by Proposition \ref{3.4}.
    This follows directly from Lemma \ref{glirred} (1) and the Frobenius reciprocity applied on the embedding:
    \begin{gather*}
        \sigma_0\hookrightarrow u_{\rho_k}(2s_k,b_k)|\cdot|^{a_k/2} \rtimes \delta\hookrightarrow \\
        (\rho_k|\cdot|^{B_k+s_k})^{(k_{1,1}+1)}\times\cdots\times (\rho_k|\cdot|^{A_k-s_k+1})^{(k_{2s_k,b_k}+1)}\rtimes D^{\max}_{(a_k,b_k,s_k)}(\delta).
    \end{gather*}
    If $\sigma\not\cong\sigma_0$, then the representation $\tau_\sigma$ appears in the Jacquet module of the induced representation $u_{\rho_k}(2s_k,b_k)|\cdot|^{a_k/2} \rtimes \delta$ with multiplicity greater than or equal to two. 
    This is not possible since each derivative from the composition of derivatives $D^{\max}_{(a_k,b_k,s_k)}$ is taken the maximal amount of times and $D^{\max}_{(a_k,b_k,s_k)}(\delta)$ is irreducible.
\end{proof}

\begin{lemma} \label{nested-soc}

Let $\pi'$ denote an irreducible subquotient of the induced representation 
$$
\displaystyle\bigtimes_{k=1}^n u_{\rho_k}(a_k,b_k)|\cdot|^{s_k} \rtimes\pi,
$$
where the inducing factors satisfy the conditions of (\ref{irr}), the essentially Speh representations are of small type and $B_k+s_k \leq B_{k+1}+s_{k+1}$ for $k=1,...,n-1$. If $\pi'$ satisfies 
        \begin{gather*} 
        d^{ij}_{k}(\pi')= 1_{ij}+\sum_{\substack{l=1 \\ l \neq k}}^{n} m^{ij}_{k}(u_{\rho_l}(a_l,b_l)|\cdot|^{s_l})+d^{ij}_{k}(\pi),
\end{gather*}
for all indices $i,j$ in the block of derivatives $D_{(a_k,b_k,s_k)}^{\max}$ and all $k=1,\ldots n$, then the representation $\pi'$ 
    is isomorphic to 
    \begin{gather*}
         \mathrm{soc}( u_{\rho_1}(2s_1,b_1)|\cdot|^{a_1/2} \rtimes \mathrm{soc}(u_{\rho_{2}}(2s_{2},b_{2})|\cdot|^{a_{2}/2}\rtimes\mathrm{soc}(\ldots \\
         \rtimes \mathrm{soc}(u_{\rho_{n}}(2s_{n},b_{n})|\cdot|^{a_{n}/2}\rtimes\pi_{\mu_1,\ldots,\mu_n}))\ldots),
    \end{gather*}
    where $\pi_{\mu_1,\ldots,\mu_n}$ is the representation defined in Definition \ref{pi_mu} and $\mu_i$ are the parameters defined by Proposition \ref{3.6}.
If 
    \begin{align*} 
    \pi' & \leq  u_{\rho_1}(2s_1,b_1)|\cdot|^{a_1/2} \times \ldots \times u_{\rho_k}(2s_k,b_k)|\cdot|^{a_k/2} \\ \nonumber
    &\times u_{\rho_{k+1}}(a_{k+1},b_{k+1})|\cdot|^{s_{k+1}} \ldots  u_{\rho_{n}}(a_{n},b_{n})|\cdot|^{s_{n}} \rtimes \sigma
    \end{align*}
    for some $\sigma \leq u_{\rho_k}(a_k-2s_k,b_k) \rtimes \pi_{\mu_1,\ldots ,\mu_{k-1}}$, then 
    \begin{align} \label{pi'1}
        \sigma = \pi_{\mu_1,\ldots ,\mu_k}.
    \end{align}
Moreover, if 
    \begin{align*}
    \pi' & \leq  u_{\rho_1}(2s_1,b_1)|\cdot|^{a_1/2} \times \ldots \times u_{\rho_k}(2s_k,b_k)|\cdot|^{a_k/2} \rtimes \sigma
    \end{align*}
for some
\begin{align*} 
    \sigma \leq  u_{\rho_{k+1}}(2s_{k+1},b_{k+1})|\cdot|^{a_{k+1}/2}\rtimes\mathrm{soc}(\ldots
         \rtimes \mathrm{soc}(u_{\rho_{n}}(2s_{n},b_{n})|\cdot|^{a_{n}/2}\rtimes\pi_{\mu_{k+1},\ldots,\mu_n}))\ldots),
\end{align*}
then 
\begin{align}\label{pi'2}
    \sigma =\mathrm{soc}( u_{\rho_{k+1}}(2s_{k+1},b_{k+1})|\cdot|^{a_{k+1}/2}\rtimes\mathrm{soc}(\ldots
         \rtimes \mathrm{soc}(u_{\rho_{n}}(2s_{n},b_{n})|\cdot|^{a_{n}/2}\rtimes\pi_{\mu_{k+1},\ldots,\mu_n}))\ldots).
\end{align}
\end{lemma}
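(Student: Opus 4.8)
The plan is to prove both assertions \eqref{pi'1} and \eqref{pi'2} by reducing them to the single-representation characterization already obtained in Lemma \ref{pom11}, combined with the equality-of-multiplicities bookkeeping of Remark \ref{exact1} and Lemma \ref{glirred} (2). The strategy is that, in both cases, $\pi'$ sits inside an induced representation of the form $u_{\rho_1}(2s_1,b_1)|\cdot|^{a_1/2} \times \cdots \times u_{\rho_k}(2s_k,b_k)|\cdot|^{a_k/2} \rtimes \sigma$ (possibly with extra small-type factors $u_{\rho_l}(a_l,b_l)|\cdot|^{s_l}$ for $l>k$ in the first case), and the hypothesis on $d^{ij}_k(\pi')$ together with Lemma \ref{glirred} (2) and Proposition \ref{tad} forces the multiplicities $d^{ij}_k(\sigma)$ to attain the maximal value $1_{ij} + d^{ij}_k(\text{the inner inductum})$; by Lemma \ref{pom11} this pins down $\sigma$ as the (unique) socle, i.e. as the prescribed nested-socle representation.

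First I would set up the induction. The intended structure is a downward induction on $k$ in the first statement and an upward induction on $k$ (or equivalently, an induction on $n-k$) in the second; the case $k=n$ in the first and $k=n$ in the second are essentially the content of Lemma \ref{pom11} applied directly to $\delta = \pi_{\mu_1,\dots,\mu_{n-1}}$, respectively $\delta = \pi_{\mu_n}$. For the inductive step in \eqref{pi'1}: suppose $\pi' \leq u_{\rho_1}(2s_1,b_1)|\cdot|^{a_1/2} \times \cdots \times u_{\rho_k}(2s_k,b_k)|\cdot|^{a_k/2} \times u_{\rho_{k+1}}(a_{k+1},b_{k+1})|\cdot|^{s_{k+1}} \times \cdots \times u_{\rho_n}(a_n,b_n)|\cdot|^{s_n} \rtimes \sigma$ with $\sigma \leq u_{\rho_k}(a_k-2s_k,b_k) \rtimes \pi_{\mu_1,\dots,\mu_{k-1}}$. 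I would compute $d^{ij}_k(\pi')$ in two ways: on one hand it is prescribed by the hypothesis; on the other hand, moving the factor $u_{\rho_k}(2s_k,b_k)|\cdot|^{a_k/2}$ past the other $\mathrm{GL}$-factors (using irreducibility of inductions of essentially Speh representations, Theorem \ref{essred}, which holds by \eqref{irr}) and applying Proposition \ref{tad}, Lemma \ref{glirred} (1), and the additivity of $m^{ij}_k$ under $\times$, one sees that $d^{ij}_k(\pi') \leq \big(\text{contribution of the outer factors}\big) + d^{ij}_k(\sigma)$, with equality forcing the reverse inequality and hence forcing $d^{ij}_k(\sigma) = 1_{ij} + d^{ij}_k(\pi_{\mu_1,\dots,\mu_{k-1}})$ (here one uses that $m^{ij}_k$ of $u_{\rho_k}(a_k-2s_k,b_k)$ vanishes, as recorded in Remark \ref{eqmult}). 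By Lemma \ref{pom11} (with $\delta = \pi_{\mu_1,\dots,\mu_{k-1}}$), $\sigma$ is then the unique irreducible subrepresentation of $u_{\rho_k}(2s_k,b_k)|\cdot|^{a_k/2} \rtimes \pi_{\mu_1,\dots,\mu_{k-1}}$, which by Proposition \ref{3.6} (and Definition \ref{pi_mu}, the nested socle) is exactly $\pi_{\mu_1,\dots,\mu_k}$. The argument for \eqref{pi'2} is symmetric: here the ``inner'' representation is $\sigma \leq u_{\rho_{k+1}}(2s_{k+1},b_{k+1})|\cdot|^{a_{k+1}/2}\rtimes \cdots$, one reads off $d^{ij}_{k+1}(\pi')$, peels off the factors $u_{\rho_1}(2s_1,b_1)|\cdot|^{a_1/2},\dots,u_{\rho_k}(2s_k,b_k)|\cdot|^{a_k/2}$ via Lemma \ref{glirred} (2) (which tells us that passing to a subquotient of $u_{\rho_l}(2s_l,b_l)|\cdot|^{a_l/2} \rtimes (-)$ increases $d^{ij}_{k+1}$ by exactly $1_{ij}$ — wait, more precisely one uses the multiplicity count of Lemma \ref{glirred} (2) applied with the small-type representation indexed by $k+1$), and concludes $d^{ij}_{k+1}(\sigma)$ is maximal, so Lemma \ref{pom11} identifies $\sigma$ with the socle, i.e. with the nested-socle expression on the right-hand side of \eqref{pi'2}.

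The main obstacle I anticipate is the careful bookkeeping of which derivative multiplicities change and by how much when one moves the big-type factor $u_{\rho_k}(2s_k,b_k)|\cdot|^{a_k/2}$ past the remaining inducing factors and then applies $D^{\max}_{(a_k,b_k,s_k)}$. This requires knowing that the line $\mathbb{Z}_{\rho_k}$-data of the other essentially Speh factors interact with the $\sigma_k^{ij}$-derivatives exactly as computed in Lemmas \ref{mult1}, \ref{glirred}, and \ref{jmlema} — in particular that each $M^{\max}_{\sigma_k^{ij}}$ of $u_{\rho_l}(a_l,b_l)|\cdot|^{s_l}$ contributes the additive term $m^{ij}_k(u_{\rho_l}(a_l,b_l)|\cdot|^{s_l})$ with no cross terms, which relies on the ordering $B_k+s_k \leq B_{k+1}+s_{k+1}$ and the small-type hypothesis. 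A second, more delicate point is verifying that $D^{\max}_{(a_k,b_k,s_k)}$ applied to the full induced representation equals $D^{\max}_{(a_k,b_k,s_k)}$ applied only to the inner $\sigma$ after the outer factors have each been ``used up'' the maximal number of times; this is the analogue inside the multi-segment induction of the step ``$\tau_\sigma = \tau_\delta$'' in the proof of Lemma \ref{pom11}, and I would handle it by the same Frobenius-reciprocity-plus-exactness sandwich, iterating over the blocks of $D^{\max}_{(a_k,b_k,s_k)}$ and invoking irreducibility of $D^{\max}_{(a_k,b_k,s_k)}(\sigma)$ (Theorem \ref{iredder}) to rule out the Jacquet constituent appearing with multiplicity $\geq 2$.
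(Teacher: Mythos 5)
Your overall strategy---multiplicity bookkeeping via Proposition \ref{tad}, Lemma \ref{glirred} and Lemma \ref{jmlema}, followed by an appeal to Lemma \ref{pom11}---is the same as the paper's, and your treatment of (\ref{pi'2}) matches the paper's argument. However, your argument for (\ref{pi'1}) contains a genuine confusion between the two factors $u_{\rho_k}(a_k-2s_k,b_k)$ and $u_{\rho_k}(2s_k,b_k)|\cdot|^{a_k/2}$. In (\ref{pi'1}) the representation $\sigma$ is a constituent of the \emph{unitary} induction $u_{\rho_k}(a_k-2s_k,b_k)\rtimes\pi_{\mu_1,\ldots,\mu_{k-1}}$, which is a direct sum of Arthur-type summands $\pi_{\mu_1,\ldots,\mu_{k-1},\mu_k'}$ indexed by a parameter $\mu_k'$; the target $\pi_{\mu_1,\ldots,\mu_k}$ is the summand with $\mu_k'=\mu_k$ and is \emph{not} the socle of the big-type induction $u_{\rho_k}(2s_k,b_k)|\cdot|^{a_k/2}\rtimes\pi_{\mu_1,\ldots,\mu_{k-1}}$, so Lemma \ref{pom11} cannot identify it. Correspondingly, the multiplicity value you claim is forced, $d^{ij}_k(\sigma)=1_{ij}+d^{ij}_k(\pi_{\mu_1,\ldots,\mu_{k-1}})$, is off by $1_{ij}$: since $m^{ij}_k(u_{\rho_k}(a_k-2s_k,b_k))=0$ while $m^{ij}_k(u_{\rho_k}(2s_k,b_k)|\cdot|^{a_k/2})=1_{ij}$, the sandwich of inequalities (hypothesis on $d^{ij}_k(\pi')$, exactness of the Jacquet functor applied to the embedding, Lemma \ref{jmlema} for the factors with $l<k$) actually forces $d^{ij}_k(\sigma)=\sum_{l<k}m^{ij}_k(u_{\rho_l}(a_l-2s_l,b_l))+d^{ij}_k(\pi)$, from which one extracts $d^{ij}_k(\pi_{\mu_k'})=d^{ij}_k(\pi)$ for all $i,j$. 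The identification $\mu_k'=\mu_k$ then follows from Remark \ref{eqmult} (the derivative multiplicities characterize the parameter, via Proposition \ref{char}) together with multiplicity-freeness of the decomposition in Definition \ref{pi-ein-mu}---not from socle irreducibility.

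A second, smaller issue is that the main assertion of the lemma---that $\pi'$ itself is the iterated socle---is only gestured at via ``downward induction on $k$.'' One must actually construct the chain of irreducible representations $\sigma_0=\pi'$, $\sigma_n=\pi_{\mu_1,\ldots,\mu_n}$ with $\sigma_k\leq\bigl(\bigtimes_{l>k}u_{\rho_l}(a_l,b_l)|\cdot|^{s_l}\bigr)\rtimes\pi_{\mu_1,\ldots,\mu_k}$ and $\sigma_{k-1}\leq u_{\rho_k}(2s_k,b_k)|\cdot|^{a_k/2}\rtimes\sigma_k$; the existence of this chain uses the corrected version of the parameter identification above at every step, and only afterwards does the computation $d^{ij}_k(\sigma_{k-1})=1_{ij}+d^{ij}_k(\sigma_k)$ combined with Lemma \ref{pom11} identify each $\sigma_{k-1}$ as $\mathrm{soc}(u_{\rho_k}(2s_k,b_k)|\cdot|^{a_k/2}\rtimes\sigma_k)$ and yield the uniqueness of $\pi'$.
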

\begin{proof}
    For this proof, to shorten notation, we write 
\begin{align*}
    u_i &\ceq u_{\rho_i}(a_i,b_i)|\cdot|^{s_i}, \\
    u_i' &\ceq u_{\rho_i}(a_i-2s_i,b_i), \\
    u_i'' &\ceq u_{\rho_i}(2s_i,b_i)|\cdot|^{a_i/2}, 
\end{align*}
for $i=1,...,n$. 
Now
    \begin{align} \label{embed}
    \pi' &\leq \displaystyle\bigtimes_{k=1}^n u_k \rtimes\pi \nonumber \\
    &\cong u_1 \times \displaystyle\bigtimes_{k=2}^{n} u_k \rtimes\pi \nonumber \\
     \Rightarrow  \pi' &\leq  u_1'' \times \left( \displaystyle\bigtimes_{k=2}^{n} u_k \right) \times u_1' \rtimes \pi, 
\end{align}
where we have used that changing the order of factors in the parabolic induction gives the same semisimplification.
There is a parameter $\mu_1'$ (remember that $\pi$ is of Arthur type) such that
\begin{align} \label{p1}
    \pi' \leq u_1'' \times \left( \displaystyle\bigtimes_{k=2}^{n} u_k \right) \rtimes \pi_{\mu_1'}. 
\end{align}
Now for all indices $i,j$ in the block of derivatives $D_{(a_1,b_1,s_1)}^{\max}$, we have successively (by the assumption on $\pi'$ and the exactness of the Jacquet functor applied on (\ref{p1}))
\begin{gather*} \label{zgt}
        d^{ij}_1(\pi')= 1_{ij}+\sum_{k=2}^{n} m^{ij}_1(u_k)+d^{ij}_1(\pi) \\
        d^{ij}_1(\pi') \leq  1_{ij}+\sum_{k=2}^{n} m^{ij}_1(u_k)+d^{ij}_1(\pi_{\mu_1'}), 
\end{gather*}
which implies that $d^{ij}_1(\pi_{\mu_1'}) = d^{ij}_1(\pi)$. Since we get this for all indices $i,j$, we have $\mu_1'= \mu_1$ (see Remark \ref{eqmult}).
Now we have that there is a subquotient $\sigma_1$ of 
\begin{align*}
 \left( \displaystyle\bigtimes_{k=2}^{n} u_k \right) \rtimes \pi_{\mu_1}. 
\end{align*}
such that $\pi' \leq u_1'' \rtimes \sigma_1$. Set $\sigma_0 \ceq \pi'$. Recursively, we will choose irreducible representations $\sigma_k$ with the following properties:

\begin{enumerate}
    \item \begin{align*}
    \sigma_k \leq  \left( \displaystyle\bigtimes_{l=k+1}^{n} u_l \right) \rtimes \pi_{\mu_1,...,\mu_k},
\end{align*}
    \item $$ \sigma_{k} \leq u_{k+1}'' \rtimes \sigma_{k+1}, $$
\end{enumerate}
for $k=0,1,\ldots,n$.
We just showed that $\sigma_1$ satisfies these properties. 
Assume now that they are true for $\sigma_k$. We will show that $\sigma_{k+1}$ exists (and later that it is in fact uniquely determined): 
\begin{align} \label{sigmak}
    \sigma_k &\leq  u_{k+1}'' \times u_{k+1}'  \times \left( \displaystyle\bigtimes_{l=k+2}^{n} u_l \right) \rtimes \pi_{\mu_1,...,\mu_k} \nonumber  \\
   \Rightarrow  \sigma_k &\leq  u_{k+1}'' \times \left( \displaystyle\bigtimes_{l=k+2}^{n} u_l \right) \times u_{k+1}' \rtimes \pi_{\mu_1,...,\mu_k} \nonumber  \\
    \Rightarrow \sigma_k &\leq  u_{k+1}'' \times \left( \displaystyle\bigtimes_{l=k+2}^{n} u_l \right) \rtimes \pi_{\mu_1,...,\mu_k,\mu_{k+1}'}.
\end{align}
One can in fact show that $\mu_{k+1}'=\mu_{k+1}$. Namely, for the indices $i,j$ in the block of derivatives $D_{(a_{k+1},b_{k+1},s_{k+1})}^{\max}$, we have successively
\begin{gather*} 
        d^{ij}_{k+1}(\pi')= 1_{ij}+\sum_{\substack{l=1 \\ l \neq k+1}}^{n} m^{ij}_{k+1}(u_l)+d^{ij}_{k+1}(\pi), 
\end{gather*}
by the assumption. 
Using properties 1. and 2., we obtain
$$ \pi' \le u_1''\times\ldots\times u_{k+1}''\times u_{k+2}\times\ldots\times u_n \rtimes \pi_{\mu_1,...,\mu_k,\mu_{k+1}'}. $$
Now from the exactness of the Jacquet functor we get:
\begin{gather*}
        d^{ij}_{k+1}(\pi') \leq  1_{ij}+\sum_{l=1}^{k} m^{ij}_{k+1}(u_l'')+\sum_{l=k+2}^{n} m^{ij}_{k+1}(u_l)+d^{ij}_{k+1}(\pi_{\mu_1,...,\mu_k,\mu_{k+1}'}).
\end{gather*}
Note that we have $B_{k+1}+s_{k+1} \geq B_l +s_l$ for $l=1,\ldots,k$. Hence, by Lemma \ref{jmlema} we get
\begin{gather}
        d^{ij}_{k+1}(\pi_{\mu_1,...,\mu_k,\mu_{k+1}'}) \geq d^{ij}_{k+1}(\pi)+  \sum_{l=1}^{k} m^{ij}_{k+1}(u_l')
\end{gather}
and since 
\begin{align*}
    \pi_{\mu_1,...,\mu_k,\mu_{k+1}'} \hookrightarrow u_1' \times ... \times u_k' \rtimes \pi_{\mu_{k+1}'} \hookrightarrow u_1' \times ... \times u_k' \times u_{k+1}' \rtimes \pi,
\end{align*}
we have 
\begin{align*}
    d^{ij}_{k+1}(\pi_{\mu_1,...,\mu_k,\mu_{k+1}'})  &\leq  \sum_{l=1}^{k} m^{ij}_{k+1}(u_l')+  d^{ij}_{k+1}(\pi_{\mu_{k+1}'}) \\
    &\leq   \sum_{l=1}^{k} m^{ij}_{k+1}(u_l')+  d^{ij}_{k+1}(\pi) \leq d^{ij}_{k+1}(\pi_{\mu_1,...,\mu_k,\mu_{k+1}'}).  
\end{align*}
Therefore we have equality everywhere and this means that $d^{ij}_{k+1}(\pi_{\mu_{k+1}'})= d^{ij}_{k+1}(\pi)$ (and hence for all indices $i,j$ step-by-step), which shows that $\mu_{k+1}'=\mu_{k+1}$ by Remark \ref{eqmult}. Since the parabolic induction $u_{k+1}'\rtimes \pi$ is multiplicity free, we have proved the second assertion (\ref{pi'1}) of the Lemma.

From (\ref{sigmak}), we can choose any (due to transitivity of parabolic induction) $\sigma_{k+1}$ with
\begin{align*}
    \sigma_{k+1} \leq  \left( \displaystyle\bigtimes_{l=k+2}^{n} u_l \right)  \rtimes \pi_{\mu_1,...,\mu_{k+1}}
\end{align*}
such that 
\begin{align*}
    \sigma_{k} \leq u_{k+1}'' \rtimes \sigma_{k+1}.
\end{align*}
We see that 
\begin{align} \label{p11}
\pi' & \leq u_1'' \times ... \times u_{k+1}'' \rtimes \sigma_{k+1}.
\end{align}
In the end, we will have chosen $\sigma_{n-1} \leq u_n'' \rtimes \pi_{\mu_1,...,\mu_n}$. Set $\sigma_n \ceq \pi_{\mu_1,...,\mu_n}$.
We claim that in fact $\sigma_{k-1} = \mathrm{soc}( u_{k}'' \rtimes \sigma_{k})$, and we will prove it using Lemma \ref{pom11}. As above, we have
\begin{align} \label{p111}
    d^{ij}_{k}(\pi') = 1_{ij}+  \sum_{\substack{l=1 \\ l \neq k}}^n m^{ij}_{k}(u_l) +  d^{ij}_{k}(\pi),
\end{align}
for all indices $i,j$ in the block of derivatives $D_{(a_{k},b_{k},s_{k})}^{\max}$.
From the analogues of a relation (\ref{p11}) for indices $k-1$ and $k$, we get successively 
\begin{align*}
    d^{ij}_{k}(\pi') \leq  \sum_{l=1}^{k-1} m^{ij}_{k}(u_l'') +  d^{ij}_{k}(\sigma_{k-1})
\end{align*}
and
\begin{align*}
    d^{ij}_{k}(\pi') \leq 1_{ij}+ \sum_{l=1}^{k-1} m^{ij}_{k}(u_l'') +  d^{ij}_{k}(\sigma_{k}).
\end{align*}
Together with (\ref{p111}) and Lemma \ref{jmlema}, this implies
\begin{align} \label{p2}
    d^{ij}_{k}(\sigma_{k-1}) \geq 1_{ij} + \sum_{l=1}^{k-1} m^{ij}_{k}(u_l') +  \sum_{l=k+1}^{n} m^{ij}_{k}(u_l) + d^{ij}_{k}(\pi)
\end{align}
and
\begin{align} \label{p3}
    d^{ij}_{k}(\sigma_{k}) \geq  \sum_{l=1}^{k-1} m^{ij}_{k}(u_l')+ \sum_{l=k+1}^{n} m^{ij}_{k}(u_l) + d^{ij}_{k}(\pi).
\end{align}
On the other hand, from property 1. we get
\begin{align*}
    \sigma_{k-1} \leq \left(  \bigtimes_{l=k}^n u_l \right) \times \left(  \bigtimes_{l=1}^{k-1} u_l' \right) \rtimes \pi 
\end{align*}
and
\begin{align*}
    \sigma_{k} \leq \left(  \bigtimes_{l=k+1}^n u_l \right) \times \left(  \bigtimes_{l=1}^{k} u_l' \right) \rtimes \pi.
\end{align*}
Hence, from this we get exactly the opposite inequalities than the ones in (\ref{p2}) and (\ref{p3}).
Consequently, we have the following equalities
\begin{align*}
    d^{ij}_{k}(\sigma_{k-1}) =1_{ij} + \sum_{l=1}^{k-1} m^{ij}_{k}(u_l') +  \sum_{l=k+1}^{n} m^{ij}_{k}(u_l) + d^{ij}_{k}(\pi)
\end{align*}
and
\begin{align*}
    d^{ij}_{k}(\sigma_{k})= \sum_{l=1}^{k-1} m^{ij}_{k}(u_l')+ \sum_{l=k+1}^{n} m^{ij}_{k}(u_l) + d^{ij}_{k}(\pi).
\end{align*}
This implies $d^{ij}_{k}(\sigma_{k})+1_{ij} =d^{ij}_{k}(\sigma_{k-1})$ and hence by Lemma \ref{pom11} we get $\sigma_{k-1} \cong \mathrm{soc}( u_{k}'' \rtimes \sigma_{k})$.
Note that this implies that $\sigma_{k-1}$ is unique because $u_{k}'' \rtimes \sigma_{k}$ is socle irreducible due to Proposition \ref{3.4}. 
This also implies that $\sigma_{k-1}$ appears with multiplicity one in $[u_{k}'' \rtimes \sigma_{k}]$.
But this shows that all the $\sigma_{k-1}$ are uniquely determined and hence $\pi'$ is isomorphic to 
    \begin{gather*}
         \mathrm{soc}( u_1'' \rtimes \mathrm{soc}(u_2''\rtimes\mathrm{soc}(\ldots
         \rtimes \mathrm{soc}(u_n''\rtimes\pi_{\mu_1,\ldots,\mu_n}))\ldots).
    \end{gather*}

To prove the last assertion (\ref{pi'2}), let 
\begin{align*}
    \pi' \leq u_1'' \times \ldots u_k'' \rtimes \sigma
\end{align*}
for some $\sigma \leq u_{k+1}'' \rtimes \sigma_{k+1}$. We want to show that $\sigma\cong\sigma_k$. But now 
\begin{align*}
    d^{ij}_{k+1}(\pi') &\leq  \sum_{l=1}^{k} m^{ij}_{k+1}(u_l'') +  d^{ij}_{k+1}(\sigma) \\
    &\leq \sum_{l=1}^{k} m^{ij}_{k+1}(u_l'') +  1_{ij}+ d^{ij}_{k+1}(\sigma_{k+1}) \\
    & = \sum_{l=1}^{k} m^{ij}_{k+1}(u_l'')+1_{ij} + \sum_{l=1}^{k} m^{ij}_{k+1}(u_l')+ \sum_{l=k+2}^{n} m^{ij}_{k+1}(u_l) + d^{ij}_{k+1}(\pi) \\
    &=  \sum_{l=1}^{k} m^{ij}_{k+1}(u_l)+1_{ij} +   \sum_{l=k+2}^{n} m^{ij}_{k+1}(u_l) + d^{ij}_{k+1}(\pi) \\
    &= d^{ij}_{k+1}(\pi') 
\end{align*}
for all indices $i,j$ in the block of derivatives $D_{(a_{k+1},b_{k+1},s_{k+1})}^{\max}$. But this implies $\sigma\cong\mathrm{soc}(u_{k+1}'' \rtimes \sigma_{k+1})\cong\sigma_k$ by Lemma \ref{pom11}.
\end{proof}

\begin{prop} \label{subrep}
    Consider 
    $$\Pi \ceq \displaystyle\bigtimes_{k=1}^n u_{\rho_k}(a_k,b_k)|\cdot|^{s_k} \rtimes\pi,
    $$
    for essentially Speh representations of small type $u_{\rho_k}(a_k,b_k)|\cdot|^{s_k} $ and an Arthur type representation $\pi$, 
    where the inducing factors satisfy the conditions (\ref{irr}). Then the representation $\Pi$ is irreducible.
\end{prop}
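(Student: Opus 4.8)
The plan is to reduce the irreducibility of $\Pi$ to the statement that its socle is irreducible and appears with multiplicity one in its composition series, using Lemma \ref{lemica}. By that lemma (applied with all $s_k$ already assumed non-negative), it suffices to exhibit an irreducible subrepresentation of $\Pi$, show it is the whole socle, and show it has multiplicity one. First I would order the essentially Speh representations so that $B_k+s_k \le B_{k+1}+s_{k+1}$ for $k=1,\dots,n-1$, which is allowed since the conditions \eqref{irr} force pairwise irreducible parabolic induction among the $u_{\rho_k}(a_k,b_k)|\cdot|^{s_k}$ (Theorem \ref{essred}), so the induced representation is independent of their order.

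Next I would use the standard embedding $u_{\rho_k}(a_k,b_k)|\cdot|^{s_k} \hookrightarrow u_{\rho_k}(2s_k,b_k)|\cdot|^{a_k/2} \times u_{\rho_k}(a_k-2s_k,b_k)$ for each small type factor, which yields, after commuting general linear factors past each other,
\begin{align*}
    \Pi \hookrightarrow u_{\rho_1}(2s_1,b_1)|\cdot|^{a_1/2} \times \ldots \times u_{\rho_n}(2s_n,b_n)|\cdot|^{a_n/2} \times u_{\rho_1}(a_1-2s_1,b_1) \times \ldots \times u_{\rho_n}(a_n-2s_n,b_n) \rtimes \pi,
\end{align*}
and then $\bigtimes_k u_{\rho_k}(a_k-2s_k,b_k) \rtimes \pi$ decomposes into the representations $\pi_{\mu_1,\dots,\mu_n}$ of Definition \ref{pi_mu} (this uses Corollary \ref{Nindex} and Corollary \ref{bad-parity-unitary} for the bad-parity pieces). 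The candidate socle is then the nested socle
\begin{align*}
    \pi_0 \ceq \mathrm{soc}\big(u_{\rho_1}(2s_1,b_1)|\cdot|^{a_1/2} \rtimes \mathrm{soc}(\ldots \rtimes \mathrm{soc}(u_{\rho_n}(2s_n,b_n)|\cdot|^{a_n/2} \rtimes \pi_{\mu_1,\dots,\mu_n}))\ldots\big),
\end{align*}
which is a genuine subrepresentation of $\Pi$ by transitivity of $\mathrm{Ind}$ and Proposition \ref{3.6} (the parameters $\mu_k$ are the ones it produces), and which is irreducible because each $u_{\rho_k}(2s_k,b_k)|\cdot|^{a_k/2}$ is of big type so each nested socle is irreducible by Proposition \ref{3.4}.

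The core of the argument is then to compute, for $\pi_0$, the derivative multiplicities $d^{ij}_k(\pi_0)$ for every block $D^{\max}_{(a_k,b_k,s_k)}$ and every pair of indices $i,j$, and to show they equal
\begin{align*}
    d^{ij}_k(\pi_0) = 1_{ij} + \sum_{\substack{l=1 \\ l\neq k}}^n m^{ij}_k(u_{\rho_l}(a_l,b_l)|\cdot|^{s_l}) + d^{ij}_k(\pi).
\end{align*}
This is done by peeling off one big-type factor at a time, using Proposition \ref{tad} (together with \eqref{a123}) to distribute the $M$- and $D$-derivatives across $\rtimes$, Lemma \ref{glirred}(2) to track the contribution of the factor being peeled, Lemma \ref{jmlema} to split $m^{ij}_k(u_{\rho_l}(a_l,b_l)|\cdot|^{s_l}) = m^{ij}_k(u_{\rho_l}(2s_l,b_l)|\cdot|^{a_l/2}) + m^{ij}_k(u_{\rho_l}(a_l-2s_l,b_l))$ correctly according to whether $l<k$ or $l>k$ (this is where the ordering $B_k+s_k \le B_{k+1}+s_{k+1}$ is essential), and finally Remark \ref{eqmult} to identify $d^{ij}_k(\pi_{\mu_1,\dots,\mu_n})$ with $\sum_{l<k} m^{ij}_k(u_{\rho_l}(a_l-2s_l,b_l)) + d^{ij}_k(\pi)$. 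Once this multiplicity formula is established for $\pi_0$, I would invoke Lemma \ref{nested-soc}: it shows that \emph{any} irreducible subquotient $\pi'$ of $\Pi$ with these same multiplicities is isomorphic to $\pi_0$, and moreover (by the multiplicity-one statements in Lemma \ref{pom11} and Proposition \ref{3.4} propagated through the nested socle) that $\pi_0$ occurs in $[\Pi]$ with multiplicity one. To finish, I must argue that \emph{every} irreducible subquotient $\pi'$ of $\Pi$ in fact has at least these multiplicities — this follows because any such $\pi'$ embeds, after reordering, into $u_{\rho_k}(2s_k,b_k)|\cdot|^{a_k/2} \times (\text{rest}) \rtimes \pi$ for each $k$, and the exactness of the Jacquet functor combined with Lemma \ref{glirred}(2) forces $d^{ij}_k(\pi') \ge 1_{ij} + \sum_{l\ne k} m^{ij}_k(u_{\rho_l}(a_l,b_l)|\cdot|^{s_l}) + d^{ij}_k(\pi)$; since the total $d^{ij}_k$ over all constituents of $\Pi$ equals exactly the right-hand side times the multiplicity, equality must hold for the socle and it is the unique constituent achieving it. The main obstacle I anticipate is the bookkeeping in the multiplicity computation: correctly handling the three-way split in Lemma \ref{jmlema} simultaneously for all $n$ factors and all index pairs $(i,j)$, and making sure the peeling order (which forces us to treat $l<k$ via the $u'_l = u_{\rho_l}(a_l-2s_l,b_l)$ piece and $l>k$ via the full $u_l$) is consistent with the chosen ordering of the $B_k+s_k$; the bad-parity reduction via $\pi \cong \tau_{\psi_1} \rtimes \pi_0$ adds a further layer that must be carried through uniformly.
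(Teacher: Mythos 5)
Your overall architecture (order the factors by $B_k+s_k$, identify the socle via the derivative-multiplicity formula and Lemma \ref{nested-soc}, prove multiplicity one, conclude with Lemma \ref{lemica}) matches the paper, but you are missing the one idea that makes the key step work: the paper's proof is an \emph{induction on $n$}. The induction hypothesis gives that $\bigtimes_{l\neq k} u_{\rho_l}(a_l,b_l)|\cdot|^{s_l}\rtimes\pi$ is irreducible for every $k$, and only then does Lemma \ref{glirred}(2) apply to $\Pi = u_k \rtimes \bigl(\bigtimes_{l\neq k} u_l\rtimes\pi\bigr)$ to yield the exact equality $d^{ij}_k(\pi')=1_{ij}+\sum_{l\neq k}m^{ij}_k(u_l)+d^{ij}_k(\pi)$ for \emph{every} irreducible subrepresentation $\pi'$ of $\Pi$ — which is precisely the hypothesis of Lemma \ref{nested-soc}. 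The proof of Lemma \ref{glirred}(2) needs the inner induced representation to be irreducible in order to embed it into $(\sigma^{11})^{d^{11}}\rtimes D^{\max}_{\sigma^{11}}(\cdot)$ and commute $\sigma^{11}$ past the remaining factor; without that irreducibility you only know $\pi'\hookrightarrow u_k\rtimes\tau$ for \emph{some} irreducible subrepresentation $\tau$ of the rest, whose derivative multiplicities you do not control.

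Your substitute for this step does not hold up. First, you apply the argument to arbitrary irreducible \emph{subquotients}, which do not embed into $\Pi$; only subrepresentations do, and the socle is what matters. Second, the closing counting argument ("the total $d^{ij}_k$ over all constituents of $\Pi$ equals the right-hand side times the multiplicity") is false: $\mathrm{mult}(D_\sigma,\cdot)$ is the highest exponent appearing in $\mu^*$, so on a sum of constituents it behaves like a maximum, not a sum, and exactness of the Jacquet functor only gives the upper bound $d^{ij}_k(\pi')\le 1_{ij}+\sum_{l\neq k}m^{ij}_k(u_l)+d^{ij}_k(\pi)$, never the lower bound you need. Once you install the induction on $n$, your separate computation of $d^{ij}_k(\pi_0)$ for the explicit nested-socle candidate becomes unnecessary: the formula holds for an arbitrary irreducible subrepresentation by Lemma \ref{glirred}(2) and Proposition \ref{tad}, and Lemma \ref{nested-soc} then identifies it uniquely; the multiplicity-one chase you sketch at the end is essentially the paper's.
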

\begin{proof}
We can assume $B_k+s_k \leq B_{k+1}+s_{k+1}$ for $k=1,...,n-1$, because the essentially Speh representations isomorphically commute.
We prove the claim by induction over the number $n$ of essentially Speh representations. For $n=1$, this is clear. Now assume the claim holds for $n-1$. Especially, this implies that the representation 
$$
\displaystyle\bigtimes_{\substack{l=1 \\
l \neq k}}^{n} u_{\rho_l}(a_l,b_l)|\cdot|^{s_l} \rtimes\pi
$$
is irreducible for all $l$. Let $\pi'$ be an irreducible subrepresentation of $\Pi$. 
Then Lemma \ref{glirred} implies
        \begin{gather*} 
        d^{ij}_{k}(\pi')= 1_{ij}+d^{ij}_{k}\left(\displaystyle\bigtimes_{\substack{l=1 \\
l \neq k}}^{n} u_{\rho_l}(a_l,b_l)|\cdot|^{s_l} \rtimes\pi
\right)
\end{gather*}
for all $k$ and all indices $i,j$. Note that from the multiplicativity of the map $M^*$ (Proposition \ref{tad}) we get
    \begin{gather}
        d_k^{ij}(\pi')= 1_{ij}+ \sum_{\substack{l=1 \\ l\neq k}}^n m^{ij}_k(u_{\rho_l}(a_l,b_l)|\cdot|^{s_l}) + d_l^{ij}(\pi).
    \end{gather}Now by Lemma \ref{nested-soc}, this implies that $\pi'$ is unique and hence $\mathrm{soc}(\Pi)\cong \pi'\oplus\overset{m \text{ times}}{\ldots}\oplus \pi'$ for some $m\ge0$. For the rest of the proof of Proposition, we prove $m=1$ and that $\pi'$ appears in $[\Pi]$ with multiplicity one.

If $\pi'$ appeared in $[\Pi]$ with multiplicity greater than one,
we would have (with the notation of the proof of Lemma \ref{nested-soc}):
\begin{align*}
   \pi' \oplus \pi' \leq  u_1'' \times u_2 \times \ldots \times u_n \times u_1' \rtimes \pi.
\end{align*}
Since $\pi_{\mu_1}$ has multiplicity one in $u_1' \rtimes \pi$ and $\pi'$ can only appear in 
\begin{align*}
    u_1'' \times u_2 \times \ldots \times u_n \rtimes \pi_{\mu_1}
\end{align*}
(by Lemma \ref{nested-soc}, (\ref{pi'1})), we also have 
\begin{align*}
   \pi' \oplus \pi' \leq   u_1'' \times u_2 \times \ldots \times u_n \rtimes \pi_{\mu_1}.
\end{align*}
Successively, this shows 
\begin{align*}
   \pi' \oplus \pi' \leq   u_1'' \times \ldots \times u_k'' \times u_{k+1} \times \ldots \times u_n \rtimes \pi_{\mu_1,\ldots,\mu_k}
\end{align*}
and thus
\begin{align*}
   \pi' \oplus \pi' \leq   u_1'' \times \ldots \times u_n'' \rtimes \pi_{\mu_1,\ldots,\mu_n}.
\end{align*}
Now conversely, since $\sigma_{n-1}$ appears in $ u_n'' \rtimes \pi_{\mu_1,\ldots,\mu_n}$ with multiplicity one (it is the socle) and since $\pi'$ only appears in 
\begin{align*}
u_1'' \times \ldots\times u_{n-1}'' \rtimes \sigma_{n-1}
\end{align*}
(by Lemma \ref{nested-soc}, (\ref{pi'1})), we have 
\begin{align*}
\pi' \oplus \pi' \leq   u_1'' \times \ldots\times u_{n-1}'' \rtimes \sigma_{n-1}.
\end{align*}
Successively we get to 
\begin{align*}
\pi' \oplus \pi' \leq   u_1'' \rtimes \sigma_{1},
\end{align*}
which is a contradiction.
This shows that $\Pi$ satisfies the conditions of Lemma \ref{lemica} and hence that it is irreducible.
\end{proof}

\subsection{The big type and non-half-integral type case}\label{bigcase}

We will now carry out the final step in the proof of Theorem \ref{thm:IRR}. Let $u_i= u_{\rho_i}(a_i,b_i)|\cdot|^{s_i}$ for $i=1, \ldots, n$ be essentially Speh representations such that $u_i$ is of big or non-half-integral type for $1\leq i\leq b$ and of small or unitary type else. Let $\pi \in \mathrm{Irr}(G)$, such that the assumptions (\ref{irr}) hold for $u_1,\ldots u_n$ and $\pi$. In the previous section, we showed that then 
\begin{align*}
   \Pi_{n-b} \ceq u_{b+1} \times \ldots u_n \rtimes \pi
 \end{align*}
 is irreducible. 
 We will now prove that 
 \begin{align*}
      \Pi_n &= u_1 \times \ldots \times u_n \rtimes \pi\\
      &= u_1 \times \ldots \times u_b \rtimes \Pi_{n-b}
 \end{align*}
 is irreducible by induction over the number $b$. If $b=0$, there is nothing to show. Now assume the statement holds true for $b-1$. Then we have that $\Pi_{n-1} \ceq u_2 \times \ldots \times u_b \rtimes \Pi_{n-b}$ is irreducible. By Proposition \ref{3.4}, we see that 
 $\mathrm{soc}(u_1 \rtimes \Pi_{n-1})$ is irreducible and appears in $[\Pi_n]$ with multiplicity one. Also, from the assumptions of (\ref{irr}), as in the proof of Lemma \ref{lemica}, it follows that 
 \begin{align*}
     & \mathrm{soc}(u_1 \rtimes \Pi_{n-1}) \\
          =\ &\mathrm{soc}(u_1 \times \ldots \times u_n \rtimes \pi) \\ 
          \cong\ &\mathrm{soc}(u_2 \times \ldots \times u_n \times u_1 \rtimes \pi) \\ 
          \cong\ &\mathrm{soc}(u_2 \times \ldots\times u_n \times u_1^\vee \rtimes \pi) \\ 
          =\ &\mathrm{soc}(u_1^\vee \times \ldots\times u_n \rtimes \pi) \\ 
     \cong\  & \mathrm{soc}(u_1^\vee \rtimes \Pi_{n-1}).
 \end{align*}
 But now, due to Proposition \ref{ired}, we see that $\Pi_n$ is irreducible.

\nopagebreak[4]

\appendix
\section{Appendix}

\subsection{Proofs about extended multi-segments}\label{app}

We will now give proofs for some of the claims made throughout this article:

\begin{lemma} \label{necess}
Let $\mathcal{S}= \bigcup_{\rho \in C_\mathcal{S}} \{(( [A_i^{\rho},B_i^{\rho}]_\rho,\mu_i^{\rho}))_{i=1}^{n_\rho}\}$ be a non-negative extended multi-segment. Atobe's necessary condition (\cite[Proposition 4.1]{A1}) for $\mathcal{F}(\mathcal{S})$ is equivalent to the condition (\ref{nec}):
\begin{align*} 
     |A_i^\rho-A_{i-1}^\rho| + |B_i^\rho-B_{i-1}^\rho| \geq |\mu_i^\rho-\mu_{i-1}^\rho | \qquad \forall\ 1 <i \leq n_\rho\ \forall\rho\in C_\mathcal{S}.
\end{align*}  
\end{lemma}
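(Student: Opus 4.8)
The statement to prove is Lemma~\ref{necess}: for a non-negative extended multi-segment $\mathcal{S}= \bigcup_{\rho} \{(( [A_i^{\rho},B_i^{\rho}]_\rho,\mu_i^{\rho}))_{i=1}^{n_\rho}\}$, Atobe's necessary condition from \cite[Proposition 4.1]{A1} applied to the symbol $\mathcal{F}(\mathcal{S})$ is equivalent to the inequality~(\ref{nec}). The whole point is a bookkeeping translation between the two parametrizations of extended segments via the map $\mathcal{F}$ defined in the excerpt, so the proof is essentially a computation carried out one pair of consecutive indices at a time. Since the condition in both formulations only involves consecutive extended segments within a single $\rho$-part, I would fix $\rho$, drop it from the notation, and work with two consecutive formal extended segments $S_{i-1}=([A_{i-1},B_{i-1}],\mu_{i-1})$ and $S_i=([A_i,B_i],\mu_i)$.

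\textbf{Key steps.} First I would recall the precise form of Atobe's necessary condition from \cite[Proposition 4.1]{A1} in terms of the data $([A_i,B_i]_\rho, l_i, \eta_i)$ appearing in $\mathcal{F}(\mathcal{S})$; under $\mathcal{F}$ we have $l_i = \frac{b_i - |\mu_i|}{2}$ and $\eta_i = \delta_i\,\mathrm{sgn}(\mu_i)$ with $b_i = A_i - B_i + 1$. Atobe's condition reads (for consecutive $i-1, i$) as an inequality of the shape $|A_i - A_{i-1}| + |B_i - B_{i-1}| \ge$ (some expression in $l_{i-1}, l_i, \eta_{i-1}, \eta_i$ and the lengths $b_{i-1}, b_i$), split into cases according to whether the ``signs agree'' (i.e. the sign $\varepsilon = \eta_{i-1}\eta_i(-1)^{b_{i-1}-1}$, as it already appears in the proof of Proposition~\ref{switch}) is $+1$ or $-1$. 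The core observation is then: in the case $\varepsilon = +1$ the right-hand side of Atobe's inequality collapses to $|\mu_i - \mu_{i-1}|$, while in the case $\varepsilon = -1$ it collapses to $|\mu_i + \mu_{i-1}|$; but when $\varepsilon = -1$ one checks that $|\mu_i + \mu_{i-1}|$ and $|\mu_i - \mu_{i-1}|$ are interchangeable in the inequality because the relevant bound $|A_i - A_{i-1}| + |B_i - B_{i-1}|$ is always at least $\max(b_{i-1}, b_i) - \min(b_{i-1}, b_i) + \ldots$, so I would show the two cases both reduce to~(\ref{nec}). The non-negativity hypothesis $B_i \ge 0$ is used precisely here: it forces the $\mathrm{sgn}$ and floor corrections in $\mathcal{F}$ to behave uniformly (there is no ``$\mu_i - 1$'' shift as in~(\ref{muhat}), since $B_i \in \Z$), which is what makes $l_i$, $\eta_i$ translate cleanly to $\mu_i$. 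Finally I would assemble the per-pair equivalences into the statement for all $1 < i \le n_\rho$ and all $\rho$.

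\textbf{Expected main obstacle.} The delicate point is the case analysis on the sign $\varepsilon$ together with keeping track of the parity factors $\delta_i$ and $(-1)^{b_{i-1}-1}$ that enter through $\mathcal{F}$. In particular, showing that in the $\varepsilon = -1$ branch the inequality with $|\mu_i + \mu_{i-1}|$ is actually equivalent to the one with $|\mu_i - \mu_{i-1}|$ requires using that $[A_i,B_i]$ and $[A_{i-1},B_{i-1}]$ are segments of the same type appearing in a good-parity parameter, which pins down $b_i \equiv b_{i-1} \pmod 2$ and hence controls when $\varepsilon$ can be $-1$; one must also double-check the admissible-order constraint $A_i > A_j, B_i > B_j \Rightarrow i > j$ does not create extra edge cases. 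This is the step I would write out most carefully, while treating the rest as routine substitution. The argument parallels the sign-condition verification already done at the end of the proof of the proposition on $\mathcal{F}, \mathcal{F}^{-1}$, so I would reuse that computation where possible rather than redoing it.
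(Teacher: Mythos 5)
Your overall strategy — fix $\rho$, work with one consecutive pair at a time, translate Atobe's data via $l_i=\tfrac{b_i-|\mu_i|}{2}$, $\eta_i=\delta_i\,\mathrm{sgn}(\mu_i)$, and split on the sign $\varepsilon=\eta_{i-1}\eta_i(-1)^{b_{i-1}-1}$ — is exactly the paper's, but the key step as you describe it contains a sign error that then leads you to plan an extra argument which is both unnecessary and false. Since $\varepsilon=\mathrm{sgn}(\mu_{i-1})\mathrm{sgn}(\mu_i)$, Atobe's branch condition, rewritten in terms of $w_i:=|\mu_i|=b_i-2l_i$, takes the form $|A_i-A_{i-1}|+|B_i-B_{i-1}|\geq |w_i-\varepsilon w_{i-1}|$, and this right-hand side is \emph{identically} equal to $|\mu_i-\mu_{i-1}|$: for $\varepsilon=+1$ the signs of the $\mu$'s agree so $\bigl||\mu_i|-|\mu_{i-1}|\bigr|=|\mu_i-\mu_{i-1}|$, and for $\varepsilon=-1$ the signs disagree so $|\mu_i|+|\mu_{i-1}|=|\mu_i-\mu_{i-1}|$. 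Your claim that the $\varepsilon=-1$ branch collapses to $|\mu_i+\mu_{i-1}|$ has the sign relation backwards ($|\mu_i+\mu_{i-1}|=\bigl||\mu_i|-|\mu_{i-1}|\bigr|$ when the signs disagree), and the proposed repair — that $|\mu_i+\mu_{i-1}|$ and $|\mu_i-\mu_{i-1}|$ are interchangeable because the left-hand side is large enough — cannot work: these quantities differ by $2\min(|\mu_i|,|\mu_{i-1}|)$, and e.g. for two equal segments the left-hand side is $0$, so no slack is available. No interchangeability argument is needed once the signs are tracked correctly.

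A second omission: the case analysis that actually carries the content of Atobe's Proposition 4.1 is not the one on $\varepsilon$ but the geometric one on the relative position of $[A_{i-1},B_{i-1}]$ and $[A_i,B_i]$ — the paper treats separately $A_i\geq A_{i-1},\,B_i\geq B_{i-1}$ (where the left-hand side of (\ref{nec}) is $a_i-a_{i-1}$) and the two containment cases (where it is $\pm(b_i-b_{i-1})$); the fourth configuration is excluded by admissible order. In each of these three cases Atobe's condition has a different shape (conditions on $A_i-l_i$, $B_i+l_i$ versus conditions on $l_i\pm l_{i-1}$ against $b_{i-1}$ or $b_i$), and the sign $\varepsilon$ only selects which of the two sub-alternatives applies within a fixed geometric case. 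Your plan should be reorganized around that three-way split, with the $\varepsilon$-dichotomy nested inside it; the good-parity/admissibility considerations you flag as the main obstacle are not where the difficulty lies.
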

\begin{proof}
We consider any $\rho$-part of $\mathcal{S}$ (we drop the index $\rho$). 
Let $S_{i-1}=([A_{i-1},B_{i-1}]_\rho,\mu_{i-1})$ and $S_i=([A_i,B_i]_\rho,\mu_i)$ be two consecutive elements of the sequences $\mathcal{S}$.
Denote
\begin{align*}
    w_i &\ceq |\mu_i| \\
    \varepsilon_i &\ceq \mathrm{sgn}(\mu_i)\\
    \varepsilon &\ceq\varepsilon_{i-1}\varepsilon_i
\end{align*}
    \begin{itemize}
        \item If $ A_i\geq  A_{i-1}$ and $ B_i\geq  B_{i-1}$, we are in case $(1)$ of Atobe. We have 
        \begin{align*}
           &A_i-A_{i-1}+B_i-B_{i-1} = \\& =a_i-a_{i-1} \geq |w_i - w_{i-1}\varepsilon| = |b_i-2l_i - (b_{i-1}-2l_{i-1})\eta_i\eta_{i-1}(-1)^{b_{i-1}-1}| \\
           & \Leftrightarrow A_i - l_i \geq A_{i-1}-l_{i-1} \ \& \ B_i + l_i \geq B_{i-1}+l_{i-1} \\
           & or\ B_i + l_i \geq A_{i-1}+1 -l_{i-1} \ ( \Leftrightarrow l_i + l_{i-1} > A_{i-1} - B_i  ), 
        \end{align*}
        (depending on the sign $\eta_i\eta_{i-1}(-1)^{b_{i-1}-1}$) where the equivalence in the last bracket is due to the fact that $A_{i-1}-B_i$ is an integer.
        \item If $ A_i\geq  A_{i-1}$ and $ B_i \leq  B_{i-1} $, we are in case $(2)$ of Atobe. We have 
        \begin{align*}
        &A_i-A_{i-1}-B_i+B_{i-1} = \\
           & b_i-b_{i-1} \geq |w_i - w_{i-1}\varepsilon| = |b_i-2l_i - (b_{i-1}-2l_{i-1})\eta_i\eta_{i-1}(-1)^{b_{i-1}-1}| \\
           & \Leftrightarrow l_i-l_{i-1} \geq 0\ \& \ b_i-b_{i-1} \geq l_i- l_{i-1} \\
           & or \  l_{i-1} + l_i \geq b_{i-1}.
        \end{align*}
        \item If $ A_i \leq  A_{i-1}$ and $ B_i \geq  B_{i-1} $, we are in case $(3)$ of Atobe. We have 
        \begin{align*}
        &-A_i+A_{i-1}+B_i-B_{i-1} = \\
           & b_{i-1}-b_{i} \geq |w_i - w_{i-1}\varepsilon| = |b_i-2l_i - (b_{i-1}-2l_{i-1})\eta_i\eta_{i-1}(-1)^{b_{i-1}-1}| \\
           & \Leftrightarrow l_{i-1}-l_i \geq 0\ \& \ b_{i-1}-b_i \geq  l_{i-1}-l_i \\
           & or \  l_{i-1} + l_i \geq b_{i}.
        \end{align*}
    \end{itemize}
\end{proof}

We want to give a proof for Theorem \ref{thm:nonvanishingstandard}. Before we can do that, we need a short lemma:
\begin{lemma} \label{permutations}
    Let $\mathcal{S} \sim \mathcal{S}'$ be two formal extended multi-sets, such that $\mathcal{S} \in \mathrm{Rep}$ and $A_i^\rho=A_i'^\rho$ and $B_i^\rho=B_i'^\rho$ (the extended segments end up in the same order), then $\mu_i^\rho=\mu_i'^\rho$ for all $1\leq i\leq n_\rho$. That is, the $\mu_i^\rho$ are well-defined for a given reordering of the extended multi-segment. 
\end{lemma}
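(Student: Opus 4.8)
The plan is to prove the stronger statement that, within the reordering class $[\mathcal S]$, the \emph{sequence of underlying segments} already determines the extended multi-segment completely; the lemma then follows since $\mathcal S$ and $\mathcal S'$ lie in $[\mathcal S]$ with the same such sequence. Since $\mathcal S\in\mathrm{Rep}$, Proposition \ref{reorders-stay-in-ext} gives $[\mathcal S]\subseteq\mathrm{ExMult}$, so every element of $[\mathcal S]$ is in admissible order; and since $R_i^{\rho'}$ for $\rho'\not\cong\rho$ leaves $\mathcal S_\rho$ untouched (Remark \ref{why-formal}), we may fix one $\rho$ and work with a single sequence $\mathcal T=(([A_i,B_i]_\rho,\mu_i))_{i=1}^{n}$ at a time. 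For a segment sequence $\mathbf s=(([A_i,B_i]_\rho))_{i=1}^n$ let $F(\mathbf s)$ denote the set of $\mathcal T\in[\mathcal S]$ whose underlying segment sequence is $\mathbf s$. The reformulated goal is: $|F(\mathbf s)|\le 1$ for every $\mathbf s$.

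I would prove this by induction on the inversion count $\mathrm{inv}(\mathbf s)$, defined as the number of pairs $i<j$ with $B_i>B_j$, or with $B_i=B_j$ and $A_i<A_j$; this is the inversion count of $\mathbf s$ for the total preorder ``$B$ ascending, then $A$ descending'', and it vanishes exactly when $\mathbf s$ is in standard order. For the base case $\mathrm{inv}(\mathbf s)=0$: any $\mathcal T\in F(\mathbf s)$ is then a standard extended multi-segment lying in $[\mathcal S]$, so by the uniqueness of the standard form of $\mathcal S\in\mathrm{Rep}$ it must equal $\mathcal S^{\mathrm{std}}$, whence $|F(\mathbf s)|\le 1$. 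For the inductive step, pick an adjacent inverted pair $i,i+1$ in $\mathbf s$; such a pair has $[A_i,B_i]_\rho\neq[A_{i+1},B_{i+1}]_\rho$, and admissibility (there is no $i<j$ with simultaneously $A_i>A_j$ and $B_i>B_j$) forces $[A_i,B_i]_\rho\subsetneq[A_{i+1},B_{i+1}]_\rho$, so $R_i^\rho$ genuinely transposes positions $i$ and $i+1$ — case (1) of Definition \ref{reorder}. Hence $R_i^\rho$ maps $F(\mathbf s)$ into $F(R_i^\rho(\mathbf s))$, and it is injective because $(R_i^\rho)^2=\mathrm{id}$ (Remark \ref{why-formal}); since transposing an adjacent inverted pair decreases the inversion count by exactly one, $\mathrm{inv}(R_i^\rho(\mathbf s))=\mathrm{inv}(\mathbf s)-1$, and the induction hypothesis gives $|F(R_i^\rho(\mathbf s))|\le 1$, hence $|F(\mathbf s)|\le 1$. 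Applying this with $\mathbf s$ the common segment sequence of $\mathcal S$ and $\mathcal S'$ yields $\mathcal S=\mathcal S'$ as extended multi-segments, and in particular $\mu_i^\rho=\mu_i'^\rho$ for all $i$ and $\rho$.

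The only delicate point is the inductive step: one must check that $R_i^\rho$ is a genuine transposition precisely for an adjacent inverted pair (this is exactly where one needs that \emph{all} reorderings, not just $\mathcal S$, are in admissible order, i.e.\ Proposition \ref{reorders-stay-in-ext}), and that it then restricts to a bijection of fibres, which is immediate from $(R_i^\rho)^2=\mathrm{id}$. Note that the equal-segment case $[A_i,B_i]_\rho=[A_{i+1},B_{i+1}]_\rho$ of Definition \ref{reorder} never enters the argument, since such a pair is not counted as an inversion; thus no use of the sign or $\mu$-parameters is needed beyond what is already packaged into ``$\mathcal S\in\mathrm{Rep}$''. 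Everything else is the standard bookkeeping of inversion counts under adjacent transpositions.
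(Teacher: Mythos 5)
Your reduction to the standard-order case is correct as far as it goes: within $[\mathcal S]$ every element is in admissible order by Proposition \ref{reorders-stay-in-ext}, an adjacent inverted pair (for the order ``$B$ ascending, ties by $A$ descending'') is therefore necessarily nested, so $R_i^\rho$ is a genuine involutive transposition and injects the fibre $F(\mathbf s)$ into $F(R_i^\rho\mathbf s)$, dropping the inversion count by one. The problem is the base case. You dispose of $|F(\mathbf s_{\mathrm{std}})|\le 1$ by citing the proposition in Section \ref{Extended multi-segments} asserting the uniqueness of the standard form of $\mathcal S\in\mathrm{Rep}$. But the proof of that proposition in the paper consists of ``sort the segments by applying operations $R_i^\rho$ and define the result to be $\mathcal S'$,'' followed by an argument that only addresses the ambiguity coming from identical segments. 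The well-definedness of the resulting $\mu$-parameters — i.e.\ the fact that two different sorting sequences, or more generally two words in the $R_i^\rho$ inducing the same final arrangement of the segments, produce the same $\mu$'s — is precisely the content of Lemma \ref{permutations}, and the proposition is silently forward-referencing it. So your argument reduces the lemma to a special case of itself and then quotes a statement whose justification is the lemma; the genuine difficulty (why two standard elements of $[\mathcal S]$ with the same segment sequence have the same $\mu$'s) is never addressed.

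The paper closes exactly this gap by a different mechanism: it regards the $R_i^\rho$ as the elementary transpositions of the symmetric group and uses the Coxeter presentation of $S_n$ (the relations $\tau_i^2=1$, the braid relation $\tau_i\tau_{i+1}\tau_i=\tau_{i+1}\tau_i\tau_{i+1}$, and commutation of distant transpositions). Two words in the $R_i^\rho$ realizing the same permutation of the segments differ by these relations, and one checks directly from the formulas \eqref{gamma} and \eqref{beta} in Definition \ref{reorder} that the $\mu$-parameters are unchanged under each relation; this is what makes the $\mu$'s depend only on the final ordering and not on the chosen sequence of reorders. Your inversion-count induction could be salvaged only if you supplied an independent proof of the base case, and any such proof would have to carry out essentially this verification (a word in the $R_i^\rho$ fixing the segment sequence must fix the $\mu$'s), so nothing is gained; as written, the argument is circular.
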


\begin{proof}
We consider any $\rho$-part of $\mathcal{S}$ (we drop the index $\rho$). Since the reorders $R_i$ perform transpositions on the extended multi-segments (changing the $\mu_i$ in the process), we need to analyze if the different ways of writing a permutation as a sequence of transpositions give the same $\mu_i$'s. It is known that the symmetric group $S_n$ is generated by the transpositions $\tau_1,...,\tau_{n-1}$ and the relations
\begin{enumerate}
    \item $\tau_i^2=1$ for all $i$
    \item $\tau_i\tau_{i+1} \tau_i =\tau_{i+1}\tau_i \tau_{i+1}$ for all $i<n-1$
    \item $\tau_i\tau_j=\tau_j\tau_i$ for $|i-j|\geq 2$.
\end{enumerate}
It is easily checked that the $\mu_i$'s stay invariant under any sequence of reorders $R_i$ corresponding to these transpositions.

\end{proof}

\nonvanishingstandard*
\begin{proof}
Since we want to use an inductive argument for this proof, we have to consider the slightly larger set of standard extended multi-segments, that do not satisfy the sign condition (\ref{sign}). For these generalized extended multi-segments, we  show that the inequalities (\ref{ine}) imply the necessary condition (\ref{nec}) for all reorderings. 
We consider any $\rho$-part of $\mathcal{S}$ (we drop the index $\rho$). If $\mathcal{S}'$ is a reordering of $\mathcal{S}$ and $S_i$ is an extended segment appearing in $\mathcal{S}$, we denote by $S_i'$ the same extended segment in its new position (with the new $\mu$-parameter) and \textbf{not} the $i$-th element in the corresponding sequence of $\mathcal{S}'$. Let us first assume the inequalities (\ref{ine}) hold for $\mathcal{S}_{\rho}=(([A_i,B_i],\mu_i))_{i=1}^n$. We will prove that $\mathcal{S}\in \mathrm{Rep}$ by induction over the number $n$. For $n=1$ there is nothing to show. For $n=2$, the inequality (\ref{ine}) is the same as (\ref{nec}), even after possibly reordering the two extended segments. 
Now let $\mathcal{S}'$ be any reordering of $\mathcal{S}$. Let $1\leq i<j\leq n$, such that $S_i'$ and $S_j'$ are consecutive in $\mathcal{S}'$. First, assume that for every $1\leq k <i$ the extended segment $S_k'$ comes before $S_i'$ in $\mathcal{S}_\rho'$ and that for every $j<k\leq n$ the extended segment $S_k'$ comes after $S_j'$ in $\mathcal{S}'_\rho$. We distinguish two cases:
\begin{enumerate}
    \item If there exists an index $k$ with $i<k<j$ such that $S_k'$ comes after $S_j'$ in $\mathcal{S}'_{\rho}$, let $k$ be maximal with this property. This implies that for $k<m \leq j$, the extended segment $S_m'$ comes before $S_i'$ and $S_k'$ in $\mathcal{S}'_{\rho}$, which means that 
\begin{align*}
    A_k\geq A_m.
\end{align*}
    Define the extended multi-segment 
    \begin{align*}
        \mathcal{S}'' \ceq R_{j-1}^\rho \circ \ldots R_k^\rho(\mathcal{S}),
    \end{align*}
    in which $S_k''$ appears immediately after $S_j''$. We claim that now the inequality (\ref{ine}) holds for any two connected extended segments in $\mathcal{S}''$ except $S_k''$. If both extended segments come before $S_k$ in $\mathcal{S}_\rho$, this is clear. If both come after $S_k$, all $\mu_l$ for $l$ from $k+1$ and $j$ are the same after being reordered with $S_k$, hence (\ref{ine}) holds for them as well. Now let $i'<k<j'$ such that $S_{i'}''$ and $S_{j'}''$ are connected in $\mathcal{S}''_\rho$. If $A_{i'}<A_{k}$, the corresponding segments are also connected in $\mathcal{S}$ and we have $\mu_m''=\mu_m'$ for $k+1 \leq m \leq j'$. Now 
    \begin{align*}
       & \Delta_{\mathcal{S}''}(\mu_{i'}'',\mu_{j'}'') =\\
       & = (-1)^{ | \{ k<k'<j'|\ A_{i'} \geq A_{k'} \} | }  \left(\sum_{m=i'}^{k-2} (-1)^{ | \{ m<k'<k|\ A_{i'} \geq A_{k'} \} | } (\mu_{m+1}-\mu_m)
        + (\mu_{k+1}-\mu_{k-1}) \right)\\
       & + \sum_{m=k+1}^{j'-1} (-1)^{ | \{ m<k'<j'|\ A_{i'} \geq A_{k'} \} | } (\mu_{m+1}-\mu_m) \\
        &= \Delta_{\mathcal{S}}(\mu_{i'},\mu_{j'}).
    \end{align*}
    If $A_{i'}\geq A_k \geq A_{j'}$, the extended segments $S_{i'}$ and $S_{j'}$ are not connected in $\mathcal{S}$. However, $S_{i'}\sim S_k$ and $S_k \sim S_{j'}$ in $\mathcal{S}$. Now we note 
    \begin{align*}
        |\Delta_{\mathcal{S}''}(\mu_{i'}'',\mu_{j'}'')| &\leq |\Delta_{\mathcal{S}}(\mu_{i'},\mu_{k})| + |\Delta_{\mathcal{S}}(\mu_{k},\mu_{j'})| \\
        &\leq (A_{i'}-A_k) + (B_k-B_{i'}) + (A_k - A_{j'}) + (B_{j'}-B_k) \\
        &=
(A_{i'}-A_{j'}) + (B_{j'}-B_{i'}).        
    \end{align*}
    By induction hypothesis, any reordering of $\mathcal{S}''$ that does not reorder $S_k''$ satisfies the necessary condition (\ref{nec}) for all consecutive pairs of extended segments that do not involve $S_k''$ (this follows by considering the sequence obtained by omitting the last element of $\mathcal{S}''$, namely $S_k''$, noting that this is standard and performing the necessary reorders). Through this, we can reorder $\mathcal{S}''$ (to $\mathcal{S}'''$) in such a way, that  $S_i'''$ and $S_j'''$ are consecutive and that all extended segments in $\mathcal{S}$ that are in between $S_i$ and $S_j$ are now on the same side of these two extended segments as in $\mathcal{S}'$. The extended segments $S_i'''$ and $S_j'''$ satisfy (\ref{nec}), by induction hypothesis. Now we can reorder from $\mathcal{S}'''$ to $\mathcal{S}'$ without performing any reorders involving $S_i'''$ and $S_j'''$. Since they satisfy the necessary condition (\ref{nec}) in $\mathcal{S}'''$, they do also in $\mathcal{S}'$, which was to be shown.  
    \item Now if no $k$ as above exists, we have that $S_k'$ comes before $S_i'$ in $\mathcal{S}_\rho$, for every $i<k<j$. If $j=i+1$, we are done. Otherwise, we denote 
    \begin{align*}
        \mathcal{S}'' \ceq R_i^\rho(\mathcal{S}),
    \end{align*}
    which moves $S_{i+1}''$ before $S_i''$. 
We claim again that the inequality (\ref{ine}) holds for any two connected extended segments in $\mathcal{S}''$ except $S_{i+1}''$. Again this is clear if both come after $S_{i+1}$, so we have to consider $S_i''$ and $S_k''$ that are connected. This implies $A_{i} \geq A_{i+1}$. If $ A_{i+1}<A_k$, the corresponding extended segments are also connected in $\mathcal{S}$ and we have $\mu_i''=2\mu_{i+1}-\mu_i$. Then
    \begin{align*}
        \Delta_{\mathcal{S}''}(\mu_{i}'',\mu_{k}'')=(-1)^{ | \{ i+1<k'<k|\ A_i \geq A_{k'} \} | } (\mu_{i+2}-2\mu_{i+1}+\mu_i) \\
        +\sum_{m=i+2}^{k-1} (-1)^{ | \{ m<k'<k|\ A_i \geq A_{k'} \} | } (\mu_{m+1}-\mu_m)=
        \Delta_{\mathcal{S}}(\mu_{i},\mu_{k}).
    \end{align*}
    If $A_i \geq A_{i+1}\geq A_k $, the extended segments $S_{i}$ and $S_{k}$ are not connected in $\mathcal{S}$. However, $S_{i}\sim S_{i+1}$ and $S_{i+1} \sim S_{k}$ in $\mathcal{S}$. Now we note 
    \begin{align*}
        |\Delta_{\mathcal{S}''}(\mu_{i}'',\mu_{k}'')| &\leq |\Delta_{\mathcal{S}}(\mu_{i},\mu_{i+1})| + |\Delta_{\mathcal{S}}(\mu_{i+1},\mu_{k})| \\
        &\leq (A_{i}-A_{i+1}) + (B_{i+1}-B_{i}) + (A_{i+1} - A_{k}) + (B_{k}-B_{i+1}) \\
        &=
(A_{i}-A_{k}) + (B_{k}-B_{i}).        
    \end{align*} 
    The remaining argument is the same as in the previous case. 
\end{enumerate}
We still have to treat the case, where the extended segments not between $S_i$ and $S_j$ are not all on the same side of $S_i$ and $S_j$ in $\mathcal{S}$ as of $S_i'$ and $S_j'$ in $\mathcal{S}'$. However, as described above, we can find a reordering $\mathcal{S}''$ where $S_i''$ and $S_j''$ are consecutive, satisfy (\ref{nec}) and the extended segments between $S_i$ and $S_j$ are on the correct side. To reorder from $\mathcal{S}''$ to $\mathcal{S}'$, we have to reorder some extended segments through both $S_i''$ and $S_j''$. Under any such reorder (and by possibly exchanging $S_i''$ and $S_j''$), the difference
    \begin{align*}
        |\mu_i''-\mu_j''|
    \end{align*}
    is invariant (to see this, note that we assumed $\mathcal{S}$ to be standard, so if $S_k''$ comes after $S_i''$ and $S_j''$ in $\mathcal{S}''$ and $S_k$ is not between $S_i$ and $S_j$ in $\mathcal{S}$, the only case in which we can reorder it to come before $S_i''$ and $S_j''$ is if $[A_k,B_k]\subseteq [A_i,B_i]$ and $[A_k,B_k]\subseteq [A_j,B_j]$, which implies that both reorders are of the same "type": they are replaced by $2\mu_k''-\mu_i''$ and $2\mu_k''-\mu_j''$. The other case follows similarly), so the necessary condition (\ref{nec}) follows for $\mathcal{S}'$. 

    For the opposite direction, the above procedure shows that if $S_i^\rho$ and $S_j^\rho$ are connected, there is a reordering $\mathcal{S}'$ of $\mathcal{S}$ where the corresponding extended multisegments are consecutive and 
    \begin{align*}
        \mu_j'-\mu_i'= \Delta_{\mathcal{S}}(\mu_{i},\mu_{j}).
    \end{align*}
\end{proof}

\subsection{The Aubert dual and deformation}\label{equivalences}

Atobe describes further constructions in terms of extended multi-segments. We translate them into our slightly altered definition. In \cite{aubert}, Aubert defines an involution on $\mathrm{Irr}(G_n)$, known as the \textit{Aubert dual}. As outlined in \cite[Section 6]{A1} the Aubert dual can be defined as follows:
\begin{de}
    Let $\pi$ be an irreducible representation of $G_n$. There exists a sign $\varepsilon \in \{ \pm 1\}$ such that the Aubert dual 
    \begin{align*}
        \Hat{\pi} = \varepsilon \sum_P (-1)^{\mathrm{dim}A_M} [ \mathrm{Ind}_P^{G_n} (\mathrm{Jac}_P(\pi))]
    \end{align*}
    is again an irreducible representation of $G_n$, where $P=MN$ runs over all standard parabolic subgroups of $G_n$ and $A_M$ is the maximal split torus of the center of $M$. 
\end{de}
Given a representation $\pi(\mathcal{E})$ of Arthur type, Atobe gives a formula for an extended multi-segment $\Hat{\mathcal{E}}$ such that $\pi(\Hat{\mathcal{E}})$ is the Aubert dual of $\pi(\mathcal{E})$. We will recall this formula in our notation (i.e. we describe $\Hat{\mathcal{S}} = \mathcal{F}^{-1}(\Hat{\mathcal{E}}) $): 
\begin{de}
    Let $\mathcal{S}= \bigcup_{\rho \in C_\mathcal{S}} \{(( [A_i^\rho,B_i^\rho]_\rho,\mu_i^\rho ))_{i=1}^{n_\rho}\} \in \mathrm{AdExMult}$. Define
    \begin{align*}
        \Hat{\mathcal{S}} \ceq \bigcup_{\rho \in C_\mathcal{S}} \{ (([A_{n_\rho-i+1}^\rho,-B_{n_\rho-i+1}^\rho]_\rho,\Hat{\mu}_{n_\rho-i+1}^\rho ))_{i=1}^{n_\rho}\},
    \end{align*}
    where 
    \begin{align*}
        \Hat{\mu_i}^\rho = \left \{ 
        \begin{matrix}
            \mu_i \qquad &if\ B_i^\rho \in \Z, \\
            \mu_i -1 \qquad &if\ B_i^\rho \not\in \Z. 
        \end{matrix}
        \right.
    \end{align*}
\end{de}

\begin{thm} (\cite[Theorem 6.2]{A1}) \\
    If $\mathcal{S}\in \mathrm{Rep}$, its Aubert dual is given by
    \begin{align*}
        \widehat{\pi(\mathcal{S})} \cong \pi(\Hat{\mathcal{S}}).
    \end{align*}
\end{thm}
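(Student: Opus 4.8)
The plan is to transport the statement across the order-preserving bijection $\mathcal{F}\colon\mathrm{ExMult}\to\mathrm{ExMult}'$ and reduce it to Atobe's Aubert-dual formula \cite[Theorem 6.2]{A1}. Set $\mathcal{E}\ceq\mathcal{F}(\mathcal{S})$, and let $\widehat{\mathcal{E}}\in\mathrm{ExMult}'$ denote the extended multi-segment produced from $\mathcal{E}$ by the explicit recipe of \cite[Section 6]{A1}, for which $\widehat{\pi(\mathcal{E})}\cong\pi(\widehat{\mathcal{E}})$. By Definition \ref{define-the-pi} we have $\pi(\mathcal{S})=\pi(\mathcal{E})$, so everything reduces to the combinatorial identity $\mathcal{F}(\Hat{\mathcal{S}})=\widehat{\mathcal{E}}$: once this is known, $\widehat{\pi(\mathcal{S})}=\widehat{\pi(\mathcal{E})}\cong\pi(\widehat{\mathcal{E}})=\pi(\mathcal{F}(\Hat{\mathcal{S}}))=\pi(\Hat{\mathcal{S}})$. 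Since the Aubert dual of a non-zero irreducible representation is again a non-zero irreducible representation, Proposition \ref{rep-is-nonzero} then gives a posteriori that $\Hat{\mathcal{S}}\in\mathrm{Rep}$, so the right-hand side is genuinely a representation of Arthur type.

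To prove $\mathcal{F}(\Hat{\mathcal{S}})=\widehat{\mathcal{E}}$, I would compare the two symbols coordinate by coordinate inside each $\rho$-part. Both operations reverse the order of the extended segments and replace $[A_i^\rho,B_i^\rho]_\rho$ by $[A_i^\rho,-B_i^\rho]_\rho$, thereby interchanging $a_i^\rho$ and $b_i^\rho$; this is the segment-level shadow of the $\mathrm{GL}$-duality underlying the Aubert involution and matches at once. The real content is in the last two coordinates. Applying $\mathcal{F}$ to the extended segment $([A_i^\rho,-B_i^\rho]_\rho,\Hat{\mu}_i^\rho)$ of $\Hat{\mathcal{S}}$ produces the parameter $l=\tfrac{1}{2}\bigl(a_i^\rho-|\Hat{\mu}_i^\rho|\bigr)$ and the sign $\mathrm{sgn}(\Hat{\mu}_i^\rho)$ twisted by the product of the $(-1)^{a_j^\rho-1}$ over the indices that now precede $i$; distinguishing $B_i^\rho\in\Z$ from $B_i^\rho\notin\Z$, one checks that these agree with the $(l,\eta)$-coordinates of $\widehat{\mathcal{E}}$. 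This case split is exactly why the correction $\Hat{\mu}_i^\rho=\mu_i^\rho-1$ is built into the definition of $\Hat{\mathcal{S}}$ for half-integral $B_i^\rho$: it absorbs the half-integral shift in Atobe's formula. In parallel I would check that $\Hat{\mathcal{S}}$ is an admissible extended multi-segment — its admissible order and the inequality $A_i^\rho+B_i^\rho\ge0$ follow from the data for $\mathcal{S}$ together with the good parity of $\psi_{\mathcal{S}}$ — and that the sign condition for $\mathcal{S}$ is equivalent to that for $\Hat{\mathcal{S}}$, using the identity $\prod_{j<i}(-1)^{a_j^\rho+b_j^\rho-1}=1$ already used in the proof of Proposition \ref{rep-is-nonzero}.

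The main obstacle is precisely this bookkeeping: tracking how the factors $\delta_i^\rho=\prod_{j<i}(-1)^{b_j^\rho-1}$ behave under the simultaneous order reversal and the swap $a_i^\rho\leftrightarrow b_i^\rho$, so that the $\eta$-signs on both sides truly coincide, and handling the integral and half-integral values of $B_i^\rho$ without special pleading. Should it be more convenient to invoke \cite[Theorem 6.2]{A1} only for non-negative (or standard) extended multi-segments, I would first pass to the standard form of $\mathcal{S}$ via Corollary \ref{rep-of-standard} (the Aubert dual descends to the classes $[\mathcal{S}]$), and then use the shift-and-derivative description of $\pi(\mathcal{S})$ from Section \ref{unit} together with the compatibility of the Aubert involution with those derivatives — a compatibility recorded in \cite{A1}, which can also be extracted from the behaviour of the $\mathrm{MVW}$ functor and of Jacquet modules under the Aubert involution. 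Either route reduces the theorem to the single combinatorial identity above.
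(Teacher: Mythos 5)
Your proposal is correct and follows essentially the same route as the paper: the paper simply imports \cite[Theorem 6.2]{A1} and presents $\Hat{\mathcal{S}}$ as the translation $\mathcal{F}^{-1}(\Hat{\mathcal{E}})$ of Atobe's dual, which is exactly your reduction to the combinatorial identity $\mathcal{F}(\Hat{\mathcal{S}})=\Hat{\mathcal{E}}$. The only difference is that you sketch the sign and half-integrality bookkeeping explicitly, whereas the paper leaves that verification implicit in the definition of $\Hat{\mathcal{S}}$.
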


Atobe gives another equivalence relation of extended multisegments, that leaves the emerging representation invariant. It is called \textit{deformation} and described in \cite[Section 5.2]{A1}. We translate it into our definition: \\
Let $\mathcal{S} \in \mathrm{Rep}$ be an extended multi-segment. Let $S_{k-1}=([A_{k-1},B_{k-1}]_\rho,\mu_{k-1})$ and $S_k=([A_k,B_k]_\rho,\mu_k)$ be consecutive extended segments of $\mathcal{S}$ where $A_k \geq A_{k-1}$ and $B_k \geq B_{k-1}$ such that 
\begin{align*}
    |A_k- A_{k-1}| + |B_k- B_{k-1}| = |\mu_k- \mu_{k-1}|. 
\end{align*}
Replace them by 
\begin{align*}
    S_{k-1}' \ceq ([A_k,B_{k-1}]_\rho,\mu_{k-1} -\mathrm{sgn}(\mu_k-\mu_{k-1}) (A_k- A_{k-1}))
\end{align*}
and 
\begin{align*}
    S_k' \ceq ([A_{k-1},B_k]_\rho,\mu_{k} -\mathrm{sgn}(\mu_k-\mu_{k-1})  (A_k- A_{k-1}))
\end{align*}
to define $\mathcal{S}'$ (with the convention $\mathrm{sgn}(0)=1$, although it is irrelevant). By \cite[Theorem 3.5]{A3} it follows that for $\mathcal{S}\in \mathrm{Rep}$, we have $\pi(\mathcal{S})\cong \pi(\mathcal{S}')$.

\nopagebreak[4]

\end{document}